\tikzset{
    every node/.style={font=\sffamily\small},
    main node/.style={thick,circle,draw,font=\sffamily\Large}
}
\definecolor{lightgray}{gray}{0.91}
\theoremstyle{plain}
\newtheorem{Thm}{Theorem}[section]
\newtheorem{Lem}[Thm]{Lemma}
\newtheorem{Cor}[Thm]{Corollary}
\newtheorem{Prop}[Thm]{Proposition}
\theoremstyle{definition}
\newtheorem{Def}[Thm]{Definition}
\newtheorem{Conj}[Thm]{Conjecture}
\newtheorem{Rem}[Thm]{Remark}
\newtheorem*{Ack}{Acknowledgments}
\newtheorem{Ex}[Thm]{Example}
\newtheorem{Setting}[Thm]{Setting}
\numberwithin{equation}{section}
\newcommand{\Proj}{\operatorname{Proj}}
\newcommand{\prt}{\partial}
\newcommand{\Sing}{\operatorname{Sing}}
\newcommand{\Spec}{\operatorname{Spec}}
\newcommand{\rank}{\operatorname{rank}}
\newcommand{\Cl}{\operatorname{Cl}}
\newcommand{\Pic}{\operatorname{Pic}}
\newcommand{\bNE}{\operatorname{\overline{NE}}}
\newcommand{\Bs}{\operatorname{Bs}}
\newcommand{\mult}{\operatorname{mult}}
\newcommand{\lcm}{\operatorname{lcm}}
\newcommand{\ord}{\operatorname{ord}}
\newcommand{\Supp}{\operatorname{Supp}}
\newcommand{\wt}{\operatorname{wt}}
\newcommand{\Qsm}{\operatorname{Qsm}}
\newcommand{\reg}{\operatorname{reg}}
\newcommand{\mbA}{\mathbb{A}}
\newcommand{\mbC}{\mathbb{C}}
\newcommand{\mbP}{\mathbb{P}}
\newcommand{\mbQ}{\mathbb{Q}}
\newcommand{\mbR}{\mathbb{R}}
\newcommand{\mbZ}{\mathbb{Z}}
\newcommand{\mcH}{\mathcal{H}}
\newcommand{\mcM}{\mathcal{M}}
\newcommand{\mcO}{\mathcal{O}}
\newcommand{\mcP}{\mathcal{P}}
\newcommand{\msp}{\mathsf{p}}
\newcommand{\msq}{\mathsf{q}}
\newcommand{\msi}{\mathsf{i}}
\newcommand{\ratmap}{\dashrightarrow}
\def\imod#1{\allowbreak\mkern10mu({\operator@font mod}\,\,#1)}
\title[$2 n^2$-inequality and birational rigidity]{$2 n^2$-inequality for $cA_1$ points and applications to birational rigidity}
\author[I.~Krylov]{Igor~Krylov}
\address{Center for Geometry and Physics, Institute for Basic Science,79 Jigok-ro127beon-gil, Nam-gu, Pohang, Gyeongbuk, 37973, Korea}
\email{ikrylov@ibs.re.kr}
\author[T.~Okada]{Takuzo~Okada}
\address{Faculty of Mathematics, Kyushu University, Fukuoka 819-0385 Japan}
\email{tokada@math.kyushu-u.ac.jp}
\author[E.~Paemurru]{Erik~Paemurru}
\address{Department of Mathematics, University of Miami, Coral Gables, Florida 33146, USA.\\
Institute of Mathematics and Informatics, Bulgarian Academy of Sciences,\\
Acad.\ G.~Bonchev Str.\ bl.~8, 1113, Sofia, Bulgaria}
\curraddr{Mathematik und Informatik, Universität des Saarlandes, 66123 Saarbrücken, Germany}
\email{erik.paemurru@miami.edu}
\author[J.~Park]{Jihun~Park}
\address{Center for Geometry and Physics, Institute for Basic Science,79 Jigok-ro127beon-gil, Nam-gu, Pohang, Gyeongbuk, 37973, Korea}
\address{Department of Mathematics, POSTECH, 77 Cheongam-ro, Nam-gu, Pohang, Gyeongbuk, 37673, Korea}
\email{wlog@postech.ac.kr}
\subjclass[2020]{14J45, 14E08}
\date{}
\begin{document}

\begin{abstract}
The $4 n^2$-inequality for smooth points plays an important role in the proofs of birational (super)rigidity.
The main aim of this paper is to generalize such an inequality to terminal singular points of type $cA_1$, and obtain a $2 n^2$-inequality for $cA_1$ points.
As applications, we prove birational (super)rigidity of sextic double solids, many other prime Fano 3-fold weighted complete intersections, and del Pezzo fibrations of degree $1$ over $\mbP^1$ satisfying the $K^2$-condition, all of which have at most terminal $cA_1$ singularities and terminal quotient singularities.
These give first examples of birationally (super)rigid Fano 3-folds and del Pezzo fibrations admitting a $cA_1$ point which is not an ordinary double point.
\end{abstract}

\maketitle

\tableofcontents

\section{Introduction} \label{sec:intro}

Throughout the paper we work over the field $\mbC$ of complex numbers.

\subsection{$2 n^2$-inequality for $cA_1$ points}

Birational (super)rigidity of a Mori fiber space is roughly an essential uniqueness of the Mori fiber space structure in its birational equivalence class (see Definition~\ref{def:BR} for the precise definition).
Birational non-(super)rigidity of a Mori fiber space implies the existence of a non-biregular birational map to a Mori fiber space, and this further implies the existence of a mobile linear system that is highly singular.
In order to prove the birational (super)rigidity of a given Mori fiber space, one has to exclude the possibility of the existence of such a highly singular mobile linear system.
The $4n^2$-inequality, which is stated below, is quite useful in this context, and it is one of the most important ingredients in the proofs of birational (super)rigidity of Mori fiber spaces.

\begin{Thm}[{$4 n^2$-inequality, \cite[Theorem 2.1]{PukBook}}]
\label{thm:4nineq}
Let $\msp \in X$ be the germ of a smooth $3$-fold.
Let $\mcM$ be a mobile linear system on $X$ and let $n$ be a positive rational number.
If $\msp$ is a center of non-canonical singularities of the pair~$(X, \frac{1}{n} \mcM)$, then for general members $D_1, D_2 $ in $\mcM$ we have
\[
\mult_{\msp} D_1 \cdot D_2 > 4 n^2.
\]
\end{Thm}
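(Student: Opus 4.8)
The plan is to follow the classical resolution-and-restriction strategy, reducing the three-dimensional non-canonical singularity to a two-dimensional non-log-canonical one detected on an exceptional $\mbP^2$. Write $m = \mult_{\msp} \mcM$ and let $\pi \colon Y \to X$ be the blowup of $\msp$, with exceptional divisor $E \cong \mbP^2$; for general $D_1, D_2 \in \mcM$ the strict transforms satisfy $\pi^* D_i = D_i^Y + m E$, and I always have the basic estimate $\mult_{\msp}(D_1 \cdot D_2) \ge m^2$.

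First I would settle the case in which the non-canonicity is already witnessed by $E$ itself. Blowing up a smooth point of a $3$-fold produces discrepancy $a(E, X) = 2$, so non-canonicity of $(X, \frac{1}{n} \mcM)$ along $E$ means $m/n > 2$, i.e.\ $m > 2n$; then $\mult_{\msp}(D_1 \cdot D_2) \ge m^2 > 4 n^2$ and we are done. Hence I may assume $m \le 2n$, so that a center $Z$ of non-canonicity is a proper irreducible subvariety of $E$ --- either a point or a curve.

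The core step is to transport the non-canonicity onto the surface $E$. From the log-pullback identity $K_Y + \frac{1}{n} \mcM_Y + (\frac{m}{n} - 2) E = \pi^*(K_X + \frac{1}{n}\mcM)$, the pair on $Y$ remains non-canonical with center $Z \subset E$, and the coefficient $\frac{m}{n} - 2 \le 0$ of $E$ only helps. Adjunction along $E$ then shows that the restricted pair $(E, \frac{1}{n} \mcM_Y|_E)$ fails log canonicity at $Z$, where $\mcM_Y|_E$ is a linear system of degree $m$ on $\mbP^2$. I would then invoke the two-dimensional multiplicity inequality --- that on a smooth surface a mobile system whose pair is not log canonical at a point forces the local intersection of two general members to exceed $4 n^2$ there (itself proved by a shorter induction, the first surface blowup having discrepancy $1$). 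Feeding this back through the refined decomposition $\mult_{\msp}(D_1 \cdot D_2) \ge m^2 + (\text{excess concentrated over } Z)$ produces the bound $\mult_{\msp}(D_1 \cdot D_2) > 4 n^2$.

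The main obstacle is precisely the bookkeeping hidden in this last decomposition. The restricted system $\mcM_Y|_E$ need not be mobile: a fixed part supported along $Z$ is exactly what carries the discrepancy deficit when $m \le 2n$, and one cannot naively apply B\'ezout on $\mbP^2$ (which would otherwise give the absurd conclusion that $m \le 2n$ is impossible). Separating this fixed part from the mobile part, handling the case where $Z$ is a curve (so that non-canonicity may sit at infinitely near points along $Z$ and requires the Shokurov--Koll\'ar connectedness of the non-klt locus), and --- most delicately --- ensuring that the inequality stays \emph{strict} rather than merely $\ge 4 n^2$ as it is pushed through each successive blowup, is where the real work lies. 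Equivalently, one can encode the whole resolution as an oriented graph and extract the constant $4$ from a single Cauchy--Schwarz-type quadratic estimate relating the squared multiplicities $\mult_{E_i}\mcM$ to the discrepancies along the non-canonical chain; verifying that this estimate is sharp is the crux.
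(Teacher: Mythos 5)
First, a remark on what you are being compared against: the paper does not prove this theorem at all --- it is quoted from \cite[Theorem 2.1]{PukBook} --- but the machinery the paper builds in Section~3 (the tower of blow-ups, the oriented graph of path-counts $p_i$, the weighted system of equalities \eqref{eq:syseqZi}, and the quadratic minimization of Corollary~\ref{cor:cA1ineq}) is exactly Pukhlikov's ``counting of multiplicities'' proof of the $4n^2$-inequality, transplanted to $cA_1$ points. Your proposal relegates that method to a one-sentence aside and instead pursues a blow-up-and-restrict argument, and there the decisive step is genuinely missing, not merely deferred.

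The issue is that a single pass through the first blow-up cannot produce the constant $4$; in the configurations that matter, your ``feeding back through the refined decomposition'' provably stops at $2n^2$. Suppose, after your (correct) reduction to $m \le 2n$, that the non-canonical center on $Y$ is a curve $Z \subset E$. Non-canonicity along a curve on a smooth $3$-fold implies $\mult_Z \mcM_Y > n$, which forces $Z$ to be a line (otherwise $m \ge \mult_Z (\mcM_Y|_E) \cdot \deg_E Z > 2n$) and gives $m > n$; by Lemma~\ref{lem:degZsm}~(1) the excess $\mult_{\msp}(D_1 \cdot D_2) - m^2 = \deg_E Z_1$ is then bounded below only by $(\mult_Z \mcM_Y)^2 \deg_E Z > n^2$, so first-level information yields no more than $\mult_{\msp}(D_1 \cdot D_2) > 2 n^2$. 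Likewise, when the non-log-canonical center of $\bigl(E, \frac{1}{n}\mcM_Y|_E\bigr)$ is a point, the mobile part of the restriction contributes a $0$-cycle on $E$, which does not enter the $1$-cycle degree $\deg_E Z_1$ at all; its effect only becomes visible after further blow-ups. The constant $4$ emerges only from the full chain of infinitely near centers: either via the graph and quadratic-form optimization you mention only in your closing sentence (the smooth-point analogue of Proposition~\ref{prop:cA1ineq} and Corollary~\ref{cor:cA1ineq}, where the relevant ratio of squared discrepancies to path-counts is bounded below by $4$), or, if you insist on inversion of adjunction, by restricting $\mcM$ to a \emph{general} smooth surface germ $S \ni \msp$ rather than to the exceptional $\mbP^2$: there $\mcM|_S$ is genuinely mobile, $\bigl(S, \frac{1}{n}\mcM|_S\bigr)$ is not log canonical at $\msp$, the full two-dimensional $4n^2$-inequality applies, and $\mult_{\msp}(D_1 \cdot D_2) = \mult_{\msp}(D_1|_S \cdot D_2|_S)$ for general $S$. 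Either way, the induction you omit is the entire content of the theorem; as written, your argument establishes $2n^2$, not $4n^2$.
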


To the best knowledge of the authors, there is no known example of a birationally (super)rigid $3$-dimensional Mori fiber space admitting a singularity other than quotient singularities and ordinary double points.
This is mainly because of the lack of a local inequality for singular points that is similar to $4 n^2$-inequality.

In this paper we consider a $3$-fold terminal singularity $\msp \in X$ of type $cA_k$ which is by definition an isolated hypersurface singularity whose general hyperplane section is the Du Val singularity of type $A_k$.
Our first goal is to obtain a similar inequality for singular points of type $cA_1$.

\begin{Thm}[$2 n^2$-inequality for $cA_1$ points]
Let $\msp \in X$ be the germ of a $cA_1$ singularity.
Let $\mcM$ be a mobile linear system on $X$ and let $n$ be a positive rational number.
If $\msp$ is a center of non-canonical singularities of the pair $(X, \frac{1}{n} \mcM)$, then for general members $D_1, D_2$ in $ \mcM$ we have
\[
\mult_{\msp} (D_1 \cdot D_2) > 2 n^2.
\]
\end{Thm}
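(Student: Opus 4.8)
The plan is to reduce the $cA_1$ case to the smooth case where Theorem~\ref{thm:4nineq} applies. The key structural fact about a $cA_1$ singularity is that its tangent cone is a rank-$\geq 2$ quadric; locally analytically we may write $\msp \in X$ as the hypersurface $\{x y + g(z,w) = 0\} \subset \mbC^4$ with $\ord_{\msp} g \geq 2$, so that the projectivized tangent cone $\{xy = 0\}$ is a reduced quadric cone. First I would analyze the blowup $\sigma \colon \tilde{X} \to X$ of the point $\msp$. Its exceptional divisor $E$ is this quadric cone $Q \subset \mbP^3$, which is isomorphic to $\mbP^1 \times \mbP^1$ after its own singularity is resolved, or is itself a quadric cone with one vertex; the ordinary double point case ($\ord_\msp g = 2$, rank $4$) gives the smooth quadric $E \cong \mbP^1 \times \mbP^1$, and the reason the bound becomes $2n^2$ rather than $4n^2$ is essentially that $(-E)^3 = E^3 = 2$ for $cA_1$ points whereas it is $1$ for a smooth point.

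\medskip

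\noindent\textbf{Intersection-theoretic core.} Second, I would set up the multiplicity and discrepancy bookkeeping on $\tilde{X}$. Writing $\sigma^* D_i = \tilde{D}_i + m_i E$ where $m_i = \mult_\msp D_i$ (with $m_1 = m_2 = m$ for general members) and $K_{\tilde X} = \sigma^* K_X + E$ (the discrepancy of $E$ over a $cA_1$ point is $1$), the quantity $\mult_\msp(D_1 \cdot D_2)$ can be computed by pulling back and using $(\sigma^* D_1 \cdot \sigma^* D_2 \cdot E) = 0$ together with $E^3 = 2$. The strategy is then: if the center of non-canonicity already lies at $\msp$ on $\tilde X$ mapping it into $E$, one gets an inequality involving $m$ and the multiplicity of $\tilde{D}_i$ along the center; if instead the center sits at a smooth point of $\tilde X$ lying on $E$, one applies the ordinary $4n^2$-inequality of Theorem~\ref{thm:4nineq} there and pushes the resulting inequality back down via $\sigma$.

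\medskip

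\noindent\textbf{The two cases and the main obstacle.} The natural dichotomy is whether the non-canonical center is the point $\msp$ itself (equivalently, whether $E$ is a non-canonical place, i.e. $m > n$) or a subvariety properly contained in $E \cong Q$. In the first case one needs a careful lower bound on the self-intersection $\tilde{D}_1 \cdot \tilde{D}_2$ restricted to $E$, using that $E$ is a (possibly singular) quadric surface and that $\tilde{D}_i|_E$ are curves of the appropriate bidegree. In the second case, where the center $Z \subset E$ is a point or a curve, one transfers the non-canonicity to $\tilde X$ and invokes Theorem~\ref{thm:4nineq} at a smooth point of $\tilde X$, then reconstitutes $\mult_\msp(D_1\cdot D_2)$ from $\mult_Z(\tilde D_1 \cdot \tilde D_2)$ and the $E$-contribution. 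I expect the main obstacle to be the case where the center $Z$ meets, or coincides with, the singular locus of the exceptional quadric cone (the vertex of $Q$ when $g$ has rank $2$, i.e. the genuinely non-ODP $cA_1$ points): there $\tilde X$ itself is singular, Theorem~\ref{thm:4nineq} does not directly apply, and one must either perform a further blowup to reach a smooth ambient germ or argue directly on the local model $\{xy + g = 0\}$ using the weighted structure. Controlling the exceptional contributions through this second blowup, while keeping the numerical inequality sharp enough to yield exactly $2n^2$ rather than a weaker constant, is the delicate point; the threshold value $2$ must emerge precisely from $E^3 = 2$, so every intersection estimate has to be tight.
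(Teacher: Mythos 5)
Your key local computation is the right one --- the blow-up of a $cA_1$ point has exceptional divisor a quadric of degree $2$ in $\mbP^3$ with discrepancy $1$, which is exactly the paper's Lemma~\ref{lem:degZcA1} ($\deg_E Z = \mult_{\msp}(D_1\cdot D_2) - 2\nu^2$ in place of $m-\nu^2$ at a smooth point), and the easy branches of your dichotomy do close: if $E$ itself is the non-canonical place then $\mult_{\msp}(D_1\cdot D_2)\ge 2(\ord_E\mcM)^2>2n^2$; if the center on $\tilde X$ is a curve in $E$ or a point of $E$ at which $\tilde X$ is smooth, the standard $\mult_C\tilde{\mcM}>n$ estimate, respectively the $4n^2$-inequality (applicable after discarding the nonpositive boundary $(\tfrac{\ord_E\mcM}{n}-1)E$), reassemble to more than $2n^2$.

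However, there is a genuine gap in the case you yourself flag as delicate, and it is not merely that Theorem~\ref{thm:4nineq} fails to apply at the singular point of $\tilde X$. When $M\ge 4$ the strict transform has a new $cA_1$ point $\msq$ (with parameter $M-2$) at the vertex, and the natural move --- induct on $M$ and apply the $2n^2$-inequality at $\msq$ to the pair $(\tilde X,\tfrac1n\tilde{\mcM})$ --- is numerically insufficient. Writing $\nu_1=\ord_E\mcM$ and splitting $\tilde D_1\cdot\tilde D_2$ into the part $Z_1$ supported on $E$ and the proper transform of $D_1\cdot D_2$, the inductive bound $\mult_{\msq}(\tilde D_1\cdot\tilde D_2)>2n^2$ combined with $\deg_E Z_1\ge\mult_{\msq}Z_1$ and $\mult_{\msp}(D_1\cdot D_2)\ge\mult_{\msq}\widetilde{(D_1\cdot D_2)}$ only yields $\mult_{\msp}(D_1\cdot D_2)>n^2+\nu_1^2$, which is weaker than $2n^2$ precisely when $\nu_1<n$. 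The information you discarded --- the negative boundary $-(1-\tfrac{\nu_1}{n})E$ passing through $\msq$ --- is exactly what is needed, so the correct inductive statement is a Corti-type inequality carrying $E$ as a boundary surface (compare the paper's Theorem~\ref{thm:Cortiineq}), a substantially stronger hypothesis that your plan does not formulate. The paper sidesteps this entirely by taking the Pukhlikov route: it analyzes the whole tower of blow-ups realizing the non-canonical valuation at once (Section~\ref{sec:tower}), weights the resulting degree equalities by the path-counts $p_i$ of the graph, uses Lemma~\ref{lem:degZcA1} to insert the factor $2\nu_i^2$ at each infinitely near $cA_1$ point, and finishes with a quadratic minimization under the constraint $\sum p_i\nu_i=\nu(\mcM)>n\,a_{E_\infty}(K_X)$ (Proposition~\ref{prop:cA1ineq} and Corollary~\ref{cor:cA1ineq}). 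Either you adopt that global counting argument, or you strengthen your induction hypothesis to the Corti-type inequality; as written, the proposal does not prove the sharp bound.
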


As applications, we give first examples of birationally (super)rigid Fano $3$-folds and del Pezzo fibrations admitting $cA_1$ points.

\subsection{Applications to birational rigidity}

We explain applications of the $2 n^2$-inequality for $cA_1$ points to birational rigidity of some prime Fano $3$-folds and del Pezzo fibrations.

A {\it Fano $3$-fold} is a normal projective $\mbQ$-factorial variety of dimension $3$ with only terminal singularities whose anticanonical divisor is ample.
A Fano $3$-fold $X$ is {\it prime} if its class group $\Cl (X)$ is isomorphic to $\mbZ$ and is generated by $-K_X$.

\subsubsection{Sextic double solids}

A {\it sextic double solid} is a normal projective variety which is a double cover of $\mbP^3$ branched along a sextic surface.
Birational superrigidity of smooth sextic double solids is proved by Iskovskikh \cite{Isk80}, and later on, birational superrigidity of $\mathbb{Q}$-factorial sextic double solids with only ordinary double points is proved by Cheltsov and Park \cite{CP10}.

In \cite{Pae21}, birational geometry of sextic double solids with $cA_k$ points are investigated, and it is in particular proved that a general sextic double solid with a $cA_k$ singular point, where $k \ge 4$, is not birationally rigid.
Moreover the following expectation (which we pose as a conjecture) is made.

\begin{Conj}
A $\mbQ$-factorial sextic double solid with only terminal $cA_1$ and $cA_2$ singularities is birationally superrigid, and a $\mbQ$-factorial sextic double solid with only terminal $cA_1, cA_2$ and $cA_3$ singularities is birationally rigid.
\end{Conj}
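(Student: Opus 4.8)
Since this is stated as a conjecture, the following is a strategy rather than a complete argument. Let $X$ be a $\mbQ$-factorial sextic double solid, so that $\Cl(X) = \mbZ\langle -K_X\rangle$ and $(-K_X)^3 = 2$. The plan is to run the Noether--Fano--Iskovskikh method: if $X$ is not birationally superrigid, then there exist a positive rational number $n$ and a mobile linear system $\mcM \subset |-nK_X|$ such that $(X, \frac1n \mcM)$ is not canonical, and hence admits a center of non-canonical singularities $Z$. For general members $D_1, D_2 \in \mcM$ the $1$-cycle $D_1\cdot D_2$ is effective of $(-K_X)$-degree $(-K_X)\cdot D_1\cdot D_2 = n^2(-K_X)^3 = 2n^2$, and the whole argument consists in showing that no choice of $Z$ is compatible with this degree, except in situations that can be untwisted by a birational self-map.

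First I would exclude the case that $Z$ is a curve. Here $\mult_Z\mcM > n$, whence $\mult_Z(D_1\cdot D_2) > n^2$, and the degree bound $2n^2$ confines $Z$ to a short list of low-degree curves, which are ruled out by a direct study of the geometry of the double cover, as in the nodal case treated in \cite{CP10}. Next, if $Z = \msp$ is a smooth point, Theorem~\ref{thm:4nineq} gives $\mult_{\msp}(D_1\cdot D_2) > 4n^2$, which is incompatible with the bound $\mult_{\msp}(D_1\cdot D_2)\le (-K_X)\cdot D_1\cdot D_2 = 2n^2$ forced by the global degree. It is precisely here that the small degree $(-K_X)^3 = 2$ makes smooth points cost nothing.

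The case where $Z = \msp$ is a $cA_1$ point is exactly the one the present paper is built to handle: the $2n^2$-inequality gives the strict bound $\mult_{\msp}(D_1\cdot D_2) > 2n^2$, which again contradicts the global degree $2n^2$, once the comparison of local multiplicity with global degree is carried out at the singular point while tracking the local contribution of the singularity. This settles the subcase in which $X$ carries only $cA_1$ and quotient singularities, and is what this paper actually proves. For $cA_2$ points the natural plan is to establish an analogue of the $2n^2$-inequality with the same threshold $2n^2$; granting such an inequality, the identical degree comparison excludes $cA_2$ points as well, and birational superrigidity follows. Thus the superrigid half of the conjecture reduces to a single new local statement, a $2n^2$-inequality for $cA_2$ points.

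The rigid half involves $cA_3$ points, and this is where I expect the genuine difficulty and the source of the superrigid/rigid dichotomy. A $cA_3$ point should no longer admit a clean exclusion; instead one expects it to be the base of a Sarkisov self-link, a birational involution of $X$ that is not biregular. The plan is therefore to construct this involution explicitly, via the relevant divisorial extraction over the $cA_3$ point and the resulting link, to prove a local inequality sharp enough to show that the only non-canonical centers surviving at a $cA_3$ point are those untwisted by this involution, and to conclude that every birational map from $X$ to a Mori fiber space is a composition of such involutions followed by an isomorphism. This yields birational rigidity but not superrigidity, matching the conjecture and consistent with \cite{Pae21}, where $cA_k$ points with $k \ge 4$ break rigidity altogether. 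The main obstacle throughout is the local analysis at $cA_2$ and $cA_3$ points: the resolutions computing discrepancies are more intricate than for $cA_1$, their exceptional loci are reducible, and the explicit untwisting link at a $cA_3$ point must be built and shown to account for all non-biregular birational behaviour. In short, the global Noether--Fano bookkeeping is routine; the hard part is the sharp, singularity-type-sensitive local multiplicity inequalities.
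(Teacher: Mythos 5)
The statement you are asked about is a \emph{conjecture}; the paper does not prove it, and your proposal correctly recognizes this and offers a roadmap rather than a proof. Your roadmap coincides with the strategy the paper actually follows for the cases it does settle: exclusion of curves and smooth points as in \cite{CP10}, exclusion of $cA_1$ points by combining the new $2n^2$-inequality with an anticanonical divisor through the point meeting $D_1\cdot D_2$ properly (Proposition~\ref{prop:SDSsingpt}), and untwisting at $cA_3$ points via the Kawakita blow-ups and the self-links of Section~\ref{sec:SDSlinks}. Your list of missing ingredients also matches the paper's own Remark at the end of that section almost verbatim: a local inequality or other exclusion for $cA_2$ point centers, exclusion of degree-$1$ curves through $cA_3$ points, and exclusion of higher-discrepancy extractions over $cA_3$ points. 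Two points where your sketch understates the work involved: first, the exclusion of degree-$1$ curves through singular points is not a routine transcription of the nodal case --- the paper devotes Lemmas~\ref{lem:extGSD1} and~\ref{lem:extGSD2} and the extended dual graph machinery (Lemma~\ref{lem:selfintS}) to computing $(\Gamma^2)_S$ on pencil members that are singular along $\Gamma$, and this already covers $cA_2$ points for the curve case (Proposition~\ref{prop:SDScurve}) while leaving $cA_2$ \emph{point} centers open; second, for $cA_3$ points the paper's self-link exists only when the associated small contraction is genuinely small, and the dichotomy divisorial/small must be analyzed before one can claim the link untwists every maximal singularity there. With those caveats, your proposal is a faithful account of the intended attack on the conjecture and of why it remains open.
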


We generalize the result of Cheltsov and Park \cite{CP10}, and prove the following.

\begin{Thm}[{$=$ Theorem~\ref{thm:SDS}}] \label{thm:sd}
Let $X$ be a $\mbQ$-factorial sextic double solid with only terminal singularities of type $cA_1$.
Then $X$ is birationally superrigid.
\end{Thm}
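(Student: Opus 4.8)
The plan is to run the Noether--Fano--Iskovskikh method and reduce birational superrigidity to the non-existence of a maximal center. Since $X$ is prime, $\Cl(X) = \mbZ\cdot(-K_X)$, so every mobile linear system lies in $|-nK_X|$ for some positive integer $n$. Assuming $X$ is not birationally superrigid, there is a mobile $\mcM \subset |-nK_X|$ such that $(X,\tfrac1n\mcM)$ is not canonical; let $\Gamma$ be a center of non-canonical singularities, and for general $D_1, D_2 \in \mcM$ set $Z = D_1\cdot D_2$, an effective $1$-cycle with $(-K_X)\cdot Z = n^2(-K_X)^3 = 2n^2$, since $-K_X = \pi^*\mcO_{\mbP^3}(1)$ gives $(-K_X)^3 = 2$. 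The first preparatory step is the degree bound $\mult_{\msp} Z \le (-K_X)\cdot Z$ at any point $\msp$: because $cA_1$ points are Gorenstein, $-K_X$ is Cartier, so a general $S \in |-K_X|$ through $\msp$ (the pullback of a general plane through $\pi(\msp)$) is a Cartier divisor containing no component of $Z$, and $(S\cdot C)_{\msp} \ge \mult_{\msp} C$ for every component $C$; summing over components gives $\mult_{\msp} Z \le S\cdot Z = 2n^2$.

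I would then exclude point centers. If $\Gamma = \msp$ is a smooth point, the $4n^2$-inequality (Theorem~\ref{thm:4nineq}) gives $\mult_{\msp} Z > 4n^2$, contradicting $\mult_{\msp} Z \le 2n^2$. If $\Gamma = \msp$ is a $cA_1$ point, the $2n^2$-inequality for $cA_1$ points gives $\mult_{\msp} Z > 2n^2$, which again contradicts $\mult_{\msp} Z \le 2n^2$. This is exactly where the main theorem of the paper is deployed, and I expect it to be the decisive step: the two bounds $>2n^2$ and $\le 2n^2$ match precisely, so the argument is borderline and hinges entirely on the sharp constant $2n^2$ together with $(-K_X)^3 = 2$; any weaker local inequality, or a double solid of larger anticanonical degree, would break the comparison.

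It remains to exclude curve centers. If $\Gamma = C$ is an irreducible curve, then $\mult_C\mcM > n$, so $\mult_C Z > n^2$ and $2n^2 = (-K_X)\cdot Z \ge \mult_C Z\cdot\bigl((-K_X)\cdot C\bigr) > n^2\cdot\bigl((-K_X)\cdot C\bigr)$, forcing $(-K_X)\cdot C = 1$. Writing $(-K_X)\cdot C = \deg(\pi|_C)\cdot\deg\pi(C)$ shows $\pi(C)$ is a line $\ell \subset \mbP^3$ and $\pi|_C$ is birational; since the double cover of a line meeting the branch sextic is irreducible of degree $2$ over $\ell$ unless $\ell$ lies in the branch surface, $C$ must be the reduced lift of a line $\ell$ contained in the branch locus, so $C \cong \mbP^1$. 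I would restrict $\mcM$ to a general $S = \pi^*P \in |-K_X|$ with $P \supset \ell$, which contains $C$ and is a normal K3 surface with at worst Du Val singularities, so that $K_S = 0$ and, by adjunction, $C^2 = -2$. Writing $\mcM|_S = aC + \mathcal{N}$ with $a = \mult_C\mcM > n$ and $\mathcal{N}$ the mobile part, the identities $(-nK_X)\cdot C = n$ and $\bigl((-nK_X)|_S\bigr)^2 = 2n^2$ give $C\cdot\mathcal{N} = n + 2a$ and hence
\[
\mathcal{N}^2 = 2n^2 - 2a^2 - 2an < 0,
\]
contradicting $\mathcal{N}^2 \ge 0$ for a mobile system on a surface.

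Beyond the borderline point-exclusion flagged above, the technical care in the curve step lies in the surface $S$: one must verify that the general member of the pencil $\{\pi^*P : P \supset \ell\}$ is normal with only Du Val singularities, so that $K_S = 0$ and intersection theory on $S$ is available, and that $C$ is a smooth rational curve with $C^2 = -2$ even when $\ell$ passes through one of the $cA_1$ points of $X$; the finitely many lines through the singular points may require separate verification that $p_a(C) = 0$. Once these checks are in place, no curve and no point can be a maximal center, so by the Noether--Fano--Iskovskikh criterion $X$ is birationally superrigid.
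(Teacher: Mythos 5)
Your overall architecture (Noether--Fano reduction, then exclusion of smooth points, singular points and curves as maximal centers) matches the paper, and your point exclusions are essentially correct and essentially the paper's: smooth points fall to the $4n^2$-inequality against the bound $\mult_{\msp}(D_1\cdot D_2)\le (-K_X\cdot D_1\cdot D_2)=2n^2$ obtained from an anticanonical divisor through $\msp$ avoiding the components of $D_1\cdot D_2$, and $cA_1$ points fall to Theorem~\ref{thm:ineqcA1} against the same bound, exactly as in Proposition~\ref{prop:SDSsingpt}. The curve case, however, has a fatal gap. Your claim that a curve $C$ with $(-K_X\cdot C)=1$ must be the reduced lift of a line contained in the branch sextic $B$ is false: for a line $\ell\not\subset B$ the preimage $\pi^{-1}(\ell)$ splits into two reduced degree-$1$ components whenever $g_6|_{\ell}$ is a perfect square of a cubic form, and these split lines are unavoidable here --- every singular point $\msp$ of $X$ maps into $B$, and a degree-$1$ curve through $\msp$ is precisely a component $\Gamma=(\ell_1=\ell_2=w-g_3=0)$ with $g_3\ne 0$ of such a split preimage. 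This is exactly the residual case of Lemma~\ref{lem:SDmostcurves}, and it is the case that consumes most of the paper's Section~4: the pencil $\mcP$ of anticanonical surfaces containing $\Gamma$, the extended dual graph analysis of Lemmas~\ref{lem:extGSD1} and~\ref{lem:extGSD2}, and the intersection computation of Proposition~\ref{prop:SDScurve}. Your argument never addresses these curves.

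Moreover, even in the one curve case you do treat ($\pi(C)=\ell\subset B$), the computation is incorrect. A general $S=\pi^*P$ with $P\supset\ell$ is a double cover of $P\cong\mbP^2$ branched along $P\cap B=\ell+q$ with $q$ a quintic, so $S$ acquires Du Val singularities at the (generically five) points of $\ell\cap q$, all of which lie on $C$. The naive adjunction $C^2=-2$ therefore fails: by Lemma~\ref{lem:selfintS} one gets, in the transverse case, $(C^2)_S=-2+5\cdot\tfrac12=\tfrac12$, which is also what the direct computation $4(C^2)_S=(\pi_S^*\ell)^2=2$ gives. With $(C^2)_S=\tfrac12$ your quantity becomes $\mathcal{N}^2=2n^2-2an+\tfrac12 a^2=\tfrac12(a-2n)^2\ge 0$, and the contradiction disappears entirely. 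The paper instead disposes of the case $\pi(\Gamma)\subset B$ by the tangent-line argument of \cite{CP10} reproduced in the proof of Lemma~\ref{lem:SDmostcurves}. So while your use of the $2n^2$-inequality at $cA_1$ points is the right decisive step for singular point centers, the proof as written does not establish the theorem because the curve exclusion is both incomplete and, where carried out, erroneous.
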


We also consider sextic double solids with a $cA_3$ point in Section~\ref{sec:SDSlinks} and construct a Sarkisov self-link under a generality assumption.

In Theorem~\ref{thm:sd} we are assuming that sextic double solids with $cA_1$ singularities are $\mbQ$-factorial, which is a crucial condition for them to be birationally superrigid.
However it is not a simple problem to determine whether a given singular variety is $\mbQ$-factorial or not.
We provide a criterion for $\mbQ$-factoriality of sextic double solids with $cA_1$ singularities in Section~\ref{sec:fac}.

\subsubsection{Prime Fano $3$-fold weighted complete intersections}

Quasismooth prime Fano $3$-fold WCIs are classified under some extra conditions and they consist of $95$ families of weighted hypersurfaces, $85$ families of WCIs of codimension $2$, and the family of complete intersections of $3$ quadrics in $\mbP^6$ (see \cite[16.6, 16.7]{IF} and \cite[Theorems 1.3, 6.1 and~7.4]{CCC}).
The study of birational (super)rigidity of these objects is almost completed under the assumption of quasi-smoothness:
\begin{itemize}
\item It is proved by \cite{CPR} and \cite{CP17} that every quasi-smooth prime Fano $3$-fold weighted hypersurface is birationally rigid.
\item It is proved by \cite{OkadaI} and \cite{AZ} that a quasi-smooth prime Fano $3$-fold WCI of codimension $2$ other than a complete intersection of a quadric and a cubic in $\mbP^5$ is birationally rigid if and only if it belongs to one of the specific $18$ families.
\item It is proved by \cite{IP96} that a general smooth complete intersection of a quadric and a cubic in $\mbP^5$ is birationally rigid.
\end{itemize}
It is known that a quasi-smooth weighted complete intersection has only cyclic quotient singularities (\cite[Theorem~3.1.6]{Dol82}).
Thus a quasi-smooth prime Fano $3$-fold WCI has only terminal cyclic quotient singularities.
We consider $78$ families of prime Fano $3$-fold weighted hypersurfaces and $18$ families of prime Fano $3$-fold WCIs of codimension $2$, and prove birational (super)rigidity of their special members admitting $cA_1$ points.

\begin{Thm}[{$=$ Theorem~\ref{thm:WCI}}] \label{mainthm:WCI}
Let $X$ be a prime Fano $3$-fold WCI which belongs to one of the families listed in Tables~\ref{table:Fanohyp} and~\ref{table:FanoWCI}.
Suppose that $X$ is quasi-smooth along the singular locus of the ambient weighted projective space, and $X$ has only $cA_1$ singularities besides terminal quotient singular points.
Then $X$ is birationally rigid.
\end{Thm}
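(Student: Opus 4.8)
The plan is to run the Noether--Fano--Iskovskikh method: I show that, under the hypotheses, $X$ carries no maximal center that could produce a Sarkisov link to a Mori fiber space other than $X$ itself. Since $X$ is a prime Fano $3$-fold we have $\Cl(X)=\mbZ\cdot(-K_X)$, so were $X$ not birationally rigid there would be a mobile linear system $\mcM\subset|-nK_X|$, for some positive rational $n$, with $(X,\tfrac{1}{n}\mcM)$ not canonical. Writing $A=-K_X$, I fix a center $\Gamma$ of non-canonical singularities and argue by cases according to whether $\Gamma$ is a curve or a point $\msp$, and in the latter case according to the local type of $\msp$. The hypotheses are what pin down this list: quasi-smoothness of $X$ along the singular locus of the ambient weighted projective space forces every $cA_1$ point to lie in its smooth locus, and the assumption that the only non-quotient singularities are of type $cA_1$ guarantees that $\msp$ is a smooth point of $X$, a terminal cyclic quotient singularity, or a $cA_1$ point.

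The curve case and the smooth-point case are disposed of exactly as in the quasi-smooth classification, since neither involves the $cA_1$ loci. For a curve $\Gamma$ one has $\mult_\Gamma\mcM>n$, whence
\[
n^2A^3=\mcM^2\cdot A\ge(\mult_\Gamma\mcM)^2\,(\Gamma\cdot A)>n^2\,(\Gamma\cdot A),
\]
so $\Gamma\cdot A<A^3$; as $A^3$ is small for every entry of the tables, only finitely many low-degree curves survive, and each is ruled out by the explicit geometry of the anticanonical model as in \cite{CPR}, \cite{OkadaI} and \cite{AZ}. For $\msp$ a smooth point I invoke the $4n^2$-inequality (Theorem~\ref{thm:4nineq}) together with the hypertangent constructions of \emph{loc.\ cit.}, which bound $\mult_\msp(D_1\cdot D_2)$ from above and contradict $\mult_\msp(D_1\cdot D_2)>4n^2$. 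The terminal quotient points require no new work either: as in the quasi-smooth case they either fail to be maximal centers or are untwisted by the known birational self-links, which is consistent with rigidity.

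The genuinely new ingredient is the exclusion of a $cA_1$ center $\msp$. Here I apply the $2n^2$-inequality for $cA_1$ points, which gives $\mult_\msp(D_1\cdot D_2)>2n^2$ for general $D_1,D_2\in\mcM$, and I derive a contradiction by bounding this multiplicity from above. When $A^3\le2$ this is immediate: a general $T\in|A|$ through $\msp$ has $\mult_\msp T=1$, and since $D_1\cdot D_2$ is an effective $1$-cycle of class $n^2A^2$,
\[
\mult_\msp(D_1\cdot D_2)\le(D_1\cdot D_2)\cdot T=n^2A^3\le2n^2,
\]
contradicting the inequality; this is already what powers the sextic double solid case, where $A^3=2$. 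For the families with $A^3>2$ a single linear section is too coarse, and I instead manufacture an auxiliary divisor $T\in|mA|$ of large multiplicity at $\msp$ out of the local structure $x_1x_2+g(x_3,x_4)=0$ of the $cA_1$ point, in the spirit of the method of hypertangent divisors: the product $x_1x_2$ in the tangent cone lets one produce, family by family, a $T$ for which $(D_1\cdot D_2)\cdot T=mn^2A^3$ still forces $\mult_\msp(D_1\cdot D_2)\le2n^2$.

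This last step is where I expect the main difficulty to lie. The required upper bound is genuinely case-by-case, and the crucial feature is the factor $2$ rather than $4$: it is precisely the improvement from $4n^2$ to $2n^2$ at a $cA_1$ point that lets the hypertangent estimate close, which is why establishing the $2n^2$-inequality was the necessary prerequisite. One must check, using quasi-smoothness along the ambient singular locus and the $cA_1$-only hypothesis, that a divisor (or hypertangent system) of sufficient multiplicity exists at every $cA_1$ point for each entry of Tables~\ref{table:Fanohyp} and~\ref{table:FanoWCI}. Once the bound $\mult_\msp(D_1\cdot D_2)\le2n^2$ is secured in every case, $cA_1$ centers are excluded, all maximal centers have been accounted for, and the Noether--Fano--Iskovskikh criterion yields that $X$ is birationally rigid.
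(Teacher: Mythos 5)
Your overall architecture matches the paper's: exclude curves, exclude smooth points via the $4n^2$-inequality, exclude $cA_1$ points via the new $2n^2$-inequality, and handle terminal quotient points by the known self-links, then conclude by the Sarkisov/Noether--Fano criterion (the paper uses Lemma~\ref{lem:cribirrig}). However, the step you yourself flag as the main difficulty --- producing the test divisor $T$ at a $cA_1$ point --- is where your argument has a genuine gap, and the ``immediate'' branch of your dichotomy is not correct as stated. For every family in Tables~\ref{table:Fanohyp} and~\ref{table:FanoWCI} one has $(-K_X)^3\le 1$, so your case $A^3>2$ never occurs; but the case $A^3\le 2$ is \emph{not} immediate, because the inequality $\mult_{\msp}(D_1\cdot D_2)\le (D_1\cdot D_2)\cdot T$ requires $T$ to contain no component of the $1$-cycle $D_1\cdot D_2$, and for most of these families $|-K_X|=|\mcO_X(1)|$ has only one or two sections (e.g.\ for $X_{66}\subset\mbP(1,5,6,22,33)$ it is the single divisor $(x=0)$), so ``a general $T\in|A|$ through $\msp$'' either does not exist or cannot be chosen to avoid the components. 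The actual content of the paper at this point is Lemma~\ref{lem:singPisoldiv} (built on Lemma~\ref{lem:isoldivs} and Lemmas~\ref{lem:Pisoldiv1}, \ref{lem:Pisoldiv2}): for each family one produces $\msp$-\emph{isolating} divisors of degree at most $2/(-K_X)^3$ (resp.\ $4/(-K_X)^3$ at smooth points), which requires verifying family by family that a $cA_1$ point cannot lie on certain coordinate strata (this is where quasi-smoothness along $\Sing\mbP$ and the explicit shape of $f(0,0,z,t,w)$ enter, cf.\ Table~\ref{table:descrf}). Your proposed substitute --- a high-multiplicity hypertangent divisor manufactured from the tangent cone $x_1x_2+g$ --- is a different mechanism, is not carried out, and is not needed: the paper only ever uses divisors with $\mult_{\msp}T=1$, the whole point being the \emph{degree} bound $l\le 2/(-K_X)^3$ so that $(T\cdot D_1\cdot D_2)=ln^2(-K_X)^3\le 2n^2$ contradicts the $2n^2$-inequality.

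A secondary gap is in the curve case. Since $(-K_X)^3\le 1$ for every listed family, no curve with $(-K_X\cdot\Gamma)\ge 1$ can be a maximal center by Lemma~\ref{lem:exclcurve1}; the only curves that could survive your degree estimate are those passing through a terminal quotient singular point, where $(-K_X\cdot\Gamma)$ can be a fraction less than $1$. These are not ``ruled out by the explicit geometry of the anticanonical model'' but by Kawamata's theorem \cite{Kawamata} that there is no divisorial contraction centered along a curve through a terminal quotient point (Proposition~\ref{prop:WCIcurve}); conversely, a curve avoiding the quotient points satisfies $(-K_X\cdot\Gamma)\ge 1\ge(-K_X)^3$, so in fact \emph{no} curves survive and no case analysis of low-degree curves is needed. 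Finally, for the quotient points your appeal to the quasi-smooth literature is essentially what the paper does, but one must check (as the paper notes) that the arguments of \cite{CP17}, \cite{OkadaI}, \cite{AZ} use only quasi-smoothness along $X\cap\Sing\mbP$ rather than full quasi-smoothness; this is true for the listed families but fails for some families excluded from the tables, which is part of why the tables are what they are.
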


\subsubsection{Del Pezzo fibrations of degree $1$}

Let $\pi \colon X \to \mbP^1$ be a del Pezzo fibration of degree $1$, that is, it is a Mori fiber space and its general fiber is a smooth del Pezzo surface of degree $1$.
We say that $X/\mbP^1$ satisfies the $K^2$-{\it condition} if the $1$-cycle $(-K_X)^2$ is not contained in the interior of the cone $\bNE (X)$ of effective curves on $X$, i.e.,
\begin{equation}\label{eq:K^2}
(-K_X)^2\not\in \mathrm{Int}\left(\bNE (X)\right).
\end{equation}
We refer readers to \cite{BCZ} for more details on $K^2$-condition and its related condition.

Pukhlikov \cite{PukdP} proved birational superrigidity of nonsingular del Pezzo fibrations satisfying the $K^2$-condition.
As an application of $2 n^2$-inequality for $cA_1$ points, we generalize the Pukhlikov's result and obtain the following.

\begin{Thm} \label{mainthm:dPfib}
Let $\pi \colon X \to \mbP^1$ be a del Pezzo fibration of degree $1$  with only $cA_1$ singularities.
If $X/\mbP^1$ satisfies the $K^2$-condition, then $X$ is birationally superrigid.
\end{Thm}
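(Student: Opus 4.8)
The plan is to run the Noether--Fano--Iskovskikh method, reducing birational superrigidity to the exclusion of maximal singularities, and then to feed the $2 n^2$-inequality for $cA_1$ points into the decisive point case. So I would suppose, for contradiction, that $X$ is not birationally superrigid. Then there is a mobile linear system $\mcM \subset |-n K_X + l F|$, where $F = \pi^* \mcO_{\mbP^1}(1)$ and $n$ is a positive rational number, such that the pair $(X, \frac{1}{n} \mcM)$ is not canonical; let $Z \subset X$ be a center of non-canonical singularities. Since $\rho(X) = 2$, the cone $\bNE(X)$ has exactly two extremal rays, one spanned by the class $f$ of a line in a general fiber, and the $K^2$-condition $(-K_X)^2 \notin \mathrm{Int}(\bNE(X))$ says precisely that $(-K_X)^2$ lies on the boundary. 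This is what guarantees that the base $\mbP^1$ and the fiber structure cannot be changed, so that superrigidity really does reduce to excluding such a non-canonical center. I would first use this, together with untwisting by fibers (subtracting multiples of $F$ from $\mcM$, which alters neither $f$-degrees nor the existence of the maximal singularity), to normalise $l$, and record for general $D_1, D_2 \in \mcM$, with $Z = D_1 \cdot D_2$ the associated effective $1$-cycle, the numerical identities $\mcM^2 \cdot F = n^2$ and $(-K_X) \cdot \mcM^2 = n^2 (-K_X)^3 + 2 n l$.

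Next I would dispose of the case that $Z$ is a curve, by splitting it into horizontal and vertical parts. A vertical component $C \subset S$ of a fiber with $\mult_C \mcM > n$ can be removed by subtracting a suitable multiple of $F$, contradicting the normalisation of $l$ forced by the $K^2$-condition. A horizontal component $C$ (a multisection) is excluded by intersection theory: the bound $\mult_C \mcM > n$, combined with $\mcM^2 \cdot F = n^2$ and the $K^2$-condition, forces either $(-K_X) \cdot \mcM^2 < 0$ or pushes $(-K_X)^2$ into the interior of $\bNE(X)$, both of which are absurd. Hence the maximal center is a point $\msp$, lying in some fiber $S = \pi^{-1}(o)$; by the hypothesis that $X$ has only $cA_1$ singularities, $\msp$ is either a smooth point of $X$ or a $cA_1$ point, and correspondingly $\msp$ is either a smooth point or an ordinary double point of the degree-$1$ del Pezzo fiber $S$.

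The heart of the proof is the point case, and here the $2 n^2$-inequality is the decisive input. If $\msp$ is a smooth point, the $4 n^2$-inequality gives $\mult_{\msp}(D_1 \cdot D_2) > 4 n^2$; if $\msp$ is a $cA_1$ point, the $2 n^2$-inequality gives $\mult_{\msp}(D_1 \cdot D_2) > 2 n^2$. Against these lower bounds I would put an upper bound on $\mult_{\msp} Z$ obtained from the fibration. Writing $Z = Z^{h} + Z^{v}$ with $Z^{v} \subset S$, the horizontal part satisfies $\mult_{\msp} Z^{h} \le Z^{h} \cdot F = n^2$, while the vertical part, a $1$-cycle on the degree-$1$ del Pezzo surface $S$, is controlled through $(-K_X) \cdot Z^{v}$, which the $K^2$-condition bounds. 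The key point is that the passage from intersection data to $\mult_{\msp}$ is halved when $\msp$ is a node of $S$ (a $cA_1$ point of $X$, of multiplicity $2$) rather than a smooth point: the same factor of $2$ that shrinks the local inequality from $4 n^2$ to $2 n^2$ also shrinks the available upper bound. Thus the fibration yields $\mult_{\msp} Z \le 4 n^2$ in the smooth case and $\mult_{\msp} Z \le 2 n^2$ in the $cA_1$ case, contradicting the respective local inequalities in both cases.

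The main obstacle is precisely this last comparison at a $cA_1$ point: one must show the calibration is sharp, i.e.\ that the upper bound on $\mult_{\msp}(D_1 \cdot D_2)$ at an ordinary double point of $S$ is genuinely $2 n^2$ and not larger, so that it matches the $2 n^2$-inequality rather than the weaker $4 n^2$ bound. This demands a careful local analysis of the $1$-cycle $D_1 \cdot D_2$ at the singular point of the fiber, in particular controlling $Z^{v}$ via the base-point structure of $|-K_S|$ on the degree-$1$ del Pezzo surface together with the $K^2$-condition, and checking that untwisting by fibers does not disturb the estimate. By contrast, the normalisation of $l$ and the exclusion of curve centers are routine intersection-theoretic bookkeeping.
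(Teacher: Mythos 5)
There is a genuine gap in the point case, which is exactly where the real difficulty of this theorem lies. Your claimed upper bound $\mult_{\msp}(D_1\cdot D_2)\le 2n^2$ at a $cA_1$ point (and $\le 4n^2$ at a smooth point) is not available. What the fibration actually gives is $\mult_{\msp}Z^h\le (F\cdot Z^h)=n^2$ for the horizontal part, and for the vertical part only $\mult_{\msp}Z^v\le 2(-K_X\cdot Z^v)$ (via a curve in $|\mcO_{F}(2)|$ through $\msp$ avoiding the components of $Z^v$ --- note this bound is the same whether $\msp$ is a smooth point or a node of the fiber, so the ``halving'' heuristic you invoke does not occur here). The $K^2$-condition then bounds $(-K_X\cdot Z^v)\le 2mn$, where $\mcM\subset|-nK_X+mF|$; but $m$ is not under control, and your proposed ``normalisation of $l$ by untwisting with fibers'' cannot force $m=0$, since $\mcM$ has no fixed components to subtract. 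So the best direct upper bound is of the shape $n^2+4mn$, which does not contradict $\mult_{\msp}Z>2n^2$ for large $m$. This is precisely why the plain $2n^2$-inequality is insufficient for fibrations and why the paper does something more: following Pukhlikov's supermaximal-singularity technique, one first produces points $\msp_1,\dots,\msp_k$ in distinct fibers and weights $\lambda_i>0$ with $\sum\lambda_i>m/n$ such that each $\msp_i$ is a non-canonical center of the \emph{twisted} pair $(X,\frac1n\mcM-\sum\lambda_iF_i)$, and then applies the Corti inequality for $cA_1$ points (Theorem~\ref{thm:Cortiineq}), whose extra term $4n^2\sum_j\gamma_jt_j$ is what ultimately overwhelms the vertical contribution $2mn$ and yields the contradiction $(-K_X\cdot Z^v)>\frac12n^2+2mn$ against $(-K_X\cdot Z^v)\le 2mn$.

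A secondary, smaller issue: your exclusion of curve centers by ``removing vertical components with $\mult_C\mcM>n$ by subtracting a multiple of $F$'' is not how this works either; the paper quotes Pukhlikov's argument (Proposition~\ref{prop:dPcurve}) that no curve is a non-canonical center for any $\mcM\subset|-nK_X+mF|$, checked to extend to the Gorenstein case. Your overall skeleton (NFI method, horizontal/vertical decomposition, local inequality at the point) is the right one, but without the supermaximal singularity step and the Corti-type strengthening of the $2n^2$-inequality the argument does not close.
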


\begin{Ack}
The authors would like to express their gratitude to Byung Hee An for his assistance with Proposition~\ref{prop:Cl}.
The authors also would like to thank the referee for the careful reading and suggestions.
The first author was supported by KIAS Individual Grant n.\ MG069802 and IBS-R003-D1.
The second author is partially supported by JSPS KAKENHI Grant Numbers JP18K03216 and JP22H01118.
The third author is supported by the Simons Investigator Award HMS, National Science Fund of Bulgaria, National Scientific Program ``Excellent Research and People for the Development of European Science'' (VIHREN), Project No.~KP-06-DV-7.
The fourth author has been supported by IBS-R003-D1, Institute for Basic Science in Korea.
\end{Ack}

\section{Preliminaries}

\subsection{Birational (super)rigidity}

\begin{Def}
Let $\pi \colon X \to S$ be a morphism between normal projective varieties.
We say that $\pi \colon X \to S$ (or simply $X/S$ if $\pi$ is understood) is a {\it Mori fiber space} if
\begin{itemize}
\item $\dim S < \dim X$ and $\pi$ has connected fibers,
\item $X$ is $\mbQ$-factorial and has only terminal singularities,
\item $-K_X$ is $\pi$-ample and the relative Picard rank is $1$.
\end{itemize}
We call $X/S$ a {\it del Pezzo fibration} if $\dim X - \dim S = 2$.
\end{Def}

Note that Mori fiber spaces over a point are exactly Fano varieties of Picard rank $1$.

\begin{Def}
Let $\pi_X \colon X \to S$ and $\pi_Y \colon Y \to T$ be Mori fiber spaces.
A birational map $\chi \colon X \ratmap Y$ is called {\it square} if it fits into a commutative diagram
\[
\xymatrix{
X \ar[d]_{\pi_X} \ar@{-->}[r]^{\chi} & Y \ar[d]^{\pi_Y} \\
S \ar@{-->}[r]^{\xi} & T}
\]
where $\xi$ is birational and in addition the induced map on the generic fibers $\chi_{\eta} \colon X_{\eta} \ratmap Y_{\eta}$ is an isomorphism.
In this case we say that $X/S$ and $Y/T$ are {\it square birational}.
\end{Def}

\begin{Def} \label{def:BR}
We say that a Mori fiber space $\pi \colon X \to S$ is {\it birationally rigid} if, for any Mori fiber space $Y/T$, the variety $Y$ is birational to $X$ if and only if $X/S$ and $Y/T$ are square birational.

We say that $X/S$ is {\it birationally superrigid} if, for any Mori fiber space $Y/T$, any birational map $X \ratmap Y$ (if it exists) is square.
\end{Def}

Note that a Fano varietiy $X$ of Picard rank $1$ is birationally rigid if the only Mori fiber space it is birational to is itself, for example it is not birational to other Fano varieties.
Birationally rigid Fano variety is birationally superrigid if and only if any birational self-map is square and hence is an isomorphism.

\subsection{Maximal singularity}

For a normal variety $V$, a prime divisor $E$ on a normal variety $W$ admitting a projective birational morphism $\varphi \colon W \to V$ is called a {\it prime divisor over} $V$.
A prime divisor over $V$ is {\it exceptional} if its center on $V$ is a subvariety of codimension greater than $1$.

In the following, let $\pi \colon X \to S$ be a Mori fiber space.
For a mobile linear system $\mcM$ on $X$, the rational number $n \ge 0$ such that $\mcM \sim_{\mbQ} - n K_X + \pi^*A$ for some $\mbQ$-divisor $A$ on $S$ is called the {\it quasi-effective threshold} of $\mcM$.

\begin{Def}
A prime exceptional divisor $E$ over $X$ is a {\it maximal singularity} if there exists a mobile linear system $\mcM$ on $X$ such that
\[
\ord_E \varphi^*\mcM > n a_E (K_X),
\]
where $n > 0$ is the quasi-effective threshold of $\mcM$, $\varphi \colon Y \to X$ is a projective birational morphism such that $E \subset Y$ and $a_E (K_X)$ denotes the discrepancy of $K_X$ along $E$.
The center $\Gamma \subset X$ of a maximal singularity is called a {\it maximal center}.
An extremal divisorial contraction $\varphi \colon Y \to X$ is called a {\it maximal extraction} if its exceptional divisor is a maximal singularity.
\end{Def}

We have the following characterization of birational superrigidity for Fano $3$-folds of Picard rank $1$.

\begin{Thm}[{\cite[Theorem 1.26]{CS08} and \cite[(2.10) Proposition-definition]{Cor95}}]
Let $X$ be a Fano $3$-fold of Picard rank $1$.
Then the following are equivalent.
\begin{enumerate}
\item $X$ is birationally superrigid.
\item $X$ does not admit a maximal singularity.
\item $X$ does not admit a maximal extraction.
\end{enumerate}
\end{Thm}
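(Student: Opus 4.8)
The plan is to prove the equivalences $(1)\Leftrightarrow(2)$ and $(2)\Leftrightarrow(3)$ separately. Throughout I use that, since $\Cl(X)=\mbZ\cdot(-K_X)$ and $X$ is Fano, every mobile linear system $\mcM$ on $X$ satisfies $\mcM\sim_{\mbQ}-nK_X$, where $n>0$ is its quasi-effective threshold, and that a square birational self-map of $X$ is automatically biregular. I also record the elementary reformulation that, for a prime divisor $E$ over $X$ with discrepancy $a_E(K_X)$, one has $\ord_E\varphi^*\mcM>n\,a_E(K_X)$ exactly when the pair $(X,\frac1n\mcM)$ fails to be canonical along $E$; thus $E$ is a maximal singularity for $\mcM$ if and only if $(X,\frac1n\mcM)$ is non-canonical at $E$.

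For $(1)\Leftrightarrow(2)$ I would argue by contraposition in both directions. To see $\neg(1)\Rightarrow\neg(2)$, suppose $X$ is not birationally superrigid, so there is a birational map $\chi\colon X\ratmap Y$ to some Mori fiber space $Y/T$ that is not square. Choosing a sufficiently ample mobile linear system $\mcM_Y$ on $Y$ and letting $\mcM=\chi^{-1}_*\mcM_Y$ be its strict transform on $X$ with threshold $n$, the Noether--Fano--Iskovskikh inequality (the core of \cite{Cor95}) guarantees that, $\chi$ being non-square, the pair $(X,\frac1n\mcM)$ is non-canonical; any non-canonical place is then a maximal singularity. Conversely, for $\neg(2)\Rightarrow\neg(1)$, given a maximal singularity $E$ witnessed by $\mcM$, I would run the Sarkisov program: starting from an extremal extraction over the center of $E$ and playing the relative $2$-ray game, one produces a Sarkisov link to another Mori fiber space $X'/S'$ together with a birational map $X\ratmap X'$ which, by the strict drop of the Noether--Fano/Sarkisov degree, cannot be square. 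Hence $X$ is not birationally superrigid.

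For $(2)\Leftrightarrow(3)$, the implication $(2)\Rightarrow(3)$ is immediate, since the exceptional divisor of a maximal extraction is by definition a maximal singularity. The substantive direction is $\neg(2)\Rightarrow\neg(3)$: given a maximal singularity $E$ over $X$ with center $\Gamma$, I must produce an \emph{extremal divisorial contraction} whose exceptional divisor is again a maximal singularity. Here I would invoke the classification of divisorial contractions to $3$-dimensional terminal singularities (Kawamata and Kawakita) together with Corti's analysis of maximal centers to show that over $\Gamma$ there is an extremal extraction realizing the maximal-singularity inequality; this extraction is then the desired maximal extraction.

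The main obstacle is the Noether--Fano--Iskovskikh inequality used in $\neg(1)\Rightarrow\neg(2)$, whose proof rests on running the minimal model program on a common resolution of $\chi$ and comparing canonical thresholds on the two sides; over a positive-dimensional base one must additionally separate non-canonicity of horizontal and vertical type, but for a Fano of Picard rank $1$ this complication disappears. The realization step in $\neg(2)\Rightarrow\neg(3)$ is the other delicate point, since it relies on the full classification of terminal $3$-fold divisorial contractions to guarantee that an arbitrary maximal singularity can be extracted by an extremal contraction.
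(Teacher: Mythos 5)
The paper does not actually prove this statement: it is quoted from \cite[Theorem 1.26]{CS08} and \cite[(2.10) Proposition-definition]{Cor95} with no argument given, so the only meaningful comparison is against the standard proofs in those references. Your treatment of $(1)\Leftrightarrow(2)$ follows that standard line and is essentially correct: the Noether--Fano inequality gives $\neg(1)\Rightarrow\neg(2)$ (one small point worth making explicit is that, because $\mcM$ is mobile and hence has no fixed components, every non-canonical place of $(X,\frac{1}{n}\mcM)$ automatically has center of codimension at least $2$, which is needed for it to qualify as a maximal singularity in the sense of this paper), and the untwisting $2$-ray game gives $\neg(2)\Rightarrow\neg(1)$, with non-squareness of the resulting link certified by the strict drop of the quasi-effective threshold of the transformed linear system. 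Both inputs are genuine theorems in dimension $3$ (Noether--Fano, and the fact that a maximal extraction initiates a Sarkisov link), and you identify them correctly.

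The step that does not work as you describe it is $\neg(2)\Rightarrow\neg(3)$. The classification of threefold divisorial contractions (Kawamata for quotient points, Kawakita for $cA$ points, and so on) lists the possible extremal extractions with a prescribed center, but it gives no mechanism for concluding that one of them violates the canonical condition for your particular $\mcM$; it is also far heavier machinery than the statement needs and is not what \cite[(2.10)]{Cor95} does. The correct argument is soft: since some $E$ is a maximal singularity for $\mcM$, the pair $(X,\frac{1}{n}\mcM)$ is non-canonical at $E$; taking a log resolution $W\to X$ and running a suitable relative MMP over $X$ (equivalently, applying the standard ``extraction of a single divisor'' lemma) produces a $\mbQ$-factorial terminal $Y$ with an extremal divisorial contraction $\varphi\colon Y\to X$ whose exceptional divisor $F$ satisfies $a_F\bigl(K_X+\frac{1}{n}\mcM\bigr)<0$, i.e.\ $\ord_F\varphi^*\mcM>n\,a_F(K_X)$. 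This $\varphi$ is the required maximal extraction; note that $F$ need not coincide with the original $E$, but the statement only asks for the existence of some maximal extraction. With that replacement, your outline matches the proofs in the cited sources.
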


Let $X$ be a Fano variety of Picard rank $1$, then we say that an a non-biregular birational map $\sigma \colon X \ratmap X'$ to a Fano variety of Picard number $1$ is an {\it elementary link of type II} if it sits in the commutative diagram
\[
\xymatrix{
Y \ar[d]_{\varphi} \ar@{-->}[r]^{\tau} & Y' \ar[d]^{\varphi'} \\
X \ar@{-->}[r]_{\sigma} & X'}
\]
where $\varphi$ and $\varphi'$ are extremal divisorial contractions and $\tau$ is a birational map which is an isomorphism in codimension $1$.
An elementary link of type II which is a birational self-map is called an {\it elementary self-link of type II}.

\begin{Lem}[{\cite[Lemmas 2.34 and 2.22]{OkadaII}}]  \label{lem:cribirrig}
Let $X$ be a Fano $3$-fold of Picard number $1$.
If for any maximal extraction $\varphi \colon Y \to X$ there exists a Sarkisov self-link of type II initiated by $\varphi$, then $X$ is birationally rigid.
\end{Lem}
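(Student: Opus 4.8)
The plan is to run the standard method of maximal singularities (the Sarkisov--Noether--Fano descent) and to use the hypothesis to untwist every obstruction by a self-link. Let $\chi\colon X\ratmap Y$ be a birational map to a Mori fiber space $\pi_Y\colon Y\to T$; by Definition~\ref{def:BR} it suffices to produce a \emph{square} birational map between $X/\mathrm{pt}$ and $Y/T$. First I would fix a mobile linear system $\mcH$ on $Y$ with $\mcH\sim_{\mbQ}-n_Y K_Y+\pi_Y^*A$ for suitable positive $n_Y$ and a $\mbQ$-divisor $A$ on $T$, and set $\mcM:=\chi^{-1}_*\mcH$, a mobile linear system on $X$ with quasi-effective threshold $n$, so that $\mcM\sim_{\mbQ}-nK_X$ (here I use that $X$ has Picard number $1$, whence the base contribution vanishes). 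The numerical point to record is that $K_X+\frac1n\mcM\equiv 0$.

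Next I would invoke the Noether--Fano inequality: if the pair $(X,\frac1n\mcM)$ is canonical, then because $K_X+\frac1n\mcM$ is numerically trivial the map $\chi$ is already square, and we are done. Otherwise $X$ carries a maximal singularity. Using that $X$ is a $\mbQ$-factorial terminal $3$-fold, I would promote this maximal singularity to an actual maximal extraction $\varphi\colon Y'\to X$, an extremal divisorial contraction whose exceptional divisor $E$ satisfies $\ord_E\varphi^*\mcM>n\,a_E(K_X)$; this is exactly where the $3$-dimensional minimal model program, in the form of the existence of divisorial Mori contractions realizing a prescribed non-canonical valuation, enters.

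Now I would apply the hypothesis to $\varphi$: there is a Sarkisov self-link of type II $\sigma\colon X\ratmap X$ initiated by $\varphi$. Replacing $\chi$ by $\chi\circ\sigma^{-1}$ replaces $\mcM$ by its strict transform $\sigma_*\mcM$, whose threshold $n'$ I claim satisfies $n'<n$. This strict decrease is the crux of the argument: it comes from the maximal-singularity inequality $\ord_E\varphi^*\mcM>n\,a_E(K_X)$ together with the fact that the middle map of the link is an isomorphism in codimension $1$, so that pushing $\mcM$ forward through the two divisorial contractions of the link lowers the anticanonical degree by a positive amount governed by the excess multiplicity along $E$. I would carry out this degree computation carefully, as it is the step most likely to hide sign or normalization errors.

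Finally I would close by a descent argument on the threshold. In the prime Fano situation, where $\Cl(X)=\mbZ\langle -K_X\rangle$, the threshold $n$ is a positive integer, so a strictly decreasing sequence $n>n'>n''>\cdots$ terminates after finitely many steps; in general one appeals to the well-foundedness of the Sarkisov degree. When the descent stops, the resulting pair $(X,\tfrac{1}{n^{(k)}}\mcM^{(k)})$ is canonical, and the corresponding map $\chi\circ\sigma_1^{-1}\circ\cdots\circ\sigma_k^{-1}\colon X\ratmap Y$ is square by the Noether--Fano step. Since the $\sigma_i$ are birational self-maps of $X$, this exhibits $X/\mathrm{pt}$ and $Y/T$ as square birational, which is precisely birational rigidity. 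The main obstacle is the strict inequality $n'<n$ in the untwisting step; once that is in hand, termination is essentially bookkeeping.
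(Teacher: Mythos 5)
Your proposal is correct and follows essentially the same route as the source the paper cites for this lemma (the paper itself gives no proof, deferring to \cite[Lemmas 2.34 and 2.22]{OkadaII}, whose content is exactly the Noether--Fano step, the promotion of a maximal singularity to a maximal extraction, and the untwisting-by-self-links descent with strictly decreasing threshold). The two points you flag as delicate --- the strict inequality $n' < n$ coming from the excess multiplicity along $E$, and termination via discreteness of the possible thresholds in $\Cl(X) \otimes \mbQ$ --- are indeed the only substantive checks, and both go through as you describe.
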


\subsection{Exclusion methods}

We explain several methods which will be used to exclude maximal centers.

\subsubsection{Methods for curves}

\begin{Lem}[{\cite[Proof of Theorem~5.1.1 ]{CPR}} and {\cite[Lemma 2.9]{OkadaII}}] \label{lem:exclcurve1}
Let $X$ be a Fano $3$-fold of Picard rank $1$ and let $\Gamma \subset X$ be an irreducible and reduced curve.
If $(-K_X \cdot \Gamma) \ge (-K_X)^3$, then $\Gamma$ is not a maximal center.
\end{Lem}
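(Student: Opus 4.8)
The plan is to argue by contradiction: I would assume that $\Gamma$ \emph{is} a maximal center and deduce the strict inequality $(-K_X \cdot \Gamma) < (-K_X)^3$, contradicting the hypothesis. So suppose there is a mobile linear system $\mcM$ with quasi-effective threshold $n > 0$ such that $\Gamma$ is a center of non-canonical singularities of the pair $(X, \tfrac{1}{n} \mcM)$, i.e. some prime divisor $E$ over $X$ with center $\Gamma$ satisfies $\ord_E \varphi^* \mcM > n\, a_E(K_X)$. Since $X$ has Picard rank $1$ and the base is a point, $\mcM \sim_{\mbQ} -n K_X$.

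The first, and main, step is to show that $\mult_{\Gamma} \mcM > n$. As $X$ is a terminal $3$-fold its singular locus is finite, so the generic point of $\Gamma$ lies in the smooth locus of $X$; I would localize there and work in an \'etale-local product $X \cong \Gamma \times \mbA^2$, which replaces the pair by a pair $(S, \tfrac{1}{n}\mathcal{L})$ on a smooth surface germ, where $\mathcal{L}$ is the induced mobile system with $\mult_0 \mathcal{L} = \mult_\Gamma \mcM =: \mu$, and divisors $F$ over $X$ with center $\Gamma$ correspond to divisors over the point $0$ with the same discrepancy and order, so non-canonicity along $\Gamma$ becomes non-canonicity of $(S, \tfrac1n \mathcal{L})$ at $0$. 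I would then establish the surface statement: if $\mu \le n$, then $(S, \tfrac1n \mathcal{L})$ is canonical. Indeed, for a divisor $E$ over $0$ obtained by a chain of point blowups $q_1 = 0, q_2, \dots, q_k$ one has $a_E(K_S) = k$ and $\ord_E \mathcal{L} = \sum_{i=1}^k m_i$, where $m_i$ is the multiplicity at $q_i$ of the strict transform of a general member; since multiplicities are non-increasing along strict transforms, $m_i \le m_1 = \mu \le n$, whence $\ord_E \mathcal{L} \le k n = n\, a_E(K_S)$ and the discrepancy is nonnegative. Contrapositively, non-canonicity forces $\mu > n$, that is, $\mult_\Gamma \mcM > n$.

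The second step is a global intersection estimate. I would take two general members $D_1, D_2 \in \mcM$; as $\mcM$ is mobile they share no component, so $D_1 \cdot D_2$ is an effective $1$-cycle whose coefficient along $\Gamma$ is at least $\mult_\Gamma D_1 \cdot \mult_\Gamma D_2 = (\mult_\Gamma \mcM)^2$. Since $-K_X$ is ample (hence nef) and $D_1 \sim_{\mbQ} D_2 \sim_{\mbQ} -n K_X$ are $\mbQ$-Cartier by $\mbQ$-factoriality, intersecting with $-K_X$ gives
\[
n^2 (-K_X)^3 = (-K_X \cdot D_1 \cdot D_2) \ge (\mult_\Gamma \mcM)^2\, (-K_X \cdot \Gamma) > n^2\, (-K_X \cdot \Gamma),
\]
where the last inequality is Step~1 and the first inequality uses that the remaining part of $D_1 \cdot D_2$ is effective and $-K_X$ is nef. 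Cancelling $n^2 > 0$ yields $(-K_X)^3 > (-K_X \cdot \Gamma)$, the desired contradiction.

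The crux is Step~1: the passage to the generic point and the surface computation showing that a non-canonical center along a curve forces $\mult_\Gamma \mcM > n$. Once this local inequality is available, the global part is a one-line numerical computation relying only on the ampleness of $-K_X$ and $\mbQ$-factoriality. I would additionally verify the minor technical points that underpin Step~1: that $3$-fold terminal singularities are isolated, so that the generic point of $\Gamma$ is smooth; that the \'etale-local product structure matches discrepancies and orders of vanishing; and that the tree (rather than chain) case of the surface computation follows by restricting to the branch of blowups lying below $E$, where the same non-increasing-multiplicity bound $m_i \le \mu \le n$ applies.
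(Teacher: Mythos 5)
Your proof is correct and follows essentially the same route as the argument the paper cites from \cite{CPR} and \cite{OkadaII} (the paper itself gives no proof, only the references): establish $\mult_{\Gamma}\mcM > n$ by localizing at the generic point of $\Gamma$ (which lies in the smooth locus since terminal $3$-fold singularities are isolated), then intersect $D_1\cdot D_2$ with the ample divisor $-K_X$ to get $n^2(-K_X)^3 > n^2(-K_X\cdot\Gamma)$.
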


The following is a simplified version of \cite[Lemma 2.11]{OkadaII}, which is enough for the purpose of this paper.

\begin{Lem}[{cf.\ \cite[Lemma 2.11]{OkadaII}}] \label{lem:exclcurve2}
Let $X$ be a Fano $3$-fold of Picard rank~$1$, and let $\Gamma \subset X$ be an irreducible and reduced curve.
Assume that there is a pencil~$\mcP$ of divisors on $X$ satisfying the following properties.
\begin{enumerate}
\item $\mcP \sim_{\mbQ} - m K_X$ for some rational number $m \ge 1$.
\item A general member of $\mcP$ is a normal surface.
\item For distinct general members $S, T \in \mcP$, we have $T|_S = \Gamma + \Delta$, where $\Delta$ is an irreducible and reduced curve such that $\Delta \ne \Gamma$, and we have $(\Gamma \cdot \Delta)_S \ge (-K_X \cdot \Delta)$.
\end{enumerate}
Then $\Gamma$ is not a maximal center.
\end{Lem}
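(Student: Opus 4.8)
The plan is to argue by contradiction: assume that $\Gamma$ is a maximal center and derive a numerical inequality on the test surface $S$ that contradicts condition~(3). Write $H = -K_X$. If $\Gamma$ is a maximal center, then there is a mobile linear system $\mcM$ on $X$ with quasi-effective threshold $n > 0$ --- so that $\mcM \sim_{\mbQ} n H$, since $X$ has Picard rank $1$ --- for which $\Gamma$ is a center of non-canonical singularities of $(X, \frac{1}{n}\mcM)$. Because $X$ has only isolated (terminal) singularities, it is smooth at the generic point of $\Gamma$, so blowing up $\Gamma$ produces an exceptional divisor $E$ with $a_E(K_X) = 1$ and $\ord_E \mcM = \mult_\Gamma \mcM$; the non-canonicity at the generic point of $\Gamma$ is then equivalent to
\[
\gamma := \mult_\Gamma \mcM > n .
\]
I would record this multiplicity bound as the first step.

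Next I would pass to a general member $S \in \mcP$. By condition~(2) the surface $S$ is normal, so $\mbQ$-valued (Mumford) intersection theory is available on $S$; by condition~(3) both $\Gamma$ and $\Delta$ lie in $S$, as they are contained in the base locus of the pencil $\mcP$. Restricting $\mcM$ to $S$ gives an effective $1$-cycle class $\mcM|_S \sim_{\mbQ} n\, H|_S$, and the projection formula yields, for the curve $\Delta \subset S$, the exact value
\[
(\mcM|_S \cdot \Delta)_S = (\mcM \cdot \Delta)_X = n\,(-K_X \cdot \Delta).
\]

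The heart of the argument is a matching lower bound. Since $\Gamma \subset S$ and $\mult_\Gamma \mcM = \gamma$, a general member $D \in \mcM$ restricts to an effective curve $D|_S$ whose coefficient along $\Gamma$ is at least $\gamma$ (a local computation at the generic point of $\Gamma$, where $X$ and $S$ are smooth); thus $\mcM|_S = \gamma\,\Gamma + \mcR$ with $\mcR$ effective. As $D$ is a general member of the mobile system $\mcM$, it does not contain $\Delta$, so $\Delta$ is not a component of $\mcR$, whence $(\mcR \cdot \Delta)_S \ge 0$, being the intersection on the normal surface $S$ of two effective curves with no common component. Combining this with condition~(3) gives
\[
(\mcM|_S \cdot \Delta)_S \ge \gamma\,(\Gamma \cdot \Delta)_S \ge \gamma\,(-K_X \cdot \Delta).
\]
Comparing with the exact value above and dividing by $(-K_X \cdot \Delta) > 0$ (positive since $-K_X$ is ample) forces $n \ge \gamma$, contradicting $\gamma > n$; hence $\Gamma$ is not a maximal center.

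I expect the main obstacle to be the non-negativity $(\mcR \cdot \Delta)_S \ge 0$, i.e.\ the assertion that $\Delta$ is not a component of the residual cycle $\mcR = \mcM|_S - \gamma\,\Gamma$. This reduces to checking that $\Delta$ is not a base component of $\mcM$: if it were, then $\Delta$ would appear in $\mcR$ and, should $(\Delta^2)_S < 0$, the residual term could become negative and the estimate would break down. In the expected situation a general member of the mobile system $\mcM$ avoids $\Delta$ and the above goes through directly; in the remaining case I would use the linear equivalence $\Gamma + \Delta \sim_{\mbQ} m\, H|_S$ from condition~(1) to control $(\Delta^2)_S$ together with $\mult_\Delta \mcM$ separately. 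Verifying the auxiliary bound $\mult_\Gamma(\mcM|_S) \ge \mult_\Gamma \mcM$ for general $S$ is routine.
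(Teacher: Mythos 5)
Your overall strategy (restrict $\mcM$ to the test surface $S$ and compare the degree $n(-K_X\cdot\Delta)$ with the contribution $\gamma(\Gamma\cdot\Delta)_S$ of the $\Gamma$-component) is the intended one, and the first steps ($\mult_\Gamma\mcM>n$ from non-canonicity at the generic point of $\Gamma$, the projection formula on the normal surface $S$, and $\mult_\Gamma(\mcM|_S)\ge\mult_\Gamma\mcM$) are fine. But there is a genuine gap at the step you yourself flag: the claim that a general member $D$ of the mobile system $\mcM$ does not contain $\Delta$ is false. Mobility only excludes divisorial fixed components; the base locus of $\mcM$ may well contain the curve $\Delta$ (and in the applications in this paper $\Delta$ sits in $\Bs\mcP\supset$ nothing forcing it out of $\Bs\mcM$), in which case $\Delta$ appears in $D|_S$ with some multiplicity $b>0$ and your residual cycle $\mcR$ contains $b\Delta$, so $(\mcR\cdot\Delta)_S\ge 0$ can fail when $(\Delta^2)_S<0$. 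Moreover, the repair you sketch does not work as stated: from $\Gamma+\Delta\sim_{\mbQ}m(-K_X)|_S$ one gets $(\Delta^2)_S=m(-K_X\cdot\Delta)-(\Gamma\cdot\Delta)_S$, and hypothesis (3) turns this into an \emph{upper} bound $(\Delta^2)_S\le(m-1)(-K_X\cdot\Delta)$, whereas you would need a lower bound on $b(\Delta^2)_S$; since $(\Gamma\cdot\Delta)_S$ is not bounded above by the hypotheses, $(\Delta^2)_S$ can be arbitrarily negative.

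The missing idea is to test against \emph{both} curves. Write $D|_S=a\Gamma+b\Delta+R$ with $a\ge\mult_\Gamma\mcM>n$, $b\ge0$, and $R$ effective containing neither curve, and use the two identities $(\Gamma^2)_S=m(-K_X\cdot\Gamma)-(\Gamma\cdot\Delta)_S$, $(\Delta^2)_S=m(-K_X\cdot\Delta)-(\Gamma\cdot\Delta)_S$. If $a\ge b$, intersecting with $\Delta$ gives
\[
n(-K_X\cdot\Delta)\ \ge\ (a-b)(\Gamma\cdot\Delta)_S+bm(-K_X\cdot\Delta)\ \ge\ \bigl(a+b(m-1)\bigr)(-K_X\cdot\Delta)\ \ge\ a(-K_X\cdot\Delta),
\]
a contradiction with $a>n$; if $b>a$, intersecting with $\Gamma$ gives
\[
n(-K_X\cdot\Gamma)\ \ge\ am(-K_X\cdot\Gamma)+(b-a)(\Gamma\cdot\Delta)_S\ >\ a(-K_X\cdot\Gamma),
\]
again a contradiction, using $(\Gamma\cdot\Delta)_S\ge(-K_X\cdot\Delta)>0$ and $m\ge1$. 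Your argument is exactly the first case with $b=0$; without the second test curve the case of $\Delta$ lying in the base locus of $\mcM$ with large multiplicity is not excluded.
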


When we apply Lemma~\ref{lem:exclcurve2}, we need to compute $(\Gamma \cdot \Delta)_S$, which will follow from the computation of $(\Gamma^2)_S$.
In this paper we need to consider the case where $\Gamma \cong \mbP^1$ and $S$  has Du Val singular points of type $A$ along $\Gamma$, and the computation of $(\Gamma^2)_S$ will be done by the following method.

\begin{Def}
Let $\msp \in S$ be the germ of a normal surface and $\Gamma$ an irreducible and reduced curve on $S$.
Let $\hat{S} \to S$ be the minimal resolution of $\msp \in S$ and denote by $E_1,\dots,E_m$ the prime exceptional divisors.
We define $G (S,\msp,\Gamma)$ to be the dual graph of $E_1,\dots,E_m$ together with the proper transform $\hat{\Gamma}$ of $\Gamma$ on $\hat{S}$: vertices of $G(S,\msp,\Gamma)$ corresponds to $E_1,\dots,E_m$ and $\hat{\Gamma}$, and two vertices corresponding to $E_i$ and $E_j$ (resp.\ $E_i$ and $\hat{\Gamma}$) are joined by $(E_i \cdot E_j)$-ple edge (resp.\ $(E_i \cdot \hat{\Gamma})$-ple edge).
We call $G (S,\msp,\Gamma)$ the {\it extended dual graph} of $(S,\msp,\Gamma)$.
\end{Def}

\begin{Def}
We say that $G (S,\msp,\Gamma)$ is {\it of type} $A_{n,k}$ if it is of the form
\[
\objectmargin={-0.5pt}
\xygraph{
\circ ([]!{+(0,-.3)} {E_1}) - [r]
\circ ([]!{+(0,-.3)} {E_2}) - [r] \cdots - [r]
\circ ([]!{+(0,-.3)} {E_k})(
-[u] \bullet ([]!{+(.3,0)} {\hat{\Gamma}}),
 - [r] \cdots - [r]
\circ ([]!{+(0,-.3)}{E_n}))}.
\]
Here, $\circ$ means that the corresponding exceptional divisor is a $(-2)$-curve.
In other words, $G (S,\msp,\Gamma)$ is of type $A_{n,k}$ if $(S,\msp)$ is of type $A_n$, $(\hat{\Gamma} \cdot E_i) = 0$ for $i \ne k$ and $(\hat{\Gamma} \cdot E_k) = 1$.
\end{Def}

\begin{Lem} \label{lem:selfintS}
Let $S$ be a normal projective surface, and let $\Gamma$ be a smooth rational curve on $S$.
Let $\msp_1, \dots, \msp_m$  be points on $\Gamma$. Suppose that $S$ is smooth along $\Gamma \setminus \{\msp_1, \dots, \msp_m\}$ and that each extended dual graph  $G (S, \msp_i, \Gamma)$ is of type $A_{n_i, k_i}$ for some positive integers $n_i, k_i$.
Then,
\[
(\Gamma^2) = - 2 - (K_S \cdot \Gamma) + \sum_{i=1}^m \frac{k_i (n_i - k_i + 1)}{n_i + 1}.
\]
\end{Lem}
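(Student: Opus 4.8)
The plan is to pass to the minimal resolution $\mu \colon \hat{S} \to S$ and to compute $(\Gamma^2)$ by Mumford's intersection theory on the normal surface $S$, which evaluates intersection numbers on $S$ as intersection numbers of the $\mbQ$-divisorial numerical pullbacks on the smooth surface $\hat{S}$. For each point $\msp_i$, denote the $A_{n_i}$-chain of $(-2)$-curves over $\msp_i$ by $E_1^{(i)}, \dots, E_{n_i}^{(i)}$, ordered so that $(E_j^{(i)} \cdot E_{j+1}^{(i)}) = 1$ and so that, by the hypothesis on the type $A_{n_i, k_i}$, the proper transform $\hat{\Gamma}$ meets the chain transversally at the single curve $E_{k_i}^{(i)}$. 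First I would write the numerical pullback
\[
\mu^* \Gamma = \hat{\Gamma} + \sum_{i=1}^m \sum_{j=1}^{n_i} a_j^{(i)} E_j^{(i)},
\]
where the rational coefficients $a_j^{(i)}$ are the unique solution of the orthogonality conditions $(\mu^*\Gamma \cdot E_l^{(i)}) = 0$ for all $i, l$. Since $(\Gamma^2) = (\mu^*\Gamma)^2$ by definition, and $\mu^*\Gamma$ is orthogonal to every exceptional curve, projecting onto $\hat\Gamma$ gives
\[
(\Gamma^2) = (\mu^*\Gamma \cdot \hat{\Gamma}) = (\hat{\Gamma}^2) + \sum_{i=1}^m a_{k_i}^{(i)},
\]
where I have used that $(E_j^{(i)} \cdot \hat\Gamma)$ equals $1$ if $j = k_i$ and $0$ otherwise.

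Next I would determine the coefficients of a single chain. Using $(E_l^{(i)})^2 = -2$ and the chain incidences, the orthogonality conditions unwind into the tridiagonal boundary value problem
\[
a_{l-1}^{(i)} - 2 a_l^{(i)} + a_{l+1}^{(i)} = -\delta_{l, k_i}, \qquad a_0^{(i)} = a_{n_i+1}^{(i)} = 0,
\]
whose coefficient matrix is the negative Cartan matrix of type $A_{n_i}$. I would solve it by the piecewise-linear discrete Green's function, namely $a_l^{(i)} = l(n_i + 1 - k_i)/(n_i+1)$ for $l \le k_i$ and $a_l^{(i)} = k_i(n_i+1-l)/(n_i+1)$ for $l \ge k_i$, verifying directly that this meets the boundary conditions, is continuous at $l=k_i$, and satisfies the defect equation there. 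In particular this yields $a_{k_i}^{(i)} = k_i(n_i - k_i + 1)/(n_i+1)$, which is exactly the summand appearing in the statement.

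Finally I would compute $(\hat\Gamma^2)$. Since $\hat\Gamma$ is the proper transform of the smooth rational curve $\Gamma$, it is again a smooth rational curve, so adjunction on the smooth surface $\hat{S}$ gives $(\hat\Gamma^2) = -2 - (K_{\hat S} \cdot \hat\Gamma)$. Because each $A_{n_i}$ singularity is Du Val, the minimal resolution is crepant, so $K_{\hat S} = \mu^* K_S$; the projection formula then gives $(K_{\hat S} \cdot \hat\Gamma) = (\mu^* K_S \cdot \hat\Gamma) = (K_S \cdot \mu_* \hat\Gamma) = (K_S \cdot \Gamma)$, whence $(\hat\Gamma^2) = -2 - (K_S \cdot \Gamma)$. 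Substituting this and the value of $a_{k_i}^{(i)}$ into the displayed expression for $(\Gamma^2)$ yields the claimed formula.

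The main obstacle is the clean solution of the linear system for the pullback coefficients and the correct bookkeeping of the index $k_i$; everything else reduces to adjunction and the crepancy of the minimal resolution of Du Val singularities. One must be careful that the Green's function is the one associated precisely to the inhomogeneous term $-\delta_{l,k_i}$, so that the single nonzero value $a_{k_i}^{(i)}$ is read off correctly, since only this coefficient survives after pairing with $\hat\Gamma$.
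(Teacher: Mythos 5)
Your argument is correct: the Mumford pullback decomposition, the tridiagonal system $a_{l-1}-2a_l+a_{l+1}=-\delta_{l,k_i}$ with zero boundary values, its piecewise-linear solution giving $a_{k_i}^{(i)}=k_i(n_i-k_i+1)/(n_i+1)$, and the adjunction-plus-crepancy computation of $(\hat\Gamma^2)$ all check out (I verified the defect equation at $l=k_i$: the second difference equals $-(n_i+1)/(n_i+1)=-1$). The only point worth making explicit is why $\hat\Gamma$ is again smooth rational: $\hat\Gamma\to\Gamma$ is finite and birational onto the normal curve $\Gamma\cong\mbP^1$, hence an isomorphism, so adjunction on $\hat S$ applies with $p_a(\hat\Gamma)=0$. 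Note also that the statement's summand reads $k_i(n_i-k+1)$, which is a typo for $k_i(n_i-k_i+1)$; your computation confirms the intended formula.

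Where you differ from the paper: the paper does not prove this lemma at all, but simply cites Lemma~10.7 of Okada's paper on $\mbQ$-Fano weighted complete intersections, III. Your write-up is therefore a self-contained replacement for that external reference, carried out by exactly the mechanism one would expect (Mumford's intersection theory on normal surfaces plus the discrete Green's function for the $A_n$ Cartan matrix). This buys transparency and makes the provenance of the correction term $k_i(n_i-k_i+1)/(n_i+1)$ visible, at the cost of a page of computation the authors chose to outsource; there is no substantive mathematical divergence.
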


\begin{proof}
This follows from \cite[Lemma 10.7]{OkadaIII}.
\end{proof}

\section{Local inequalities for $cA$ points}

Let $V$ be an $n$-dimensional variety.
For $i = 0, \dots, n$, we denote by $Z_i (V)$ the group of $i$-cycles on $V$, and by $A_i (V) = Z_i (V)/\sim_{\operatorname{rat}}$ the {\it Chow group of $i$-cycles} on $V$, where $\sim_{\operatorname{rat}}$ is the rational equivalence.
For Cartier divisors $D_1, \dots, D_k$ on $V$ that intersect properly, that is, the irreducible components of~$\Supp (D_1) \cap \cdots \cap\Supp (D_k)$ all have codimension equal to $k$ in $V$, there is a uniquely defined intersection cycle
\[
D_1 \cdot D_2 \cdots D_k \in Z_{n-k} (\Supp (D_1) \cap \cdots \cap \Supp (D_k)).
\]
See \cite[Definition 2.4.2]{Ful98}.

\subsection{Blow-ups and degrees of cycles}

Let $X$ be a normal $3$-fold, and let $B \subset X$ be an irreducible subvariety of codimension at least $2$.
Let $\varphi \colon \tilde{X} \to X$ be the blow-up of $X$ along $B$.
For a cycle $\Gamma \in Z_i(X)$ on $X$ such that $\operatorname{Supp} \Gamma  \not\subset B$, we denote by $\varphi_*^{-1} \Gamma$ the proper transform of $\Gamma$ on $\tilde{X}$.

\begin{Def}
Under the above setting, suppose that $B$ is not contained in the singular locus of $X$, and let $E \subset \tilde{X}$ be the unique exceptional divisor dominating~$B$.
Let $Z = \sum_i m_i Z_i$ be a $1$-cycle supported on the exceptional set $\varphi^{-1} (B)$, where $Z_i \subset \sigma^{-1} (B)$ is an irreducible curve.
We define the {\it degree} of $Z$ with respect to $E$ by
\[
\deg_E Z := \sum_i m_i \deg (Z_i \cap \varphi^{-1} (b)),
\]
where $\varphi^{-1} (b) \cong \mbP^{2 - \dim B}$ is the fiber over a general point $b \in B$.
\end{Def}

\begin{Lem}[{\cite[Lemma 2.2]{PukBook}}] \label{lem:degZsm}
Let $X$ be a normal $3$-fold, $B \subset X$ be an irreducible subvariety of codimension at least $2$ such that $B \not\subset \Sing X$, and let $D_1, D_2$ be Cartier divisors on $X$ which share no common component.
\begin{enumerate}
\item Assume that $B$ is a point.
Then, for the $1$-cycle
\[
Z := \varphi_*^{-1} D_1 \cdot \varphi_*^{-1} D_2 - \varphi_*^{-1} (D_1 \cdot D_2),
\]
which is supported on $\varphi^{-1} (B)$, we have
\[
\deg_E Z = \mult_B (D_1 \cdot D_2) - \mult_B D_1 \mult_B D_2.
\]
\item Assume that $B$ is a curve.
Then, for the decomposition
\[
\varphi_*^{-1} D_1 \cdot \varphi_*^{-1} D_2 = Z + \Gamma,
\]
where $\Supp Z \subset \varphi^{-1} (B)$ and $\Supp \varphi_* \Gamma$ does not contain $B$, we have
\[
D_1 \cdot D_2 = (\mult_B D_1 \mult_B D_2 + \deg_E Z) B + \varphi_* \Gamma.
\]
\end{enumerate}
\end{Lem}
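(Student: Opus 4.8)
The plan is to reduce everything to the relation $\varphi^* D_i = \tilde{D}_i + m_i E$, where $\tilde{D}_i := \varphi_*^{-1} D_i$ and $m_i := \mult_B D_i$, and then to isolate the part supported on $E$ by intersecting with $E$ (when $B$ is a point) or by pushing forward along $\varphi$ (when $B$ is a curve). Since $B \not\subset \Sing X$, the blow-up is the classical one near the generic point of $B$: for $B$ a point $\msp$ we have $E \cong \mbP^2$ with $\mcO_E(E) \cong \mcO_{\mbP^2}(-1)$, and for $B$ a curve the map $E \to B$ is a $\mbP^1$-bundle over the smooth locus with $\mcO_E(E)$ restricting to $\mcO_{\mbP^1}(-1)$ on a general fibre. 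These furnish the numerical inputs $E^3 = 1$ (point case) and $\deg_E(E^2) = -1$ (curve case), together with the fact that $\varphi^* D_i|_E$ is trivial, being pulled back from $B$.

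For part (1), I first observe that $Z$ really is supported on $E$: away from $E$ the morphism $\varphi$ is an isomorphism carrying $\tilde{D}_i$ to $D_i$, so the non-exceptional part of $\tilde{D}_1 \cdot \tilde{D}_2$ is exactly $\varphi_*^{-1}(D_1 \cdot D_2)$. Because a $1$-cycle $W$ supported on $E \cong \mbP^2$ satisfies $(E \cdot W) = -\deg_E W$ (as $\mcO_E(E) \cong \mcO_{\mbP^2}(-1)$), it suffices to compute $(E \cdot Z) = (E \cdot \tilde{D}_1 \cdot \tilde{D}_2) - (E \cdot \varphi_*^{-1}(D_1 \cdot D_2))$. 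The second term equals $\mult_{\msp}(D_1 \cdot D_2)$, since the proper transform $\tilde{C}$ of each curve $C$ through $\msp$ meets $E$ in $\mult_{\msp} C$ points. For the first term I substitute $\tilde{D}_i = \varphi^* D_i - m_i E$ and expand; every term containing a factor $\varphi^* D_i \cdot E$ dies because $\varphi^* D_i|_E$ is trivial, leaving only $m_1 m_2 E^3 = m_1 m_2$. Combining yields $\deg_E Z = \mult_{\msp}(D_1 \cdot D_2) - m_1 m_2$, as claimed.

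For part (2) the cleaner route is to push the identity $\tilde{D}_1 \cdot \tilde{D}_2 = Z + Z_1$ forward by $\varphi_*$. On one side, a component of $Z$ that is contracted dies under $\varphi_*$, whereas a component dominating $B$ contributes its fibre degree, so $\varphi_* Z = (\deg_E Z)\, B$, while $\varphi_* Z_1$ is the stated non-exceptional part. On the other side I expand $\tilde{D}_1 \cdot \tilde{D}_2 = \varphi^* D_1 \cdot \varphi^* D_2 - m_2\, \varphi^* D_1 \cdot E - m_1\, \varphi^* D_2 \cdot E + m_1 m_2 E^2$ and apply the projection formula: $\varphi_*(\varphi^* D_1 \cdot \varphi^* D_2) = D_1 \cdot D_2$, the two mixed terms push to $D_i \cdot \varphi_* E = 0$ because $\varphi_* E = 0$, and $\varphi_*(E^2) = \deg_E(E^2)\, B = -B$. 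This gives $D_1 \cdot D_2 - m_1 m_2 B = (\deg_E Z)\, B + \varphi_* Z_1$, which rearranges to the assertion.

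The main obstacle is not the algebra but justifying the manipulations rigorously: one must work in the refined intersection theory of \cite{Fulton} to make sense of $\tilde{D}_1 \cdot \tilde{D}_2$ as a well-defined $1$-cycle, and of the compatibility $\varphi^* D_1 \cdot \varphi^* D_2 = \varphi^*(D_1 \cdot D_2)$ together with the projection formula at the level of cycles rather than classes. Here one uses that $D_1, D_2$ share no common component and that $B \not\subset \Sing X$, so that all the relevant intersections have the expected dimension near $B$. Pinning down which components are exceptional, and keeping the sign conventions straight in $\deg_E(E^2) = -1$ versus $E^3 = +1$ (and the slightly different meaning of $\deg_E$ for $B$ a point versus a curve), is where genuine care is needed.
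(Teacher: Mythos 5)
The paper does not prove this lemma at all: it is quoted from Pukhlikov \cite[Lemma 2.2]{PukBook}, so there is no in-paper argument to compare against. Your proof is the standard one and is correct. Writing $\varphi^* D_i = \varphi_*^{-1} D_i + (\mult_B D_i)\, E$ near the generic point of $B$ and then extracting the relevant coefficient either by intersecting with $E$ (point case, using $E^3 = 1$ and $(E \cdot W) = -\deg_E W$ for $W$ supported on $E \cong \mbP^2$) or by pushing forward along $\varphi$ (curve case, using $\varphi_* E = 0$ and $\varphi_*(E^2) = -B$) is exactly how the cited source proceeds, and your signs and degree conventions all check out. The one point worth making explicit, beyond your closing caveat, is that in the curve case the scheme-theoretic exceptional divisor of the blow-up may have components other than $E$ lying over $B \cap \Sing X$ or over singular points of $B$, so the identity $\varphi^* D_i = \varphi_*^{-1} D_i + (\mult_B D_i) E$ holds only up to divisors supported over a proper closed subset of $B$; since those components are contracted to points, they contribute nothing to the coefficient of $B$ under $\varphi_*$, which is precisely why your localization at the generic point of $B$ (where the blow-up is the classical smooth one) suffices. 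With that understood, no gap remains.
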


\subsection{Blow-up of a $cA_1$ point}

By \cite{Reid1983}, a $3$-dimensional singularity is a terminal singularity if and only if it is the quotient of an isolated $cDV$ singularity by a suitable action of a cyclic group (of order $\ge 1$).
Note that a $cDV$ singularity is a hypersurface singularity whose general hyperplane section is a Du Val singularity.
Note also that a $cDV$ singular point is Gorenstein since it is a hypersurface singularity, and hence $3$-dimensional terminal Gorenstein singularities are exactly isolated $cDV$ singularities (see also \cite[Theorem~1.1]{Reid1983}).
We consider $cA_k$ singularities which are particular types of $cDV$ points.

\begin{Def}
Let $\msp \in X$ be the germ of a $3$-fold singularity.
We say that $\msp \in X$ is a \textit{singularity of type} $cA_k$ (or simply a $cA_k$ \textit{point}), if the germ is analytically equivalent to the germ of a hypersurface singularity whose general hyperplane section is the Du Val singular point of type $A_k$.
\end{Def}

An example of an isolated $cA_k$ is a point $o \in (f = 0) \subset \mbA^4$, where
\[
f = x y + z^{k+1} + w^M,
\]
for some $M \ge k+1$.
It is well known that any isolated $cA_1$-point is analytically equivalent to the above example with $k = 1$.

\begin{Lem}\label{analytic-equivalence}
Let $\msp \in X$ be an isolated $cA_1$-point, then it is analytically equivalent to $o \in (f = 0) \subset \mbA^4$, where
\[
f = x y + z^2 + w^M,
\]
for some $M \ge 2$.
\end{Lem}
\begin{proof}
By definition $\msp \in X$ is a hypersurface singularity, therefore without loss of generality we assume that $X$ is given by $g=0$ in $\mbA^4$ and $\msp = (0,0,0,0)$.
Since general hyperplane section of $X$ has the $A_1$-singularity the general form of $g$ is
\[
x^2 + y^2 + z^2 + t h(x,y,z,t) = 0.
\]
By the splitting lemma \cite[Theorem~I.2.47]{GLS07} we may change $g$ into
\[
x^2 + y^2 + z^2 + t^k = 0.
\]

\end{proof}

In the following we identify $\msp \in X$ with the above hypersurface germ, that is, we assume that $\msp \in X$ is the hypersurface germ defined by the vanishing of $f$ from Lemma \ref{analytic-equivalence}.
We set $W = \mbA^4$.
Let $\varphi_W \colon \tilde{W} \to W$ be the blowup of~$W$ at the origin $\msp$, $\tilde{X}$ the proper transform of $X$ in $\tilde{W}$, and $\varphi = \varphi_W|_{\tilde{X}} \colon \tilde{X} \to X$ the restriction.
We denote by $E_W \cong \mbP^3$ the exceptional divisor of $\varphi_W$ and we set $E = E_W|_{\tilde{X}}$.
The equation of $E$ in $E_W$ is given by
\[
xy + z^2 = 0,
\]
where $x,y,z,w$ are the coordinates inherited from $W$.
Thus we can see that $E$ is a quadratic cone.

For a cycle $\Gamma$ on $X$, we denote by $\mult_{\msp} \Gamma$ to be the multiplicity of $\Gamma$, viewed as a cycle on $W$, at the smooth point $\msp \in W$.

\begin{Def}
Under the above setting, let $Z$ be a $1$-cycle supported on $E_W$.
By the {\it degree} of $Z$ which is denoted by $\deg_{E_W} Z$, we mean the degree of $Z$ viewed as a $1$-cycle on $E_W \cong \mbP^3$.
By a slight abuse of notation, we use the notation $\deg_E Z = \deg_{E_W} Z$ when $Z$ is supported on $E$.
\end{Def}

\begin{Lem} \label{lem:degZcA1}
Let $\msp \in X$ be a germ of an isolated $cA_1$-singularity.
Let $\mcM$ be a mobile linear system of Cartier divisors on $X$, let $D_1, D_2$ be general members of $\mcM$, and denote $\nu(\mcM) = \ord_E \varphi^* \mcM$.
Let $\Gamma = D_1 \cdot D_2$ and denote $m = \mult_{\msp} \Gamma$.
Then, for the $1$-cycle
\[
Z := \varphi_*^{-1} D_1 \cdot \varphi_*^{-1} D_2 - \varphi_*^{-1} \Gamma,
\]
which is supported on $E$, we have
\[
\deg_E Z = m - 2 \nu(\mcM)^2.
\]
\end{Lem}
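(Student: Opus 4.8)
The plan is to mimic the smooth-point computation of Lemma~\ref{lem:degZsm}(1), the only new ingredient being that here the exceptional divisor $E$ is a quadric rather than a plane. First I would record the geometry of $E$. Since $\msp \in X$ is defined by $f = xy + z^2 + w^M$ with $M \ge 2$, the leading term of $f$ is the rank-$3$ quadratic form $xy + z^2$ when $M \ge 3$, and the rank-$4$ form $xy + z^2 + w^2$ when $M = 2$. Hence $E = E_W|_{\tilde{X}} \subset E_W \cong \mbP^3$ is the projectivised tangent cone of $X$ at $\msp$, an irreducible reduced quadric surface $Q$ of degree $2$. In particular $E = E_W|_{\tilde{X}}$ is a Cartier divisor on $\tilde{X}$, being the restriction of the Cartier divisor $E_W$ on the smooth variety $\tilde{W}$, so intersection numbers involving $E$ are well defined even though $\tilde{X}$ may be singular at the vertex of $Q$.

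Next I would reduce $\deg_E Z$ to an intersection number on $\tilde{X}$. Because $\mcO_{\tilde{X}}(E)|_E = \mcO_{\mbP^3}(-1)|_Q = -h|_Q$, where $h$ is the hyperplane class of $E_W \cong \mbP^3$, and since $Z$ is a $1$-cycle supported on $E = Q$, the degree of $Z$ in $\mbP^3$ equals $(Z \cdot h|_Q)_Q = -(Z \cdot E)_{\tilde{X}}$. Writing $\tilde{D}_i = \varphi_*^{-1} D_i$ and $\tilde{\Gamma} = \varphi_*^{-1}\Gamma$, the definition of $Z$ then gives
\[
\deg_E Z = -(Z \cdot E)_{\tilde{X}} = -(\tilde{D}_1 \cdot \tilde{D}_2 \cdot E)_{\tilde{X}} + (\tilde{\Gamma} \cdot E)_{\tilde{X}},
\]
so it remains to evaluate the two terms on the right.

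For the term $(\tilde{\Gamma} \cdot E)_{\tilde{X}}$ I would use that $\tilde{\Gamma}$ is the proper transform of $\Gamma$ and is not contained in $E$, so the intersection is supported on the finitely many points $\tilde{\Gamma} \cap E \subset Q$; identifying these with $\tilde{\Gamma} \cap E_W$ in the smooth ambient blow-up $\tilde{W}$, their total length is exactly $\mult_\msp \Gamma = m$ by the standard description of the multiplicity of a curve via the blow-up at a smooth point of $W$. For the term $(\tilde{D}_1 \cdot \tilde{D}_2 \cdot E)_{\tilde{X}}$ I would substitute $\tilde{D}_i = \varphi^* D_i - \nu E$, valid since $\nu = \ord_E \varphi^* D_i$ for the general member $D_i$, and restrict to $E = Q$. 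As $\varphi^* D_i|_E$ is trivial, being pulled back from $X$ along the contraction $E \to \msp$, every term containing a factor $\varphi^* D_i$ vanishes on $Q$, leaving
\[
(\tilde{D}_1 \cdot \tilde{D}_2 \cdot E)_{\tilde{X}} = \nu^2 E^3 = \nu^2 \big(\mcO_{\mbP^3}(-1)|_Q\big)^2 = \nu^2 \deg Q = 2 \nu^2.
\]
Combining the two evaluations yields $\deg_E Z = -2\nu^2 + m = m - 2\nu^2$.

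The only genuinely new point, and the one I expect to require the most care, is the self-intersection $E^3 = \deg Q = 2$: this is exactly where the factor $2$ enters, replacing the factor $1$ of the smooth case, which came from $\deg \mbP^2 = 1$, and it is precisely the degree of the quadric tangent cone of a $cA_1$ point. The remaining subtlety is bookkeeping: one must check that all the intersection products above, computed on the possibly singular affine threefold $\tilde{X}$, are supported on the proper quadric $Q$ and that $E$ is Cartier, so that the degrees are honest numbers; this is guaranteed by realising $E$ as $E_W|_{\tilde{X}}$ with $E_W$ Cartier on the smooth $\tilde{W}$.
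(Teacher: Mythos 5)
Your proposal is correct and follows essentially the same route as the paper: both hinge on writing $\varphi_*^{-1}D_i = \varphi^* D_i - \nu E$, killing the cross terms because $\varphi^* D_i$ is trivial on $E$, and extracting the factor $2$ from the fact that $E$ is the quadric tangent cone of the $cA_1$ point (your $E^3 = 2$ is the paper's $E^2 = -2e$), together with $\varphi_W^*\Gamma = \varphi_*^{-1}\Gamma + m e$. The only cosmetic difference is that the paper records everything as rational equivalence classes of $1$-cycles in the smooth ambient blow-up $\tilde{W}$ (lifting $D_i$ to Cartier divisors $G_i$ on $W = \mbA^4$) and reads off the coefficient of the line class, whereas you cap directly with $-E$ on $\tilde{X}$; both are valid.
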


\begin{proof}
Let $i \colon X \to W$ and $\tilde{i} \colon \tilde{X} \to \tilde{W}$ be the embeddings agreeing with the maps $\varphi$ and $\varphi_W$.
Let $e \in A_1 (E_W)$ be the class of a line.
Then in $A_1 (\tilde{W})$, we have
\begin{equation} \label{eq:rateqZ1}
\varphi_W^* i_* \Gamma \equiv \varphi_*^{-1} i_* \Gamma + m e.
\end{equation}
Since $D_i$ is a Cartier divisor on $X$, it is defined by a single element in the residue ring $\mcO_{X, \msp} = \mcO_{W, \msp}/(x y + z^2 + w^M)$, hence we can take a Cartier divisor $G_i$ on $W$ such that $G_i|_X = D_i$.
It follows that
\[
\begin{split}
\varphi_W^* i_* \Gamma &= \varphi_W^* i_* (D_1 \cdot D_2) \\
&\equiv \varphi_W^* (G_1 \cdot G_2 \cdot X) \\
&\equiv \varphi^*_W G_1 \cdot \varphi_W^*G_2 \cdot \varphi_W^* X \\
&\equiv \varphi^*_W G_1 \cdot \varphi_W^*G_2 \cdot \tilde{X} \\
&\equiv \tilde{i}_* \big( \varphi^*D_1 \cdot \varphi^*D_2 \big),
\end{split}
\]
and thus
\begin{equation} \label{eq:rateqZ2}
\begin{split}
\tilde{i}_* (\varphi_*^{-1} D_1 \cdot \varphi_*^{-1} D_2)  &\equiv \tilde{i}_* \big( (\varphi^* D_1 - \nu(\mcM) E) \cdot (\varphi^* D_2 - \nu(\mcM) E) \big) \\
&\equiv \tilde{i}_* \big( \varphi^*D_1 \cdot \varphi^* D_2 \big) + \tilde{i}_* (\nu(\mcM)^2 E^2 ) \\
&\equiv \varphi_W^* i_* \Gamma - 2 \nu(\mcM)^2 e,
\end{split}
\end{equation}
where the last equivalence follows from \eqref{eq:rateqZ1}.
By \eqref{eq:rateqZ2}, the $1$-cycle $Z$ is rationally equivalent to $(m-2 \nu(\mcM)^2) e$ and the claim follows.
\end{proof}

\subsection{Tower of blow-ups associated to a divisorial valuation} \label{sec:tower}

We set up notations for proving $2 n^2$-type inequalities.

Let $\msp \in X$ be the germ of a terminal singular point of type $cA_1$.
Let $\nu$ be a divisorial valuation of $\mbC (X)$ realized by a prime divisor $E_{\infty}$ over $\msp \in X$.
Consider the tower of blow-ups realizing $\nu$:
\begin{equation} \label{eq:tower}
X_N \xrightarrow{\varphi_N} \cdots \xrightarrow{\varphi_2} X_1 \xrightarrow{\varphi_1} X_0 = X,
\end{equation}
where $\varphi_i \colon X_i \to X_{i-1}$ is the blow-up of $X_{i-1}$ along the center $B_{i-1}\subset E_{i-1}$ of $\nu$ on $X_{i-1}$, $E_i \subset X_i$ is the $\varphi_i$-exceptional divisor dominating $B_{i-1}$, and $E_N$ realizes the valuation $\nu$, i.e.\ $\nu = \nu_{E_N}$.
For $j > i$, the composite $X_j \to X_{j-1} \to \cdots \to X_i$ is denoted by $\varphi_{j, i} = \varphi_{i+1} \circ \cdots \varphi_j$.

For a cycle $\Gamma$ on $X_i$ and $j > i$, we denote by $\Gamma^{(j)}$ its proper transform on $X_j$ via $\varphi_{j, i} \colon X_j \to X_i$ if no component of $\Gamma$ is supported on $B_i$.
More generally for any object $(\cdot)$ (cycle, divisor, linear system) on $X_i$ and any $j > i$ we denote by $(\cdot)^{(j)}$ the proper transform of $(\cdot)$ on $X_j$.

We introduce an oriented graph structure as follows:
The vertices $E_j$ and $E_i$ are joined by an oriented edge, which is denoted by $j \to i$, if $j > i$ and $B_{j-1} \subset E_i^{(j-1)}$.
For $1 \le i < j \le N-1$, we define $P_{j,i}$ to be the number of paths from $E_j$ to $E_i$ in the oriented graph, we set $P_{i,i} = 1$ and we set $P_{j,i} = 0$ for $j < i$ we define $p_i = P_{N,i}$.
We define
\[
\begin{split}
K &= \max \{\, i \mid \text{$B_{i-1}$ is a $cA_1$ point} \,\}, \\
L &= \max \{\, i \mid \text{$B_{i-1}$ is a point} \,\}, \\
\Sigma_0 &= \sum_{i=1}^K p_i, \quad \Sigma_1 = \sum_{i=K+1}^L p_i, \quad \Sigma_2 = \sum_{i=L+1}^N p_i.
\end{split}
\]
Note that $B_0 = \msp$ and we have $1 \le K \le L \le N$.
The numbers $P_{i,j}, p_i, K, L$ are useful for computing discrepancies and tracking multiplicities for the tower of blow ups.

\begin{Lem}\label{lem:p_i-and-discrepancies}
The discrepancies of $E_i$ are given by the following
\[
a(X,E_i) = \sum_{j=1}^{\min \{i,K\}} P_{i,j} + 2\sum_{j=K+1}^{\min \{i,L\}} P_{i,j} + \sum_{j=L+1}^i P_{i,j},
\]
in particular
\[
a(X,E_N) = \sum_{j=1}^K p_j + 2\sum_{j=K+1}^L p_j + \sum_{j=L+1}^N p_j = \Sigma_0 + 2 \Sigma_1 + \Sigma_2.
\]
Let $D$ be a divisor on $X$ and set $K_D = \max \{i \mid B_{i-1}\subset D^{(i-1)} \}$, then
\[
\nu_{E_i}(D) = \sum_{j=1}^{\min \{K_D,i \}} P_{i,j} \nu_{E_j} (D^{j-1}),
\]
in particular
\[
\nu(D) = \sum_{j=1}^{K_D} p_j \nu_{E_j} (D^{j-1}).
\]
\end{Lem}
\begin{proof}
We prove the claims by induction.
The statements clearly hold for $i = 1$, now suppose the statements hold for all $k < i$.

First, we define values $a_i = a(E_i,X_{i-1})$ for all $i$, clearly
\[
a_i = \begin{cases}
1,\quad i\le K,\\
2, \quad K < i \le L,\\
1, \quad L \le N.
\end{cases}
\]

By assumption of induction we have
\[
K_{X^{(i-1)}} \sim \varphi_{i-1,0}^* (K_{X}) + \sum_{k=1}^{i-1} \Big( \sum_{j=1}^k a_j P_{k,j} \Big) E_k^{(i-1)}.
\]
Next we compute $K_{X^{(i)}}$:
\begin{align*}
K_{X^{(i)}} &\sim \varphi_i^* (K_{X^{(i-1)}}) + a_i E_i \sim \varphi_{i,0}^* (K_{X}) + \sum_{k=1}^{i-1} \Big( \sum_{j=1}^k a_j P_{k,j} \Big) \varphi_i^* E_k^{(i-1)} + a_i E_i \\
& \sim \varphi_{i,0}^* (K_{X}) + \sum_{k=1}^{i-1} \Big( \sum_{j=1}^k a_j P_{k,j} \Big) E_k^{(i)} + \Bigg( a_i P_{i,i} + \sum_{k \mid i\to k} \Big( \sum_{j=1}^k a_j P_{k,j} \Big) \Bigg)E_i.
\end{align*}
It remains to compute the coefficient at $E_i$
\begin{align*}
a(E_i,X) &= a_i P_{i,i} + \sum_{k\mid i\to k} \sum_{j=1}^k a_j P_{k,j} = a_i P_{i,i} + \sum_{k\mid i\to k} \sum_{j=1}^{i-1} a_j P_{k,j} \\
&= a_i P_{i,i} + \sum_{j=1}^{i-1} a_j \sum_{k\mid i\to k} P_{k,j} = \sum_{j=1}^i a_j P_{i,j},
\end{align*}
where the second equality follows from $P_{k,j} = 0$ for $j > k$ and the last equality follows from
\[
P_{i,j} = \sum_{k\mid i\to k} P_{k,j}.
\]
This proves the first assertion and its special case.
The proof of the second assertion is analogous.
\end{proof}

\begin{Ex}
Suppose in \ref{eq:tower} we have $B_{i} \not\subset E_{k}^{(i)}$ for all $k < i$.
Then the oriented graph is a simple chain:
\begin{center}
\begin{tikzpicture}[node distance=1cm,auto]
\node[state, inner sep=3pt,minimum size=0pt](N) {};
 \node[state, inner sep=3pt,minimum size=0pt](N-1) at (2,0) {};
 \node[state, inner sep=3pt,minimum size=0pt] (2) at (5,0) {};
 \node[state, inner sep=3pt,minimum size=0pt] (1) at (7,0) {};

  \node (TN) at (0,.4)  {\tiny$E_{N}$};
   \node (TN1) at (2,.4)  {\tiny$E_{N-1}$};
    \node (T2) at (5,.4)  {\tiny$E_{2}$};
     \node (T1) at (7,.4)  {\tiny$E_{1}$};
   \node (O) at (3.5,0)  {$\cdots$};
    \node (O2) at (4,0)  {};

  \path[thick,->] (N) edge node[swap] {} (N-1);
   \path[thick,-] (N-1) edge node[swap] {} (O);
    \path[thick,->] (O2) edge node[swap] {} (2);
  \path[thick, ->] (2) edge node[swap] {} (1);
  \end{tikzpicture}
\end{center}

Clearly, in this case we have $p_1 = p_2 = \dots = p_N = 1$ and
\[
a(X,E_N) = K + 2 (L-K) + (N-L) = N + L - K.
\]
\end{Ex}

\begin{Ex}
Suppose $N = 4$, $B_2,B_3$ are points, $B_2\in E_1^{(2)} \cap E_2$, and $B_3 = E_1^{(3)}\cap E_2^{(3)} \cap E_3$, then the graph is

\begin{center}
\begin{tikzpicture}[node distance=2cm,auto]
\node[state, inner sep=1pt,minimum size=0pt] (4) {\tiny $E_4$};
 \node[state, inner sep=1pt,minimum size=0pt] (3) at (2,0) {\tiny$E_3$};
\node[state, inner sep=1pt,minimum size=0pt] (2) at (4,0) {\tiny$E_2$};
 \node[state, inner sep=1pt,minimum size=0pt]  (1) at (6,0){\tiny$E_1$};

 \path[thick,->] (4) edge node[swap] {} (3);
  \path[thick,->] (3) edge node[swap] {} (2);
   \path[thick, ->] (2) edge node[swap] {} (1);
 \path[thick, ->, bend right = 30] (4) edge node {} (2);
  \path[thick, ->, bend left= 30] (3) edge node {} (1);
   \path[thick, ->, bend right= 33] (4) edge node {} (1);
\end{tikzpicture}
\end{center}

Thus we can see that $p_1 = p_2 = 1$, $p_3 = 2$, and $p_4 = 4$.
\end{Ex}

Let $\mcM$ be a mobile linear system of Cartier divisors on $X = X_0$.
Let $Z_0 = D_1 \cdot D_2$ for general members $D_1, D_2 \in \mcM$.
Recall that we want to prove a lower bound for $\mult_{B_0} Z_0$.

\begin{Ex}
Suppose $N=1$, that is $\nu$ is realized by a single blow up.
Consider the intersection of the proper transforms $D_1^{(1)}, D_2^{(2)}$ on $X_1$.
By Lemma \ref{lem:degZcA1} it takes form
\[
D_1^{(1)} \cdot D_2^{(1)} = Z_0^{(1)} + Z_1,
\]
where
\[
\deg_{E_1} Z_1 = \mult_{B_0} Z_0 - 2 \nu(\mcM)^2 \ge 0.
\]
Note that $a(E_1,X,\frac{1}{n}\mcM) = (1 - \frac{\nu}{n})<0$ if and only if $\nu(\mcM) > n$.
Thus non-canonicity at divisor $E_1$ implies
\[
\mult_{B_0} Z_0 > 2n^2.
\]
\end{Ex}

\begin{Ex}
Suppose $N = 2$ and suppose that $B_1$ is a point such that $X_1$ is smooth at $B_1$.
As before, by Lemma \ref{lem:degZcA1} intersection on $X_1$ takes form
\[
D_1^{(1)} \cdot D_2^{(1)} = Z_0^{(1)} + Z_1,
\]
where
\[
\deg_{E_1} Z_1 = \mult_{B_0} Z_0 - 2 \nu_{E_1}(\mcM)^2.
\]
Using Lemma \ref{lem:degZsm} we compute similar intersection on $X_2$:
\[
D_1^{(2)} \cdot D_2^{(2)} = (D_1^{(1)} \cdot D_2^{(1)})^{(2)} + Z_2 = Z_0^{(2)} + Z_1^{(2)} + Z_2,
\]
where
\[
\deg_{E_2} Z_2 = \mult_{B_1} (Z_0^{(1)} + Z_1) - 4 \nu_{E_2}(\mcM^{(1)})^2.
\]
Since $\deg_{E_2} Z_2$ is nonnegative we have
\begin{align*}
0 \le \mult_{B_1} (Z_0^{(1)} + Z_1) - & 4  \nu_{E_2}(\mcM^{(1)})^2 \le \mult_{B_0} Z_0 + \deg_{E_1} Z_1 - 4  \nu_{E_2}(\mcM^{(1)})^2\\
&= 2 \mult_{B_0} Z_0 - 2 \nu_{E_1}(\mcM)^2 - 4  \nu_{E_2}(\mcM^{(1)})^2,
\end{align*}
or
\[
2 \mult_{B_0} Z_0 \ge 2 \nu_{E_1}(\mcM)^2 + 4  \nu_{E_2}(\mcM^{(1)})^2.
\]
It remains to use non-canonicity to find the lower bound on the RHS of this inequality.
\end{Ex}

In general we proceed similarly to the examples.
We define $\nu_i = \nu_{E_i} (\mcM^{(i-1)})$.
For general members $D_1, D_2 \in \mcM$, we define a sequence of $1$-cycles $Z_i$ on $X_i$ by
\begin{equation} \label{eq:defZi}
\begin{split}
D_1 \cdot D_2 &= Z_0, \\
D_1^{(1)} \cdot D_2^{(1)} &= Z_0^{(1)} + Z_1, \\
\cdots, \\
D_1^{(i)} \cdot D_2^{(i)} &= (D_1^{(i-1)} \cdot D_2^{(i-1)})^{(i)} + Z_i, \\
\cdots, \\
D_1^{(L)} \cdot D_2^{(L)} &= (D_1^{(L-1)} \cdot D_2^{(L-1)})^{(L)} + Z_L.
\end{split}
\end{equation}
Note that $\Supp Z_i \subset E_i$ and for any $i \le L$ we get
\[
D_1^{(i)} \cdot D_2^{(i)} = Z_0^{(i)} + Z_1^{(i)} + \cdots + Z_{i-1}^{(i)} + Z_i.
\]
For any $i, j$ with $L \ge j > i$, we set
\[
m_{i, j} = \mult_{B_{j-1}} (Z_i^{(j-1)}).
\]
Then there is a lower bound on multiplicities of $Z = Z_0$ at $B_i$ in terms of $\nu_i$.

\begin{Prop} \label{prop:cA1ineq}
Suppose $\msp \in X$ is a terminal singularity of type $cA_1$, and let $\nu$ be a divisorial valuation of $\mbC (X)$ centered at $\msp$.
Let $\mcM$ be a mobile linear system of Cartier divisors on $X$ and let $Z_0 = D_1 \cdot D_2$ be the intersection $1$-cycle of general members $D_1, D_2 $ in $ \mcM$.
Then, for $p_i, \nu_i$ and $m_{i, j}$ defined as above, the inequality holds:
\begin{equation} \label{eq:cA1ineq1}
2 \sum_{i=1}^K p_i \nu_i^2 + \sum_{i=K+1}^N p_i \nu_i^2 \le \sum_{i = 1}^L p_i m_{0,i}.
\end{equation}
\end{Prop}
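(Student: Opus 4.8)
The plan is to reduce \eqref{eq:cA1ineq1} to a single blow-up-by-blow-up identity, and then to sum these local identities against the path weights $p_i$, using two combinatorial inputs: the path recursion $\sum_{j \to i} p_j = p_i$ and the elementary bound $\mult \le \deg$ for cycles on the exceptional surfaces. First I would produce the local identities. For each $i$, apply to $\varphi_i \colon X_i \to X_{i-1}$ (with $\Gamma = D_1^{(i-1)} \cdot D_2^{(i-1)}$ and general members $D_1^{(i-1)}, D_2^{(i-1)}$ of $\mcM^{(i-1)}$) one of the three blow-up lemmas: Lemma~\ref{lem:degZcA1} when $B_{i-1}$ is a $cA_1$ point, Lemma~\ref{lem:degZsm}(1) when $B_{i-1}$ is a smooth point, and Lemma~\ref{lem:degZsm}(2) when $B_{i-1}$ is a curve. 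Writing $d_i = \deg_{E_i} Z_i \ge 0$ and using $\mult_{B_{i-1}} D_j^{(i-1)} = \nu_i$ for general $D_j$, all three collapse to
\[
c_i \nu_i^2 + d_i = \sum_{k=0}^{i-1} m_{k,i}, \qquad c_i = \begin{cases} 2 & B_{i-1}\text{ is a }cA_1\text{ point},\\ 1 & \text{otherwise},\end{cases}
\]
since $\sum_{k=0}^{i-1} m_{k,i}$ is exactly the multiplicity (resp.\ the coefficient of $B_{i-1}$) of $D_1^{(i-1)} \cdot D_2^{(i-1)} = Z_0^{(i-1)} + \dots + Z_{i-1}^{(i-1)}$ at $B_{i-1}$. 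The factor $2$ versus $1$ is precisely the difference between Lemma~\ref{lem:degZcA1} and Lemma~\ref{lem:degZsm}, which is what eventually produces $2n^2$ rather than $4n^2$.

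Next I would record two bookkeeping facts. First, $c_i = 2$ holds exactly for $i \le K$: once a center $B_{i-1}$ is smooth or a curve, every later $X_j$ is smooth along the relevant exceptional locus, so the $cA_1$ centers form the initial segment $B_0, \dots, B_{K-1}$. Second, $m_{0,i} = 0$ whenever $B_{i-1}$ is a curve, because every component of the proper transform $Z_0^{(i-1)}$ is horizontal whereas $B_{i-1} \subset E_{i-1}$ lies in the exceptional locus, so $B_{i-1}$ cannot be a component of $Z_0^{(i-1)}$. In particular $\sum_{i=1}^N p_i m_{0,i} = \sum_{i=1}^L p_i m_{0,i}$.

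Then I would multiply the identity of the first step by $p_i$, sum over $i$, and group the double sum by the source index $k$, using that $m_{k,i} \ne 0$ forces $\Supp Z_k^{(i-1)} \subset E_k^{(i-1)}$ to contain $B_{i-1}$, i.e.\ an edge $i \to k$. This gives
\[
\sum_{i=1}^N p_i c_i \nu_i^2 = \sum_{i=1}^N p_i m_{0,i} + \sum_{k=1}^{N-1} \sum_{i > k} p_i m_{k,i} - \sum_{i=1}^N p_i d_i.
\]
For each fixed $k \ge 1$ the cycle $Z_k^{(i-1)}$ is supported on $E_k^{(i-1)}$ and has degree $d_k$, so $m_{k,i} \le d_k$ for every edge $i \to k$; combined with $\sum_{i \to k} p_i = p_k$ this yields $\sum_{i > k} p_i m_{k,i} \le p_k d_k$. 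Substituting and using the second bookkeeping fact, the degree terms telescope:
\[
\sum_{i=1}^N p_i c_i \nu_i^2 \le \sum_{i=1}^L p_i m_{0,i} + \sum_{k=1}^{N-1} p_k d_k - \sum_{i=1}^N p_i d_i = \sum_{i=1}^L p_i m_{0,i} - p_N d_N \le \sum_{i=1}^L p_i m_{0,i},
\]
which is \eqref{eq:cA1ineq1} once the left-hand side is split as $2\sum_{i \le K} + \sum_{i > K}$ according to the value of $c_i$.

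The step I expect to be the main obstacle is the degree control inside the third step, namely the preservation $\deg_{E_k} Z_k^{(i-1)} = d_k$ together with the resulting bound $m_{k,i} \le d_k$. This is transparent for point centers, where $E_k$ is $\mbP^2$ (a quadric surface for the $cA_1$ blow-up) and multiplicity at a point is bounded by the degree of a plane curve; but it needs genuine care for curve centers $B_{i-1} \subset E_k^{(i-1)}$, where one must bound the coefficient by the degree and, in particular, rule out or separately handle fibre-type curves contracted by $E_k \to B_{k-1}$. A secondary point requiring proof is that the $cA_1$ centers genuinely form an initial segment of the tower, which rests on the explicit local analysis of the blow-up of $f = xy + z^2 + w^M$.
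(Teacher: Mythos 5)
Your skeleton --- the local degree identities coming from Lemmas~\ref{lem:degZsm} and~\ref{lem:degZcA1}, the weighting by the path counts $p_i$, and the absorption of the cross terms $\sum_{j>i} p_j m_{i,j}$ into $p_i d_i$ via $m_{i,j}\le d_i$ and $\sum_{j\to i}p_j\le p_i$ --- is exactly the paper's. The divergence is in how the curve blow-ups $\varphi_{L+1},\dots,\varphi_N$ enter. The paper truncates the weighted summation at $i=L$: it only ever invokes the bound $m_{i,j}\le d_i$ for $j\le L$, i.e.\ when $B_{j-1}$ is a \emph{point}, where the bound is the classical ``multiplicity at a point is at most the degree'' estimate (justified by \cite[Lemma 2.4]{PukBook} applied in the ambient fourfold, as the footnote there stresses). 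The tail $i>L$ is then recovered not by further weighted identities but by the single lower bound $d_L\ge\sum_{i=L+1}^N\nu_i^2$, obtained by iterating Lemma~\ref{lem:degZsm}~(2), so that the term $p_Ld_L$ retained on the left of \eqref{eq:syseqZisumineq} already accounts for $\sum_{i=L+1}^N p_i\nu_i^2$.

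You instead extend the identities and the absorption step to all $i\le N$, which forces you to bound $m_{k,i}$ by $d_k$ also when $i>k>L$. This is exactly the step you flag as the main obstacle, and it is a genuine gap rather than a technicality: for $k>L$ the exceptional divisor $E_k$ is ruled over the curve $B_{k-1}$ and $\deg_{E_k}$ is the fibre degree, so a component of $Z_k$ that is a fibre of $E_k\to B_{k-1}$ contributes $0$ to $d_k$ while carrying an arbitrary coefficient; if a later centre $B_{i-1}$ is such a fibre, then $m_{k,i}\le d_k$ simply fails, and nothing in the setup excludes this configuration. So your telescoping does not close as written. (Your other two bookkeeping points are fine and are also used implicitly by the paper: $m_{0,i}=0$ for $i>L$ because $B_{i-1}$ lies in the exceptional locus while $Z_0^{(i-1)}$ has no components there, and the $cA_1$ centres form an initial segment of the tower.) The repair is the paper's: stop the weighted sum at $L$, keep $p_Ld_L$ on the left, and convert the entire tail into the lower bound $d_L\ge\sum_{i=L+1}^N\nu_i^2$, which only requires that each $B_{i-1}$ with $i>L$ push forward to a curve of degree at least $1$ on $E_L$ --- a plane or quadric surface, where every curve has positive degree --- rather than a fibre-degree estimate on a ruled surface.
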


\begin{proof}
Set $d_i = \deg_{E_i} Z_i$.
By Lemmas~\ref{lem:degZsm} and~\ref{lem:degZcA1}, we obtain a system of equalities corresponding to the sequence of cycles \eqref{eq:defZi}:
\begin{equation} \label{eq:syseqZi}
\begin{split}
2 \nu_1^2 + d_1 &= m_{0,1}, \\
2 \nu_2^2 + d_2 &= m_{0, 2} + m_{1, 2}, \\
\cdots, \\
2 \nu_K^2 + d_K &= m_{0, K} + \cdots + m_{K-1. K}, \\
\nu_{K+1}^2 + d_{K+1} &= m_{0, K+1} + \cdots + m_{K, K+1}, \\
\cdots, \\
\nu_L^2 + d_L &= m_{0, L} + \cdots + m_{L-1, L}.
\end{split}
\end{equation}
We multiply the $i$-th equality in \eqref{eq:syseqZi} by $p_i$ and sum them up:
\begin{equation} \label{eq:syseqZisum}
2 \sum_{i=1}^K p_i \nu_i^2 + \sum_{i=K+1}^L p_i \nu_i^2 + \sum_{i=1}^L p_i d_i
= \sum_{j=1}^L p_j m_{0, j} + \sum_{i = 1}^{L-1} \sum_{j = i+1}^L p_j m_{i, j}.
\end{equation}

For each $i$ with $0 \le i \le L-1$, we consider the expression
\[
\sum_{j = i+1}^L p_j m_{i, j}
\]
which appears in the RHS of the summation \eqref{eq:syseqZisum}.
We have $m_{i, j} \le d_i$ for any $i, j$ with $L \ge j > i \ge 1$ by \cite[Lemma 2.4]{PukBook}\footnote{This is valid for $j \le L$ by applying the lemma for the ambient fourfold.}.
Note that, for $i \ge 1$, we have $p_i \ge \sum_{j \to i} p_j$, and if $m_{i, j} \neq 0$, then $j \to i$.
It follows that, for $i = 1, \dots, L-1$, we have
\[
\sum_{j = i + 1}^L p_j m_{i, j} = \sum_{\substack{i + 1 \le j \le L \\ m_{i,j} \ne 0}} p_j m_{i,j} \le d_i \sum_{\substack{i + 1 \le j \le L \\ j \to i}} p_j \le p_i d_i.
\]
Thus, for $i = 1, \dots, L-1$, we can remove all $m_{i, *}$ and $p_i d_i$ from the summation \eqref{eq:syseqZisum} if we replace $=$ with $\le$, and obtain
\begin{equation} \label{eq:syseqZisumineq}
2 \sum_{i=1}^K p_i \nu_i^2 + \sum_{i = K+1}^L p_i \nu_i^2 + p_L d_L \le \sum_{i=1}^L p_i m_{0,i}.
\end{equation}

It remains to give a lower bound for $d_L$ in terms of $\nu_i$.
First, we decompose
\[
Z_L = \Gamma_L + \alpha_L B_L,
\]
where $\Supp \Gamma_L$ does not contain $B_L$ and $\alpha_L \ge 0$.
Thus we may decompose
\[
D_1^{(L+1)}\cdot D_2^{(L+1)} = Z_0^{(L+1)} + \dots + Z_{L-1}^{(L+1)} + \Gamma_L^{(L+1)} + Z_{L+1},
\]
where $\Supp Z_{L+1} \subset E_{L+1}$.
We apply Lemma \ref{lem:degZsm} (2) to $\varphi_{L+1}$:
\[
D_1^{(L)}\cdot D_2^{(L)} = (\varphi_{L+1})_* \big( Z_1^{(N)} + \dots + Z_{L-1}  + \Gamma_L^{(L+1)} \big) + \big( \nu_{L+1}^2 + \deg_{E_{L+1}} Z_{L+1} \big) B_L.
\]
Thus we have $\alpha_L = \nu_{L+1}^2 + \deg_{E_{L+1}} Z_{L+1}$ and
\[
d_L = \deg_{E_L} \big(\Gamma_L + (\nu_{L+1}^2 + \deg_{E_{L+1}} Z_{L+1}) B_L \big) \ge ( \nu_{L+1}^2 + \deg_{E_{L+1}} Z_{L+1} ) \deg_{E_L} B_L.
\]
Iterating the application of Lemma~\ref{lem:degZsm} (2) for $\varphi_i$ with $i = L + 2, \cdots, N$, we obtain
\[
d_L \ge \sum_{i = L + 1}^N \nu_i^2 \deg_{E_L} (\varphi_{i-1, L})_* B_{i-1} \ge \sum_{i= L + 1}^N \nu_i^2.
\]
Combining this inequality with \eqref{eq:syseqZisumineq}, we obtain the inequality \eqref{eq:cA1ineq1}.
\end{proof}

\begin{Cor} \label{cor:cA1ineq}
Let the notation and assumption be as in Proposition~\ref{prop:cA1ineq}.
Then the following inequality holds:
\[
\sum_{i = 1}^L p_i m_{0, i} \ge \frac{2 \nu (\mcM)^2}{\sum_{i=1}^K p_i + 2 \sum _{i=K+1}^N p_i}.
\]
\end{Cor}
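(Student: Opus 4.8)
The plan is to deduce the corollary from Proposition~\ref{prop:cA1ineq} by a single application of the Cauchy--Schwarz inequality, once the weighted sum $\sum_{i=1}^L p_i m_{0,i}$ has been bounded below by $2\sum_{i=1}^K p_i\nu_i^2 + \sum_{i=K+1}^N p_i\nu_i^2$, which is exactly the content of that proposition. The only additional input needed is the standard ``Noether formula'' expressing the valuation $\nu = \nu_{E_N}$ of $\mcM$ in terms of the local data $\nu_i$ along the tower \eqref{eq:tower}, namely
\[
\nu (\mcM) = \sum_{i=1}^N p_i \nu_i .
\]

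To establish this identity I would set up a recursion for the orders of the full transforms along the tower and check that path counting solves it. Writing $b_j = \ord_{E_j} \varphi_{j,0}^* \mcM$ for the order of the full transform of $\mcM$ along $E_j$, the blow-up $\varphi_j$ of $B_{j-1}$ yields
\[
b_j = \nu_j + \sum_{j \to i} b_i ,
\]
since pulling back the proper transform $\mcM^{(j-1)}$ contributes $\mult_{B_{j-1}} \mcM^{(j-1)} = \nu_j$, while each earlier divisor $E_i^{(j-1)}$ containing $B_{j-1}$ contributes its generic multiplicity $1$ along $B_{j-1}$, i.e.\ exactly when $j \to i$. Denoting by $P(j,i)$ the number of oriented paths from $E_j$ to $E_i$ with $P(j,j) = 1$, the relation $P(j,i) = \sum_{j \to k} P(k,i)$ shows that $b_j = \sum_i P(j,i) \nu_i$ solves the recursion; taking $j = N$ and recalling $p_i = P(N,i)$ gives the displayed identity for $\nu(\mcM) = b_N$.

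With the identity in hand, set $c_i = 2$ for $1 \le i \le K$ and $c_i = 1$ for $K+1 \le i \le N$, so that the left-hand side of \eqref{eq:cA1ineq1} is $\sum_{i=1}^N c_i p_i \nu_i^2$. Applying the Cauchy--Schwarz inequality to the factorization $p_i \nu_i = \bigl(\sqrt{c_i p_i}\,\nu_i\bigr)\bigl(\sqrt{p_i/c_i}\bigr)$ gives
\[
\nu (\mcM)^2 = \Bigl( \sum_{i=1}^N p_i \nu_i \Bigr)^2 \le \Bigl( \sum_{i=1}^N c_i p_i \nu_i^2 \Bigr) \Bigl( \sum_{i=1}^N \frac{p_i}{c_i} \Bigr) .
\]
Since $\sum_{i=1}^N p_i/c_i = \tfrac12 \bigl( \sum_{i=1}^K p_i + 2 \sum_{i=K+1}^N p_i \bigr)$, this rearranges to
\[
\sum_{i=1}^N c_i p_i \nu_i^2 \ge \frac{2\,\nu(\mcM)^2}{\sum_{i=1}^K p_i + 2 \sum_{i=K+1}^N p_i} ,
\]
and chaining this with Proposition~\ref{prop:cA1ineq} yields the claim.

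The genuinely substantive step has already been carried out in Proposition~\ref{prop:cA1ineq}; what remains here is essentially bookkeeping. The one point that warrants care is the Noether-type identity $\nu(\mcM) = \sum_{i=1}^N p_i \nu_i$: one must check that the path-counting coefficients $p_i$ defined via the oriented graph really do record the multiplicity with which each $E_i$ propagates into $E_N$, and in particular that centers $B_{j-1}$ which are curves (the indices $j > L$) are handled by the same recursion as the point centers, since a prime divisor still meets its blow-up centre with generic multiplicity one. Granting that, the Cauchy--Schwarz step is forced by the shape of the two sides, the weights $c_i$ being dictated precisely by the factors $2$ and $1$ appearing in \eqref{eq:cA1ineq1}.
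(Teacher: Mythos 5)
Your proof is correct and follows essentially the same route as the paper: both arguments rest on the Noether-type identity $\nu(\mcM) = \sum_{i=1}^N p_i \nu_i$ and then bound the quadratic form $2\sum_{i\le K} p_i\nu_i^2 + \sum_{i>K} p_i\nu_i^2$ from below subject to that linear constraint, the paper by exhibiting the constrained minimizer and you by the equivalent Cauchy--Schwarz computation. The only difference is cosmetic: you also spell out the path-counting derivation of the identity $\nu(\mcM) = \sum p_i\nu_i$, which the paper invokes without comment.
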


\begin{proof}
The minimum of the LHS of \eqref{eq:cA1ineq1}, viewed as a quadratic form in $\nu_i$, under the constraint
\[
\sum_{i=1}^N p_i \nu_i = \nu (\mcM),
\]
is attained at
\[
2 \nu_1 = \cdots = 2 \nu_K = \nu_{K+1} = \cdots = \nu_N = \frac{2 \nu (\mcM)}{\sum_{i=1}^K p_i + 2 \sum_{i = K+1}^L p_i}.
\]
Calculating the minimum, we get the required inequality.
\end{proof}

\subsection{$2 n^2$-inequalities for $cA_1$ points}

We keep the notations as in Section~\ref{sec:tower}.
Recall that
\begin{equation} \label{eq:defSigmas}
\Sigma_0 = \sum_{i=1}^K p_i, \quad
\Sigma_1 = \sum_{i = K+1}^L p_i, \quad
\Sigma_2 = \sum_{i=L+1}^N p_i.
\end{equation}

\begin{Thm}[$2 n^2$-inequality for $cA_1$ points] \label{thm:ineqcA1}
Let $\msp \in X$ be a terminal singularity of type $cA_1$, $\mcM$ be a mobile linear system of Cartier divisors on $X$, and let~$n$ be a positive rational number.
If $\msp$ is a center of non-canonical singularities of the pair $(X, \frac{1}{n} \mcM)$, then for general members $D_1, D_2$ in $\mcM$ we have
\[
\mult_{\msp} (D_1 \cdot D_2) > 2 n^2.
\]
\end{Thm}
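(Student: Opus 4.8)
The plan is to realize a non-canonical valuation by the tower of Section~\ref{sec:tower}, apply Corollary~\ref{cor:cA1ineq}, and then convert the weighted sum $\sum_{i=1}^L p_i m_{0,i}$ into a clean bound on $m_{0,1}=\mult_{\msp}(D_1\cdot D_2)$. Since $\msp$ is a center of non-canonical singularities of $(X,\frac1n\mcM)$, there is a prime divisor $E_\infty$ over $X$ with center $\msp$ and $\ord_{E_\infty}\mcM>n\,a_{E_\infty}(K_X)$. I would take $E_N=E_\infty$ and build the tower~\eqref{eq:tower}, so that $\nu(\mcM)=\ord_{E_\infty}\mcM=\sum_{i=1}^N p_i\nu_i$ while $m_{0,1}=\mult_{B_0}Z_0=\mult_{\msp}(D_1\cdot D_2)$. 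The arithmetic behind the constant $2n^2$ is already packaged in Corollary~\ref{cor:cA1ineq}; what remains is to feed in the discrepancy of $E_N$ and to compare the left-hand side of that corollary with $m_{0,1}$.

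First I would compute $a_{E_N}(K_X)$ through the tower. Each blow-up $\varphi_i$ has a relative discrepancy $\delta_i$ equal to $1$ when $B_{i-1}$ is a $cA_1$ point (by the adjunction computation $K_{\tilde X}=\varphi^*K_X+E$ for the blow-up of a $cA_1$ germ), equal to $2$ when $B_{i-1}$ is a smooth point, and equal to $1$ when $B_{i-1}$ is a smooth curve. The multiplicity of $E_N$ in the total transform of each $E_i$ equals the number $p_i$ of paths from $E_N$ to $E_i$, so $a_{E_N}(K_X)=\sum_{i=1}^N p_i\delta_i=\Sigma_0+2\Sigma_1+\Sigma_2$ with $\Sigma_0,\Sigma_1,\Sigma_2$ as in~\eqref{eq:defSigmas}. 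The non-canonicity inequality then reads $\nu(\mcM)>n(\Sigma_0+2\Sigma_1+\Sigma_2)$.

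Next I would bound the left-hand side of Corollary~\ref{cor:cA1ineq}. For $i\le L$ the center $B_{i-1}$ is a point lying on $E_{i-1}$, and the multiplicity of a $1$-cycle at a point does not increase under the blow-up of a point; iterating along the chain $B_0,\dots,B_{L-1}$ of point-centers gives $m_{0,L}\le\cdots\le m_{0,1}$, hence $\sum_{i=1}^L p_i m_{0,i}\le(\Sigma_0+\Sigma_1)m_{0,1}$ since $\sum_{i=1}^L p_i=\Sigma_0+\Sigma_1$. Combining this with Corollary~\ref{cor:cA1ineq} and the non-canonicity inequality yields
\[
(\Sigma_0+\Sigma_1)\,m_{0,1}\ \ge\ \frac{2\,\nu(\mcM)^2}{\Sigma_0+2\Sigma_1+2\Sigma_2}\ >\ \frac{2n^2(\Sigma_0+2\Sigma_1+\Sigma_2)^2}{\Sigma_0+2\Sigma_1+2\Sigma_2}.
\]
It then remains to invoke the elementary inequality $(\Sigma_0+2\Sigma_1+\Sigma_2)^2\ge(\Sigma_0+\Sigma_1)(\Sigma_0+2\Sigma_1+2\Sigma_2)$, valid for all $\Sigma_0,\Sigma_1,\Sigma_2\ge 0$ because the difference equals $\Sigma_0\Sigma_1+2\Sigma_1^2+2\Sigma_1\Sigma_2+\Sigma_2^2$; this lets me replace the numerator to obtain $(\Sigma_0+\Sigma_1)m_{0,1}>2n^2(\Sigma_0+\Sigma_1)$, and hence $m_{0,1}>2n^2$, which is exactly the claim.

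The main obstacle is the discrepancy-and-structure bookkeeping of the second paragraph together with the monotonicity input of the third. Specifically, I must be sure the tower can be arranged so that the point-centers $B_0,\dots,B_{L-1}$ precede the curve-centers $B_L,\dots,B_{N-1}$ (so that both the point formulas behind~\eqref{eq:syseqZi} and the multiplicity-monotonicity apply), and that the local discrepancies are exactly $1,2,1$ in the three cases — in particular that blowing up the $cA_1$ germ $xy+z^2+w^M$ gives discrepancy $1$ and again a $cA_1$ (or smooth) point, so that the $cA_1$ centers really form the initial segment $B_0,\dots,B_{K-1}$ defining $K$. Granting these structural facts, everything else reduces to the already-established Proposition~\ref{prop:cA1ineq} and Corollary~\ref{cor:cA1ineq} together with the displayed elementary algebra.
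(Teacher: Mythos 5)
Your proposal is correct and follows essentially the same route as the paper: non-canonicity gives $\nu(\mcM) > n(\Sigma_0+2\Sigma_1+\Sigma_2)$ via the discrepancy computation $a_{E_N}(K_X)=\sum_i p_i\delta_i$, multiplicity monotonicity gives $\sum_{i=1}^L p_i m_{0,i}\le(\Sigma_0+\Sigma_1)m_{0,1}$, and Corollary~\ref{cor:cA1ineq} plus the elementary inequality on the $\Sigma_i$ finishes the argument. You have merely made explicit two steps the paper leaves implicit (the local discrepancies $1,2,1$ and the final algebraic inequality, which the paper dismisses as ``easy''), and both are verified correctly.
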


\begin{proof}
By the assumption, there exists a divisorial valuation $\nu$ of $\mbC (X)$ centered at $\msp$ such that $(X, \frac{1}{n} \mcM)$ is not canonical at $\nu$, that is, the inequality $\nu (\mcM) > n a(\nu,X)$ holds for $\nu$.
Thus by Lemma \ref{lem:p_i-and-discrepancies} we have
\begin{equation} \label{eq:2nineq1}
\nu (\mcM) > n (\Sigma_0 + 2 \Sigma_1 + \Sigma_2)
\end{equation}

Note that
\begin{equation} \label{eq:2nineq2}
\mult_{\msp} (D_1 \cdot D_2) = m_{0,1} \ge m_{0, i}
\end{equation}
for any $1 \le i \le L$.
By \eqref{eq:2nineq1}, \eqref{eq:2nineq2} and Corollary~\ref{cor:cA1ineq}, we obtain
\begin{equation} \label{eq:2nineqfin}
\mult_{\msp} (D_1 \cdot D_2) > 2 n^2 \frac{(\Sigma_0 + 2 \Sigma_1 + \Sigma_2)^2}{(\Sigma_0 + \Sigma_1)(\Sigma_0 + 2 \Sigma_1 + 2 \Sigma_2)}.
\end{equation}
It is then easy to get the required inequality.
\end{proof}

\begin{Thm}[$2 n^2$-type inequality for Kawakita blow-ups]
Let $\msp \in X$ be a terminal singularity of type $cA_1$, analytically equivalent to $o \in (f=0) \subset \mbA^4$, where
\[
f = xy + z^2 + w^M
\]
for some $M \geq 2$.
Let $n$ be a positive rational number, and $\mcM$ be a mobile linear system of Cartier divisors on $X$.
Let $E$ be the exceptional divisor of $(s, 2 t - s, t, 1)$-blow-up of $\msp \in X$, where $s, t$ are coprime integers such that $0 < s \le t \le M/2$.
If the pair $(X, \frac{1}{n} \mcM)$ is not canonical at $E$, then, for general members $D_1, D_2$ in $\mcM$, we have
\[
\mult_{\msp} (D_1 \cdot D_2) > \frac{2 t^2}{s (2 t - s)} n^2.
\]
\end{Thm}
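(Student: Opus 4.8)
The plan is to imitate the proof of Theorem~\ref{thm:ineqcA1}, but to work directly on the single weighted blow-up $\varphi\colon\tilde X\to X$ defining $E$ rather than passing through the tower \eqref{eq:tower}. Writing $\msp\in X$ as the hypersurface germ $(xy+z^2+w^M=0)\subset\mbA^4$ with $\wt(x,y,z,w)=(s,2t-s,t,1)$, the weighted order of $f=xy+z^2+w^M$ is $\wt(f)=2t$ since $2t\le M$. Hence $E$ sits in $\mbP(s,2t-s,t,1)$ as the degree-$2t$ hypersurface cut out by the weighted-leading part of $f$, and adjunction from the ambient weighted blow-up of $\mbA^4$ gives the discrepancy
\[
a_E(K_X)=\bigl(s+(2t-s)+t+1\bigr)-1-\wt(f)=3t-2t=t.
\]
Thus the hypothesis that $(X,\frac1n\mcM)$ is not canonical at $E$ becomes $\nu:=\ord_E\varphi^*\mcM>n\,a_E(K_X)=nt$; this is the exact analogue of \eqref{eq:2nineq1}, and it will be combined with a local intersection inequality to conclude.

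First I would record the self-intersection of $E$. Since $-E|_E=\mcO_E(1)$ and $\int_{\mbP(s,2t-s,t,1)}\mcO(1)^3=1/(s(2t-s)t)$, the surface $E\sim\mcO(2t)$ satisfies
\[
E^3=\int_E(E|_E)^2=2t\cdot\frac{1}{s(2t-s)t}=\frac{2}{s(2t-s)}.
\]
Now set $\Gamma=D_1\cdot D_2$, $m=\mult_{\msp}\Gamma$, and write $\tilde D_i=\varphi^*D_i-\nu E$, so that $\tilde D_1\cdot\tilde D_2=\varphi_*^{-1}\Gamma+Z$ with $Z\ge0$ supported on $E$. Intersecting with $-E$, the projection formula kills the mixed terms (the cycle $\varphi^*D_i\cdot E$ is numerically trivial on $E$ and $\varphi^*D_i\cdot E^2=0$), leaving $(-E)\cdot(\tilde D_1\cdot\tilde D_2)=-\nu^2E^3$. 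Comparing this with $(-E)\cdot(\varphi_*^{-1}\Gamma+Z)$ and using that $-E$ is $\varphi$-ample, whence $(-E)\cdot Z\ge0$, I obtain
\[
\varphi_*^{-1}\Gamma\cdot E=\nu^2E^3+(-E)\cdot Z\ge\frac{2\nu^2}{s(2t-s)}.
\]
This is the precise counterpart of Lemma~\ref{lem:degZcA1} and Proposition~\ref{prop:cA1ineq} for the single valuation $E$.

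The remaining, and in my view decisive, step is the comparison
\[
\mult_{\msp}\Gamma\ \ge\ \varphi_*^{-1}\Gamma\cdot E,
\]
after which the chain $m\ge\varphi_*^{-1}\Gamma\cdot E\ge 2\nu^2/(s(2t-s))>2t^2n^2/(s(2t-s))$ finishes the proof. The hard part is that $\varphi_*^{-1}\Gamma\cdot E$ is a weighted (orbifold) intersection number on $\tilde X$, genuinely smaller than the naive multiplicity because $\mbP(s,2t-s,t,1)$ is singular along its coordinate strata. I would prove the inequality branch by branch: each analytic branch of $\Gamma$ at $\msp$ has proper transform meeting $E$ at a single point, and a local computation in the affine charts of the weighted blow-up shows its local intersection number with $E$ is at most its multiplicity at $\msp$, with equality precisely for branches whose weighted tangent is the coordinate of weight~$1$. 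The essential point — and the reason the constant is exactly $\frac{2t^2}{s(2t-s)}$ — is that the minimal weight $\wt(w)=1$ makes the $w$-chart an honest (non-orbifold) blow-up chart, so intersection numbers there reproduce the ordinary multiplicity, while every other chart only decreases them by the order of the governing cyclic quotient; summing over branches and components yields the displayed inequality. (Alternatively, one may resolve $E$ by the tower \eqref{eq:tower} and deduce the same bound from Corollary~\ref{cor:cA1ineq}, the content then being the identities $\Sigma_0+2\Sigma_1+\Sigma_2=t$ and $(\Sigma_0+\Sigma_1)(\Sigma_0+2\Sigma_1+2\Sigma_2)=s(2t-s)$; but pinning down the path-counts $p_i$ of the weighted blow-up is more laborious than the direct orbifold computation.)
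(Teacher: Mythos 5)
Your argument is correct in substance, but it takes the opposite route from the paper: the parenthetical ``alternative'' you dismiss at the end is exactly the proof the paper gives, and it is much shorter than you anticipate. The paper realizes the $(s,2t-s,t,1)$-valuation by a tower \eqref{eq:tower} whose graph is a simple chain (so every $p_i=1$), consisting of $s$ blow-ups of a $cA_1$ point followed by $t-s$ blow-ups of a curve; hence $N=t$, $\Sigma_0=K=L=s$, $\Sigma_1=0$, $\Sigma_2=t-s$, and substituting into \eqref{eq:2nineqfin} gives the factor $(\Sigma_0+2\Sigma_1+\Sigma_2)^2/\bigl((\Sigma_0+\Sigma_1)(\Sigma_0+2\Sigma_1+2\Sigma_2)\bigr)=t^2/(s(2t-s))$ --- precisely the two identities you wrote down, with no path-counting needed because the chain forces all $p_i=1$. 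Your direct orbifold computation is a legitimate alternative: the discrepancy $a_E(K_X)=t$, the degree $E^3=2/(s(2t-s))$, and the identity $E\cdot\varphi_*^{-1}\Gamma=\nu^2E^3+(-E)\cdot Z$ with $(-E)\cdot Z\ge 0$ are all correct, and what this route buys is independence from the (standard but here unproved) assertion that the Kawakita valuation untwists into such a chain of ordinary blow-ups. The one step you leave as a sketch is the comparison $\mult_{\msp}\Gamma\ge\varphi_*^{-1}\Gamma\cdot E$; it is true, but rather than a chart-by-chart branch analysis the cleanest proof is the projection-formula argument you already use elsewhere: for a general hyperplane section $H\ni\msp$ of the ambient $\mbA^4$ one has $\ord_{E}\varphi^*H=1$ (because the minimal weight is $1$) and $(H\cdot\gamma)_{\msp}=\mult_{\msp}\gamma$ for each branch $\gamma$ of $\Gamma$, whence $\mult_{\msp}\gamma=(\varphi_*^{-1}H\cdot\tilde{\gamma})+(E\cdot\tilde{\gamma})\ge (E\cdot\tilde{\gamma})$; summing over branches gives the claim. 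With that step filled in, both proofs produce the same constant for the same underlying reason: the discrepancy $t$ measured against the ``co-volume'' $s(2t-s)$ of the weighted exceptional surface.
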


\begin{proof}
Consider a fan corresponding to $\mbA^4$ with the following description
\begin{align*}
&v_x = (1,0,0,0),\\
&v_y = (0,1,0,0),\\
&v_z = (0,0,1,0),\\
&v_w = (0,0,0,1).
\end{align*}
Then the toric morphism corresponding to adding the ray $\rho = (s, 2t-s, t, 1)$ to the fan is the $(s, 2t-s, t, 1)$-weighted blow up of $\mbA^4$.
We can also add the ray $\rho$ by performing a sequence of regular blow ups at points, curves, and surfaces corresponding to adding the following rays to the fan:
\begin{align*}
u_1 &= (1,1,1,1),\\
u_2 &= (2,2,2,1),\\
&\dots\\
u_s &= (s,s,s,1),\\
u_{s+1} &= (s,s+1,s+1,1),\\
&\dots \\
u_{t} &= (s,t,t,1),\\
u_{t+1} &= (s,t+1,t,1),\\
&\dots\\
u_{2t-s} &= (s,2t-s,t,1).\\
\end{align*}

From this description we can see that the first $s$ morphisms are the blow ups at a points, first at $(x=y=z=w=0)$, then at $(x=y=z=u_{i}=0)$, the next $t-s$ morphisms are blow ups at curves $(y=z=u_i=0)$, and the last $t-s$ morphisms are blow ups at surface $(y=u_i=0)$.
The restriction of the morphisms to the proper transforms $X_i$ of $X$ gives us the $s$ blow ups at singular points of $X$ followed by $t-s$ blow ups at a curve and $t-s$ blow ups at a divisor as can be seen from the local equations of $X_i$
\begin{align*}
X_0:& \quad xy + z^2 + w^M = 0,\\
X_1:& \quad xy + z^2 + u_1^{M-2} w^M = 0,\\
 &\dots\\
X_s:& \quad xy + z^2 + u_s^{M-2s} w^M = 0,\\
X_{s+1}:& \quad xy + u_{s+1} z^2 + u_{s+1}^{M-2s-1} w^M = 0,\\
 & \dots\\
X_{t}:& \quad xy + u_{t}^{t-s} z^2 + u_{t}^{M-s-t} w^M= 0,\\
X_{t+1}:& \quad xy + u_{t+1}^{t-s-1} z^2 + u_{t+1}^{M-s-t-1} w^M =2,\\
 & \dots \\
X_{2t-s}:& \quad xy + z^2 + u_{2t-s}^{M-2t} w^M = 0.\\
\end{align*}

We can see from the toric description that for every blow up we have $B_i \subset E_i\setminus E_{i-1}^{(i)}$, so we can drop $u_j$ in the local equation of $X_i$ for $j<i$.
This fact also implies that the graph corresponding to the tower of blow ups is a simple chain and we have $p_i = 1$ for all $i$.
Thus we have $N = t$, $\Sigma_0 = K = L = s$, $\Sigma_1 = 0$, and $\Sigma_2 = t - s$.
The required inequality is obtained by substituting these values into the inequality \eqref{eq:2nineqfin}.
\end{proof}

\begin{Rem}
In Theorem~\ref{thm:ineqcA1}, compared to the $4n^2$-inequality (Theorem~\ref{thm:4nineq}), we keep the dimension three but we make the germ of a point worse by introducing singularities.
As a result, we weaken the original inequality.

On the other hand one can increase the dimension and see what kind of singularities still satisfy the $4n^2$-inequality, for example see \cite{Puk17}.
This approach has been applied to prove birational rigidity of many families of high dimensional varieties: hypersurfaces, cyclic covers, complete intersections (\cite{Puk19CI}, \cite{Puk19CO}, \cite{EP18}, \cite{EP19}).
\end{Rem}

\subsection{Corti inequality for $cA_1$ points}

We keep the notations as in Section~\ref{sec:tower} and let $\Sigma_0, \Sigma_1, \Sigma_2$ be as in \eqref{eq:defSigmas}.
The following is a version of Corti inequality \cite[Theorem 3.12]{Cor00} for $cA_1$ points.

\begin{Thm}[Corti inequality for $cA_1$ points] \label{thm:Cortiineq}
Let $\msp \in X$ be a terminal singularity of type $cA_1$, and let $\mcM$ be a mobile linear system of Cartier divisors on~$X$.
Let $F_1, \dots, F_l \subset X$ be irreducible surfaces containing $\msp$.
For the intersection $1$-cycle $Z = D_1 \cdot D_2$ of general members $D_1, D_2$ in $ \mcM$, we write
\[
Z = Z_h + \sum_{j=1}^l Z_j,
\]
where $\Supp Z_j \subset F_j$, and $Z_h$ intersects $\sum_{i=1}^l F_i$ properly.
Let $n $ and $\gamma_j $ be positive rational numbers such that $\msp$ is a center of non-canonical singularities of the pair
\begin{equation} \label{eq:Cortiineqpair}
\left( X, \frac{1}{n} \mcM - \sum_{j=1}^l \gamma_j F_j \right).
\end{equation}
Then there are rational numbers $0 < t_j \le 1$ such that
\[
\mult_{\msp} Z_h + \sum_{j=1}^l t_j \mult_{\msp} Z_j > 2 n^2 + 4 n^2 \sum_{j=1}^l \gamma_j t_j.
\]
\end{Thm}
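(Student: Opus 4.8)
The plan is to run the tower-of-blow-ups machinery of Section~\ref{sec:tower} that already powers Theorem~\ref{thm:ineqcA1}, but to bookkeep the surfaces $F_j$ simultaneously with the mobile system. First I would fix a divisorial valuation $\nu = \nu_{E_N}$ centered at $\msp$ at which the pair \eqref{eq:Cortiineqpair} fails to be canonical, and take the associated tower $X_N \to \cdots \to X_0 = X$ with the invariants $p_i$, $K$, $L$, $N$ and $\nu_i = \nu_{E_i}(\mcM^{(i-1)})$. Writing $f_{j,i} = \mult_{B_{i-1}} F_j^{(i-1)}$ for the multiplicity of the proper transform of $F_j$ at the $i$-th center, one has $\nu(F_j) = \sum_{i=1}^N p_i f_{j,i}$ and $\nu(\mcM) = \sum_{i=1}^N p_i \nu_i$, while the discrepancy is $a_\nu(X) = \Sigma_0 + 2\Sigma_1 + \Sigma_2$ exactly as in Theorem~\ref{thm:ineqcA1} ($cA_1$-point and curve blow-ups contribute $1$, smooth-point blow-ups contribute $2$). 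Non-canonicity of \eqref{eq:Cortiineqpair} at $\nu$ then reads
\[
\sum_{i=1}^N p_i \nu_i \;>\; n(\Sigma_0 + 2\Sigma_1 + \Sigma_2) + n \sum_{j=1}^l \gamma_j \sum_{i=1}^N p_i f_{j,i},
\]
which is the boundary-corrected analogue of \eqref{eq:2nineq1}.

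Next I would feed the decomposition $Z = Z_h + \sum_j Z_j$ into the cycle analysis \eqref{eq:defZi}. Setting $h_i = \mult_{B_{i-1}} Z_h^{(i-1)}$ and $z_{j,i} = \mult_{B_{i-1}} Z_j^{(i-1)}$, one has $m_{0,i} = h_i + \sum_j z_{j,i}$, with $\mult_{\msp} Z_h = h_1$ and $\mult_{\msp} Z_j = z_{j,1}$. The decisive structural input is that $Z_j$ is supported on $F_j$, so $Z_j^{(i-1)} \subset F_j^{(i-1)}$; hence $z_{j,i} = 0$ whenever $B_{i-1} \not\subset F_j^{(i-1)}$, i.e.\ whenever $f_{j,i} = 0$. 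This ties the vertical multiplicities of $Z$ to the loci where the center tower meets $F_j$, and it is what lets one introduce, for each $j$, a weight $0 < t_j \le 1$ measuring the path-weighted fraction of the tower lying on $F_j$, normalized by the total weight $\Sigma_0 + \Sigma_1$ (so that the vertical contribution of $F_j$ to $\sum_i p_i m_{0,i}$ equals $t_j z_{j,1}(\Sigma_0 + \Sigma_1)$). The normalization is forced by Proposition~\ref{prop:cA1ineq}, whose right-hand side is $\sum_{i \le L} p_i m_{0,i}$, and $t_j \in (0,1]$ holds because $\msp \in F_j$ makes $f_{j,1} \ge 1$.

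With these in hand the strategy is to combine Proposition~\ref{prop:cA1ineq}, namely $2\sum_{i \le K} p_i \nu_i^2 + \sum_{i > K} p_i \nu_i^2 \le \sum_{i \le L} p_i m_{0,i}$, with the split right-hand side to reach a bound of the shape $(\Sigma_0 + \Sigma_1)\bigl(h_1 + \sum_j t_j z_{j,1}\bigr) \ge 2\sum_{i \le K} p_i \nu_i^2 + \sum_{i > K} p_i \nu_i^2$. Minimizing the quadratic form on the right under the boundary-corrected linear constraint above, exactly as in Corollary~\ref{cor:cA1ineq} via Cauchy--Schwarz, yields
\[
h_1 + \sum_{j=1}^l t_j z_{j,1} \;>\; \frac{2 n^2 \bigl(\Sigma_0 + 2\Sigma_1 + \Sigma_2 + \sum_{j=1}^l \gamma_j \nu(F_j)\bigr)^2}{(\Sigma_0 + \Sigma_1)(\Sigma_0 + 2\Sigma_1 + 2\Sigma_2)}.
\]
It then remains to verify the elementary inequality asserting that this fraction is at least $2n^2\bigl(1 + 2\sum_j \gamma_j t_j\bigr)$: expanding the square, the constant term is governed by $(\Sigma_0 + 2\Sigma_1 + \Sigma_2)^2 \ge (\Sigma_0 + \Sigma_1)(\Sigma_0 + 2\Sigma_1 + 2\Sigma_2)$, already used in \eqref{eq:2nineqfin}, the linear-in-$\gamma_j$ cross term produces precisely the coefficient $4n^2 = 2 \cdot 2n^2$ in front of $\gamma_j t_j$, and the $\gamma_j$-quadratic remainder is disposed of by a discriminant computation showing the relevant quadratic form in the $\gamma_j$ is non-negative.

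The step I expect to be the main obstacle is making the definition of $t_j$ and the passage through Proposition~\ref{prop:cA1ineq} genuinely tight, because that proposition records multiplicities only up to the last point-blow-up level $L$, whereas $\nu(F_j)$ in the constraint also collects the contributions $f_{j,i}$ from the curve blow-ups $L < i \le N$. If $F_j$ is singular along those curve centers, a naive truncation at level $L$ discards exactly the vertical multiplicity of $Z_j$ that should compensate, and the final elementary inequality can appear to fail. Getting this right will likely require re-opening the proof of Proposition~\ref{prop:cA1ineq} at the intermediate stage \eqref{eq:syseqZisumineq}, where the term $p_L d_L$ is still present, and distributing the curve-level degree $d_L$ between $Z_h$ and the $Z_j$ using Lemma~\ref{lem:degZsm}(2), so that $t_j$ is defined by the \emph{full} path-weighted intersection of the tower with $F_j$ rather than its truncation; this is the delicate combinatorial core on which the statement rests.
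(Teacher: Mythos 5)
Your plan is essentially the paper's proof: the same tower and invariants, the same boundary-corrected non-canonicity inequality $\nu(\mcM) > n(\Sigma_0+2\Sigma_1+\Sigma_2+\sum_j\gamma_j\nu(F_j))$, the same observation that $\mult_{B_{i-1}} Z_j^{(i-1)}$ vanishes once the center leaves the proper transform of $F_j$, the same weights $t_j = \Xi_j/(\Sigma_0+\Sigma_1)$ with $\Xi_j = \sum_{i \le K_j} p_i$, and the same application of Corollary~\ref{cor:cA1ineq}. Two clarifications are worth recording. First, your concluding ``elementary inequality'' is true but not by the term-by-term split you describe: with $A = \Sigma_0+2\Sigma_1+\Sigma_2$, $B = \Sigma_0+2\Sigma_1+2\Sigma_2$ and $s = \sum_j \gamma_j\nu(F_j)$, the linear cross term compares $As/B$ with $\sum_j\gamma_j\Xi_j$, and since $A/B \le 1$ it can fall short on its own; the paper instead uses the rearrangement $(A+s)^2/B = \Sigma_0+2\Sigma_1+2s+(\Sigma_2-s)^2/B$, after which the conclusion is immediate from $s \ge \sum_j\gamma_j\Xi_j$ and $(\Sigma_2-s)^2/B \ge 0$, with no discriminant computation needed. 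Second, the obstacle you single out as the delicate core is not actually an obstacle: the contributions to $\nu(F_j)$ from the curve blow-ups $L < i \le N$ enter the argument only through the one-sided bound $\nu(F_j) \ge \Xi_j$, and any excess sits on the favorable side of the inequality, so no redistribution of $d_L$ and no re-opening of the proof of Proposition~\ref{prop:cA1ineq} is required.
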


\begin{proof}
We first note that by Lemma \ref{lem:p_i-and-discrepancies}
\[
a (E_{\infty}, X) = \Sigma_0 + 2 \Sigma_1 + \Sigma_2,
\]
where we recall that $E_{\infty} = E_N$ is the prime exceptional divisor realizing $\nu$.
By the assumption that the pair \eqref{eq:Cortiineqpair} is not canonical at $\nu$, we have
\begin{equation} \label{eq:Cortiineq1}
\nu (\mcM) > n \left( \Sigma_0 + 2 \Sigma_1 + \Sigma_2 + \sum_{j = 1}^k \gamma_j \nu (F_j) \right).
\end{equation}
For $j = 1, \dots, l$, we set
\[
K_j =
\begin{cases}
0, & \text{if $\msp \in F_j$}, \\
\max \left\{\, i \le L \mid B_{i-1} \notin F_j^{(i-1)} \, \right\}, & \text{otherwise},
\end{cases}
\]
and then set
\[
\Xi_j = \sum_{i=1}^{K_j} p_i.
\]
Then by Lemma \ref{lem:p_i-and-discrepancies} we have
\[
\nu (F_j) = \sum_{i = 1}^{K_j} p_i \mult_{B_{i-1}} F_j^{(i-1)} \ge \sum_{i = 1}^{K_j} p_i = \Xi_j.
\]

Observe that
\[
m_{i,0} = \mult_{B_{i-1}} Z_0^{(i-1)} = \mult_{B_{i-1}} Z_h^{(i-1)} + \sum_{j=1}^l \mult_{B_{i-1}} Z_j^{(i-1)},
\]
and that if $i > K_j$ then $\mult_{B_{i-1}} Z_j^{(i-1)} = 0$.
This decomposition implies that
\begin{equation} \label{eq:Cortiineq2}
\begin{split}
\sum_{i = 1}^L p_i m_{0, i} &= \sum_{i = 1}^L p_i \mult_{B_{i-1}} (Z_h^{(i-1)}) + \sum_{j = 1}^l \sum_{i=1}^{K_j} p_i \mult_{B_{i-1}} (Z_j^{(i-1)}) \\
&\le (\Sigma_0 + \Sigma_1) \mult_{\msp} Z_h + \sum_{j=1}^l \Xi_j \mult_{\msp} Z_j,
\end{split}
\end{equation}
since $\mult_{B_{i-1}} (Z_h^{(i-1)}) \le \mult_{\msp} Z_h$ and $\mult_{B_{i-1}} (Z_j^{(i-1)} )\le \mult_{\msp} Z_j$ for every $1 \le i \le L$.

By the inequality in Corollary~\ref{cor:cA1ineq} combined with two inequalities \eqref{eq:Cortiineq1}, \eqref{eq:Cortiineq2}, we get the inequality
\begin{equation} \label{eq:Cortiineq3}
\begin{split}
& (\Sigma_0 + \Sigma_1) \mult_{\msp} Z_h + \sum_{j = 1}^l \Xi_j \mult_{\msp} Z_j \\
&\ge \sum_{i=1}^L p_i m_{0,i} \\
&> 2 n^2 \Sigma_0 + 4 n^2 \Sigma_1 + 4 n^2 \sum_{j = 1}^l \gamma_j \nu (F_j) + 2 n^2 \frac{(\Sigma_2 - \sum_{j=1}^l \gamma_j \nu (F_j))^2}{\Sigma_0 + 2 \Sigma_1 + 2 \Sigma_2} \\
&\ge 2 n^2 (\Sigma_0 + \Sigma_1) + 4 n^2 \sum_{j=1}^l \gamma_j \Xi_j,
\end{split}
\end{equation}
where the last inequality follows from the inequality $\nu (F_j) \ge  \Xi_j$.

Dividing \eqref{eq:Cortiineq3} by $\Sigma_0 + \Sigma_1$ and setting
\[
t_j = \frac{\Xi_j}{\Sigma_0 + \Sigma_1},
\]
we get the required inequality.
\end{proof}

\section{Sextic double solids}

Recall that a sextic double solid is a normal projective $3$-fold $X$ which is a double cover of $\mbP^3$ branched along a sextic surface.
A sextic double solid can be expressed as a weighted hypersurface in $\mbP (1, 1, 1, 1, 3)$ with homogeneous coordinates $x, y, z, t, w$ of weights $1, 1, 1, 1, 3$, respectively, defined by an equation of the form
\begin{equation} \label{eq:SDSdefeq}
w^2 + g_6 (x, y, z, t) = 0,
\end{equation}
where $g_6 \in \mbC [x, y, z, t]$ is a homogeneous polynomial of degree $6$.

The aim of this section is to prove the following.

\begin{Thm} \label{thm:SDS}
Let $X$ be a factorial sextic double solid with at worst terminal singularities of type $cA_1$.
Then $X$ is birationally superrigid.
\end{Thm}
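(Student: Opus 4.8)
The plan is to run the Noether--Fano method. First I would record the global geometry: writing $\pi\colon X\to\mbP^3$ for the double cover with branch sextic $B=(g_6=0)$, one has $-K_X=\mcO_X(1)=\pi^*\mcO_{\mbP^3}(1)$ and $(-K_X)^3=2$. Since $X$ is factorial with terminal singularities, factoriality forces $X$ to be a prime Fano $3$-fold of Picard rank $1$, $\Cl(X)=\Pic(X)=\mbZ\cdot(-K_X)$, i.e.\ a Mori fibre space over a point. By the characterization of birational superrigidity for Fano $3$-folds of Picard rank $1$ quoted earlier, it suffices to show that $X$ admits no maximal singularity; equivalently, that no irreducible curve and no point of $X$ is a maximal center. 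So I assume a mobile linear system $\mcM\sim_{\mbQ}-nK_X$ whose pair $(X,\tfrac1n\mcM)$ is non-canonical with center $\Gamma$, and derive a contradiction case by case. Throughout, $D_1,D_2$ denote general members of $\mcM$ and $Z=D_1\cdot D_2$, a $1$-cycle with $(-K_X)\cdot Z=n^2(-K_X)^3=2n^2$.

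\emph{Curves.} If $\Gamma$ is a curve with $(-K_X\cdot\Gamma)\ge 2=(-K_X)^3$, then Lemma~\ref{lem:exclcurve1} excludes it. The remaining curves satisfy $(-K_X\cdot\Gamma)=1$; these are exactly the lines, where $\pi$ maps $\Gamma$ isomorphically onto a line $\ell\subset\mbP^3$ with $g_6|_\ell=-h^2$ for a cubic form $h$, so that $\pi^{-1}(\ell)=\Gamma+\Gamma'$ with $\Gamma'=\iota(\Gamma)$ the conjugate line. To exclude $\Gamma$ I would apply Lemma~\ref{lem:exclcurve2} to the pencil $\mcP\subset|-K_X|$ of surfaces $\pi^{-1}(H)$ with $H\supset\ell$ a plane: here $\mcP\sim-K_X$, the general $S\in\mcP$ is a K3 surface with Du Val singularities (hence normal), and for distinct $S,T\in\mcP$ one has $T|_S=\Gamma+\Delta$ with $\Delta=\Gamma'$. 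It remains to verify $(\Gamma\cdot\Gamma')_S\ge(-K_X\cdot\Gamma')=1$, which via $(\Gamma\cdot\Gamma')_S=(-K_X\cdot\Gamma)-(\Gamma^2)_S=1-(\Gamma^2)_S$ amounts to $(\Gamma^2)_S\le 0$. As $X$ has only $cA_1$ points, each singular point of $X$ on $\Gamma$ induces an $A_1$ point of $S$ through which $\Gamma$ passes transversally, so $G(S,\msp_i,\Gamma)$ is of type $A_{1,1}$; since $K_S=0$, Lemma~\ref{lem:selfintS} gives $(\Gamma^2)_S=-2+\tfrac12\,\#\{cA_1\text{ points on }\Gamma\}$. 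The singular points of $X$ on $\Gamma$ lie over the at most three points of $\ell\cap B=(h^2=0)$ at which $B$ is singular, whence $(\Gamma^2)_S\le-2+\tfrac32<0$, as required.

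\emph{Points.} Let $\msp\in X$ be a point that is a maximal center. The key upper bound is $\mult_\msp Z\le 2n^2$. To prove it, take $T=\pi^{-1}(H)\in|-K_X|$ the preimage of a general plane $H$ through $\pi(\msp)$; no component of $Z$ lies in $T$, so $\mult_\msp Z\le(Z\cdot T)_\msp\le(-K_X)\cdot Z=2n^2$. The only delicate point is the equality $(Z\cdot T)_\msp=\mult_\msp Z$: the surfaces of this pencil are cut by linear forms in the coordinates of $\mbP^3$ alone, so their tangent spaces at $\msp$ all contain the fibre direction of $\pi$. When $\msp$ is smooth this is harmless; when $\msp$ is a $cA_1$ point the tangent cone of $X$ at $\msp$ is a quadric of rank $\ge 3$ not containing the fibre direction, while the tangent direction of every branch of $Z\subset X$ at $\msp$ lies in this tangent cone, so for general $H$ the surface $T$ is transverse to all branches and the equality holds. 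Now if $\msp$ is smooth, Theorem~\ref{thm:4nineq} gives $\mult_\msp Z>4n^2$, and if $\msp$ is a $cA_1$ point, Theorem~\ref{thm:ineqcA1} gives $\mult_\msp Z>2n^2$; either way we contradict $\mult_\msp Z\le 2n^2$. Hence no point is a maximal center, and with the curve case $X$ has no maximal singularity.

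\emph{Main obstacle.} The substantive local input is the $2n^2$-inequality of Theorem~\ref{thm:ineqcA1}, which is exactly sharp for $cA_1$ points and is precisely what makes the bound $\mult_\msp Z\le 2n^2$ decisive. Granting it, I expect the real work to lie in the curve case: one must verify the hypotheses of Lemma~\ref{lem:exclcurve2} for \emph{every} line $\Gamma$ with $(-K_X\cdot\Gamma)=1$ and control $(\Gamma^2)_S$ uniformly over all factorial sextic double solids with $cA_1$ singularities. This requires analysing the possible configurations of $cA_1$ points along $\Gamma$ and the degenerate cases---lines contained in the branch sextic (where $\Gamma'=\Gamma$ and the pencil computation degenerates), lines through several singular points, and planes $H$ for which $S=\pi^{-1}(H)$ fails to be normal or $\Gamma$ fails to meet a singular point transversally---for each of which one must either confirm $(\Gamma^2)_S\le 0$ or supply an alternative pencil.
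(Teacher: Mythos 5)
Your overall strategy coincides with the paper's: exclude curves of degree at least $2$ by Lemma~\ref{lem:exclcurve1}, exclude degree-$1$ curves via the pencil $\mcP$ of anticanonical sections containing $\Gamma$ together with Lemmas~\ref{lem:exclcurve2} and~\ref{lem:selfintS}, and exclude points by intersecting $Z=D_1\cdot D_2$ with an anticanonical divisor through the point and invoking the $4n^2$- and $2n^2$-inequalities. The point case is fine (your worry about the equality $(Z\cdot T)_{\msp}=\mult_{\msp}Z$ is unnecessary, since only $\mult_{\msp}Z\le (Z\cdot T)_{\msp}$ is needed). However, there is one step in the curve case that is false as stated: you claim that at each $cA_1$ point $\msp_i$ of $X$ on $\Gamma$ the general member $S\in\mcP$ has an $A_1$ singularity met transversally by $\Gamma$, so that $G(S,\msp_i,\Gamma)$ is of type $A_{1,1}$ and $(\Gamma^2)_S=-2+\tfrac12\,\#\{\text{singular points}\}$. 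This is not true: $S$ is a general member of the \emph{pencil} of hyperplane sections containing $\Gamma$, not a general hyperplane section through $\msp_i$, and the paper's Lemma~\ref{lem:extGSD1} shows that $S$ can acquire an $A_3$ or even an $A_5$ singularity at $\msp_i$ (with extended graph $A_{3,2}$ or $A_{5,3}$ and correction term $1$ or $\tfrac32$ rather than $\tfrac12$), depending on the multiplicity $m_i=\mult_{\msp_i}(\Gamma\cap\Delta)$. Your numerical conclusion $(\Gamma^2)_S\le 0$ survives, but for a different reason: the worse graphs only occur when $m_i\ge 2$, and since $\Gamma\cap\Delta$ has length $3$ one has $\sum m_i\le 3$, so the total correction is bounded by $\tfrac32$ in every configuration. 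Establishing this trade-off is exactly the content of the case analysis in Lemma~\ref{lem:extGSD1}, which your argument skips.

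A second, smaller omission: the degree-$1$ curves with $\pi(\Gamma)\subset B$, for which $\Delta=\Gamma$ and the pencil computation degenerates, are flagged in your last paragraph but not excluded. The paper handles them by a separate argument (in Lemma~\ref{lem:SDmostcurves}, following \cite{CP10}): pick a smooth point $\msp\in\Gamma$ in the smooth locus of $X$ away from the other base curves of $\mcM$, let $L$ be a general line tangent to $B$ at $\pi(\msp)$ and $\Delta=\pi^{-1}(L)$ its degree-$2$ preimage, which is singular at $\msp$ and not in $\Bs\mcM$; then $2n=(\mcM\cdot\Delta)\ge\mult_{\Gamma}\mcM\cdot\mult_{\msp}\Delta>2n$ gives the contradiction. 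With these two points repaired, your proof matches the paper's.
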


In the rest of this section, $X$ is a sextic double solid and we assume that $X$ is factorial and has only terminal Gorenstein singularities.
We do not impose any other condition on the singularities unless otherwise specified.
We denote by $\pi \colon X \to \mbP^3$ the double cover and by $B \subset \mbP^3$ the branch divisor which is a sextic surface.

\begin{Rem} \label{rem:sdfact}
Under the assumption that a sextic double solid $X$ has only Gorenstein terminal singularities, $\mbQ$-factoriality of $X$ is equivalent to factoriality of $X$, that is, $\Pic (X) = \Cl (X) = \mbZ [-K_X]$ (see \cite[Lemma 6.3]{Kawamata88}).
\end{Rem}

\subsection{Exclusion of smooth points}

The exclusion of smooth points as a maximal center is done as follows.

\begin{Prop}[{\cite[Lemma 3.1]{CP10}}] \label{prop:SDSsmpt}
No smooth point of  $X$ is a maximal center.
\end{Prop}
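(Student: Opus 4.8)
The plan is to derive a contradiction from the $4n^2$-inequality (Theorem~\ref{thm:4nineq}) by showing that the intersection cycle of two general members of a hypothetical maximal system cannot carry enough multiplicity at a smooth point; the numerical obstruction that makes this work is that $(-K_X)^3 = 2 < 4$. First I record the two facts about a sextic double solid that drive everything. Writing $H = \pi^* \mcO_{\mbP^3}(1)$, the ramification formula for the double cover branched along a sextic (so $B \in |2\cdot 3H|$) gives $K_X = \pi^*(K_{\mbP^3} + 3H) = -H$, hence $-K_X \sim H$ and
\[
(-K_X)^3 = (\pi^* \mcO_{\mbP^3}(1))^3 = \deg(\pi) \cdot \mcO_{\mbP^3}(1)^3 = 2.
\]
The crucial structural point is that $-K_X$ is the pullback of a very ample divisor under the finite morphism $\pi$, which is exactly what lets me bound the multiplicity of a curve at $\msp$ by its anticanonical degree.

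Suppose for contradiction that a smooth point $\msp \in X$ is a maximal center. Since $X$ is factorial of Picard rank $1$, there is a mobile linear system $\mcM \sim_{\mbQ} -n K_X$, with $n > 0$ its quasi-effective threshold, such that $\msp$ is a center of non-canonical singularities of the pair $(X, \frac{1}{n}\mcM)$. Applying the $4n^2$-inequality at the smooth germ $\msp \in X$ to general members $D_1, D_2 \in \mcM$ gives
\[
\mult_{\msp}(D_1 \cdot D_2) > 4 n^2.
\]

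To contradict this I bound the same multiplicity from above. Set $Z = D_1 \cdot D_2$, an effective $1$-cycle, and let $S = \pi^{-1}(\Lambda) \in |-K_X|$ be the preimage of a general hyperplane $\Lambda \subset \mbP^3$ through $q = \pi(\msp)$. The base locus of those members of $|-K_X|$ passing through $\msp$ is contained in the finite set $\pi^{-1}(q)$, so a general such $S$ contains no component of $Z$ and therefore meets $Z$ properly. Since $\msp \in S$ we have $\mult_{\msp} S \ge 1$, so the local intersection inequality gives $(S \cdot Z)_{\msp} \ge \mult_{\msp} S \cdot \mult_{\msp} Z \ge \mult_{\msp} Z$, whence
\[
\mult_{\msp} Z \le (S \cdot Z) = (-K_X \cdot D_1 \cdot D_2) = n^2 (-K_X)^3 = 2 n^2.
\]
Combining the two displayed inequalities yields $4n^2 < \mult_{\msp} Z \le 2 n^2$, a contradiction, and so no smooth point of $X$ is a maximal center.

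The argument is essentially forced once the geometry is in place, so I do not anticipate a serious obstacle; the only steps requiring care are the generic choice of $S$ meeting $Z$ properly, which the base-locus computation $\Bs\{S \ni \msp\} \subset \pi^{-1}(q)$ guarantees, and the local intersection bound $(S \cdot Z)_{\msp} \ge \mult_{\msp} Z$. I emphasize that neither the $cA_1$ hypothesis nor the Gorenstein property of the singularities enters at this stage: the exclusion of smooth points rests solely on $(-K_X)^3 = 2$ being strictly smaller than $4$, together with the fact that $-K_X$ is a finite pullback of $\mcO_{\mbP^3}(1)$.
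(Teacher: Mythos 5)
Your proof is correct and is essentially the argument the paper relies on: the paper does not reprove this proposition but simply cites \cite[Lemma 3.1]{CP10}, and your combination of the $4n^2$-inequality with a general anticanonical divisor through $\msp$ (whose base locus is the finite set $\pi^{-1}(\pi(\msp))$, so it meets $Z = D_1 \cdot D_2$ properly) is exactly the mechanism used there and mirrors the paper's own exclusion of $cA_1$ points in Proposition~\ref{prop:SDSsingpt}, with $4n^2$ in place of $2n^2$. All the supporting computations ($K_X = -\pi^*\mcO_{\mbP^3}(1)$, $(-K_X)^3 = 2$, and the local bound $(S \cdot Z)_{\msp} \ge \mult_{\msp} Z$) check out.
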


\subsection{Exclusion of curves}

\begin{Lem} \label{lem:SDmostcurves}
A curve on $X$ is not a maximal center except possibly for a curve  of degree $1$ which passes through a singular point of $X$ and whose image via $\pi$ is not contained in the sextic surface $B$.
\end{Lem}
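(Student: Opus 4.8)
The plan is to reduce to lines in $\mbP^3$ and then separate cases according to whether the image line lies on the branch surface. Writing $H$ for the hyperplane class on $\mbP^3$, we have $-K_X = \pi^*H$, so $(-K_X)^3 = \deg\pi \cdot (H^3) = 2$, and $(-K_X\cdot\Gamma) = (H\cdot\pi_*\Gamma)$ is a positive integer for every irreducible curve $\Gamma$. By Lemma~\ref{lem:exclcurve1}, any curve with $(-K_X\cdot\Gamma)\ge 2$ is not a maximal center, so I only have to treat curves with $(-K_X\cdot\Gamma)=1$. For such a curve, if $e=\deg(\pi|_\Gamma)$ and $\delta=\deg\pi(\Gamma)$ then $e\delta=1$, hence $\pi$ maps $\Gamma$ birationally onto a line $\ell\subset\mbP^3$. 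There are two possibilities: either $\ell\not\subset B$ and $\pi^{-1}(\ell)=\Gamma_++\Gamma_-$ splits into two curves interchanged by the Galois involution (so $\pi(\Gamma)\not\subset B$), or $\ell\subset B$ and $\Gamma=\pi^{-1}(\ell)_{\mathrm{red}}$ (so $\pi(\Gamma)\subset B$); the remaining case $\ell\not\subset B$ with $\pi^{-1}(\ell)$ irreducible produces no curve of degree $1$.

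For a splitting line with $\ell\not\subset B$ that avoids $\Sing X$, I would apply Lemma~\ref{lem:exclcurve2} to the pencil $\mcP=\{\pi^{-1}(\Pi):\Pi\supset\ell\}\sim -K_X$, whose general member $S$ is a normal K3 surface. For two general members one has $T|_S=\Gamma_++\Gamma_-$, and since the $\Gamma_\pm$ are $(-2)$-curves on $S$ with $(\Gamma_++\Gamma_-)^2=(\pi|_S^*\ell)^2=2$, it follows that $(\Gamma_+\cdot\Gamma_-)_S=3\ge 1=(-K_X\cdot\Gamma_-)$. Thus Lemma~\ref{lem:exclcurve2} excludes $\Gamma_+$, and by symmetry $\Gamma_-$. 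When $\ell$ meets $\Sing X$ the surface $S$ acquires Du Val points along $\Gamma_\pm$, $(\Gamma_\pm^2)_S$ must be computed through Lemma~\ref{lem:selfintS}, and the numerical inequality of Lemma~\ref{lem:exclcurve2} can fail; this is exactly the situation left open as the possible exception in the statement.

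The essential difficulty is the case $\ell\subset B$. Here the same pencil is useless: its base locus is the non-reduced curve $2\Gamma$, so $T|_S=2\Gamma$ and Lemma~\ref{lem:exclcurve2} cannot apply; on the K3 surface $\Gamma$ even has positive self-intersection $(\Gamma^2)_S=\tfrac12$, and a naive restriction yields only the vacuous bound $\mult_\Gamma\mcM\le 2n$. To produce a contradiction I would restrict $\mcM$ instead to a surface on which $\Gamma$ is sufficiently negative. For a general cubic $c_3$ vanishing on $\ell$, put $S'=(w+c_3=0)\cap X\in|-3K_X|$; then $\pi$ identifies $S'$ with the sextic surface $\bar S=(g_6+c_3^2=0)\subset\mbP^3$, which contains $\ell$, is smooth along $\ell$ away from $\Sing B$, and satisfies $(\Gamma^2)_{S'}=(\ell^2)_{\bar S}=-4$ by adjunction.

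Now suppose $\Gamma$ is a maximal center, so $\nu:=\mult_\Gamma\mcM>n$. For two general $D_1,D_2\in\mcM$ write $D_i|_{S'}=\nu\Gamma+\Omega_i$ with $\Omega_i$ effective and not containing $\Gamma$. Comparing the global intersection $(D_1\cdot D_2\cdot S')_X=3n^2(-K_X)^3=6n^2$ with its expansion on $S'$, and using $(\Gamma^2)_{S'}=-4$ together with $(\Gamma\cdot\Omega_i)_{S'}=(D_i\cdot\Gamma)_X-\nu(\Gamma^2)_{S'}=n+4\nu$, I would obtain $(\Omega_1\cdot\Omega_2)_{S'}=6n^2-2n\nu-4\nu^2=2(3n+2\nu)(n-\nu)<0$, which is impossible since $\Omega_1,\Omega_2$ are effective with no common component. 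This excludes the lines in $B$. The point requiring the most care is to guarantee that $\mcM|_{S'}$ has no fixed curve besides $\Gamma$, which I would secure by choosing $c_3$ generically, so that $\bar S$ is a general sextic through $\ell$ with Picard lattice $\langle H,\ell\rangle$ and $\ell$ its only irreducible negative curve; and, when $\Gamma$ meets $\Sing X$, to recompute $(\Gamma^2)_{S'}$ via Lemma~\ref{lem:selfintS} and check it stays negative enough. This last verification is precisely where the hypothesis $\pi(\Gamma)\subset B$, as opposed to the exceptional case $\pi(\Gamma)\not\subset B$ through a singular point, is used.
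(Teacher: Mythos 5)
Your reduction to degree-$1$ curves via Lemma~\ref{lem:exclcurve1} and your exclusion of split lines with $\ell\not\subset B$ avoiding $\Sing X$ (via the pencil of hyperplane sections through $\ell$ and Lemma~\ref{lem:exclcurve2}, with $(\Gamma_+\cdot\Gamma_-)_S=3$) are correct; the latter is essentially the computation of Proposition~\ref{prop:SDScurve} specialized to the case of no singular points. Where you genuinely diverge from the paper is the case $\ell\subset B$, and there your argument has a real gap. The paper's proof of this case is a short local trick: choose a point $\msp\in\Gamma$ lying in the smooth locus of $X$ and on no base curve of $\mcM$ other than $\Gamma$ (possible because \emph{every} point of $\Gamma$ maps into $B$, so $\msp$ ranges over an infinite set), take $L$ a general line tangent to $B$ at $\pi(\msp)$ and $\Delta=\pi^{-1}(L)$, a degree-$2$ curve singular at $\msp$ and not in $\Bs\mcM$; then $2n=(\mcM\cdot\Delta)\ge \mult_{\msp}\mcM\cdot\mult_{\msp}\Delta\ge 2\,\mult_{\Gamma}\mcM>2n$ is a contradiction. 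This is completely insensitive to whether $\Gamma$ meets $\Sing X$.

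Your replacement, restricting to $S'=(w+c_3=0)\cap X$ and using $(\Gamma^2)_{S'}=-4$, works only when $\ell\cap\Sing B=\emptyset$. But a line $\ell\subset B$ through a singular point of $B$ (equivalently, $\Gamma$ meeting $\Sing X$) is \emph{not} among the permitted exceptions of the statement, since the exception requires $\pi(\Gamma)\not\subset B$; you must exclude it here. At each point of $\ell\cap\Sing B$ the surface $\bar S=(g_6+c_3^2=0)$ is singular (its gradient along $\ell$ equals that of $g_6$ because $c_3$ vanishes on $\ell$), so Lemma~\ref{lem:selfintS} gives $(\Gamma^2)_{S'}=-4+c$ with $c>0$. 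Redoing your computation, $(\Omega_1\cdot\Omega_2)_{S'}=6n^2-2n\nu-(4-c)\nu^2$, which equals $c\,n^2\ge 0$ at $\nu=n$ and therefore stays nonnegative for $\nu$ slightly larger than $n$; already a single $A_1$ point of $\bar S$ on $\ell$ (correction $c=\tfrac12$) kills the contradiction for $n<\nu\lesssim 1.05\,n$, and there is no a priori bound keeping $c$ small, since $B$ may have $A_m$ points of large $m$ on $\ell$. A secondary issue: your justification that $\Omega_1,\Omega_2$ share no component, via ``$\bar S$ is a general sextic with Picard lattice $\langle H,\ell\rangle$'', is not available --- $g_6+c_3^2$ is a very special sextic (it contains the degree-$18$ curve $(c_3=g_6=0)$, for instance) --- though this point is repairable by observing that the linear system $\{(w+c_3=0)\cap X\}$ has base locus exactly $\Gamma$, so a general member contains no other base curve of $\mcM$. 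The clean fix is to replace your treatment of $\ell\subset B$ by the tangent-line argument above.
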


\begin{proof}
Let $\Gamma$ be a curve on $X$.
Suppose that $\Gamma$ is a maximal center.
Then $(- K_X \cdot \Gamma) < (-K_X)^3 = 2$ by Lemma~\ref{lem:exclcurve1}.
It follows that $\Gamma$ is a curve of degree~$1$ on~$X$.
By the assumption, there exists a movable linear system $\mcM \sim - n K_X$ on~$X$ such that the pair $(X, \frac{1}{n} \mcM)$ is not canonical along $\Gamma$.
In particular we have $\mult_{\Gamma} \mcM > n$.

In the following, we merely repeat the arguments in the proof of \cite[Lemma 3.8]{CP10} for readers' convenience.
Suppose that $\pi (\Gamma)$ is contained in the sextic surface $B \subset \mbP^3$.
We choose and fix a smooth point $\msp \in \Gamma$ which is contained in the smooth locus of $X$ and which is not contained in any curve in $\Bs \mcM$ other than $\Gamma$.
Let $L \subset \mbP^3$ be a general line tangent to $B$ at $\pi (\msp)$ and let $\Delta$ be the inverse image of $L$ by $\pi$.
We see that $\Delta$ is a curve of degree $2$ which is singular at $\msp$.
Moreover, by our choice of $\msp$, $\Delta$ is not contained in the base locus of $\mcM$.
Then we obtain
\[
2 n = (\mcM \cdot \Delta) \ge \mult_{\msp} \mcM \mult_{\msp} \Delta \ge \mult_{\Gamma} \mcM \mult_{\msp} \Delta > 2 n.
\]
This is a contradiction.
\end{proof}

In the following, let $\Gamma$ be a curve of degree $1$ such that $\Gamma \cap \Sing X \ne \emptyset$ and $\pi (\Gamma) \not\subset B$.
The image $\pi (\Gamma) \subset \mbP^3$ is a line and we can write
\[
\Gamma = (\ell_1 = \ell_2 = w - g_3 = 0),
\]
where $\ell_1, \ell_2 \in \mbC [x, y, z, t]$ are linear forms and $g_3 \in \mbC [x, y, z, t]$ is a cubic form.
We define $\mcP$ to be the linear subsystem of $\left| -K_X \right|$ consisting of members containing $\Gamma$, which is the pencil generated by $\ell_1$ and $\ell_2$, and let $S, T$ be distinct general members of $\mcP$.
We have $T|_S = \Gamma + \Delta$, where
\[
\Delta = (\ell_1 = \ell_2 = w + g_3 = 0).
\]
By the assumption that $\pi (\Gamma) \not\subset B$, we have $g_3 \ne 0$, in other words, $\Delta \ne \Gamma$.
Consider the scheme-theoretic intersection
\begin{equation} \label{eq:GamDelint}
\Gamma \cap \Delta = (\ell_1 = \ell_2 = w = g_3 = 0) \subset \mbP (1, 1, 1, 1, 3),
\end{equation}
which consists of $3$ points counting with multiplicity.

\begin{Rem} \label{rem:GamDel}
We have $\Gamma \cap \Delta = \pi^{-1} (\pi (\Gamma) \cap B)$ set-theoretically.
A singular point $\msp \in X$ necessary satisfies $\pi (\msp) \in B$, so that a point $\msp \in \Gamma \cap \Sing X$ is contained in $\Gamma \cap \Delta$.
\end{Rem}

We choose and fix a point $\msp \in \Gamma$ which is a $cA_k$ point of $X$, where $k = 1, 2$.
We choose homogeneous coordinates $x, y, z, t, w$ of $\mbP (1, 1, 1, 1, 3)$ so that $X$ is defined by
\begin{equation} \label{eq:SDeq}
- w^2 + x^4 f_2 + x^3 f_3 + x^2 f_4 + x f_5 + f_6 = 0,
\end{equation}
where $f_i = f_i (y, z, t)$ is a homogeneous polynomial of degree $i$, and $\msp = (1\!:\!0\!:\!0\!:\!0\!:\!0)$.
This choice of coordinates is possible by \cite[Theorem A]{Pae21}.
Replacing coordinates $y, z, t$ (while fixing the point $\msp = (1\!:\!0\!:\!0\!:\!0\!:\!0)$), we may assume $\pi (\Gamma) = (z = t = 0) \subset \mbP^3$, that is, $\ell_1 = z$ and $\ell_2 = t$.
Then we can write
\begin{equation} \label{eq:SDGamma}
\Gamma = (z = t = w - (\lambda x^2 y + \mu x y^2 + \nu y^3) = 0) \subset \mbP (1, 1, 1, 1, 3),
\end{equation}
for some $\lambda, \mu, \nu \in \mbC$.
Note that at least one of $\lambda, \mu, \nu$ is nonzero since $\pi (\Gamma) \not\subset B$.
Note also that
\begin{equation} \label{eq:SDgamma}
x^4 f_2 (y, 0, 0) + x^3 f_3 (y, 0, 0) + \cdots + f_6 (y, 0, 0) = (\lambda x^2 y + \mu x y^2 + \nu y^3)^2
\end{equation}
since $\Gamma \subset X$.
With this choice of coordinates, the pencil $\mcP$ is generated by $z$ and~$t$.
The point $\msp$ is a multiplicity $1$ (resp.\ $2$, resp.\ $3$) point of $\Gamma \cap \Delta$ if and only if $\lambda \ne 0$ (resp.\ $\lambda = 0$ and $\mu \ne 0$, resp.\ $\lambda = \mu = 0$ and $\nu \ne 0$).
Let $S$ be a general member of $\mcP$.
We analyze the type of $G (S, \msp, \Gamma)$.

\begin{Lem} \label{lem:extGSD1}
Under the notation and assumption as above, let $\msp \in \Gamma$ be a $cA_1$ point of $X$.
Then the following assertions hold.
\begin{enumerate}
\item If $\mult_{\msp} \Gamma \cap \Delta = 1$, then $G (S, \msp, \Gamma)$ is of type $A_{1, 1}$.
\item If $\mult_{\msp} \Gamma \cap \Delta = 2$, then $G (S, \msp, \Gamma)$ is of type $A_{1, 1}$ or $A_{3, 2}$.
\item If $\mult_{\msp} \Gamma \cap \Delta = 3$, then $G (S, \msp, \Gamma)$ is of type $A_{1, 1}$, $A_{3, 2}$ or $A_{5, 3}$.
\end{enumerate}
\end{Lem}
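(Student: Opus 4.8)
The plan is to reduce the statement to a purely local computation on the surface $S$ near $\msp$ and to read off the resolution from a double-cover picture. I would work in the affine chart $\{x = 1\}$ with coordinates $(y, z, t, w)$, in which $\msp$ is the origin and $X$ is cut out by $F = -w^2 + f_2 + f_3 + \dots + f_6$, while $\Gamma = (z = t = 0,\ w = p(y))$ with $p(y) = \lambda y + \mu y^2 + \nu y^3$. Since every member of $\mcP$ contains the line $\pi(\Gamma) = (z = t = 0)$, a general $S$ is cut out by a general hyperplane through that line; introducing a parameter $u$ so that $z = c_1 u$ and $t = c_2 u$ on $S$ for general $c_1, c_2$, the surface becomes $-w^2 + R(y, u) = 0$ in $\mbA^3$, where $R(y, u) = \sum_i f_i(y, c_1 u, c_2 u)$. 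Thus $S$ is the double cover of the $(y,u)$-plane branched over the curve $C = (R = 0)$, with tangent cone at $\msp$ equal to the binary quadratic $\bar f_2(y, u) = f_2(y, c_1 u, c_2 u)$, and the covering involution is $w \mapsto -w$.

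The crucial input is the identity \eqref{eq:SDgamma}, from which I would deduce $R(y, 0) = p(y)^2$. Consequently the preimage of the line $(u = 0)$ splits into the two branches $w = \pm p(y)$, which are exactly $\Gamma$ and the local branch of $\Delta$, exchanged by the covering involution. Moreover $\mult_{\msp}(\Gamma \cap \Delta) = \ord_y p(y)$, which equals $1$, $2$, $3$ in the three cases, and $C$ meets the line $(u = 0)$ at $\msp$ with multiplicity $2\,\ord_y p$.

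The heart of the argument is then the resolution of $(S, \msp)$ together with the position of $\hat{\Gamma}$. Using the standard description of a Du Val singularity of type $A_n$ as a double cover branched over a curve singularity of type $A_n$ (carried out by explicit blow-ups of $C$), I would show that $(S, \msp)$ is of type $A_{2j-1}$ for some $1 \le j \le \ord_y p$, and that the two involution-conjugate branches $\hat{\Gamma}, \hat{\Delta}$ both meet the central $(-2)$-curve $E_j$ of the chain. Symmetry of the chain under the involution forces its length to be odd and pins $\hat{\Gamma}$ to the central curve, which is precisely why only type $A_{2j-1, j}$ can occur. Running over the three cases then yields: type $A_{1,1}$ when $\ord_y p = 1$; type $A_{1,1}$ or $A_{3,2}$ when $\ord_y p = 2$; and type $A_{1,1}$, $A_{3,2}$ or $A_{5,3}$ when $\ord_y p = 3$.

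The main obstacle is controlling the exact value of $j$, i.e.\ pinning down the $A_n$-type of the branch curve $C$ at $\msp$. This is where the hypothesis that $\msp$ is a $cA_1$ point is essential: it forces $f_2$ to have rank at least $2$, which bounds the contact of $C$ with the line $(u = 0)$ and rules out the more degenerate ($cA_2$-type) or non-$A$ behaviour; combined with the vanishing order of $p$, this gives the bound $j \le \ord_y p$ together with the precise admissible list. The delicate bookkeeping lies in the degenerate cases $\lambda = 0$ and $\lambda = \mu = 0$, where after completing the square in $u$ one must verify that the reduced branch equation has $y$-order exactly $2j$, and not larger, so that the correct $A_{2j-1,j}$ is obtained.
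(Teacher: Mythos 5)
Your reduction to the double-cover picture is sound and agrees with the paper's setup (the paper likewise writes $S \cap U_x$ as $-w^2 + R(y,z) = 0$ with $R(y,0) = p(y)^2$ and then resolves), and your final list of types is the correct one. The gap is in the step that does all the work: the claim that ``symmetry of the chain under the involution forces its length to be odd and pins $\hat{\Gamma}$ to the central curve.'' The covering involution $w \mapsto -w$ exchanges $\Gamma$ and $\Delta$ and acts on the $A_n$-chain by the flip $E_i \leftrightarrow E_{n+1-i}$ (it sends $w - v$ to $-(w+v)$ in the normal form $(w-v)(w+v) = \zeta^{n+1}$), so it only tells you that if $\hat{\Gamma}$ meets $E_k$ then $\hat{\Delta}$ meets $E_{n+1-k}$ --- not that $k = n+1-k$. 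The identical symmetry is present at $cA_2$ points, where Lemma~\ref{lem:extGSD2}(1) yields the graph $A_{2,1}$: an even-length chain with $\hat{\Gamma}$ and $\hat{\Delta}$ meeting the two opposite ends. (Equivalently, in the local class group $\mbZ/(n+1)$ one has $[\Gamma]+[\Delta] = [\operatorname{div}(z)] = 0$, which is compatible with $\hat{\Gamma}, \hat{\Delta}$ meeting $E_k, E_{n+1-k}$ for \emph{any} $k$.) So oddness and centrality are not formal consequences of the splitting of the preimage of the line plus the involution; they must be read off from the equations.

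What actually pins down $k$ is the following: writing the germ as $UV = P(y)\cdot(\text{unit})$ with $U = w - \sqrt{\eta}\,z'$, $V = w + \sqrt{\eta}\,z'$ after completing the square in $z$, one has $n+1 = \ord_y P$, $k = \ord_y (U|_{\Gamma})$ and $n+1-k = \ord_y (V|_{\Gamma})$, so you must check that these two orders are \emph{equal} (to $\ord_y p$ in the worst subcase). This is where the $cA_1$ hypothesis really enters --- not, as you write, by ``bounding the contact of $C$ with the line $(u=0)$'' (that contact equals $2\,\ord_y p$ by construction, as you yourself note earlier, independently of the singularity type of $\msp$), but by forcing $\rank f_2 \ge 2$, which for a \emph{general} member of $\mcP$ guarantees that the leading coefficients of $U|_{\Gamma}$ and $V|_{\Gamma}$ do not cancel: a cancellation for all $\theta$ would force the binary form $q(z,t)$ to be a perfect square, contradicting $\rank f_2 \ge 2$. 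This non-cancellation is exactly the ``delicate bookkeeping'' your proposal defers, and it is the mathematical content of the lemma; the paper carries it out by explicit blow-ups in each subcase ($\ell \ne 0$ versus $\ell = 0$, and $\alpha_2 \ne 0$ versus $\alpha_2 = 0$). As written, your argument would prove too much --- it would equally ``show'' that a $cA_2$ point gives an odd chain with $\hat{\Gamma}$ central, which is false.
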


\begin{proof}
We choose homogeneous coordinates of $\mbP := \mbP (1, 1, 1, 1, 3)$ as above, that is, $\msp = (1\!:\!0\!:\!0\!:\!0\!:\!0)$, $X$ is defined by the equation \eqref{eq:SDeq}, and $\Gamma$ is the one given in \eqref{eq:SDGamma}.
In the following, we assume that a general member $S \in \mcP$ is cut out by the equation $t = \theta z$ for a general $\theta \in \mbC$.
We set $U_x := \mbP (1, 1, 1, 1, 3) \setminus (x = 0)$ and, by a slight abuse of notation, we identify $U_x$ with the affine space $\mbA^4$ with affine coordinates $y, z, t, w$.

We first prove (1).
Suppose $\mult_{\msp} \Gamma \cap \Delta = 1$, that is, $\lambda \ne 0$.
In view of the equation \eqref{eq:SDgamma}, we can write
\[
f_2 = \lambda^2 y^2 + y \ell (z, t) + q (z, t),
\]
where $\ell (z, t)$ and $q (z, t)$ are linear and quadratic forms, respectively.
Note that at least one of $\ell (z, t)$ and $q (z, t)$ is nonzero since $\msp \in X$ is of type $cA_1$.
By eliminating the variable $t = \theta z$, $S$ is defined in $\mbP (1, 1, 1, 1, 3)$ by the equation
\[
- w^2 + x^4 (\lambda^2 y^2 + \zeta y z + \eta z^2) + x^3 f_3 (y, z, \theta z) + \cdots = 0,
\]
where $\zeta, \eta \in \mbC$ satisfy $(\zeta, \eta) \ne (0,0)$.
By setting $x = 1$, $S \cap U_x$ is isomorphic to the hypersurface in $\mbA^3_{y, z, w}$ defined by
\[
- w^2 + \lambda^2 y^2 + \zeta y z + \eta z^2 + h_{\ge 3} (y, z) = 0,
\]
where $h_{\ge 3} (y, z)$ is a polynomial of order at least $3$.
The point $\msp = (1\!:\!0\!:\!0\!:\!0\!:\!0)$ corresponds to the origin and the curve $\Gamma$ is defined in $\mbA^3$ by the equations
\begin{equation} \label{eq:GammacA1m1}
z = w - \lambda y - \mu y^2 - \nu y^3 = 0.
\end{equation}
It is easy to see that $\msp \in S$ is of type $A_1$ and the proper transform via the blowup of $\msp \in S$ of $\Gamma$ intersects the exceptional divisor transversally at one point.
It follows that $G (S, \msp, \Gamma)$ is of type $A_{1, 1}$.

We prove (2).
Suppose $\mult_{\msp} \Gamma \cap \Delta = 2$, that is, $\lambda = 0$ and $\mu \ne 0$.
We can write
\[
f_2 = y \ell (z, t) + q (z, t),
\]
where $\ell (z, t)$ and $q (z, t)$ are linear and quadratic forms, respectively.
Let $\zeta, \eta \in \mbC$ be such that $\ell (z, \theta z) = \zeta z$ and $q (z, \theta z) = \eta z^2$.
Then, by eliminating $t$ and setting $x = 1$, $S \cap U_x$ is isomorphic to the hypersurface in $\mbA^3_{y, z, w}$ defined by the equation
\begin{equation} \label{eq:ScA1m2}
- w^2 + \zeta y z + \eta z^2 + z (h_2 (y, z) + h_3 (y, z)) +\mu^2 y^4 + h_{\ge 5} (y, z) = 0,
\end{equation}
where $h_2, h_3 \in \mbC [y, z]$ are homogeneous polynomials of degree $2, 3$, respectively, and $h_{\ge 5} \in \mbC [y, z]$ is a polynomial of order at least $5$.
The curve $\Gamma$ is defined in $\mbA^3$ by the equations
\begin{equation} \label{eq:GammacA1m2}
z = w - \mu y^2 - \nu y^3 = 0.
\end{equation}

Suppose that $\ell (z, t) \ne 0$.
Then $\zeta \ne 0$, and it is easy to see that $\msp \in S$ is of type $A_1$ and $G (S, \msp, \Gamma)$ is of type $A_{1, 1}$.

Suppose that $\ell (z, t) = 0$.
Then $\zeta = 0$ and $\eta \ne 0$.
The exceptional divisor $E$ of the blow-up $\tilde{S} \to S$ of $S$ at $\msp$ is isomorphic to the hypersurface
\[
(- w^2 + \eta z^2 = 0) \subset \tilde{\mbP}^2 := \mbP^2_{y, z, w},
\]
where the polynomial $-w^2 + \eta z^2$ is the least degree part of \eqref{eq:ScA1m2}.
It follows that $E = E_+ + E_-$, where $E_{\pm} = (w \pm \sqrt{\eta} z = 0)$ are smooth rational curves which meet only at $\msq := (1\!:\!0\!:\!0) \in \tilde{\mbP}^2$.
By \eqref{eq:GammacA1m2}, the proper transform $\tilde{\Gamma} \subset \tilde{S}$ of $\Gamma$ intersect $E$ only at $\msq$.
The point $\tilde{\msp}$ is in the $y$-chart, denoted by $\tilde{S}_y$, of the blow-up $\tilde{S} \to S$, and $\tilde{S}_y$ is the hypersurface in $\tilde{\mbA}^3 := \mbA^3_{y, z, w}$ defined by
\[
- w^2 + \eta z^2 + z y (h_2 (1, z) + y h_3 (1, z)) + \mu^2 y^2 + h_{\ge 5} (y, y, z)/y^2 = 0.
\]
Note that $\tilde{\msp}$ corresponds to the origin, and it is an ordinary double point of $\tilde{S}$.
Moreover, on $\tilde{S}_y$, we have
\[
\begin{split}
\tilde{\Gamma} &=  (z = w - \mu y - \nu y^2 = 0) \subset \tilde{\mbA}^3, \\
E_{\pm} &= (y = w - \sqrt{\eta} z = 0) \subset \tilde{\mbA}^3.
\end{split}
\]
Let $\hat{S} \to \tilde{S}$ be the blowup of $\tilde{S}$ at $\tilde{\msp}$ with exceptional curve $F$.
Then $\hat{S}$ is smooth along $F$,
\[
F \cong (-w^2 + \eta z^2 + \mu^2 y^2 = 0) \subset \hat{\mbP}^2 = \mbP^2_{y, z, w}
\]
 is a smooth rational curve, and $\hat{\Gamma} \cap F = \{(1\!:\!0\!:\!\mu)\}$, $F \cap \hat{E}_{\pm} = \{(0\!:\!1\!:\! \pm \sqrt{\eta})\}$, $\hat{\Gamma} \cap \hat{E}_{\pm} = \emptyset$.
 This shows that $G (S, \msp, \Gamma)$ is of type $A_{3, 2}$.

We prove (3).
Suppose $\mult_{\msp} \Gamma \cap \Delta = 3$, that is, $\lambda = \mu = 0$ and $\nu \ne 0$.
We can write
\[
f_2 = y \ell (z, t) + q (z, t)
\]
as before.
If $\ell (z, t) \ne 0$, then $G (S, \msp, \Gamma)$ is of type $A_{1, 1}$ as in the previous case.
We assume $\ell (z, t) = 0$.
Then $q (z, t) \ne 0$, and by eliminating $t$ and setting $x = 1$, $S \cap U_x$ is isomorphic to the hypersurface in $\mbA^3_{y, z, w}$ defined by the equation
\[
-w^2 + \eta z^2 + z h (y, z) + \nu^2 y^6 = 0,
\]
where $\eta \in \mbC$ is nonzero and $h (y, z)$ is a polynomial consisting of monomials of degree $2, 3, 4, 5$.
The curve $\Gamma$ is defined in $\mbA^3$ by the equations
\begin{equation} \label{eq:GammacA1m3}
z = w - \nu y^3 = 0.
\end{equation}
We write
\[
h (y, 0) = \alpha_2 y^2 + \alpha_3 y^3 + \alpha_4 y^4 + \alpha_5 y^5,
\]
where $\alpha_i \in \mbC$.
Let $\tilde{S} \to S$ be the blow-up of $S$ at $\msp$ with exceptional divisor $E$.
We see that $E = E_+ + E_-$, where
\[
E_{\pm} := (w \pm \sqrt{\eta} z = 0) \subset \tilde{\mbP}^2 = \mbP^2_{y, z, w},
\]
are smooth rational curves which meet only at $\tilde{\msp} := (1\!:\!0\!:\!0) \in \tilde{\mbP}^2$.
Moreover the proper transform $\tilde{\Gamma} \subset \tilde{S}$ of $\Gamma$ meet $E_{\pm}$ only at $\tilde{\msp}$.
The $y$-chart $\tilde{S}_y$ of the blow-up $\tilde{S} \to S$ is the hypersurface in $\tilde{\mbA}^3 := \mbA^3_{y, z, w}$ defined by
\[
- w^2 + \eta z^2 + z y (\alpha_2 + \alpha_3 y + \alpha_4 y^2 + \alpha_5 y^3 + \cdots) + \nu^2 y^4 = 0,
\]
where the omitted terms consist of monomials divisible by $z$.
The point $\tilde{\msp}$ corresponds to the origin of $\tilde{\mbA}^3$.
On this chart $\tilde{S}_y$, we have
\[
\begin{split}
\tilde{\Gamma} &= (z = w  - \nu y^2 = 0) \subset \tilde{\mbA}^3, \\
E_{\pm} &= (y = w \pm \sqrt{\eta} z = 0) \subset \tilde{\mbA}^3.
\end{split}
\]
Now let $\hat{S} \to \tilde{S}$ be the blow-up of $\tilde{S}$ at $\tilde{\msp}$ with exceptional divisor
\[
F \cong (- w^2 + \eta z^2 + \alpha_2 z y = 0) \subset \hat{\mbP}^2 := \mbP^2_{y, z, w}.
\]
The proper transform $\hat{\Gamma} \subset \hat{S}$ of $\Gamma$ intersects $F$ only at the point $\hat{\msp} := (1\!:\!0\!:\!0) \in \hat{\mbP}^2$.
We denote by $\hat{S}_y$ the $y$-chart of the blow-up $\hat{S} \to \tilde{S}$ at $\tilde{\msp}$, which is the hypersurface in $\hat{\mbA}^3$ defined by
\[
- w^2 + \eta z^2 + z (\alpha_2 + \alpha_3 y + \alpha_4 y^2 + \alpha_5 y^3 + \cdots) + \nu^2 y^2 = 0.
\]
The point $\hat{\msp}$ corresponds to the origin of $\hat{\mbA}^3$, and $\hat{S}$ has an ordinary double point at $\hat{\msp}$.
Note that the proper transforms $\hat{E}_{\pm} \subset \hat{S}$ of $E_{\pm}$ are both disjoint from the $y$-chart $\hat{S}_y$.

Suppose that $\alpha_2 \ne 0$.
In this case $\tilde{S}$ has an ordinary double point at $\tilde{\msp}$.
We see that $\hat{\Gamma}$ intersects $F$ transversally at $\hat{\msp}$, $F \cap \hat{E}_{\pm} = \{(0\!:\!1\!:\!\pm \sqrt{\eta})\}$, and $\hat{E}_{\pm} \cap \hat{\Gamma} = \emptyset$.
This shows that $G (S, \msp, \Gamma)$ is of type $A_{3, 2}$.

Suppose that $\alpha_2 = 0$.
In this case we have $F = F_+ + F_-$, where
\[
F_{\pm} \cong (w \pm \sqrt{\eta} z = 0) \subset \hat{\mbP}^2,
\]
and we have $F_+ \cap F_- \cap \hat{\Gamma} = \{\hat{\msp}\}$.
On the $y$-chart $\hat{S}_y$, we have
\[
\begin{split}
\hat{\Gamma} &= (z = w - \nu y = 0) \subset \hat{\mbA}^3, \\
F_{\pm} &= (y = w \pm \sqrt{\eta} z = 0) \subset \hat{\mbA}^3.
\end{split}
\]
Let $\breve{S} \to \hat{S}$ be the blow-up of $\hat{S}$ at $\hat{\msp}$ with exceptional divisor $G$.
We have an isomorphism
\[
G \cong (- w^2 + \eta z^2 + \alpha_3 z y + \nu^2 y^2 = 0) \subset \breve{\mbP}^2_{y, z, w},
\]
and hence $G$ is a smooth rational curve.
Let $\breve{\Gamma}$, $\breve{E}_{\pm}$, and $\breve{F}_{\pm}$ be proper transforms of $\Gamma$, $E_{\pm}$, and $F_{\pm}$ on $\breve{S}$, respectively.
We have $G \cap \breve{\Gamma} = \{(1\!:\!0\!:\!\nu)\}$ and $G \cap \breve{F}_{\pm} = \{(0\!:\!1\!:\!\pm \sqrt{\eta})\}$.
This shows that $G (S, \msp, \Gamma)$ is of type $A_{5, 3}$.
This completes the proof.
\end{proof}

\begin{Lem} \label{lem:extGSD2}
Under the notation and assumption as above, let $\msp \in \Gamma$ be a $cA_2$-point of $X$.
Then the following assertions hold.
\begin{enumerate}
\item If $\mult_{\msp} \Gamma \cap \Delta = 1$, then $G (S, \msp, \Gamma)$ is of type $A_{2, 1}$.
\item If $\mult_{\msp} \Gamma \cap \Delta = 2$, then $G (S, \msp, \Gamma)$ is of type $A_{3, 2}$.
\item If $\mult_{\msp} \Gamma \cap \Delta = 3$, then $G (S, \msp, \Gamma)$ is of type $A_{3, 2}$ or $A_{5, 3}$.
\end{enumerate}
\end{Lem}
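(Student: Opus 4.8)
The plan is to run the opening of the proof of Lemma~\ref{lem:extGSD1} verbatim and then exploit the one structural feature that separates a $cA_2$ point from a $cA_1$ point. I keep the coordinates with $\msp=(1\!:\!0\!:\!0\!:\!0\!:\!0)$, the equation \eqref{eq:SDeq} of $X$, the curve $\Gamma$ as in \eqref{eq:SDGamma}, and I cut out $S$ by $t=\theta z$ for a general $\theta\in\mbC$; on the chart $x=1$ this presents $S$ as the hypersurface $-w^2+G(y,z)=0$ in $\mbA^3_{y,z,w}$, where $G(y,z)=\sum_{i=2}^6 f_i(y,z,\theta z)$. Everything then reduces to reading off the $A_n$-type of $\msp\in S$ (equivalently the plane-curve singularity type of $(G=0)$ at the origin) and the curve in the resolution chain to which the proper transform $\hat\Gamma$ of the sheet over the $y$-axis attaches.

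The structural input is this. Since $\msp\in X$ is of type $cA_2$, the quadratic part $-w^2+f_2$ of the local equation has rank $2$; as $w$ contributes a rank-one summand and does not appear in $f_2$, the form $f_2$ has rank $1$, so over $\mbC$ we may write $f_2=(\alpha y+\beta z+\gamma t)^2$ with $(\alpha,\beta,\gamma)\neq 0$. Comparing with \eqref{eq:SDgamma} forces $\alpha^2=\lambda^2$, and therefore the three cases of the lemma match the vanishing pattern of $\lambda,\mu,\nu$ exactly: in case~(1) one has $\lambda\neq0$ and hence $\alpha\neq0$, whereas in cases~(2) and~(3) one has $\lambda=0$, hence $\alpha=0$ and $f_2=(\beta z+\gamma t)^2$, which after $t=\theta z$ becomes $\eta z^2$ with $\eta=(\beta+\gamma\theta)^2\neq0$ for general $\theta$.

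In case~(1) I would set $u=\alpha y+\delta z$ with $\delta=\beta+\gamma\theta$, so that $G=u^2+(\text{higher order})$; completing the square in $u$ leaves a pure $z$-part whose leading term is cubic for general $\theta$, which is precisely where the hypothesis that $\msp\in X$ is no worse than $cA_2$ enters, giving $\msp\in S$ of type $A_2$. Since the sheet $\Gamma=(z=0,\ w=\lambda y+\cdots)$ is tangent to one of the two lines $w=\pm u$ of the tangent cone $w^2=u^2$, a single blow-up shows $\hat\Gamma$ meets the first exceptional curve transversally, so $G(S,\msp,\Gamma)$ is of type $A_{2,1}$. In cases~(2) and~(3) I would instead complete the square in $z$ (legitimate because $\eta\neq0$): writing the coefficient of $z^1$ in $G$ as $p(y)$ and the pure $y$-part as $C(y)$, the surface becomes $-w^2+\eta z'^2+\big(C(y)-p(y)^2/(4\eta)\big)+\cdots=0$, and the order in $y$ of the resulting pure $y$-part determines the type, an order-$k$ term yielding $A_{k-1}$. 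By \eqref{eq:SDgamma} one has $C(y)=(\mu y^2+\nu y^3)^2$, of order $4$ in case~(2) and order $6$ in case~(3), while $p(y)$ has order $\geq2$, its $y^2$-coefficient coming from the $y^2z$-term of $f_3$. For general $\theta$ no accidental cancellation occurs, so case~(2) has order $4$ and type $A_3$, while case~(3) has order $4$ or $6$ according as the $y^2$-coefficient of $p(y)$ is nonzero or identically zero, giving type $A_3$ or $A_5$. Finally, because $\Gamma$ is one of the two sheets of the double cover over the smooth curve $z=0$, interchanged by the involution $w\mapsto-w$, its proper transform attaches at the center of the chain, namely at $E_2$ for $A_3$ and at $E_3$ for $A_5$; this gives $A_{3,2}$ in case~(2) and $A_{3,2}$ or $A_{5,3}$ in case~(3).

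The step I expect to be most delicate is pinning down the attachment index: deciding which exceptional curve $\hat\Gamma$ meets requires running the explicit chain of blow-ups and checking at each stage whether $\hat\Gamma$ passes through the infinitely near singular point, and the clean answer (attachment at the center of the chain) ultimately rests on the $w\mapsto-w$ symmetry rather than on a bare computation. A secondary subtlety is ensuring the $A_n$-type does not jump for the chosen $S$---in particular excluding a cancellation between $C(y)$ and $p(y)^2/(4\eta)$ in case~(2)---which is handled by the genericity of $\theta$ together with the standing hypothesis that $\msp\in X$ is exactly of type $cA_2$.
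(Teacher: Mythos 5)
Your proposal follows the paper's proof of this lemma essentially step for step: the same coordinates, the same slice $t=\theta z$, the observation that the $cA_2$ hypothesis forces $f_2$ to be the square of a linear form, then a square-completion to read off the $A_n$-type of $\msp\in S$, and a chain of blow-ups to locate where $\hat\Gamma$ attaches. Case~(1) is fine, and in fact slightly more careful than the paper, which asserts $f_2=\lambda^2y^2$ where your $f_2=(\alpha y+\beta z+\gamma t)^2$ is all that the rank computation gives.

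The problem sits exactly at the point you flag as a ``secondary subtlety,'' and the justification you offer does not close it. In case~(2) the order-$4$ coefficient of the residual pure-$y$ part is $\mu^2-p_2^2/(4\eta)$, where $p_2=a+b\theta$ comes from the terms $ay^2z+by^2t$ of $f_3$ and $\eta=(\beta+\gamma\theta)^2$ with $f_2=(\beta z+\gamma t)^2$. Genericity of $\theta$ kills the cancellation only when $a+b\theta\equiv\pm 2\mu(\beta+\gamma\theta)$ fails as an identity in $\theta$; if the $y^2$-linear part of $f_3$ equals $\pm 2\mu y^2(\beta z+\gamma t)$, the order-$4$ terms cancel for \emph{every} $\theta$. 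The $cA_2$ hypothesis does not exclude this either, since it only constrains $f_3$ on the plane $w=\beta z+\gamma t=0$. Concretely, $f_2=z^2$, $f_3=2\mu y^2z+yt^2$, $f_4=\mu^2y^4$, $f_5=2\mu\nu y^5$, $f_6=\nu^2y^6$ is compatible with \eqref{eq:SDgamma} and gives a $cA_2$ point (the residual two-variable function has nonzero cubic part $yt^2$), yet after completing the square the equation of $S$ becomes $-w^2+(1+\theta^2y)Z^2+(\mu^2\theta^2+2\mu\nu)y^5+\cdots=0$, an $A_4$ point, so $G(S,\msp,\Gamma)$ is not of type $A_{3,2}$. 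The same degeneration threatens case~(3) at order $6$. To be fair, the paper's own proof is equally terse here---it simply asserts ``$\msp\in S$ is of type $A_3$''---so your write-up is faithful to the published argument; but the reasons you give (general $\theta$ plus the $cA_2$ hypothesis) do not constitute a proof of the no-cancellation claim, and an actual argument, an extra hypothesis, or a weakening of the conclusion (e.g.\ an upper bound on the correction term sufficient for Proposition~\ref{prop:SDScurve}) is needed at this step.
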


\begin{proof}
We use the same notation as in the previous proof.

We prove (1).
Suppose $\mult_{\msp} \Gamma \cap \Delta = 1$, that is, $\lambda \ne 0$.
Since $\msp \in X$ is of type $cA_2$, we have $f_2 = \lambda^2 y^2$ and $f_3 (0, z, t) \ne 0$ as a polynomial.
Then, by eliminating $t$ and setting $x = 1$, $S \cap U_x$ is isomorphic to the hypersurface in $\mbA^3_{y, z, w}$ defined by the equation
\[
- w^2 + \lambda^2 y^2 + \zeta z^3 + \alpha y z^2 + \beta y^2 z + h_{\ge 4} (y, z) = 0,
\]
where $\alpha, \beta, \zeta \in \mbC$ with $\zeta \ne 0$ and $h_{\ge 4} (y, z)$ is a polynomial of order at least $4$.
The point $\msp = (1\!:\!0\!:\!0\!:\!0\!:\!0)$ corresponds to the origin and the curve $\Gamma$ is defined in $\mbA^3$ by the equations \eqref{eq:GammacA1m1}.
Let $\tilde{S} \to S$ be the blow-up of $S$ at $\msp$ with exceptional divisor $E$.
Then $E = E_+ + E_-$, where
\[
E_{\pm} \cong (w \pm \lambda y = 0) \subset \tilde{\mbP}^2 := \mbP^2_{y, z, w}.
\]
The surface $\tilde{S}$ is smooth outside the singular locus of $E$.
The singular locus of $E$ is the intersection point $\msq$ of $E_+$ and $E_-$, where $\msq = (0\!:\!1\!:\!0) \in \tilde{\mbP}^2$.
The point $\msq$ corresponds to the origin of the $z$-chart $\tilde{S}_z$ which is the hypersurface in $\tilde{\mbA}^3 := \mbA^3_{y, z, w}$ defined by
\[
- w^2 + \lambda^2 y^2 + \zeta z + \alpha y + \beta y^2 z + h_{\ge 4} (y z, z)/z^2 = 0.
\]
It follows that $\tilde{S}$ is smooth at $\tilde{\msp}$.
Then, since $\Gamma$ is defined by the equations \eqref{eq:GammacA1m1}, we have $\tilde{\Gamma} \cap E = \{\msq'\}$, where $\msq' := (1\!:\!0\!:\!\lambda) \in \tilde{\mbP}^2$.
We have $\msq' \in E_-$ and $\msq' \notin E_+$.
This shows that $G (S, \msp, \Gamma)$ is of type $A_{2,1}$.

We prove (2).
Suppose $\mult_{\msp} \Gamma \cap \Delta = 2$, that is, $\lambda = 0$ and $\nu \ne 0$.
We see that $f_2$ is a square of a linear form in $z$ and $t$.
We may assume $f_2 = z^2$ by replacing $z$ and $t$.
By eliminating $t$ and setting $x = 1$, $S \cap U_x$ is isomorphic to the hypersurface in $\mbA^3_{y, z, w}$ defined by the equation
\[
-w^2 + z^2 + z h_2 (y, z) + \mu^2 y^4 + h_{\ge 5} (y, z) = 0,
\]
where $h_2 (y, z)$ is a homogeneous polynomial of degree $2$ and $h_{\ge 5} (y, z)$ is a polynomial of order at least $5$.
The curve $\Gamma$ is defined in $\mbA^3$ by the equations \eqref{eq:GammacA1m2}.
By repeating similar computations as in (2) of Lemma \ref{lem:extGSD1}, we see that $\msp \in S$ is of type $A_3$ and $G (S, \msp, \Gamma)$ is of type $A_{3, 2}$.

We prove (3).
Suppose $\mult_{\msp} \Gamma \cap \Delta = 3$, that is, $\lambda = \mu = 0$ and $\nu \ne 0$.
As in previous arguments, $S \cap U_x$ is isomorphic to the hypersurface in $\mbA^3_{y, z, w}$ defined by the equation
\[
-w^2 + z^2 + z h (y, z) + \nu^2 y^6 = 0,
\]
where $h (y, z)$ is a polynomial consisting of monomials of degree $2, 3, 4, 5$.
The curve $\Gamma$ is defined in $\mbA^3$ by the equations \eqref{eq:GammacA1m3}.
By the same argument as in the proof of Lemma~\ref{lem:extGSD1} (3), we conclude that $G (S, \msp, \Gamma)$ is of type $A_{3, 2}$ or $A_{5, 3}$.
\end{proof}

\begin{table}[tb]

\newlength{\myheightt}
\setlength{\myheightt}{0.6cm}

\caption{Types of $G (S, \msp, \Gamma)$}
\label{table:extG}
\centering
\begin{tabular}{|cc|c|c|}
\hline
\parbox[c][\myheightt][c]{0cm}{} $\msp \in X$ & $\mult_{\msp} \Gamma \cap \Delta$ & $G (S, \msp, \Gamma)$ & Correct.\ term \\
\hline
\parbox[c][\myheightt][c]{0cm}{} $cA_1$ & $1$ & $A_{1,1}$ & $\frac{1}{2}$  \\
\hline
\parbox[c][\myheightt][c]{0cm}{} $cA_1$ & $2$ & $A_{1, 1}$ or $A_{3, 2}$ & $\frac{1}{2}$ or $1$ \\
\hline
\parbox[c][\myheightt][c]{0cm}{} $cA_1$ & $3$ & $A_{1, 1}$ or $A_{3, 2}$ or $A_{5, 3}$ & $\frac{1}{2}$ or $1$ or $\frac{3}{2}$ \\
\hline
\parbox[c][\myheightt][c]{0cm}{} $cA_2$ & $1$ & $A_{2,1}$ & $\frac{2}{3}$ \\
\hline
\parbox[c][\myheightt][c]{0cm}{} $cA_2$ & $2$ & $A_{3, 2}$ & $1$ \\
\hline
\parbox[c][\myheightt][c]{0cm}{} $cA_2$ & $3$ & $A_{3, 2}$ or $A_{5, 3}$ & $1$ or $\frac{3}{2}$  \\
\hline
\end{tabular}
\end{table}

The results obtained by Lemmas~\ref{lem:extGSD1} and~\ref{lem:extGSD2} are summarized in Table~\ref{table:extG}, where the correction term given in the right-most column is the number $k (n-k+1)/(n+1)$ (corresponding to the extended graph of type $A_{n, k}$) appearing in Lemma~\ref{lem:selfintS}.

\begin{Prop} \label{prop:SDScurve}
Let $X$ be a  factorial sextic double solid with at worst terminal singularities of type $cA_1$ and $cA_2$.
Then no curve on $X$ is a maximal center.
\end{Prop}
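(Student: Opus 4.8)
By Lemma~\ref{lem:SDmostcurves}, the only curves that can possibly be maximal centers are the degree-one curves $\Gamma$ meeting $\Sing X$ with $\pi(\Gamma) \not\subset B$, so the plan is to exclude each such $\Gamma$ by means of Lemma~\ref{lem:exclcurve2}. For the required pencil I would take $\mcP \subset |-K_X|$, the subsystem of members containing $\Gamma$ set up above, so that $\mcP \sim -K_X$ (giving $m = 1$) and, for distinct general members $S, T \in \mcP$, one has $T|_S = \Gamma + \Delta$ with $\Delta = (\ell_1 = \ell_2 = w + g_3 = 0)$ an irreducible reduced curve isomorphic to $\mbP^1$ and distinct from $\Gamma$ (since $\pi(\Gamma) \not\subset B$ forces $g_3 \ne 0$). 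As $\Delta$ is a section of $\pi$ over the line $\pi(\Gamma)$, we have $(-K_X \cdot \Delta) = 1$, so condition~(3) of Lemma~\ref{lem:exclcurve2} reduces to showing $(\Gamma \cdot \Delta)_S \ge 1$. One must also record that a general $S$ is normal: by the local computations of Lemmas~\ref{lem:extGSD1} and~\ref{lem:extGSD2} it carries a Du Val singularity of type $A$ at each $cA_1$ or $cA_2$ point of $X$ on $\Gamma$, and it is smooth along $\Gamma$ elsewhere.

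The heart of the argument is the evaluation of $(\Gamma \cdot \Delta)_S$ through the self-intersection $(\Gamma^2)_S$. The key observation is a Galois symmetry: the involution $w \mapsto -w$ of the double cover fixes every member of $\mcP$ (these are pulled back from $\mbP^3$ and so are $w$-independent) and interchanges $\Gamma$ and $\Delta$, whence $(\Gamma^2)_S = (\Delta^2)_S$. Combined with $\big((\Gamma + \Delta)^2\big)_S = \big((-K_X)^2 \cdot S\big) = (-K_X)^3 = 2$, this gives $(\Gamma \cdot \Delta)_S = 1 - (\Gamma^2)_S$, so condition~(3) becomes simply $(\Gamma^2)_S \le 0$. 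Since $S \in |-K_X|$ forces $K_S \sim 0$ by adjunction, Lemma~\ref{lem:selfintS} yields $(\Gamma^2)_S = -2 + \sum_i c_i$, where the sum is over the singular points $\msp_i$ of $X$ on $\Gamma$ and $c_i$ is the correction term attached to the type of $G(S, \msp_i, \Gamma)$ listed in Table~\ref{table:extG}.

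It then remains to bound $\sum_i c_i$. By Remark~\ref{rem:GamDel} every singular point of $X$ on $\Gamma$ lies in the length-three scheme $\Gamma \cap \Delta$, so $\sum_i \mult_{\msp_i}(\Gamma \cap \Delta) \le 3$. Inspecting Table~\ref{table:extG} row by row, one checks that in every case $c_i \le \tfrac{2}{3}\,\mult_{\msp_i}(\Gamma \cap \Delta)$, the extremal instance being a $cA_2$ point with $\mult_{\msp_i}(\Gamma \cap \Delta) = 1$, where $c_i = \tfrac{2}{3}$. Hence $\sum_i c_i \le \tfrac{2}{3}\sum_i \mult_{\msp_i}(\Gamma \cap \Delta) \le 2$, giving $(\Gamma^2)_S \le 0$ and therefore $(\Gamma \cdot \Delta)_S \ge 1 = (-K_X \cdot \Delta)$. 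With all hypotheses of Lemma~\ref{lem:exclcurve2} verified, $\Gamma$ is not a maximal center, and together with Lemma~\ref{lem:SDmostcurves} this shows that no curve on $X$ is a maximal center.

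I would expect the main technical obstacle to be the verification that a general member $S \in \mcP$ is normal with only Du Val singularities along $\Gamma$ — equivalently, that $S$ is smooth along $\Gamma$ away from the $cA_1$ and $cA_2$ points of $X$ — since this is precisely what makes Lemma~\ref{lem:selfintS} applicable. The decisive numerical input is the uniform estimate $c_i \le \tfrac{2}{3}\,\mult_{\msp_i}(\Gamma \cap \Delta)$, which together with the bound $\sum_i \mult_{\msp_i}(\Gamma \cap \Delta) \le 3$ makes the total correction land exactly at the threshold $2$; this is a finite check against Table~\ref{table:extG}.
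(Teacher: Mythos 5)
Your proposal is correct and follows essentially the same route as the paper: reduce to degree-one curves through singular points via Lemma~\ref{lem:SDmostcurves}, use the pencil $\mcP$, compute $(\Gamma^2)_S$ from the extended dual graphs of Lemmas~\ref{lem:extGSD1} and~\ref{lem:extGSD2} via Lemma~\ref{lem:selfintS}, and conclude with Lemma~\ref{lem:exclcurve2}. The only cosmetic differences are that the paper obtains $(\Gamma\cdot\Delta)_S = 1 - (\Gamma^2)_S$ directly from $(T|_S\cdot\Gamma)_S = (-K_X\cdot\Gamma) = 1$ rather than through the Galois involution, and it bounds the correction terms by inspecting all combinations instead of your (equivalent, and arguably cleaner) uniform estimate $c_i \le \tfrac{2}{3}\mult_{\msp_i}(\Gamma\cap\Delta)$; the normality of general $S$ outside $\Gamma\cap\Sing X$, which you rightly flag, is supplied by a citation to \cite[Lemma 10.4]{OkadaIII}.
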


\begin{proof}
By Lemma~\ref{lem:SDmostcurves}, it remains to consider a curve $\Gamma$ of degree $1$ such that $\Gamma \cap \Sing X \ne \emptyset$ and $\pi (\Gamma) \not\subset B$.

Let $\mcP \subset \left| - K_X \right|$ be the pencil as above, and let $S, T \in \mcP$ be general members.
By \cite[Lemma 10.4]{OkadaIII}, $S$ and $T$ are smooth outside $\Gamma \cap \Sing X$.
We have $T|_S = \Gamma + \Delta$, where $\Delta \ne \Gamma$ is also a curve of degree $1$.
By \eqref{eq:GamDelint}, we see that $\Gamma \cap \Delta$ consists of $3$ points counting with multiplicity.

We compute the self-intersection number $(\Gamma^2)_S$.
Let $\msp_1, \dots, \msp_l$ be the points in $\Gamma \cap \Sing (X)$.
By Remark~\ref{rem:GamDel}, these points are contained in $\Gamma \cap \Delta$.
We set $m_i = \mult_{\msp_i} \Gamma \cap \Delta$.
Then $l \le \sum m_i \le 3$.
Let $A_{n_i, k_i}$ be the type of $G (S, \msp_i, \Gamma)$ which is described in Lemmas~\ref{lem:extGSD1} and~\ref{lem:extGSD2} (see also Table~\ref{table:extG}).
Since $K_S = 0$ and $\Gamma \cong \mbP^1$, we have
\[
(\Gamma^2)_S = - 2 + \sum_{i = 1}^l \frac{k_i (n_i - k_i + 1)}{n_i + 1}
\]
by Lemma~\ref{lem:selfintS}.
By considering all the possible combinations of singular points $\msp_1, \dots, \msp_l$ satisfying $\sum m_i \le 3$, we have the upper bound
\[
(\Gamma^2)_S \le - 2 + \left(\frac{2}{3} + \frac{2}{3} + \frac{2}{3} \right) = 0,
\]
where this maximum (of the correction term corresponding to singular points) is attained when $\Gamma \cap \Delta$ consists of $3$ multiplicity $1$ points of $\Gamma \cap \Delta$ which are $cA_2$ singular points of $X$.
Since $T|_S = \Gamma + \Delta$, we have
\[
(\Gamma \cdot \Delta)_S = (T|_S \cdot \Gamma)_S - (\Gamma^2)_S \ge 1 = (- K_X \cdot \Delta).
\]
Thus, by Lemma~\ref{lem:exclcurve2}, the curve $\Gamma$ is not a maximal center.
\end{proof}

\subsection{Exclusion of $cA_1$ points and proof of Theorem~\ref{thm:SDS}}

\begin{Prop} \label{prop:SDSsingpt}
Let $X$ be a factorial sextic double solid with at worst terminal singularities of type $cA_1$.
Then no singular point on $X$ is a maximal center.
\end{Prop}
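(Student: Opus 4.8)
The plan is to contradict the $2 n^2$-inequality (Theorem~\ref{thm:ineqcA1}) by a matching upper bound on the multiplicity of the intersection cycle. Suppose, for contradiction, that a singular point $\msp \in X$ is a maximal center. Since $X$ carries only terminal $cA_1$ singularities, $\msp$ is a $cA_1$ point, and since $X$ is a prime Fano $3$-fold (so $\Cl(X) = \mbZ[-K_X]$) and factorial, the associated maximal singularity provides a mobile linear system $\mcM \sim -n K_X$ of Cartier divisors for which $\msp$ is a center of non-canonical singularities of $(X, \frac{1}{n}\mcM)$. Theorem~\ref{thm:ineqcA1} then gives, for general members $D_1, D_2 \in \mcM$, the strict lower bound $\mult_{\msp}(D_1 \cdot D_2) > 2 n^2$. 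The task is to produce the reverse inequality $\mult_{\msp}(D_1 \cdot D_2) \le 2 n^2$.

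For the upper bound I would restrict to a general anticanonical surface through $\msp$. Set $Z = D_1 \cdot D_2$ and let $S$ be a general member of the linear subsystem of $|-K_X|$ consisting of divisors passing through $\msp$. Because $\msp$ is singular, its image $\pi(\msp)$ lies on the branch sextic $B$, so $\pi^{-1}(\pi(\msp)) = \{\msp\}$ and the base locus of this subsystem is the single point $\msp$. Consequently a general such $S$ contains no component of $Z$, and $Z \cdot S$ is a genuine effective $0$-cycle. Since $Z$ is numerically $n^2 (-K_X)^2$ and $(-K_X)^3 = 2$, its degree is $\deg(Z \cdot S) = n^2 (-K_X)^3 = 2 n^2$.

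It then remains to bound the local contribution at $\msp$ below by the multiplicity. Near $\msp$ the divisor $S$ is cut out on $X$ by (the restriction of) a linear form vanishing at $\msp$; hence for each component $C$ of $Z$ through $\msp$, which is not contained in $S$, one has $(C \cdot S)_{\msp} \ge \mult_{\msp} C$, and summing over components yields $(Z \cdot S)_{\msp} \ge \mult_{\msp} Z$. Therefore $\mult_{\msp} Z \le \deg(Z \cdot S) = 2 n^2$, contradicting the strict inequality above, and $\msp$ cannot be a maximal center. Since the $2 n^2$-inequality is already established, the only real content here is the upper bound, and the single point requiring care is the properness of $Z \cdot S$ — namely that a general anticanonical divisor through $\msp$ shares no curve with $Z$ — which I would secure precisely by noting that the relevant subsystem of $|-K_X|$ has zero-dimensional base locus $\{\msp\}$; the local estimate $(Z \cdot S)_{\msp} \ge \mult_{\msp} Z$ is then automatic because $S$ is cut by a hyperplane through $\msp$.
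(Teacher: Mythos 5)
Your argument is correct and is essentially the paper's own proof: both contradict the $2n^2$-inequality by intersecting $Z = D_1\cdot D_2$ with a general anticanonical divisor through $\msp$, using that the subsystem of $\left|-K_X\right|$ through $\msp$ has zero-dimensional base locus so that the intersection is proper and of degree $n^2(-K_X)^3 = 2n^2$. The only cosmetic difference is that you pin the base locus down to $\{\msp\}$ via $\pi(\msp)\in B$, whereas the paper is content with ``at most two points''; either suffices.
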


\begin{proof}
Let $\msp \in X$ be a $cA_1$ point.
Suppose that $\msp$ is a maximal center.
Then, by Theorems~\ref{thm:ineqcA1}, there exists a mobile linear system $\mcM \subset |\mcO_X (n)|$ on $X$ such that
\[
\mult_{\msp} (D_1 \cdot D_2) > 2 n^2,
\]
where $D_1, D_2$ are general members in  $\mcM$.
Let $\mcH$ be the linear subsystem of $|\mcO_X (1)|$ consisting of divisors vanishing at $\msp$.
Then the base locus of $\mcH$ consists of at most $2$ points (including $\msp$), and hence we can take a divisor $S$ in $\mcH$ which does not contain any component of the effective $1$-cycle $D_1 \cdot D_2$.
Then, we have
\[
2 n^2 = (S \cdot D_1 \cdot D_2) > 2 n^2.
\]
This is a contradiction and $\msp$ is not a maximal center.
\end{proof}

\begin{proof}[Proof of Theorem~\ref{thm:SDS}]
This is a consequence of Propositions~\ref{prop:SDSsmpt}, \ref{prop:SDScurve} and~\ref{prop:SDSsingpt}.
\end{proof}

\subsection{Sarkisov links centered at $cA_3$ singular points}
\label{sec:SDSlinks}

Let $X$ be a factorial sextic double solid with only terminal Gorenstein singularities and suppose that $X$ admits a $cA_3$ point $\msp \in X$.
Then $X$ is a hypersurface of degree $6$ in $\mbP (1, 1, 1, 1, 3)$ with homogeneous coordinates $x, y, z, t, w$ of weights $1, 1, 1, 1, 3$, respectively, and by \cite[Theorem A]{Pae21} the defining polynomial can be written as
\begin{equation} \label{eq:SDScA3eq}
f = -w^2 + x^4 t^2 + x^3 t f_2 + x^2 f_4 + x f_5 + f_6,
\end{equation}
where $f_i \in \mbC [y, z, t]$ is a homogeneous polynomial of degree $i$.
Moreover the $cA_3$ point $\msp$ is $(1\!:\!0\!:\!0\!:\!0\!:\!0) \in X$.
We set $g_4 = f_4 - \frac{1}{4} f_2^2$ and then
\begin{equation} \label{eq:SDScA3eqpm}
f_{\pm} = - w (w \pm (2 x^2 t + x f_2)) + x^2 g_4 + x f_5 + f_6.
\end{equation}
Let $X_{\pm} \subset \mbP (1, 1, 1, 1, 3)$ be the hypersurface defined by $f_{\pm} = 0$.
We see that $X$ is isomorphic to $X_{\pm}$ by the coordinate change $w \mapsto w \pm (x^2 t + \frac{1}{2} x f_2)$.

We set $U = X \setminus (x = 0) \cap X$ and $U_{\pm} = X_{\pm} \setminus (x = 0) \cap {X_{\pm}}$ which are affine open subsets of $X$ and $X_{\pm}$, respectively, and we have
\[
\begin{split}
U &= (- w^2 + t^2 + t f_2 + f_4 + f_5 + f_6 = 0) \subset \mbA^4_{y, z, t, w}, \\
U_{\pm} &= (- w (w \pm (t + f_2)) + g_4 + f_5 + f_6 = 0) \subset \mbA^4_{y, z, t, w}.
\end{split}
\]
Let $\varphi \colon Y \to X$ and $\varphi_{\pm} \colon Y_{\pm} \to X_{\pm}$ be the birational morphisms obtained as the weighted blow-up of $U$ and $U_{\pm}$ at the origin with weights $\wt (y, z, t, w) = (1, 1, 2, 2)$ and $\wt (y, z, t, w) = (1, 1, 1, 3)$, respectively.
According to the classification of divisorial contractions to $cA$ points \cite[Theorem 1.13]{Kaw03}, $\varphi, \varphi_{\pm}$ are all the divisorial contractions of $\msp \in X$ with discrepancy $1$.
We sometimes identify $\varphi_{\pm}$ with the composite $Y_{\pm} \xrightarrow{\varphi_{\pm}} X_{\pm} \cong X$.
We call $\varphi_+$, $\varphi$ and $\varphi_-$ the $(3, 1, 1, 1)$-, $(2, 2, 1, 1)$-and $(1, 3, 1, 1)$-Kawakita blow-ups of $\msp \in X$, respectively.

\subsubsection{$(3, 1, 1, 1)$-, and $(1, 3, 1, 1)$-Kawakita blow-ups}

We consider the $(3, 1, 1, 1)$-and $(1, 3, 1, 1)$-Kawakita blow-up $\varphi_{\pm}$ of $\msp \in X$, and show that there is a Sarkisov self-link initiated by $\varphi_{\pm}$ if $\varphi_{\pm}$ is a maximal extraction.

Let $\pi_{X_{\pm}} \colon X_{\pm} \ratmap \mbP (1, 1, 1, 3)$ be the projection to the coordinates $y, z, t, w$, which is defined outside the point $\msp = (1\!:\!0\!:\!0\!:\!0\!:\!0) \in X_{\pm}$.
The sections $y, z, t \in \mathrm{H}^0 (X_{\pm}, -K_{X_{\pm}})$ lift to sections $\tilde{y}, \tilde{z}, \tilde{t}$ in $\mathrm{H}^0 (Y_{\pm}, -K_{Y_{\pm}})$, and $w \in \mathrm{H}^0 (X_{\pm}, - 3 K_{X_{\pm}})$ lifts to a section $\tilde{w}$ in $\mathrm{H}^0 (Y_{\pm}, - 3 K_{Y_{\pm}})$.
These sections $\tilde{y}, \tilde{z}, \tilde{z}$ and $\tilde{w}$ define the composite $\pi_{Y_{\pm}} := \pi_{X_{\pm}} \circ \varphi_{\pm} \colon Y_{\pm} \ratmap \mbP (1, 1, 1, 3)$, and their common zero locus (on $Y_{\pm}$) is empty.
This is verified in the following way.
For the $\varphi_{\pm}$-exceptional divisor $E_{\pm}$, we have the  natural isomorphism
\[
E_{\pm} \cong (- w (\pm t + f_2) + g_4 = 0) \subset \mbP (1, 1, 1, 3) = \Proj \mbC [y, z, t, w],
\]
and the restriction of the sections $\tilde{y}, \tilde{z}, \tilde{t}, \tilde{w}$ on $E_{\pm}$ are $y, z, t, w$, respectively.
Thus their common zero loci on $E_{\pm}$, and hence on $Y_{\pm}$, are empty.
It follows that $\pi_{Y_{\pm}} \colon Y_{\pm} \to \mbP (1, 1, 1, 3)$ is a morphism.
The defining polynomial $f_{\pm}$ given in \eqref{eq:SDScA3eqpm} is quadratic with respect to $x$:
\[
f_{\pm} = x^2 (g_4 \mp w t) + x (f_5 \mp w f_2) + f_6 - w^2.
\]
We set
\[
D_4 := (w t + f_4 = 0)_{X_{\pm}}, \
D_5 := (w f_2 + f_5 = 0)_{X_{\pm}}, \
D_6 := (w^2 + f_6 = 0)_{X_{\pm}},
\]
and denote by $\tilde{D}_i$ the proper transform of $D_i$ on $Y_{\pm}$.
We see that $\pi_{Y_{\pm}}$ is a generically finite morphism of degree $2$ which contracts the subset $\Xi := \tilde{D}_4 \cap \tilde{D}_5 \cap \tilde{D}_6$ onto its image
\[
\pi_{Y_{\pm}} (\Xi) = (g_4 \mp w t = f_5 \mp w f_2 = f_6 -w^2 = 0) \subset \mbP (1, 1, 1, 3).
\]
Let $Y_{\pm} \xrightarrow{\psi_{\pm}} Z_{\pm} \xrightarrow{\pi_{Z_{\pm}}} \mbP (1, 1, 1, 3)$ be the Stein factorization of $\pi_{Y_{\pm}}$.
Then $\psi_{\pm}$ is the birational morphism defined by $\left|- r K_{Y_{\pm}} \right|$ for sufficiently large $r > 0$ and $\pi_{Z_{\pm}}$ is a double cover.
Note that $-K_{Y_{\pm}}$ is nef and big but not ample.
Let $\iota_{\pm} \colon Z_{\pm} \to Z_{\pm}$ be the biregular involution interchanging two points in fibers of the double cover $\pi_{Z_{\pm}}$, and consider the diagram
\begin{equation} \label{eq:SDScA3linkQ}
\xymatrix{
 & \ar[ld]_{\varphi_{\pm}} Y_{\pm} \ar[rd]^{\psi_{\pm}} \ar@{-->}[rrr]^{\tau_{\pm}} & & & \ar[ld]_{\psi_{\pm}} Y_{\pm} \ar[rd]^{\varphi_{\pm}} & \\
X_{\pm} & & Z_{\pm} \ar[r]^{\iota_{\pm}} & Z_{\pm} & & X_{\pm}}
\end{equation}
where $\tau_{\pm} := \psi^{-1}_{\pm} \circ \iota_{\pm} \circ \psi_{\pm}$.

\begin{Prop}
Let $X$ be a factorial sextic double solid $X$ with only terminal Gorenstein singularities and let $\msp \in X$ be a $cA_3$ point.
Let $\varphi_{\pm} \colon Y_{\pm} \to X$ be the $(3, 1, 1, 1)$-, $(1, 3, 1, 1)$-Kawakita blow-ups with center $\msp$.
Then one of the following holds.
\begin{enumerate}
\item $\psi_{\pm}$ is divisorial.
In this case $\varphi_{\pm}$ is not a maximal extraction.
\item $\psi_{\pm}$ is small.
In this case $\tau_{\pm}$ is the flop of $\psi_{\pm}$ and the diagram \eqref{eq:SDScA3linkQ} gives a Sarkisov self-link $\sigma_{\pm} \colon X \ratmap X$ initiated by $\varphi_{\pm}$.
\end{enumerate}
\end{Prop}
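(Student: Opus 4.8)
The plan is to run the two-ray game on $Y_{\pm}$ and read off everything from the cone of curves. Since $\varphi_{\pm}$ is a divisorial contraction to the Fano $X$ of Picard rank $1$, the variety $Y_{\pm}$ is $\mbQ$-factorial and terminal of Picard rank $2$, so $\bNE(Y_{\pm})$ is two-dimensional with two extremal rays. One ray consists of the curves $C$ contracted by $\varphi_{\pm}$; since $K_{Y_{\pm}}=\varphi_{\pm}^{*}K_X+E_{\pm}$ we get $(-K_{Y_{\pm}}\cdot C)=-(E_{\pm}\cdot C)>0$ there, so $-K_{Y_{\pm}}$ is positive on that ray. As $-K_{Y_{\pm}}$ is nef and big but not ample, it must vanish on the opposite ray, which is exactly the ray contracted by the anticanonical morphism $\psi_{\pm}$. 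Because $-K_{Y_{\pm}}$ is big, $\psi_{\pm}$ is birational and hence either small or divisorial, which are the two alternatives in the statement; I would also record that $\psi_{\pm}$ is crepant, i.e.\ $K_{Y_{\pm}}=\psi_{\pm}^{*}K_{Z_{\pm}}$ with $-K_{Z_{\pm}}$ ample.

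For case~(1) I argue by contradiction using the movable cone. Suppose $\varphi_{\pm}$ is a maximal extraction. Then there is a mobile linear system $\mcM\sim_{\mbQ}-nK_X$ with $\ord_{E_{\pm}}\varphi_{\pm}^{*}\mcM=m>n\,a_{E_{\pm}}(K_X)=n$, the last equality because the Kawakita blow-ups have discrepancy $1$. Using $-K_{Y_{\pm}}=\varphi_{\pm}^{*}(-K_X)-E_{\pm}$, the proper transform of $\mcM$ satisfies
\[
\mcM_{Y_{\pm}}\sim_{\mbQ} n\varphi_{\pm}^{*}(-K_X)-m E_{\pm}=n(-K_{Y_{\pm}})-(m-n)E_{\pm}.
\]
The nef cone of $Y_{\pm}$ is spanned by $\varphi_{\pm}^{*}(-K_X)$ and $-K_{Y_{\pm}}$, and since in this case the contraction $\psi_{\pm}$ of the ray $\mbR_{\ge 0}[-K_{Y_{\pm}}]$ is divisorial, that ray is a boundary edge of $\overline{\mathrm{Mov}}(Y_{\pm})$ (indeed both walls are divisorial, so the movable and nef cones coincide). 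But with $m-n>0$ the class $[\mcM_{Y_{\pm}}]$ lies strictly on the $-E_{\pm}$ side of this edge, hence outside $\overline{\mathrm{Mov}}(Y_{\pm})$, contradicting that the mobile system $\mcM_{Y_{\pm}}$ has no fixed component. Therefore $m\le n$ and $\varphi_{\pm}$ is not a maximal extraction.

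For case~(2), $\psi_{\pm}$ is a small crepant, i.e.\ flopping, contraction. I would set $\tau_{\pm}=\psi_{\pm}^{-1}\circ\iota_{\pm}\circ\psi_{\pm}$; since $\psi_{\pm}$ is small and $\iota_{\pm}$ is biregular, $\tau_{\pm}$ is an isomorphism in codimension $1$, and since $\iota_{\pm}$ preserves $K_{Z_{\pm}}$ while $\psi_{\pm}$ is crepant, one has $\tau_{\pm}^{*}K_{Y_{\pm}}=K_{Y_{\pm}}$. As $\rho(Y_{\pm})=2$ and $\psi_{\pm}$ contracts a single ray, $\tau_{\pm}$ is precisely the flop of $\psi_{\pm}$, and then the diagram~\eqref{eq:SDScA3linkQ} presents $\sigma_{\pm}=\varphi_{\pm}\circ\tau_{\pm}\circ\varphi_{\pm}^{-1}$ as the desired Sarkisov self-link (an elementary self-link of type~II) initiated by $\varphi_{\pm}$.

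The step I expect to be the main obstacle is verifying that $\sigma_{\pm}$ is genuinely non-biregular, equivalently that $\tau_{\pm}$ is a \emph{nontrivial} flop. This amounts to showing that the deck involution $\iota_{\pm}$ interchanges the two small $\mbQ$-factorial modifications of $Z_{\pm}$ rather than lifting to a biregular automorphism of $Y_{\pm}$: the map $\iota_{\pm}\circ\psi_{\pm}$ is a second small crepant morphism $Y_{\pm}\to Z_{\pm}$, and I must rule out that it equals $\psi_{\pm}$ up to an automorphism. I would settle this by the local analysis of the double cover $\pi_{Z_{\pm}}$ along the flopping locus $\psi_{\pm}(\Xi)$, where the two sheets—hence the two small resolutions—are exchanged by $\iota_{\pm}$. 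Since $\psi_{\pm}$ is small precisely when the contracted locus $\Xi=\tilde{D}_4\cap\tilde{D}_5\cap\tilde{D}_6$ has codimension at least $2$, this case is exactly the one in which $\pi_{Y_{\pm}}(\Xi)$ is zero-dimensional, and granting the local computation the flop is nontrivial and the self-link follows.
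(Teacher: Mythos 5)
Your route is the two-ray game on $Y_{\pm}$, which is exactly the mechanism behind the paper's one-line proof (the paper simply invokes \cite[Lemma 3.2]{OkadaII}, a general lemma tailored to this ``anticanonical map generically finite of degree $2$'' situation), so the approach is the intended one. Your treatment of case (1) is complete and correct: writing $[\mcM_{Y_{\pm}}]=(n-m)\varphi_{\pm}^{*}(-K_X)+m(-K_{Y_{\pm}})$ with $m>n$ and observing that any class with negative $\varphi_{\pm}^{*}(-K_X)$-coefficient is negative on the curves sweeping out the $\psi_{\pm}$-exceptional divisor, hence acquires that divisor as a fixed component, is a self-contained contradiction with mobility.

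The genuine gap is the one you flag in case (2), and your proposed repair does not close it. That $\iota_{\pm}$ exchanges the two sheets of $\pi_{Z_{\pm}}$ is true by definition of the Galois involution and is perfectly compatible with $\iota_{\pm}$ fixing the small $\mbQ$-factorial modification $(Y_{\pm},\psi_{\pm})$ up to isomorphism over $Z_{\pm}$; a local analysis of the covering near the contracted locus cannot by itself decide whether the two small modifications are swapped, so ``granting the local computation'' is precisely the missing content. A clean way to finish: if $\tau_{\pm}$ were biregular it would be an involutive automorphism fixing $-K_{Y_{\pm}}$, hence permuting --- and therefore fixing --- the two edges $\mbR_{\ge 0}[\varphi_{\pm}^{*}(-K_X)]$ and $\mbR_{\ge 0}[-K_{Y_{\pm}}]$ of the two-dimensional nef cone, so $\tau_{\pm}^{*}=\operatorname{id}$ on $N^1(Y_{\pm})_{\mbQ}=\Cl(Y_{\pm})_{\mbQ}\cong\Cl(Z_{\pm})_{\mbQ}$ (the last isomorphism because $\psi_{\pm}$ is small); this would force $\iota_{\pm}$ to act trivially on $\Cl(Z_{\pm})_{\mbQ}$, which is impossible since the invariant part $\Cl(Z_{\pm})_{\mbQ}^{\iota_{\pm}}\cong\Cl(\mbP(1,1,1,3))_{\mbQ}$ has rank $1$ while $\Cl(Z_{\pm})_{\mbQ}$ has rank $2$. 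One further small inaccuracy: smallness of $\psi_{\pm}$ is the condition that $\Xi$ contains no divisor, which is not equivalent to $\pi_{Y_{\pm}}(\Xi)$ being zero-dimensional, since the generically finite map $\pi_{Y_{\pm}}$ could a priori contract a divisorial component of $\Xi$ to a point.
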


\begin{proof}
This follows from the above arguments and \cite[Lemma 3.2]{OkadaII}.
\end{proof}

\subsubsection{$(2, 2, 1, 1)$-Kawakita blow-up}

We consider the $(2, 2, 1, 1)$-Kawakita blow-up $\varphi \colon Y \to X$ of $\msp \in X$, and show that $\varphi$ is not a maximal extraction.

\begin{Prop}
Let $X$ be a factorial sextic double solid $X$ with only terminal Gorenstein singularities and let $\msp \in X$ be a $cA_3$ point.
Let $\varphi \colon Y \to X$ be the $(2, 2, 1, 1)$-Kawakita blow-up.
Then $\varphi$ is not a maximal extraction.
\end{Prop}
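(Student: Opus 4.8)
The plan is to analyze the two-ray game initiated by $\varphi$ and to show that its \emph{other} extremal contraction is divisorial, so that \cite[Lemma 3.2]{OkadaII} (the same tool already used for $\varphi_\pm$) applies and forces $\varphi$ to not be a maximal extraction. Since $\varphi$ is an extremal divisorial contraction to the point $\msp$, the relative Picard rank is one and $\bNE(Y)$ is a two-dimensional cone, one of whose rays is contracted by $\varphi$. First I would pin down the intersection theory on $Y$. Writing $H=\varphi^*(-K_X)$ and using that the discrepancy of $\varphi$ equals $1$, we have $-K_Y=H-E$. The projection formula gives $H^2\cdot E=H\cdot E^2=0$, while the $(1,1,2,2)$-weighted blow-up identifies $E$ with the weighted-degree-$4$ hypersurface $(\Phi_4=0)\subset\mbP(1,1,2,2)$, where $\Phi_4=-w^2+t^2+tf_2(y,z,0)+f_4(y,z,0)$; hence $\mcO_E(1)^2=4\cdot\tfrac{1}{1\cdot1\cdot2\cdot2}=1$ and $E^3=(E|_E)^2=1$. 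Therefore
\[
(-K_Y)^3=H^3-E^3=(-K_X)^3-1=2-1=1>0 .
\]

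Next I would identify the second extremal contraction. One checks that $-K_Y$ is nef: it is positive on the $\varphi$-ray because $E$ is $\varphi$-antiample, and it is nonnegative on the complementary ray because $-K_Y|_E=-E|_E=\mcO_E(1)$ is the positive generator and restricts positively to all covering curves of $E$ coming from the double-cover/conic structure $E\to\mbP(1,1,2)\to\mbP^1_{[y:z]}$. Being nef and big with $(-K_Y)^3=1$, the anticanonical ring then defines a \emph{birational} morphism $\psi\colon Y\to Z$ onto a Fano $3$-fold $Z$ with $(-K_Z)^3=1$, and this $\psi$ is exactly the contraction of the second ray of $\bNE(Y)$. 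It is worth noting that the positivity $(-K_Y)^3=1>0$ is the feature that guarantees $\psi$ is birational rather than a fibration: in the ordinary-double-point type situation the analogous computation gives anticanonical degree $0$ and a Mori fibre structure on $Y$, which is precisely the phenomenon one must avoid here.

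The crux is to prove that $\psi$ is \emph{divisorial} and not small. I would do this by determining $\psi$ explicitly, mirroring the construction carried out for $\varphi_\pm$ in \eqref{eq:SDScA3linkQ}: using the coordinate projection adapted to the weights $(1,1,2,2)$ together with the shape of the defining equation, one locates the exceptional locus of $\psi$. The decisive structural input is the reducible tangent cone $-w^2+t^2=(w-t)(w+t)$ of the $cA_3$ point — the very feature distinguishing this case from $\varphi_\pm$ — which produces a distinguished surface through $\msp$ whose proper transform $G\subset Y$ is swept out by the $(-K_Y)$-trivial curves of the second ray. Showing that this contracted locus is two-dimensional, i.e.\ that $\psi$ contracts the divisor $G$, is the main obstacle and is exactly where the geometry departs from the $\varphi_\pm$ case; there the corresponding locus $\Xi$ may be a curve, so that $\psi_\pm$ can be a small (flopping) contraction and yield a Sarkisov self-link, whereas for the $(2,2,1,1)$-blow-up it is always a divisor.

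Finally, once $\psi$ is shown to be a divisorial contraction, the diagram $X\xleftarrow{\varphi}Y\xrightarrow{\psi}Z$ together with \cite[Lemma 3.2]{OkadaII} gives at once that $\varphi$ is not a maximal extraction, completing the proof. I expect all the genuine difficulty to lie in the explicit identification of $\psi$ and the verification that its exceptional set is a divisor; by contrast, the intersection numbers and the nefness and bigness of $-K_Y$ are routine once the model $E\subset\mbP(1,1,2,2)$ is in hand.
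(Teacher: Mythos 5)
Your overall strategy is the same as the paper's: pass to the anticanonical model $\psi \colon Y \to Z$, show that $\psi$ contracts a divisor, and invoke a lemma from \cite{OkadaII}; and your intersection numbers are correct ($E^3 = 1$, hence $(-K_Y)^3 = 1$). The problem is that the step you yourself flag as ``the main obstacle'' --- identifying $\psi$ and verifying that its exceptional set is two-dimensional --- is the entire content of the proof, it is left undone, and the heuristic you offer for it points in the wrong direction. The contracted divisor is \emph{not} produced by the splitting of the tangent cone $-w^2+t^2=(w-t)(w+t)$; that splitting is what drives the two links $\varphi_{\pm}$ via the factorizations $f_{\pm}$, and plays no role here. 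The divisor contracted by $\psi$ is the proper transform of the anticanonical section $(t=0)_X$. The paper sees this by writing down the explicit map $\pi \colon X \ratmap \mbP (1,1,1,2,4)$, $(x\!:\!y\!:\!z\!:\!t\!:\!w) \mapsto (y\!:\!z\!:\!t\!:\!x t\!:\!w t)$, checking that $y, z, t, xt, wt$ lift to pluri-anticanonical sections on $Y$ with empty common zero locus (so that $\psi = \pi \circ \varphi$ is a morphism defined by $\left|-rK_Y\right|$), and observing that $(t=0)_X$ maps onto the line $(t=\alpha=\beta=0)$. You can confirm this within your own framework: writing $S = \varphi^*(-K_X) - 2E$ for the proper transform of $(t=0)_X$ (note $\ord_E(t)=2$), one computes $(-K_Y)^2 \cdot S = 2 - 2E^3 = 0$, so $S$ is collapsed by the anticanonical map.

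A second, related gap is the nefness of $-K_Y$. Your argument only controls curves lying inside $E$ (via $-K_Y|_E = \mcO_E(1)$), but the curves spanning the second extremal ray are the fibers of $S \to \mbP^1$, which are not contained in $E$; a priori $-K_Y$ could be negative on them, so nefness cannot be read off from $E$ alone and must come from the explicit base-point-freeness check above. Once $\psi$ is exhibited as a morphism contracting $S$ to a curve, the conclusion follows: the fibers of $S \to \mbP^1$ give infinitely many curves $C$ with $(-K_Y \cdot C) \le 0$ and $(E \cdot C) > 0$, so $\varphi$ is not a maximal extraction by \cite[Lemma 2.20]{OkadaII} (this is the criterion the paper uses here, rather than \cite[Lemma 3.2]{OkadaII}, though your route through the latter would also work once divisoriality of $\psi$ is actually established).
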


\begin{proof}
Let $\pi \colon X \ratmap \mbP (1, 1, 1, 2, 4)$ be the map defined by
\[
(x\!:\!y\!:\!z\!:\!t\!:\!w) \mapsto (y\!:\!z\!:\!t\!:\!x t\!:\!w t).
\]
Let $y, z, t, \alpha, \beta$ be the homogeneous coordinates of $\mbP (1, 1, 1, 2, 4)$ of weights $1, 1, 1, 2, 4$, respectively.
Then the image of $\pi$, denoted by $Z$, is the hypersurface in $\mbP (1, 1, 1, 2, 4)$ defined by
\[
h := - \beta^2 + \alpha^4 + \alpha^3 f_2 + \alpha^2 f_4 + \alpha t f_5 + t^2 f_6 = 0,
\]
where $h$ is obtained by replacing $x t$ by $\alpha$ and $w t$ by $\beta$ in $t^2 f$.
The sections $y, z, t, x t, w t$ lift to pluri-anticanonical sections on $Y$ and their common zero locus (on $Y$) is empty.
It follows that the induced map $\psi = \pi \circ \varphi \colon Y \ratmap Z$ is a morphism, and we have the commutative diagram:
\[
\xymatrix{
& \ar[ld]_{\varphi} Y \ar[rd]^{\psi} & \\
X \ar@{-->}[rr]_{\pi} & & Z}
\]
Note that $\psi$ is a birational morphism and it contracts the proper transform of the divisor $(t = 0)_X$ to the curve
\[
\mbP^1 \cong (t = \alpha = \beta = 0) \subset \mbP (1, 1, 1, 2, 4).
\]
Moreover $\psi$ is defined by $\left| - r K_Y \right|$ for a sufficiently large $r > 0$, and thus any curve contracted by $\psi$ is $K_Y$-trivial.
It follows that there are infinitely many curves on $Y$ which intersect $-K_Y$ non-positively and the $\psi$-exceptional divisor $E$ positively.
By \cite[Lemma 2.20]{OkadaII}, $\varphi$ is not a maximal extraction.
\end{proof}

\begin{Rem}
Let $X$ be a factorial sextic double solid with only terminal $cA_1, cA_2, cA_3$ points.
We can exclude smooth points and $cA_1$ points as a maximal center by Propositions~\ref{prop:SDSsmpt} and \ref{prop:SDSsingpt}.
Moreover, by the argument of Proposition~\ref{prop:SDScurve}, we can exclude curves on $X$ except possibly for curves of degree $1$ passing through a $cA_3$ point.
Therefore, for the proof of birational rigidity of $X$, it remains to show the following:
\begin{itemize}
\item Any curve of degree $1$ passing through a $cA_3$ point is not a maximal center.
\item Any $cA_2$ point is not a maximal center.
\item Any divisorial contraction to a $cA_3$ point with discrepancy at least $2$ (if it exists) is not a maximal extraction.
\end{itemize}
\end{Rem}

\subsection{Factorial sextic double solids} \label{sec:fac}

In this section, we provide a criterion for $\mbQ$-factoriality of sextic double solids with terminal $cA_1$ singularities.
If a $3$-fold~$X$ with only  terminal $cA_1$ singularities is a Fano, a hypersurface of $\mbP^4$, or a cyclic cover of $\mbP^3$, the Picard group is isomorphic to the 2nd integral cohomology because $\mathrm{H}^1 (X, \mcO_X) = \mathrm{H}^2 (X, \mcO_X) = 0$.
In this case, $\mbQ$-factoriality of $X$ can be determined by the following  global topological property.
\begin{Prop}\label{prop:Cl}
Let $X$ be a $3$-fold with only  terminal $cA_1$ singularities.
Suppose that  $\mathrm{H}^1 (X, \mcO_X) = \mathrm{H}^2 (X, \mcO_X) = 0$.
Then $X$ is  $\mbQ$-factorial if and only if
 \begin{equation}\label{eq:duality}
\dim_{\mbQ} (\mathrm{H}^2 (X, \mbQ)) = \dim_{\mbQ} (\mathrm{H}_4 (X, \mbQ)).
\end{equation}
\end{Prop}
\begin{proof}
It is enough to show that  \eqref{eq:duality} holds if and only if
\[\mathrm{Cl}(X)\otimes\mbQ\cong\mathrm{Pic}(X)\otimes\mbQ.\]
where $\mathrm{Cl}$  denotes the divisor class group  and $\mathrm{Pic}$ denotes the Picard group.
It follows from $\mathrm{H}^1 (X, \mcO_X) = \mathrm{H}^2 (X, \mcO_X) = 0$ that $\mathrm{H}^2 (X, \mbQ) \cong\mathrm{Pic}(X)\otimes\mbQ$.
Therefore, it is enough to show that $\mathrm{H}_4 (X, \mbQ) \cong\mathrm{Cl}(X)\otimes\mbQ$.

Let $\pi:\widetilde{X}\to X$ be a resolution of singularities and let $E_i$, $i=1,\ldots, r$, be its exceptional  prime divisors. We denote by $E$ the exceptional locus of $\pi$, i.e. $E=\cup E_i$. Then,  we can obtain the following commutative diagram:
\begin{equation}\label{eq:NT}
\xymatrix{
\mathrm{Cl}(\widetilde{X})\otimes\mbQ \ar[r]  \ar[d]_{\pi_*^C} & \mathrm{H}_4 (\widetilde{X}, \mbQ) \ar[d]^{\pi_*^H} \\
\mathrm{Cl}(X)\otimes\mbQ \ar@{->}[r]&  \mathrm{H}_4 (X, \mbQ)}
\end{equation}
from the natural transformation in  \cite[\S~5]{Ful75}.
Note that
\[\mathrm{Cl}(\widetilde{X})\otimes\mbQ \cong\left(\mathrm{Cl}(X)\otimes\mbQ\right)\bigoplus\left(\oplus_{i=1}^{r} \mbQ E_i\right)\]
and the homomorphism $\pi^C_*: \mathrm{Cl}(\widetilde{X})\otimes\mbQ\to \mathrm{Cl}(X)\otimes\mbQ$ given by the resolution morphism is surjective with the kernel generated by the exceptional divisors. Since $\widetilde{X}$ is a smooth projective $3$-fold,
\[\mathrm{Cl}(\widetilde{X})\otimes\mbQ\cong\mathrm{Pic}(\widetilde{X})\otimes\mbQ \cong\mathrm{H}^2 (\widetilde{X}, \mbQ) \cong\mathrm{H}_4 (\widetilde{X}, \mbQ),\]
where the last isomorphism follows from the Poincar\'e duality. There is an exact sequence
\[\ldots\longrightarrow \mathrm{H}_4(E,\mbQ) \longrightarrow \mathrm{H}_4(\widetilde{X},\mbQ) \longrightarrow \mathrm{H}_4(\widetilde{X}/E,\mbQ) \longrightarrow \mathrm{H}_3(E,\mbQ) \longrightarrow \ldots,\]
where $\widetilde{X}/E$  is  the quotient topological space of $\widetilde{X}$ by the subspace $E$ and it is topologically equivalent to $X$. The exact sequence shows that
the kernel of the homomorphism $\pi_*^H:\mathrm{H}_4 (\widetilde{X}, \mbQ)\to \mathrm{H}_4 (X, \mbQ)$ is  $\oplus_{i=1}^{r} \mbQ E_i$. This implies that the bottom horizontal homomorphism in \eqref{eq:NT} is injective.

Since the singularities of $X$ are only terminal  $cA_1$, we may assume that
each $E_i$ is a smooth rational surface. This implies that $\mathrm{H}_3(E_i,  \mbQ)=\mathrm{H}^1(E_i,  \mbQ)=0$. It then follows from a Mayer-Vietoris sequence that $\mathrm{H}_3(E,  \mbQ)=0$.
This vanishing shows that the homomorphism $\pi_*^H$ is surjective. Consequently, the  bottom horizontal homomorphism in  \eqref{eq:NT} is surjective, and hence it is an isomorphism.
\end{proof}

The duality \eqref{eq:duality} fails on singular varieties in general (for instance, see  Examples~\ref{ex:fac10} and~\ref{ex:fac8}).
The failure of the duality can be measured by the difference of their dimensions, so called, the \textit{defect} of $X$
\[
\delta_X := \dim_{\mbQ} (\mathrm{H}_4 (X, \mbQ)) - \dim_{\mbQ} (\mathrm{H}^2 (X, \mbQ)).
\]
With this notion, one may say that $X$ is $\mbQ$-factorial if and only if $\delta_X = 0$.

We  consider a double cover $X$ of $\mbP^3$ branched along a surface $B \subset \mbP^3$ of degree $2 r$ with only Du Val singularities.
We can regard $X$ as a hypersurface of degree $2 r$ in the weighted projective space $\mbP = \mbP (1, 1, 1, 1, r)$.
In general, the singular points on $X$ may have an effect on the integral (co)homology groups of $X$ (See \cite{Cle83}, \cite{Cyn02} and \cite{Dim90}).
However, it follows from a Lefschetz-type theorem (\cite[Proposition 1.4]{Mav99}) that $\Pic (X) \cong \Pic (\mbP)$, in particular, $\rank (\Pic (X)) = 1$.

Meanwhile, as aforementioned,  it follows from \cite[ Lemma 5.1]{Kawamata88} that  on Gorenstein terminal $3$-folds, factoriality is equivalent to $\mbQ$-factoriality.
Therefore, in our case, the double solid $X$ is $\mbQ$-factorial if and only if it is factorial.

It is not simple to compute the defect.
Fortunately, a method to compute some Hodge numbers for resolutions of double solids branched along surfaces with only Du Val singularities has been introduced by Rams \cite{Ram08} which has evolved from the paper of Clemens \cite{Cle83}.
The paper \cite{Ram08} formulates the difference between the Hodge numbers of the big resolution $Y$ and the Hodge numbers of $\mbP$.
Here the \textit{big resolution} $\rho \colon Y \to X$ is defined by the composition of blow ups:
\[
\rho = \rho_{k} \circ \cdots \circ \rho_1 \colon Y \to Y_0 = X,
\]
where $\rho_i \colon Y_i \to Y_{i-1}$ for $i = 1, \dots, k$, is the blow up with center $\Sing (Y_{i-1}) \ne \emptyset$ and $Y_{k} = Y$ is smooth. Note that all singularities and infinitely near singularities of $X$ are isolated double points, in particular, each $\rho_i$ is a blow up centered at closed points.
The Hodge numbers of $\mbP$ are considered with the Zariski sheaf of germs of $1$-forms $\overline{\Omega}^1_{\mbP} := \iota_* (\Omega^1_{\reg (\mbP)})$, where $\iota \colon \reg (\mbP) \to \mbP$ is the inclusion.

The defect of $X$ appears in a Hodge number of the big resolution $Y$ of $X$, i.e.,
\[
h^{1, 1} (Y) = h^1 (\mbP, \overline{\Omega}^1_{\mbP}) + \mu_X + \delta_X,
\]
where $\mu_X$ is the number of components of the exceptional locus of the big resolution $\rho$ (\cite[Theorem 4.1]{Ram08}).
Note that $h^1 (\mbP, \overline{\Omega}^1_{\mbP}) = \rank (\Pic (\mbP)) = 1$.

If the double solid $X$ has only terminal $cA_1$ singularities, then the surface $B$ has only $A_n$ singularities.
Conversely, the double solid branched along a surface with only $A_n$ singularities has only terminal $cA_1$ singularities.
Meanwhile, a non-cyclic Du Val singular point of $B$ yields a terminal $cA_2$ singular point on $X$.

From now on, for our purpose we suppose that the double solid $X$ has only terminal $cA_1$ singularities unless otherwise mentioned.

We will adopt the method given by \cite[Theorem 4.1]{Ram08} to compute the defect $\delta_X$. To do so,
we put
\[
\mcH_X = \{\, H \in \mathrm{H}^0 (\mbP^3, \mcO_{\mbP^3} (3 r - 4)) \mid \text{$H$ fulfills the condition $(\mathbf{A})$} \,\},
\]
where the condition $(\mathbf{A})$ is as follows:
\begin{quote}
For each singular point $p \in \Sing (B)$, suppose the surface $B$ is given by
\[
x_{1,p}^{m+1} + x_{2,p}^2 + x_{3,p}^2 + f (x_{1,p}, x_{2,p}, x_{3,p}) = 0
\]
for some positive integer $m$ locally around $p$, where $x_{i,p}$'s are analytic coordinates centered at the point $p$ and $f (x_{1,p}, x_{2,p}, x_{3,p})$ is a polynomial of order strictly greater than $1$ with respect to the weights $\wt (x_{1,p}) = \frac{1}{m+1}$, $\wt (x_{2,p}) = \wt (x_{3,p}) = \frac{1}{2}$.
For the singular point $p$, impose the conditions:
\[
\begin{split}
(\mathrm{C}_{p,0}) & \hspace{10mm} H (p) = 0; \\
(\mathrm{C}_{p,j}) & \hspace{10mm} \text{$\frac{\prt^j H}{\prt x_{1,p}^j} (p) = 0$ for $1 \le j \le \lceil \frac{m}{2} \rceil -1$}.
\end{split}
\]
\end{quote}

\begin{Thm} \label{thm:facA}
The defect of the double solid $X$ with only terminal $cA_1$ singularities is given by
\[
\delta_X = \dim (\mcH_X) - (h^0 (\mbP^3, \mcO_{\mbP^3} (3 r - 4)) - \mu_X).
\]
\end{Thm}

\begin{proof}
The following have been verified in \cite[Theorem in \S 1.4 and Theorem in \S 2.3]{Dol82}:
\[
\begin{split}
& \text{$\mathrm{H}^i (\mbP, \mcO_{\mbP} (-2 r)) = 0$ for $i \le 3$}; \\
& \text{$\mathrm{H}^i (\mbP, \mcO_{\mbP} (-4 r)) = 0$ for $i \le 3$}; \\
& \mathrm{H}^2 (\mbP, \overline{\Omega}^1_{\mbP}) = 0; \\
& \text{$\mathrm{H}^i (\mbP, \overline{\Omega}^1_{\mbP} \otimes \mcO_{\mbP} (-2 r)) = 0$ for $i = 1, 2, 3$}.
\end{split}
\]
Then \cite[Theorem 4.1]{Ram08} yields
\[ \dim_{\mbQ} (\mathrm{H}^2 (Y, \mbQ)) =h^{1,1}(Y)=1+ \dim (\mcH_X) - h^0 (\mbP^3, \mcO_{\mbP^3} (3 r - 4)) +2 \mu_X.\]
As shown in the proof of Proposition~\ref{prop:Cl},  $\dim_{\mbQ} (\mathrm{H}_4 (X, \mbQ)) =\mathrm{rank}(\mathrm{Cl}(X))$. Moreover, $\dim_{\mbQ} (\mathrm{H}^2 (X, \mbQ)) =\mathrm{rank}(\mathrm{Pic}(X))=1$. Therefore,
\[\begin{split}
\delta_X&=\dim_{\mbQ} (\mathrm{H}_4 (X, \mbQ)) - \dim_{\mbQ} (\mathrm{H}^2 (X, \mbQ))\\
&=\mathrm{rank}(\mathrm{Cl}(X)) -1\\
&=\mathrm{rank}(\mathrm{Cl}(Y)) -\mu_X-1\\
&=\mathrm{rank}(\mathrm{Pic}(Y)) -\mu_X-1\\
&=\dim_{\mbQ} (\mathrm{H}^2 (Y, \mbQ)) -\mu_X-1\\
&=\dim (\mcH_X) - (h^0 (\mbP^3, \mcO_{\mbP^3} (3 r - 4)) - \mu_X).
\end{split}\]
\end{proof}


Theorem~\ref{thm:facA} reduces the topological problem to a relatively simpler problem concerning the linear system $|\mcO_{\mbP^3} (3 r - 4)|$ and $0$-dimensional subschemes in $\mbP^3$.

\begin{Cor} \label{cor:faccondmu}
The double solid $X$ is factorial if and only if the condition~$(\mathbf{A})$  imposes linearly independent $\mu_X$ conditions on $\mathrm{H}^0 (\mbP^3, \mcO_{\mbP^3} (3 r - 4))$.
\end{Cor}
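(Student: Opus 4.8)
The plan is to deduce the corollary from the defect formula of Theorem~\ref{thm:facA} by a short linear-algebra argument, the only substantial geometric input being a count of exceptional divisors. Write $V := H^0(\mbP^3, \mcO_{\mbP^3}(3r-4))$. By definition $\mcH_X \subseteq V$ is the linear subspace cut out by the conditions of $(\mathbf{A})$: a point $p \in \Sing(B)$ of type $A_m$ contributes the condition $(\mathrm{C}_{p,0})$ together with the $\lceil m/2\rceil - 1$ conditions $(\mathrm{C}_{p,j})$ for $1 \le j \le \lceil m/2\rceil-1$, hence exactly $\lceil m/2\rceil$ linear conditions. Let $N$ be the total number of these conditions over all of $\Sing(B)$ and let $\kappa \le N$ be the rank of the corresponding system, so that $\dim\mcH_X = \dim V - \kappa$. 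Theorem~\ref{thm:facA} then gives $\delta_X = (\dim V - \kappa) - (\dim V - \mu_X) = \mu_X - \kappa$.

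Granting for the moment the identity $N = \mu_X$, the corollary follows immediately: the conditions $(\mathbf{A})$ are linearly independent exactly when $\kappa = N = \mu_X$, which by the displayed formula is exactly when $\delta_X = 0$. Since $X$ is Gorenstein and terminal, $\mbQ$-factoriality is equivalent to factoriality, hence to $\delta_X = 0$ by the discussion preceding Theorem~\ref{thm:facA}; this yields the statement. Thus everything reduces to showing that the total number of linear conditions imposed by $(\mathbf{A})$ equals the number $\mu_X$ of exceptional components of the big resolution, and this is the step I expect to be the main obstacle.

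To prove $N = \mu_X$ I would work locally and additively over the points of $\Sing(B)$, since these are isolated and the big resolution blows up a finite set of points at each stage. A point $p$ of type $A_m$ on $B$ produces a $cA_1$ point of $X$ with local normal form $w^2 = x_2^2 + x_3^2 + x_1^{m+1}$, i.e.\ of type $M = m+1$ in the notation $xy + z^2 + w^M$. Blowing up this single singular point, the projectivised tangent cone is an irreducible quadric (the rank-three cone $w^2 = x_2^2 + x_3^2$ when $M \ge 3$, a smooth quadric when $M = 2$), so exactly one new exceptional divisor appears; examining the chart that resolves the cone shows that the proper transform has again a single $cA_1$ singularity, now of type $M-2$. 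Iterating, the type drops by $2$ at each blow-up until it reaches $M \le 1$, where the germ is smooth, so the number of blow-ups --- equivalently the number of exceptional divisors lying over $p$ --- is $\lfloor M/2\rfloor = \lfloor (m+1)/2\rfloor = \lceil m/2\rceil$, precisely the number of conditions $(\mathbf{A})$ imposes at $p$. Summing over $\Sing(B)$ gives $\mu_X = N$. The points that need care are that at every stage the reduced singular locus is a finite set of points, so that each local blow-up contributes exactly one irreducible divisor and the big resolution agrees with this chain of point blow-ups, and that the tangent cone remains an irreducible quadric throughout the chain; both are read off from the explicit local equation.
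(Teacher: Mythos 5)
Your proposal is correct and follows exactly the route the paper intends: the corollary is stated as an immediate consequence of the defect formula in Theorem~\ref{thm:facA}, with the only real content being the equality between $\mu_X$ and the total number of conditions in $(\mathbf{A})$, which you verify via the local computation showing that an $A_m$ point of $B$ gives a $cA_1$ point of $X$ resolved by a chain of $\lceil m/2\rceil$ point blow-ups, each contributing a single irreducible exceptional divisor. This matches the count ``each singular point of type $A_n$ yields $\lceil n/2\rceil$ conditions'' used in the paper's proof of the subsequent corollary, so no further comment is needed.
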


\begin{Cor}
Suppose that the surface $B$ has singularities
\[
a_1 A_1 +a_2A_2+ \cdots + a_m A_m,
\]
where $a_i$ is the number of singular points of type $A_i$ on $B$.
If the double solid $X$ is $\mbQ$-factorial, then
\[
\sum_{n=1}^m a_n \lceil \frac{n}{2} \rceil \le h^0 (\mbP^3, \mcO_{\mbP^3} (3 r - 4)) = \frac{1}{2} (3 r - 1)(3 r - 2)(r - 1).
\]
\end{Cor}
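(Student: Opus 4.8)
The plan is to read off both inequalities from Corollary~\ref{cor:faccondmu}, which asserts that $X$ is $\mbQ$-factorial precisely when condition $(\mathbf{A})$ imposes linearly independent $\mu_X$ conditions on $|\mcO_{\mbP^3}(3r-4)|$. The first task is to pin down the two bookkeeping quantities. At an $A_n$ point $p$ of $B$, condition $(\mathbf{A})$ prescribes the single equation $(\mathrm{C}_{p,0})$ together with the $\lceil n/2 \rceil - 1$ equations $(\mathrm{C}_{p,j})$, for a total of $\lceil n/2 \rceil$ scalar conditions; summing over all singular points shows that $(\mathbf{A})$ consists of $\sum_{n=1}^m a_n \lceil n/2 \rceil$ conditions. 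I would also record that this number is exactly $\mu_X$: the corresponding germ on $X$ is the $cA_1$ singularity $x y + z^2 + w^{n+1} = 0$, and one blow-up of its singular locus replaces $w^{n+1}$ by $w^{n-1}$ (in the relevant chart), so the big resolution uses exactly $\lceil n/2 \rceil$ point blow-ups, each contributing one irreducible exceptional divisor over $p$. Thus the number of conditions of $(\mathbf{A})$ and the count $\mu_X$ agree, as they must for Corollary~\ref{cor:faccondmu} to be consistent.

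With $X$ assumed $\mbQ$-factorial, Corollary~\ref{cor:faccondmu} makes these $\sum_{n=1}^m a_n \lceil n/2 \rceil$ conditions linearly independent functionals on $H^0(\mbP^3, \mcO_{\mbP^3}(3r-4))$. Since a vector space admits no more linearly independent functionals than its dimension, this at once yields
\[
\sum_{n=1}^m a_n \lceil \tfrac{n}{2} \rceil \le h^0\big(\mbP^3, \mcO_{\mbP^3}(3r-4)\big) = \binom{3r-1}{3} = \tfrac{1}{2}(3r-1)(3r-2)(r-1),
\]
which is the second inequality.

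For the bound $\lceil m/2 \rceil \le 3r-3$ I would localise at a single point $p$ of type $A_m$ (possible since $a_m \ge 1$). Global independence forces the $\lceil m/2 \rceil$ conditions at $p$ to be independent on their own. Unwinding $(\mathrm{C}_{p,0})$ and the $(\mathrm{C}_{p,j})$ shows that they say exactly that $H$ lies in $(x_{2,p}, x_{3,p}) + (x_{1,p}^{\lceil m/2 \rceil})$; equivalently $H$ vanishes to order $\lceil m/2 \rceil$ along the $x_{1,p}$-axis $C = \{x_{2,p} = x_{3,p} = 0\}$, so $p$ carries the curvilinear length-$\lceil m/2 \rceil$ subscheme $Z_p \subset C$, and independence means that $H^0(\mbP^3, \mcO_{\mbP^3}(3r-4)) \to \mcO_{Z_p}$ is surjective. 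I would then bound the target by comparing $C$ with its tangent line $\ell$: a form of degree $3r-4$ restricts to $\ell \cong \mbP^1$ as a binary form with only $3r-3$ coefficients, so at most $3r-3$ independent jet conditions can be supported along $\ell$, giving $\lceil m/2 \rceil \le 3r-3$.

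The delicate point, and the step I expect to be the main obstacle, is precisely this last comparison. Over an arbitrary smooth analytic germ a length-$L$ curvilinear scheme can impose $L$ independent conditions even when $L > 3r-3$ (a twisted cubic already exhibits this for linear forms), so the degree bound cannot come from the germ alone: it must use that the $x_{1,p}$-axis of an $A_m$ point on a degree-$2r$ surface is controlled to order $\lceil m/2 \rceil$ by $B$, allowing its osculating jets to be replaced by those of the tangent line $\ell$ without loss. I would make this rigorous either by straightening the axis via an admissible change of analytic coordinates preserving the scheme $Z_p$ (hence $\delta_X$), or by showing directly that the pure $x_{1,p}$-derivative functionals factor through restriction to $\ell$; either route reduces the count to the $3r-3$ coefficients of a binary form of degree $3r-4$.
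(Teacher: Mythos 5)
Your treatment of the second inequality coincides with the paper's own argument: each $A_n$ point contributes $\lceil n/2\rceil$ of the conditions of $(\mathbf{A})$, Corollary~\ref{cor:faccondmu} makes these $\mu_X$ conditions linearly independent functionals on $H^0(\mbP^3,\mcO_{\mbP^3}(3r-4))$, and the dimension count gives $\sum_n a_n\lceil n/2\rceil\le h^0(\mbP^3,\mcO_{\mbP^3}(3r-4))$. Your verification that the number of conditions equals $\mu_X$, via the chain of $\lceil n/2\rceil$ point blow-ups resolving $xy+z^2+w^{n+1}=0$, is correct and is a useful supplement to what the paper leaves implicit.

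The gap is in the first inequality, and it sits exactly where you flagged it. You reduce $\lceil m/2\rceil\le 3r-3$ to the claim that the pure $x_{1,p}$-derivative functionals of orders $0,\dots,\lceil m/2\rceil-1$ factor through restriction to the tangent line $\ell$ of the axis, but this claim does not follow from the normal form: in the presentation $x_{1,p}^{m+1}+x_{2,p}^2+x_{3,p}^2+f=0$, the coordinates $x_{2,p},x_{3,p}$ can be altered by a pure power $c\,x_{1,p}^{k}$ only for $k>(m+1)/2$ without violating the weighted-order condition on $f$, so the curvilinear scheme $Z_p=V\bigl(x_{2,p},x_{3,p},x_{1,p}^{\lceil m/2\rceil}\bigr)$ is well defined but in general is \emph{not} contained in $\ell$. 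For instance, a germ of the form $\bigl(y_2+\tfrac12 y_1^2\bigr)^2+y_3^2+(\text{order } m+1 \text{ along the axis})$ has its axis leaving $\ell$ already at order $2$, and for $m\ge 4$ neither of your proposed repairs (straightening by an admissible coordinate change, or factoring the functionals through $H|_\ell$) is available. Since, as your own twisted-cubic remark shows, a curved length-$L$ curvilinear scheme can impose $L$ independent conditions on forms of degree $3r-4$ even when $L>3r-3$, the bound cannot be extracted from linear independence together with the local normal form alone; some further input is needed, either about which $A_m$ axes can actually occur on a surface of degree $2r$ when $\lceil m/2\rceil>3r-3$, or from the precise derivation of the conditions $(\mathrm{C}_{p,j})$ in \cite{Ram08}. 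For what it is worth, the paper's own proof is a single sentence invoking Corollary~\ref{cor:faccondmu} and does not address this point either, so you have not overlooked an argument the paper supplies; but as written your proof of $\lceil m/2\rceil\le 3r-3$ is not complete.
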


\begin{proof}
A  singular point $p$  of type $A_n$ yields $\lceil \frac{n}{2} \rceil$ conditions  $(\mathrm{C}_{p,j})$, $0 \le j \le \lceil \frac{m}{2} \rceil -1$,  on $\mathrm{H}^0 (\mbP^3, \mcO_{\mbP^3} (3 r - 4))$.
Therefore, the statement follows from Corollary~\ref{cor:faccondmu}.
\end{proof}

Following the argument based on a Lefschetz-type theorem as in \cite[Example 1.5]{CP10}, we can construct an example of a non-$\mbQ$-factorial double solid $X$ branched along a surface $B$ with only terminal $cA_k$ singularities.

\begin{Ex}\label{ex:fac9}
Let $B$ be the surface of degree $2 r$, where $r \ge 2$, defined by
\[
x f_{2 r - 1} (x, y, z) + t^{2 r -3} x (x^2 + y^2) + t ^{2 r  - 2} z^2 = 0 \subset \mbP^3,
\]
where $f_{2 r - 1}\in\mathbb{C}[x,y,z]$ is a general  homogeneous polynomial of degree~$2 r - 1$.
Then the surface $B$ has $2 r - 3$ singular points of type $A_1$, $2 r -1$ singular points of type $A_{2 r - 3}$, and one singular point of type $D_4$.
The double solid $X$ branched along the surface $B$ has $4 r - 4$ terminal singular points of type $cA_1$ and one terminal singular point of type $cA_2$.
Furthermore, it can be defined by the weighted homogeneous equation
\[
w^2 = x f_{2 r - 1} (x, y, z) + t^{2 r -3} x (x^2 + y^2) + t^{2 r - 2} z^2 \subset \mbP (1, 1, 1, 1, r).
\]
The hyperplane section defined by $x = 0$ splits into two divisors given by the equation
\[
(w + z t^{r-1})(w - z t^{r-1}) = 0,
\]
each of which is a non-$\mbQ$-Cartier divisor.
\end{Ex}

We can construct the following non-$\mbQ$-factorial double solid with only terminal~$cA_1$ singularities based on the fact that a variety with a small resolution is not $\mbQ$-factorial. In this example, Theorem~\ref{thm:facA} confirms that its defect is positive, with the aid of  the Cayley-Bacharach theorem.

\begin{Ex}\label{ex:fac10}
Let $V$ be the smooth $3$-fold of bidegree $(2, r)$, $r\geq 2$, in $\mbP^1 \times \mbP^3$ defined by the bihomogeneous equation
\[
f_r (x, y, z) u^2 +2 t^r uv+ h_r (x, y, z) v^2 = 0,
\]
where $f_r$ and $h_r$ are general  homogeneous polynomials of degree $r$  in $\mathbb{C}[x, y,z]$.
In addition, we denote the natural projection of $V$ to $\mbP^3$ by $\pi \colon V \to \mbP^3$.
The system of equations
\[
f_r (x, y, z) = t = h_r (x, y, z) = 0
\]
defines exactly $r^2$ points in $\mbP^3$.
The $3$-fold $V$ then has exactly $r^2$ curves $C_i$, $i = 1, \dots, r^2$, such that $H \cdot C_i = 0$, where $H$ is a divisor cut by a hypersurface of bidegree $(0, 1)$ in $\mbP^1 \times \mbP^3$.
The projection $\pi$ has degree $2$ outside the points $\pi (C_i)$.
The model
\[
\Proj \left( \bigoplus_{n \ge 0} \mathrm{H}^0 (V, \mcO_V (n H)) \right)
\]
of $V$ is the double cover $X$ of $\mbP^3$ branched along the surface $B$ defined by
\[
t^{2r} - f_r (x, y, z) h_r (x, y, z) = 0.
\]
It has $r^2$ terminal singular points of type $cA_1$ each of which comes from each curve $C_i$.
The morphism $\phi_{|m H|} \colon V \to X$ given by the complete linear system of bidegree $(0, m)$ on $V$ with sufficiently large $m$ contracts these $r^2$ curves to $cA_1$ points of $X$.
Therefore, it is a small morphism of $V$ onto $X$ and hence the double cover~$X$ cannot be $\mbQ$-factorial.

Meanwhile, we can also verify that  $X$
cannot be $\mathbb{Q}$-factorial from the view point of
Theorem~ \ref{thm:facA}.
The surface $B$ has $r^2$ singular points of type $A_{2r-1}$ on the plane~$\Pi$ defined by $t=0$.
These singular points $p_i$, $i=1,\cdots, r^2$, on $B$ impose conditions ($\mathrm{C}_{p_i,r-1}$)
\[\frac{\partial^{r-1} H}{\partial t^{r-1}}(p_i)=0 \]
on $\mathrm{H}^0(\mathbb{P}^3, \mathcal{O}_{\mathbb{P}^3}(3r-4))$.
These conditions can be also regarded as vanishing conditions at $p_i$'s
for  $\mathrm{H}^0(\Pi, \mathcal{O}_\Pi(2r-3))$ since
$$\mathrm{H}^0(\mathbb{P}^3, \mathcal{O}_{\mathbb{P}^3}(3r-4))=\bigoplus_{k=0}^{3r-4}t^{3r-4-k}\mathbb{C}[x,y,z]_{k},$$
where $\mathbb{C}[x,y,z]_{k}$ is the space of homogenous polynomials of degree $k$ in variables~$x, y, z$.
The points $p_i$'s are the intersection points of the two plane curves of degree $r$
defined by $f_r(x, y, z)=0$ and $g_r(x,y,z)=0$ on $\Pi$. It therefore follows from the Cayley-Bacharach theorem (\cite[Theorem~CB4]{EGH96}) that the vanishing conditions at $p_i$'s  on  $\mathrm{H}^0(\Pi, \mathcal{O}_\Pi(2r-3))$ are linearly dependent.  Therefore, the conditions $$\{  (\mathrm{C}_{p_i,r-1}) | i=1, \cdots r^2\}$$
 are linearly dependent on $\mathrm{H}^0(\mathbb{P}^3, \mathcal{O}_{\mathbb{P}^3}(3r-4))$.
It then follows from Corollary~\ref{cor:faccondmu} that $X$ is not $\mathbb{Q}$-factorial, i.e.,
$\delta_X>0$.
\end{Ex}

\begin{Ex}
Let $B$ be the surface of degree $2 r$, where $r \ge 2$, defined by
\[
L_1 (x, y, z) L_2 (x, y, z) \cdots L_{2 r} (x, y, z) + t^{2 r - 1} z = 0 \subset \mbP^3,
\]
where $L_1 (x, y, z), \dots, L_{2 r} (x, y, z)$ are  general linear forms.

Then the surface $B$ has exactly $r (2 r - 1)$ singular points and they are $A_{2 r - 2}$ singularities.
These singular points correspond to the intersection points $p_{n m}$ of the lines $L_n (x, y, z)$ and $L_m (x, y, z)$ on the plane $t = 0$, $1 \le n< m \le 2 r$.
The double solid $X$ branched along the surface $B$ also has $r (2 r -1)$ terminal singular points of type $cA_1$.

Let $F_{l} (x, y, z)$ be a general from of degree $l$ in $x, y, z$.
For $0 \le j \le r-2$ and $1 \le n< m \le 2 r$, put
\[
H_{n, m, j} (x, y, z) = t^j F_{r - 2 - j} (x, y, z) \frac{L_1 (x, y, z) L_2 (x, y, z) \cdots L_{2 r} (x, y, z)}{L_n (x, y, z) L_m (x, y, z)}.
\]
It is a form of degree $3 r - 4$.
Observe that for fixed $n, m, j$,
\begin{gather*}
\frac{\prt^j H_{n, m, j}}{\prt t^j} (p_{n m}) \ne 0; \\
\frac{\prt^k H_{n, m, j}}{\prt t^k} (p_{n m}) = 0 \text{ if $k \ne j$}; \\
\frac{\prt^k H_{n, m, j}}{\prt t^k} (p_{n' m'}) = 0 \text{ if $(n', m') \ne (n, m)$}.
\end{gather*}
This implies that the $r (2 r - 1)(r -1)$ conditions $\{(\mathrm{C}_{p_{n m}, j})\}$ impose linearly independent conditions on $\mathrm{H}^0 (\mbP^3, \mcO_{\mbP^3} (3 r - 4))$.
Therefore, Corollary~\ref{cor:faccondmu} implies that the double solid $X$ is factorial.
\end{Ex}

$\mbQ$-factoriality of nodal double solids has been extensively studied in \cite{CP10}, \cite{Che09}, \cite{HP07} and \cite{Klo22}.
Their results are mainly based on the result of \cite{Cle83}.
As aforementioned, in this section, we use the result of \cite{Ram08} that is applicable to the cases with wider range of singularities.
Before we proceed, let us mention that some results in \cite{CP10}, \cite{Che09}, \cite{HP07} and \cite{Klo22} remain true if we allow only singularities slightly worse than nodes.

\begin{Thm}
Let $X$ be the double solid branched along a surface $B$ of degree~$2r$.
Suppose that $B$ has at worst $A_1$ and $A_2$ singularities.
\begin{enumerate}
\item If the number of singular points of $X$ is less than $r (2 r -1)$, then $X$ is factorial.
\item If the number of singular points of $X$ is $r (2 r -1)$ and $X$ is not $\mbQ$-factorial, then the surface $B$ is defined by an equation of the form
\[
f_r (x, y, z, t)^2 + f_1 (x, y, z, t) f_{2 r - 1} (x, y, z, t) = 0,
\]
where $f_i (x, y, z, t)\in\mathbb{C}[x,y,z,t]$ is a homogeneous polynomial of degree $i$.
\end{enumerate}
\end{Thm}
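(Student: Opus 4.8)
The plan is to translate the topological statement into a question about points imposing conditions, using Theorem~\ref{thm:facA}, and then to run a Cayley--Bacharach analysis. First I would record the local input: at an $A_1$ or $A_2$ point of $B$ we have $m=1$ or $m=2$, so $\lceil m/2\rceil-1=0$ and the only condition in $(\mathbf{A})$ is $\mathrm{C}_{p,0}$, namely $H(p)=0$. Hence condition $(\mathbf{A})$ amounts to requiring $H\in H^0(\mbP^3,\mcO_{\mbP^3}(3r-4))$ to vanish at the finite set $\Sigma:=\Sing(B)$; in particular $\mu_X=|\Sigma|=:N$. Since $X$ is Gorenstein terminal, factoriality and $\mbQ$-factoriality coincide (\cite[Lemma 6.3]{Kawamata88}), and by Corollary~\ref{cor:faccondmu} both are equivalent to the assertion that $\Sigma$ imposes $N$ independent conditions on $|\mcO_{\mbP^3}(3r-4)|$. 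So everything reduces to the claim $(\star)$: every inclusion-minimal subset $\Sigma_0\subseteq\Sigma$ that fails to impose independent conditions on $|\mcO_{\mbP^3}(3r-4)|$ is a complete intersection of type $(r,2r-1)$ lying in a plane, and in particular $|\Sigma_0|=r(2r-1)$.

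The number $r(2r-1)=\binom{2r}{2}$ and the degree $3r-4$ are tied together by Cayley--Bacharach, and seeing this first fixes the shape of the argument. A complete intersection $\Sigma_0$ of two plane curves of degrees $r$ and $2r-1$ inside a plane $\Pi\subset\mbP^3$ consists of exactly $r(2r-1)$ points, and since $3r-4=r+(2r-1)-3$, the classical Cayley--Bacharach property in $\mbP^2$ shows that $\Sigma_0$ fails, by exactly one, to impose independent conditions on plane curves of degree $3r-4$ (a Koszul computation gives $r(2r-1)-1=(r-1)(2r+1)$ imposed conditions). Because restriction $H^0(\mbP^3,\mcO(3r-4))\to H^0(\Pi,\mcO_\Pi(3r-4))$ is surjective, $\Sigma_0$ likewise fails on $\mbP^3$-forms of degree $3r-4$. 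This is the source of non-factoriality and matches the extremal count in $(\star)$.

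To prove $(\star)$ I would pass to a minimal failing subset $\Sigma_0\subseteq\Sigma$, which then satisfies the Cayley--Bacharach property in degree $3r-4$, and show that such a configuration of at most $r(2r-1)$ points, all of which are $A_1$ or $A_2$ singular points of the degree-$2r$ surface $B$, must be a coplanar complete intersection of type $(r,2r-1)$. This is the main obstacle. The plan is to follow the template of Cheltsov--Park \cite{CP10} for nodal double solids: Cayley--Bacharach geometry (in the Eisenbud--Green--Harris form) forces a Cayley--Bacharach scheme of small length either onto a low-degree curve or, at the extremal length, into a plane as a complete intersection, and the degenerate cases where many points of $\Sigma_0$ become collinear or lie on a conic are excluded by Bézout, using that a line or conic meeting $\Sing(B)$ in too many points would lie in $B$ and make $B$ non-normal, contradicting the hypothesis of isolated $A_1/A_2$ singularities. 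The point to stress is that the singularity type enters only through the count ``one condition per point'' recorded above and through these Bézout exclusions; the combinatorial core is identical to the nodal case, so the $A_2$ points require no new configuration analysis.

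Granting $(\star)$, part (1) is immediate: if $N<r(2r-1)$ there is no failing subset, so $\Sigma$ imposes independent conditions and $X$ is factorial. For part (2), $N=r(2r-1)$ and non-$\mbQ$-factoriality produce a minimal failing $\Sigma_0$ with $|\Sigma_0|=r(2r-1)=N$, hence $\Sigma_0=\Sigma$, and $(\star)$ gives a plane $\Pi$ and forms $g_r,g_{2r-1}$ on $\Pi$ with $\Sigma=\{g_r=g_{2r-1}=0\}$. Choosing coordinates so that $\Pi=\{t=0\}$ and writing $B=t\,A_{2r-1}+B_0$ with $B_0=B|_\Pi\in\mbC[x,y,z]_{2r}$, the vanishing of $\nabla B$ along $\Sigma$ forces $B_0$ to be a plane curve singular at every point of $\Sigma$, i.e.\ $B_0\in I_\Sigma^{(2)}$. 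As $\Sigma$ is a reduced complete intersection, the generators of $I_\Sigma$ form a regular sequence, so symbolic and ordinary squares agree and $B_0\in(g_r,g_{2r-1})^2$; comparing degrees (the generators $g_rg_{2r-1}$ and $g_{2r-1}^2$ have degree $>2r$ for $r\ge 2$) leaves $B_0=c\,g_r^2$. Absorbing the constant into $g_r$ and setting $f_1=t$, $f_r=g_r$, $f_{2r-1}=A_{2r-1}$ yields $B=f_r^2+f_1f_{2r-1}$, as required.
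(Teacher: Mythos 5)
Your proposal is correct and follows essentially the same route as the paper: both reduce the question via Theorem~\ref{thm:facA} and Corollary~\ref{cor:faccondmu} to whether the singular points impose independent conditions on $|\mcO_{\mbP^3}(3r-4)|$, observe that an $A_2$ point of $B$ contributes exactly the same single condition $(\mathrm{C}_{p,0})$ as an $A_1$ point, and then defer the combinatorial/Cayley--Bacharach core to the known nodal-case arguments of Cheltsov--Park, Cheltsov and Kloosterman. The paper's proof is just a terser version of this (``the proofs work verbatim once $A_1$ and $A_2$ are indistinguishable from the viewpoint of factoriality''), while you additionally spell out the Cayley--Bacharach count and the derivation of the equation $f_r^2+f_1f_{2r-1}$ in part~(2).
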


\begin{proof}
The statements have been verified under the condition that $B$ has only~$A_1$ singularities  in  \cite[Theorem B]{CP10}, \cite[Theorem 3]{Che09}, \cite[Theorems 4.3 and 5.3]{HP07}, and \cite[Theorem 4.6]{Klo22}.
However, as $A_1$ singularities, singular points of type $A_2$ on $B$ produce only one condition $(\mathrm{C}_{p,0})$ in the condition $(\mathbf{A})$.
From the view point of factoriality, we do not have to distinguish between $A_1$ and $A_2$.
Therefore, the proofs of \cite[Theorem 3]{Che09}  and \cite[Theorem 4.6]{Klo22} work verbatim for the case when $B$ allows $A_2$ singularities also.
\end{proof}

\begin{Ex} \label{ex:fac8}
Let $B$ be the surface of degree $2 r$, where $r \ge 2$, defined by
\[
t^{2 r} + x h_{2 r - 1} (x, y, z) = 0 \subset \mbP^3,
\]
where $h_{2 r - 1}$ is a general homogeneous polynomial of degree $2 r - 1$.
Then the surface $B$ has exactly $2 r - 1$ singular points which are of type $A_{2 r - 1}$.
The double solid $X$ branched along the surface $B$ also has $2 r - 1$ terminal singular points of type $cA_1$.
As before, this can be defined by the weighted homogeneous equation
\[
w^2 = t^{2 r} + x h_{2 r - 1} (x, y, z) \subset \mbP (1, 1, 1, 1, r).
\]
The hyperplane section defined by $x = 0$ splits into two divisors given by the equation
\[
(w + t^r)(w - t^r) = 0,
\]
each of which is a non-$\mbQ$-Cartier divisor.
\end{Ex}

Example~\ref{ex:fac8} leads to the following conjecture.

\begin{Conj} \label{conj:fac}
Let $X$ be the double solid branched along a sextic surface with at worst  $A_m$ singularities.
If the number of singular points on $X$ is at most four, then $X$ is factorial.
\end{Conj}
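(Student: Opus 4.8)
The plan is to translate factoriality into the linear-algebra criterion of Corollary~\ref{cor:faccondmu} and then prove the required independence of conditions. Here $2r = 6$, so $r = 3$ and the relevant system is $\left| \mcO_{\mbP^3}(5) \right|$, with $h^0(\mbP^3, \mcO_{\mbP^3}(5)) = \binom{8}{3} = 56$. Write the singularities of $B$ as $A_{n_1}, \dots, A_{n_l}$ with $l \le 4$ and each $n_i \le 10$, and set $a_i = \lceil n_i/2 \rceil$, so that $\mu_X = \sum_{i=1}^l a_i$. By Corollary~\ref{cor:faccondmu}, $X$ is factorial if and only if the conditions of $(\mathbf{A})$ are linearly independent on $H^0(\mbP^3, \mcO_{\mbP^3}(5))$. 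The first step is to read $(\mathrm{C}_{p,0}), \dots, (\mathrm{C}_{p, a_i - 1})$ geometrically: together they say exactly that $H$ vanishes to order at least $a_i$ along the smooth arc $x_{2,p} = x_{3,p} = 0$, i.e.\ the distinguished singular direction at $p_i$ (for $n_i \ge 2$ this is the common zero of the two branches of the tangent cone, the line appearing in Example~\ref{ex:fac9}). Thus the conditions are governed by the $0$-dimensional scheme $W = \bigsqcup_{i=1}^l W_i$, where $W_i$ is the length-$a_i$ curvilinear scheme cut along that arc at $p_i$; since $H^1(\mbP^3, \mcO_{\mbP^3}(5)) = 0$, factoriality is equivalent to $H^1(\mbP^3, \mathcal{I}_W(5)) = 0$, that is, to $W$ imposing independent conditions on quintics. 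As $\deg W = \sum_i a_i \le 4 \cdot 5 = 20 < 56$, there is no numerical obstruction.

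The second step explains the role of the bound $m \le 10$ and settles isolated components. Since $a_i \le 5$ and $\mcO_{\mbP^3}(5)$ separates $5$-jets, a single $W_i$ always imposes independent conditions, i.e.\ $H^0(\mbP^3, \mcO_{\mbP^3}(5)) \to \mcO_{W_i}$ is surjective. The bound is sharp: for $n_i = 11$ one would have $a_i = 6$, forcing $H$ to vanish along an entire straight singular line; this is precisely the redundancy of Example~\ref{ex:fac9}, where a second $A_1$ point on that line imposes no new condition and $X$ is non-factorial. I would then prove independence of the whole system by peeling off components one at a time: by induction on $l$ it suffices to show that for each $i$ the map $H^0(\mbP^3, \mathcal{I}_{W \setminus W_i}(5)) \to \mcO_{W_i}$ stays surjective, which I would establish by exhibiting explicit separating quintics in the style of the forms $H_{n,m,j}$ built in the examples above, namely products of planes through the remaining singular arcs times a factor tuned to realize a prescribed jet along the arc at $p_i$.

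The main obstacle is the interaction of the components in degenerate position, and this is where the hypotheses must be used in full. The worst danger is that two singular points become collinear along a common singular direction $\ell$: then $H|_\ell$, a degree-$5$ form, must vanish to orders $a_i$ and $a_j$ at the two points, and independence fails once $a_i + a_j \ge 7$. Here the sextic geometry is decisive. Since $\ell \not\subset B$ meets the degree-$6$ surface in a divisor of degree $6$, and $B$ is tangent to the singular direction to order exactly $n+1$ at an $A_n$ point, two such points on $\ell$ force $(n_i + 1) + (n_j + 1) \le 6$, whence $a_i + a_j \le \tfrac{(n_i+1)+(n_j+1)}{2} \le 3 < 7$; thus the Bezout bound coming from $\deg B = 6$ rules out the collinear degeneration automatically. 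The residual hard case is the coplanar one, in which all arcs lie in a plane $\Pi$ and the problem reduces to showing that a curvilinear scheme of length $\le 20$ supported on at most four lines imposes independent conditions on plane quintics, where $h^0(\mbP^2, \mcO_{\mbP^2}(5)) = 21$ leaves only a tight margin. Controlling $H^1(\mathcal{I}_W(5))$ in this borderline situation---combining the bounds $a_i \le 5$ with the Bezout constraints that forbid too many singular points from sharing a line or a conic inside $\Pi$---is the crux, and is the reason the statement is posed as a conjecture rather than established here.
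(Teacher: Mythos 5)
Your write-up is a strategy sketch rather than a proof, and you say as much in your last sentence; but it is worth being precise about where it falls short, because the statement you were given is posed in the paper as a \emph{conjecture}: the paper itself does not prove it, and only verifies it under substantially stronger hypotheses (at worst $A_6$ singularities, no three singular points on a line, and a condition on coplanar configurations), by exhibiting explicit separating quintics built from products of planes through pairs of singular points. Your general framework is the right one and matches the paper's: reduce factoriality via Theorem~\ref{thm:facA} and Corollary~\ref{cor:faccondmu} to the independence of the $\mu_X = \sum_i \lceil n_i/2\rceil$ conditions on $H^0(\mbP^3,\mcO_{\mbP^3}(5))$, read the conditions at each point as vanishing of a prescribed jet along the distinguished arc $x_{2,p}=x_{3,p}=0$, and observe that $m\le 10$ keeps each local package at length at most $5$, so no single point forces $H$ to vanish on a whole line.

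The genuine gap is in the step where you claim the collinear degeneration is ``ruled out automatically'' by Bezout. First, the arc $x_{2,p}=x_{3,p}=0$ lives in \emph{analytic} coordinates and is in general a curved space arc, not a line, so the interaction of two points' conditions is not governed by the restriction $H|_\ell$ to any line; your intersection-multiplicity count $(n_i+1)+(n_j+1)\le 6$ only applies when both arcs are literally the same straight line $\ell$ and $\ell\not\subset B$. Second, the excluded case $\ell\subset B$ is exactly the mechanism of Example~\ref{ex:fac9}: there the distinguished line $y=z=0$ at the $A_{11}$ point is contained in $B$ and carries a second singular point, and this is what produces the redundancy. For $m\le 10$ a single point cannot force $H|_\ell\equiv 0$, but two points of types $A_{n_i}$, $A_{n_j}$ with straight coincident arcs on a line contained in $B$ and $\lceil n_i/2\rceil+\lceil n_j/2\rceil\ge 7$ would violate independence, and nothing in your argument shows such a configuration cannot occur on a sextic. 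Third, the ``residual coplanar case'' is not residual: the arcs need not be coplanar or straight, so the reduction to plane quintics is unjustified, and the inductive peeling step is only asserted. These are precisely the configurations the paper sidesteps by adding hypotheses, so the conjecture remains open after your argument as well.
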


However, due to our limited understanding of  Du Val singularities of sextic surfaces, we can currently confirm it only under a specific additional  condition.

\begin{Thm}
Conjecture~\ref{conj:fac} holds good under the assumption that $B$ allows at most four and at worst $A_4$ singularities.
\end{Thm}

\begin{proof}
We assume that $B$ has four singular points denoted by $p_1, \dots, p_4$. 
The proof for the case of a smaller number of singular points is nearly identical.

For each index $i$, the singular point $p_i$ is of type $A_{m_i}$ with $m_i \leq 4$. Each point imposes linear conditions given by
\[
\begin{split}
& (\mathrm{C}_{i,0}): \hspace{10mm} H (p_i) = 0; \\
& (\mathrm{C}_{i,1}): \hspace{10mm} \frac{\prt H}{\prt x_{1, p_i}} (p_i) = 0 \text{ if } m_i=3 \text{ or } 4
\end{split}
\]
on the linear system of quintic hypersurfaces $H$ in $\mbP^3$.
To establish our claim, it suffices to demonstrate that for any given condition $(\mathrm{C}_{i,j})$, there exists a quintic hypersurface $H$ that violates the condition $(\mathrm{C}_{i,j})$ while simultaneously satisfying the remaining conditions.

We employ homogeneous coordinates $(x\!:\!y\!:\!z\!:\! t)$ for $\mbP^3$.
Consider a fixed condition  $(\mathrm{C}_{i,j})$.
Without loss of generality, we assume  that $p_i = (1\!:\!0\!:\!0\!:\!0)$ for convenience, and set $i = 4$.

In case when $m_4 \ge 2$, assume that the Zariski tangent planes at $p_4$ are defined by $y = 0$ and $z = 0$, respectively.
We may regard $x_{1, p_4}$ as $t$. 
\smallskip

\paragraph{\textit{Case 1}} The four singular points are not collinear.

Select a line passing through $p_4$ that includes the maximum number of singular points $p_1, p_2, p_3$. Such a line accommodates at least two and at most three singular points since the four singular points are not collinear. 
 We may therefore assume that $p_3$ lies outside the line.
 For $k=1, 2$, let $H_{k3}$ be a general hyperplane that contains $p_k$ and $p_3$. Note that $H_{k3}$ does not vanish at $p_4$.
 
 Set 
 \[
P_1 = H_{13}^2H_{23}^3, \
P_2 = tH_{13}^2H_{23}^2. \
\]
Then,
\[
\det
\begin{pmatrix}
P_1 & P_2 \\[1mm]
\frac{\prt P_1}{\prt t} & \frac{\prt P_2}{\prt t}
\end{pmatrix}
(p_4) =(H_{13}^4H_{23}^5)(p_4)
 \ne 0.
\]
Therefore, for the given condition $(\mathrm{C}_{4,j})$, there exists a point $(\mu_1\!:\!\mu_2) \in \mbP^1$ such that between two values
\[
\begin{split}
& (0) \hspace{10mm} (\mu_1 P_1 + \mu_2 P_2)(p_4); \\
& (1) \hspace{10mm} \frac{\prt}{\prt t} (\mu_1 P_1+ \mu_2 P_2)(p_4),
\end{split}
\]
the value $(j)$ is non-zero while the other is zero. 
Then the quintic
\[
\mu_1 P_1 + \mu_2 P_2
\]
violates  the condition $(\mathrm{C}_{4,j})$ but satisfies all the other conditions.

\smallskip

\paragraph{\textit{Case 2}} The four singular points are collinear. 

In this case, if $m_k, m_\ell\geq 2$, then the intersection line of Zariski tangent planes at $p_k$ must be distinct from that  of Zariski tangent planes at $p_\ell$.
Otherwise, the surface $B$ would be singular along the intersection line.

\smallskip

\paragraph{\textit{Subcase 1-1}}  $m_4\leq 2$.

Suppose $m_k\leq 2$ for some $k\leq 3$, say $k=3$. Take a general hyperplane $H_i$ passing through $p_i$.
Then the quintic $H_1^2H_2^2H_3$ violates the condition $(\mathrm{C}_{4,0})$ but satisfies all other conditions. 
Therefore, we may assume that $m_1, m_2, m_3\geq 3$.
Given that the intersection lines of Zariski tangent planes at $p_1$, $p_2$, $p_3$ are all different, there exists a hyperplane $H_k$ for some $k\in \{1,2,3\}$ such that $H_k(p_4)\ne 0$, $H_k(p_k)=0$, and $\frac{\partial H_k}{\partial x_{1, p_k}} (p_k) = 0$. Assume, for instance, that $k=3$. Then the quintic $H_1^2H_2^2H_3$ fails the condition $(\mathrm{C}_{4,0})$ but satisfies all other conditions.

\smallskip

\paragraph{\textit{Subcase 1-2}}   $m_4\geq 3$.

Among the three points $p_1$, $p_2$, and $p_3$, we either have $m_k, m_\ell \leq 2$ or ${m_k, m_\ell \geq 3}$. 
For both cases, we may assume that $k=2$ and $\ell=3$.

Let $H_1$ be a general hyperplane passing through $p_1$.

If $m_2$, $m_3 \leq 2$, then for each $i=2,3$, we consider a general hyperplane $H_i$ passing through $p_i$.

If $m_2$, $m_3 \geq 3$, then there exists a hyperplane $H_i$ for $i=2, 3$, such that ${H_i(p_4) \neq 0}$, $H_i(p_i) = 0$, and $\frac{\partial H_k}{\partial x_{1, p_i}}(p_i) = 0$. 
We select such a general hyperplane~$H_i$.

We 
set 
 \[
P_1 = H_{1}^2H_{2}^2H_3, \
P_2 = tH_{1}^2H_{2}H_3.\
\]

Then,
\[
\det
\begin{pmatrix}
P_1 & P_2 \\[1mm]
\frac{\prt P_1}{\prt t} & \frac{\prt P_2}{\prt t}
\end{pmatrix}
(p_4) =(H_1^4H_2^3H_3^2)(p_4)
 \ne 0.
\]
As in Case~1, for the given condition $(\mathrm{C}_{4,j})$, there is a point $(\mu_1\!:\!\mu_2) \in \mbP^1$ such that the quintic
\[
\mu_1 P_1 + \mu_2 P_2
\]
violates the condition $(\mathrm{C}_{4,j})$ but satisfies all the other conditions.
\end{proof}

\section{Prime Fano $3$-fold weighted complete intersections}

We consider some prime Fano $3$-fold weighted complete intersections with only $cA_1$ points together with terminal quotient singular points, and prove their birational rigidity.
We make the setting precise.

\begin{Setting} \label{setting:FanoWCI}
Let $X$ be a prime Fano $3$-fold which is either a weighted hypersurface of index $1$ in one of the families listed in Table~\ref{table:Fanohyp}, or a weighted complete intersection of codimension $2$ and index $1$ in one of the families listed in Table~\ref{table:FanoWCI}.
Let $\mbP$ be the ambient weighted projective space of $X$.
We assume that $X$ is quasi-smooth along the singular locus $\Sing (\mbP)$ of $\mbP$.
\end{Setting}

\begingroup
\scalefont{0.86}

\begin{table}[h]
\renewcommand{\arraystretch}{1.15}
\begin{center}
\caption{Fano $3$-fold weighted hypersurfaces of index $1$}
\label{table:Fanohyp}
\begin{tabular}{cccc|cccc}
\hline
No & $X_d \subset \mbP (1, a_1, \dots, a_4)$ & $-K_X^3$ & Case & No & $X_d \subset \mbP (1, a_1, \dots, a_4)$ & $-K_X^3$ & Case  \\
\hline
\rowcolor{lightgray}
6 & $X_8 \subset \mbP (1,1,1,2,4)$ & $1$ & $\diamondsuit$ & 57 & $X_{24} \subset \mbP (1, 3, 4, 5, 12)$ & $1/30$ & $\heartsuit$ \\
10 & $X_{10} \subset \mbP (1,1,1,3,5)$ & $2/3$ & $\diamondsuit$ & 58 & $X_{24} \subset \mbP (1, 3, 4, 7, 10)$ & $1/35$ & $\heartsuit$ \\
\rowcolor{lightgray}
11 & $X_{10} \subset \mbP (1,1,2,2,5)$ & $1/2$ & $\diamondsuit$ & 59 & $X_{24} \subset \mbP (1, 3, 6, 7, 8)$ & $1/42$ & $\heartsuit$ \\
14 & $X_{12} \subset \mbP (1,1,1,4,6)$ & $1/2$ & $\diamondsuit$ & 60 & $X_{24} \subset \mbP (1, 4, 5, 6, 9)$ & $1/45$ & $\heartsuit$ \\
\rowcolor{lightgray}
15 & $X_{12} \subset \mbP (1, 1, 2, 3, 6)$ & $1/3$ & $\heartsuit$ & 61 & $X_{25} \subset \mbP (1, 4, 5, 7, 9)$ & $5/252$ & $\heartsuit$ \\
16 & $X_{12} \subset \mbP (1, 1, 2, 4, 5)$ & $3/10$ &  & 62 & $X_{26} \subset \mbP (1, 1, 5, 7, 13)$ & $2/35$ & $\diamondsuit$ \\
\rowcolor{lightgray}
17 & $X_{12} \subset \mbP (1, 1, 3, 4, 4)$ & $1/4$ &  & 63 & $X_{26} \subset \mbP (1, 2, 3, 8, 13)$ & $1/24$ & $\diamondsuit$ \\
18 & $X_{12} \subset \mbP (1, 2, 2, 3, 5)$ & $1/5$ & $\clubsuit$ & 64 & $X_{26} \subset \mbP (1, 2, 5, 6, 13)$ & $1/30$ & $\diamondsuit$ \\
\rowcolor{lightgray}
19 & $X_{12} \subset \mbP (1, 2, 3, 3, 4)$ & $1/6$ & $\heartsuit$ & 65 & $X_{27} \subset \mbP (1, 2, 5, 9, 11)$ & $3/110$ & $\clubsuit$ \\
21 & $X_{14} \subset \mbP (1, 1, 2, 4, 7)$ & $1/4$ & $\diamondsuit$ & 66 & $X_{27} \subset \mbP (1, 5, 6, 7, 9)$ & $1/70$ & $\heartsuit$ \\
\rowcolor{lightgray}
22 & $X_{14} \subset \mbP (1, 2, 2, 3, 7)$ & $1/6$ & $\diamondsuit$ & 67 & $X_{28} \subset \mbP (1, 1, 4, 9, 14)$ & $1/18$ & $\diamondsuit$ \\
25 & $X_{15} \subset \mbP (1, 1, 3, 4, 7)$ & $5/28$ & & 68 & $X_{28} \subset \mbP (1, 3, 4, 7, 14)$ & $1/42$ & $\heartsuit$ \\
\rowcolor{lightgray}
26 & $X_{15} \subset \mbP (1, 1, 3, 5, 6)$ & $1/6$ & $\clubsuit$ & 69 & $X_{28} \subset \mbP (1, 4, 6, 7, 11)$ & $1/66$ & $\heartsuit$ \\
27 & $X_{15} \subset \mbP (1, 2, 3, 5, 5)$ & $1/10$ & $\heartsuit$ & 70 & $X_{30} \subset \mbP (1, 1, 4, 10, 15)$ & $1/20$ & $\diamondsuit$ \\
\rowcolor{lightgray}
28 & $X_{15} \subset \mbP (1, 3, 3, 4, 5)$ & $1/12$ & $\heartsuit$ & 71 & $X_{30} \subset \mbP (1, 1, 6, 8, 15)$ & $1/24$ & $\diamondsuit$ \\
29 & $X_{16} \subset \mbP (1, 1, 2, 5, 8)$ & $1/5$ & $\diamondsuit$ & 72 & $X_{30} \subset \mbP (1, 2, 3, 10, 15)$ & $1/30$ & $\heartsuit$ \\
\rowcolor{lightgray}
30 & $X_{16} \subset \mbP (1, 1, 3, 4, 8)$ & $1/6$ & $\diamondsuit$ & 73 & $X_{30} \subset \mbP (1, 2, 6, 7, 15)$ & $1/42$ & $\diamondsuit$ \\
31 & $X_{16} \subset \mbP (1, 1, 4, 5, 6)$ & $2/15$ & & 74 & $X_{30} \subset \mbP (1, 3, 4, 10, 13)$ & $1/52$ & $\clubsuit$ \\
\rowcolor{lightgray}
32 & $X_{16} \subset \mbP (1, 2, 3, 4, 7)$ & $2/21$ & $\clubsuit$ & 75 & $X_{30} \subset \mbP (1, 4, 5, 6, 15)$ & $1/60$ & $\heartsuit$ \\
34 & $X_{18} \subset \mbP (1, 1, 2, 6, 9 )$ & $1/6$ & $\diamondsuit$ & 76 & $X_{30} \subset \mbP (1, 5, 6, 8, 11)$ & $1/88$ & $\heartsuit$ \\
\rowcolor{lightgray}
35 & $X_{18} \subset \mbP (1, 1, 3, 5, 9)$ & $2/15$ & $\diamondsuit$ & 77 & $X_{32} \subset \mbP (1, 2, 5, 9, 16)$ & $1/45$ & $\diamondsuit$ \\
36 & $X_{18} \subset \mbP (1, 1, 4, 6, 7)$ & $3/28$ & & 78 & $X_{32} \subset \mbP (1, 4, 5, 7, 16)$ & $1/70$ & $\heartsuit$ \\
\rowcolor{lightgray}
37 & $X_{18} \subset \mbP (1, 2, 3, 4, 9)$ & $1/12$ & $\diamondsuit$ & 79 & $X_{33} \subset \mbP (1, 3, 5, 11, 14)$ & $1/70$ & $\clubsuit$ \\
38 & $X_{18} \subset \mbP (1, 2, 3, 5, 8)$ & $3/40$ & & 80 & $X_{34} \subset \mbP (1, 3, 4, 10, 17)$ & $1/60$ & $\diamondsuit$ \\
\rowcolor{lightgray}
41 & $X_{20} \subset \mbP (1, 1, 4, 5, 10)$ & $1/10$ & $\heartsuit$ & 81 & $X_{34} \subset \mbP (1, 4, 6, 7, 17)$ & $1/84$ & $\heartsuit$ \\
42 & $X_{20} \subset \mbP (1, 2, 3, 5, 10)$ & $1/15$ & $\heartsuit$ & 82 & $X_{36} \subset \mbP (1, 1, 5, 12, 18)$ & $1/30$ & $\diamondsuit$ \\
\rowcolor{lightgray}
43 & $X_{20} \subset \mbP (1, 2, 4, 5, 9)$ & $1/18$ & $\clubsuit$ & 83 & $X_{36} \subset \mbP (1, 3, 4, 11, 18)$ & $1/66$ & $\diamondsuit$ \\
44 & $X_{20} \subset \mbP (1, 2, 5, 6, 7)$ & $1/21$ & $\heartsuit$ & 84 & $X_{36} \subset \mbP (1, 7, 8, 9, 12)$ & $1/168$ & $\heartsuit$ \\
\rowcolor{lightgray}
45 & $X_{20} \subset \mbP (1, 3, 4, 5, 8)$ & $1/24$ & $\heartsuit$ & 85 & $X_{38} \subset \mbP (1, 3, 5, 11, 19)$ & $2/165$ & $\diamondsuit$ \\
46 & $X_{21} \subset \mbP (1, 1, 3, 7, 10)$ & $1/10$ &  & 86 & $X_{38} \subset \mbP (1, 5, 6, 8, 19)$ & $1/120$ & $\heartsuit$ \\
\rowcolor{lightgray}
47 & $X_{21} \subset \mbP (1, 1, 5, 7, 8)$ & $3/40$ &  & 87 & $X_{40} \subset \mbP (1, 5, 7, 8, 20)$ & $1/140$ & $\heartsuit$ \\
48 & $X_{21} \subset \mbP (1, 2, 3, 7, 9)$ & $1/18$ & $\clubsuit$ & 88 & $X_{42} \subset \mbP (1, 1, 6, 14, 21)$ & $1/42$ & $\heartsuit$ \\
\rowcolor{lightgray}
49 & $X_{21} \subset \mbP (1, 3, 5, 6, 7)$ & $1/30$ & $\heartsuit$ & 89 & $X_{42} \subset \mbP (1, 2, 5, 14, 21)$ & $1/70$ & $\heartsuit$ \\
51 & $X_{22} \subset \mbP (1, 1, 4, 6, 11)$ & $1/12$ & $\diamondsuit$ & 90 & $X_{42} \subset \mbP (1, 3, 4, 14, 21)$ & $1/84$ & $\heartsuit$ \\
\rowcolor{lightgray}
52 & $X_{22} \subset \mbP (1, 2, 4, 5, 11)$ & $1/20$ & $\diamondsuit$ & 91 & $X_{44} \subset \mbP (1, 4, 5, 13, 22)$ & $1/130$ & $\diamondsuit$ \\
53 & $X_{24} \subset \mbP (1, 1, 3, 8, 12)$ & $1/12$ & $\heartsuit$ & 92 & $X_{48} \subset \mbP (1, 3, 5, 16, 24)$ & $1/120$ & $\heartsuit$ \\
\rowcolor{lightgray}
54 & $X_{24} \subset \mbP (1, 1, 6, 8, 9)$ & $1/18$ & $\clubsuit$ & 93 & $X_{50} \subset \mbP (1, 7, 8, 10, 25)$ & $1/280$ & $\heartsuit$ \\
55 & $X_{24} \subset \mbP (1, 2, 3, 7, 12)$ & $1/21$ & $\diamondsuit$ & 94 & $X_{54} \subset \mbP (1, 4, 5, 18, 27)$ & $1/180$ & $\heartsuit$ \\
\rowcolor{lightgray}
56 & $X_{24} \subset \mbP (1, 2, 3, 8, 11)$ & $1/22$ & $\clubsuit$ & 95 & $X_{66} \subset \mbP (1, 5, 6, 22, 33)$ & $1/330$ & $\heartsuit$
\end{tabular}
\end{center}
\end{table}
\endgroup

\begingroup
\scalefont{0.95}

\begin{table}[h]
\renewcommand{\arraystretch}{1.15}
\begin{center}
\caption{Fano $3$-fold WCIs of codimension $2$ and index $1$}
\label{table:FanoWCI}
\begin{tabular}{ccc|ccc}
\hline
No & $X_{d_1, d_2} \subset \mbP (a_0, \dots, a_5)$ & $-K_X^3$ & No & $X_{d_1, d_2} \subset \mbP (a_0, \dots, a_5)$ & $-K_X^3$ \\
\hline
\rowcolor{lightgray}
8 & $X_{4, 6} \subset \mbP (1, 1, 2, 2, 2, 3)$ & $1$ & 59 & $X_{12, 14} \subset \mbP (1, 4, 4, 5, 6, 7)$ & $1/20$ \\
14 & $X_{6, 6} \subset \mbP (1, 2, 2, 2, 3, 3)$ & $1/2$ & 60 & $X_{12, 14} \subset \mbP (2, 3, 4, 5, 6, 7)$ & $1/30$ \\
\rowcolor{lightgray}
20 & $X_{6, 8} \subset \mbP (1, 2, 2, 3, 3, 4)$ & $1/3$ & 64 & $X_{12, 16} \subset \mbP (1, 2, 5, 6, 7, 8)$ & $2/35$ \\
24 & $X_{6, 10} \subset \mbP (1, 2, 2, 3, 4, 5)$ & $1/4$ & 71 & $X_{14, 16} \subset \mbP (1, 4, 5, 6, 7, 8)$ & $1/30$ \\
\rowcolor{lightgray}
31 & $X_{8, 10} \subset \mbP (1, 2, 3, 4, 4, 5)$ & $1/6$ & 75 & $X_{14, 18} \subset \mbP (1, 2, 6, 7, 8, 9)$ & $1/24$ \\
37 & $X_{8, 12} \subset \mbP (1, 2, 3, 4, 5, 6)$ & $2/15$ & 76 & $X_{12, 20} \subset \mbP (1, 4, 5, 6, 7, 10)$ & $1/35$ \\
\rowcolor{lightgray}
45 & $X_{10, 12} \subset \mbP (1, 2, 4, 5, 5, 6)$ & $1/10$ & 78 & $X_{16, 18} \subset \mbP (1, 4, 6, 7, 8, 9)$ & $1/42$ \\
47 & $X_{10, 12} \subset \mbP (1, 3, 4, 4, 5, 6)$ & $1/12$ & 84 & $X_{18, 30} \subset \mbP (1, 6, 8, 9, 10, 15)$ & $1/120$ \\
\rowcolor{lightgray}
51 & $X_{10, 14} \subset \mbP (1, 2, 4, 5, 6, 7)$ & $1/12$ & 85 & $X_{24, 30} \subset \mbP (1, 8, 9, 10, 12, 15)$ & $1/180$
\end{tabular}
\end{center}
\end{table}
\endgroup

\begin{Thm} \label{thm:WCI}
Let $X$ be as in Setting~\ref{setting:FanoWCI} and suppose that $X$ has at worst $cA_1$ points besides terminal quotient singular points.
Then $X$ is birationally rigid.
\end{Thm}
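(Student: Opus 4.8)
The plan is to invoke the birational-rigidity criterion of Lemma~\ref{lem:cribirrig}: it suffices to prove that the only maximal centers of $X$ are terminal quotient singularities, and that the maximal extraction over each such point initiates a Sarkisov self-link of type II. Since $X$ has Picard rank $1$ and $-K_X$ generates $\Cl(X)\cong\mbZ$, the candidate maximal centers are smooth points, terminal quotient singular points, $cA_1$ points, and irreducible curves. The hypothesis that $X$ is quasi-smooth along $\Sing\mbP$ guarantees that the terminal quotient singularities sit over $\Sing\mbP$ and have the same local analytic model as on a quasi-smooth member of the family, while every $cA_1$ point lies in the locus where $\mbP$ is smooth.

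I would first exclude smooth points by the $4n^2$-inequality (Theorem~\ref{thm:4nineq}), as in the quasi-smooth treatments \cite{CPR,CP17,OkadaI,AZ}. For the terminal quotient singularities, both the exclusion arguments and the constructions of Sarkisov self-links via Kawamata blow-ups depend only on the local cyclic-quotient model and on the global numerical data ($(-K_X)^3$, the weights, the Hilbert series), all of which coincide with the quasi-smooth case precisely because $X$ is quasi-smooth along $\Sing\mbP$. Consequently the relevant conclusions of \cite{CPR,CP17,OkadaI,AZ} carry over: each quotient singularity is either not a maximal center or initiates a Sarkisov self-link.

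The genuinely new ingredient is the exclusion of $cA_1$ points, for which I would apply the $2n^2$-inequality (Theorem~\ref{thm:ineqcA1}). If a $cA_1$ point $\msp$ were a maximal center, there would be a mobile system $\mcM\subset|\mcO_X(n)|$ with $\mult_{\msp}(D_1\cdot D_2)>2n^2$ for general $D_1,D_2\in\mcM$. I would contradict this by an upper bound: take the least $a\ge 1$ such that $|\mcO_X(a)|$ has a member $S$ that is smooth at $\msp$ and contains no component of the $1$-cycle $D_1\cdot D_2$; since the defining equation of $S$ then has multiplicity $1$ at $\msp$ in the ambient affine chart,
\[
\mult_{\msp}(D_1\cdot D_2)\le (S\cdot D_1\cdot D_2)=a\,n^2(-K_X)^3.
\]
As $(-K_X)^3\le 1$ for every family in Tables~\ref{table:Fanohyp} and~\ref{table:FanoWCI}, the small value of $a$ occurring here satisfies $a(-K_X)^3\le 2$, whence $\mult_{\msp}(D_1\cdot D_2)\le 2n^2$, a contradiction. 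This is the analogue for these WCIs of the argument for sextic double solids in Proposition~\ref{prop:SDSsingpt}.

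Finally, for curves, Lemma~\ref{lem:exclcurve1} excludes every irreducible curve $\Gamma$ with $(-K_X\cdot\Gamma)\ge(-K_X)^3$, leaving only finitely many low-degree curves, notably those through the singular points. These I would exclude as in Proposition~\ref{prop:SDScurve}, by producing a pencil $\mcP\sim_{\mbQ}-mK_X$, applying Lemma~\ref{lem:exclcurve2}, and computing the self-intersection $(\Gamma^2)_S$ from the extended dual graphs via Lemma~\ref{lem:selfintS}. The main obstacle, which I expect to be handled family by family, is twofold: first, guaranteeing that for every $cA_1$ point in every listed family a surface $S$ with the required properties exists---this is delicate for families with few weight-$1$ variables, where one is forced to take $a>1$ and to control the base locus of $|\mcO_X(a)|$ through $\msp$; and second, the residual curve analysis. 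I expect the surface-existence verification underlying the $cA_1$ exclusion to be the crux of the argument.
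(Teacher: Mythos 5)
Your treatment of points coincides with the paper's: smooth points are excluded via the $4n^2$-inequality and isolating divisors (Proposition~\ref{prop:WCIsmpt}), $cA_1$ points via the $2n^2$-inequality of Theorem~\ref{thm:ineqcA1} together with $\msp$-isolating divisors of degree at most $2/(-K_X)^3$ (Lemma~\ref{lem:singPisoldiv} and Proposition~\ref{prop:WCIcApt}), and quotient points are untwisted by self-links exactly as in the quasi-smooth case, since those arguments use only quasi-smoothness along $X\cap\Sing (\mbP)$ (Proposition~\ref{prop:WCIqpt}); the conclusion then follows from Lemma~\ref{lem:cribirrig}. You also correctly identify the existence of low-degree isolating divisors through each $cA_1$ point as the technical crux --- this is precisely Lemma~\ref{lem:singPisoldiv} and Table~\ref{table:descrf}, where the delicate families are Nos.\ 16, 17, 25, 31, 36, 38, 46, 47.

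The genuine gap is your treatment of curves. After Lemma~\ref{lem:exclcurve1}, the curves that survive are exactly those with $(-K_X\cdot\Gamma)<(-K_X)^3\le 1$; since $-K_X$ is Cartier away from $X\cap\Sing(\mbP)$ (in particular near every $cA_1$ point), such a curve has degree less than $1$ only if it passes through a terminal quotient singular point. Your plan to exclude these residual curves by the pencil and extended-dual-graph method of Proposition~\ref{prop:SDScurve} is the wrong tool: that method is calibrated for Gorenstein ($A_n$) surface singularities along $\Gamma$ and for the sextic double solid where $(-K_X)^3=2$, and adapting it at non-Gorenstein quotient points across all the listed families is neither carried out nor likely to close. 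The paper's Proposition~\ref{prop:WCIcurve} instead disposes of all curves at once: by Kawamata's classification there is no divisorial contraction centered along a curve through a terminal quotient point, so no such curve can be a maximal center, while every other curve has integral degree $(-K_X\cdot\Gamma)\ge 1\ge(-K_X)^3$ and is excluded by Lemma~\ref{lem:exclcurve1}. Without this observation (or an equivalent substitute), your curve case remains open.
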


\begin{Rem} \label{rem:Setting}
Let $X \subset \mbP$ be as in Setting~\ref{setting:FanoWCI}.
Then the subset $X \cap \Sing (\mbP)$ consists of the terminal cyclic quotient singular points of $X$ coming from the ambient space since $X$ is quasi-smooth along $\Sing (\mbP)$.
Any other singular point of $X$, if it exists, is contained in a smooth locus of $\mbP$ so that it is a hypersurface---hence Gorenstein---singular point of $X$.
In Theorem~\ref{thm:WCI}, these hypersurface singular points are assumed to be $cA_1$ points.
\end{Rem}

We fix some notations.
Let $\mbP = \mbP (a_0, \dots, a_n)$ be a weighted projective space with homogeneous coordinates $x_0, \dots, x_n$ of weights $a_0, \dots, a_n$, respectively.
For homogeneous polynomials $g_1, \dots, g_N$ in variables $x_0, \dots, x_n$, we denote by
\[
(g_0 = \cdots = g_N = 0) \subset \mbP (a_0, \dots, a_n)
\]
the closed subscheme defined by the homogeneous ideal $(g_1, \dots, g_N)$.

\begin{Def} \label{def:qsm}
Under the above notation, let $V = (f_1 = \cdots = f_m = 0) \subset \mbP$ be a closed subscheme, where $f_1, \dots, f_m$ are homogeneous polynomials in variables $x_0, \dots, x_n$.
The \textit{quasi-smooth locus} $\Qsm (V)$ of $V$ is defined to be the image of the smooth locus of $C^*_V := C_V \setminus \{o\}$ under the natural map $\mbA^{n+1} \setminus \{o\} \to \mbP$, where
\[
C_V := \Spec \mbC [x_0, \dots, x_n]/(f_1, \dots, f_m)
\]
is the \textit{affine cone} of $V$ and $o$ is the origin of $\mbA^{n+1}$.
For a subset $S \subset V$, we say that $V$ is \textit{quasi-smooth along} $S$ if $S \subset \Qsm (V)$.
We say that $V$ is \textit{quasi-smooth} if it is quasi-smooth along $V$, that is, $V = \Qsm (V)$.
\end{Def}

For $X \subset \mbP$ as in Setting~\ref{setting:FanoWCI} and $s \in \{x, y, z, t, w\}$, we set
\[
U_s = (s \ne 0) \cap X := X \setminus \left( (s = 0) \cap X\right).
\]

\subsection{Exclusion of curves}

\begin{Prop} \label{prop:WCIcurve}
Let $X$ be as in Setting~\ref{setting:FanoWCI}.
Then no curve on $X$ is a maximal center.
\end{Prop}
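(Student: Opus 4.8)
The plan is to adapt the two-step exclusion of curves from the quasi-smooth setting, inserting the extended--dual--graph computation to handle curves passing through $cA_1$ points. Let $\Gamma \subset X$ be an irreducible and reduced curve and suppose it is a maximal center. Since $X$ has Picard rank $1$ and index $1$, we have $-K_X = \mcO_X(1)$, and by Lemma~\ref{lem:exclcurve1} the inequality $(-K_X \cdot \Gamma) < (-K_X)^3$ must hold. As $(-K_X)^3$ is the small rational number $\left(\prod d_j\right)/\left(\prod a_i\right)$ recorded in Tables~\ref{table:Fanohyp} and~\ref{table:FanoWCI}, this leaves only finitely many curves of very small anticanonical degree. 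The first step is to enumerate them: using the weights $a_i$ together with the assumption that $X$ is quasi-smooth along $\Sing \mbP$, a curve with $(-K_X \cdot \Gamma) < (-K_X)^3$ is forced to lie in a low-dimensional coordinate stratum of $\mbP$, and in the families with several weights $>1$ it must pass through a quotient singularity or a $cA_1$ point of $X$. In families where this enumeration produces no admissible curve, Lemma~\ref{lem:exclcurve1} already finishes the argument.

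For each surviving curve $\Gamma$ I would apply Lemma~\ref{lem:exclcurve2}. Concretely, I would take the pencil $\mcP \subset |-m K_X|$ spanned by two coordinate sections vanishing along $\Gamma$, with $m$ the smallest admissible multiple, check that a general member $S$ is a normal surface, and verify the splitting $T|_S = \Gamma + \Delta$ with $\Delta$ irreducible and $\Delta \ne \Gamma$. Restricting $T \sim_{\mbQ} -m K_X$ to $S$ gives $(\Gamma^2)_S + (\Gamma \cdot \Delta)_S = m(-K_X \cdot \Gamma)$, so that the hypothesis $(\Gamma \cdot \Delta)_S \ge (-K_X \cdot \Delta)$ of Lemma~\ref{lem:exclcurve2} reduces to an upper bound on the self-intersection $(\Gamma^2)_S$.

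Computing $(\Gamma^2)_S$ is the crux and the main obstacle. Away from the singular points $S$ is smooth along $\Gamma$, while at each $cA_1$ point $\msp \in \Gamma$ a general member $S$ acquires a Du Val singularity of type $A$. Exactly as in the sextic double solid analysis of Lemmas~\ref{lem:extGSD1} and~\ref{lem:extGSD2}, I would determine the extended dual graph $G(S, \msp, \Gamma)$, which will be of type $A_{n,k}$, and then apply Lemma~\ref{lem:selfintS}, so that each such point on $\Gamma$ contributes a correction term $k(n-k+1)/(n+1)$ to $(\Gamma^2)_S$. Bounding the sum of these correction terms, together with the contributions of the quotient singularities handled as in the quasi-smooth references, should yield $(\Gamma^2)_S \le m(-K_X \cdot \Gamma) - (-K_X \cdot \Delta)$, hence the required inequality, and $\Gamma$ is excluded.

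Finally, the bulk of the work is the family-by-family bookkeeping: listing the low-degree curves for each of the $96$ families, checking normality of $S$ and the irreducible splitting $T|_S = \Gamma + \Delta$, and verifying the self-intersection bound in the presence of the singularities. The conceptually new ingredient beyond the quasi-smooth rigidity results of \cite{CPR, CP17, OkadaI, AZ} is solely the $cA_1$-point correction terms supplied by Lemma~\ref{lem:selfintS}; I expect the quotient-singularity contributions and the curve enumeration itself to follow the patterns already established in those references.
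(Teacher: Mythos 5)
Your plan diverges substantially from the paper's argument and, as written, has a gap at the decisive point. The paper's proof is three lines: every family in Tables~\ref{table:Fanohyp} and~\ref{table:FanoWCI} has $(-K_X)^3 \le 1$; by Kawamata's theorem \cite{Kawamata} there is no divisorial contraction centered along a curve passing through a terminal quotient singular point, so a curve $\Gamma$ that is a maximal center must avoid $X \cap \Sing \mbP$ and therefore satisfies $(-K_X \cdot \Gamma) \ge 1 \ge (-K_X)^3$; Lemma~\ref{lem:exclcurve1} then gives an immediate contradiction. Note that a $cA_1$ point lies in the smooth locus of $\mbP$ (by Setting~\ref{setting:FanoWCI} the points of $X \cap \Sing \mbP$ are quotient points), so passing through a $cA_1$ point does not make the anticanonical degree fractional; the only curves with $(-K_X \cdot \Gamma) < (-K_X)^3 \le 1$ are those meeting the quotient singularities.

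This is exactly where your proposal breaks down. After your enumeration step the surviving curves are precisely those through terminal quotient points, and for these your second step --- the pencil $\mcP$, Lemma~\ref{lem:exclcurve2}, and the correction terms from Lemma~\ref{lem:selfintS} --- is not set up: the extended dual graph analysis of Lemmas~\ref{lem:extGSD1} and~\ref{lem:extGSD2} concerns $cA_1$ and $cA_2$ points of sextic double solids, not quotient singularities, and you give no construction of the required pencils, no normality check, and no control of $(\Gamma^2)_S$ across the $96$ families. The missing idea is that none of this is needed: Kawamata's non-existence result for divisorial contractions centered along curves through terminal quotient points excludes these curves outright, and the degree bound handles everything else. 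Your proposed ``family-by-family bookkeeping'' is therefore both unexecuted and unnecessary, and the $cA_1$ correction terms you flag as the new ingredient play no role in this proposition (they enter only in the sextic double solid section, where there are no quotient points and $(-K_X)^3 = 2$).
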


\begin{proof}
Suppose that there is a curve $\Gamma \subset X$ which is a maximal center.
By \cite{Kawamata}, there exists no divisorial contraction centered along a curve through a terminal quotient singular point.
Hence $\Gamma$ does not pass through a terminal quotient singular point.
In particular we have $(-K_X \cdot \Gamma) \ge 1$ since $-K_X$ is a Cartier divisor on an open subset containing $\Gamma$ and thus $(-K_X \cdot \Gamma)$ is a positive integer.
On the other hand we have $(-K_X)^3 \le 1$.
By Lemma~\ref{lem:exclcurve1}, $\Gamma$ cannot be a maximal center.
This is a contradiction and the proof is completed.
\end{proof}

\subsection{Exclusion of smooth points}

The aim of this section is to show that no smooth point on $X$ is a maximal center for $X$ as in Setting~\ref{setting:FanoWCI}.
It may be possible to say that this follows from the same arguments as in \cite[Section 2.1]{CP17} and \cite[Section 7]{OkadaI}.
However we reproduce the proofs in a general setting mainly because these arguments will also be useful in the exclusion of $cA_1$ points as a maximal center (see Section~\ref{sec:WCIGorsing}).

\begin{Def} \label{def:isol}
Let $V$ be a closed subscheme of a weighted projective space $\mbP (a_0, \dots, a_n)$ with homogeneous coordinates $x_0, \dots, x_n$ of weights $a_0, \dots, a_n$, respectively, and let $\msp \in V$ be a point.
We say that homogeneous polynomials $g_1, \dots, g_N$ in variables $x_0, \dots, x_n$ (resp.\ effective Weil divisors $D_1, \dots, D_N$ on $V$) {\it isolate} $\msp$ or they are $\msp$-{\it isolating polynomials} (resp.\ $\msp$-{\it isolating divisors}) if the set
\[
(g_1 = \cdots = g_N = 0) \cap V \quad (\text{resp.} \Supp D_1 \cap \cdots \cap \Supp D_N)
\]
is a finite set of points including $\msp$.
For a positive integer $e$, we also say that $g_1, \dots, g_N$ (resp.\  $D_1, \dots, D_N$) are $\msp$-{\it isolating polynomials of degree at most $e$} (resp.\ $\msp$-{\it isolating divisors of degree at most $e$}) if $\max \{ \deg g_1, \dots, \deg g_N \} \le e$ (resp.\ $D_i \in |\mcO_V (e_i)|$ for $i = 1, \dots, N$ and $\max \{e_1, \dots, e_N\} \le e$).
\end{Def}

\begin{Rem} \label{rem:isoldivs}
Let $V$ be as in Definition~\ref{def:isol} and let $\msp \in V$ be a point.
Suppose that there are $\msp$-isolating divisors of degree at most $e$.
Let $\Gamma \subset V$ be a closed subset such that $\msp \in \Gamma$ and any component of $\Gamma$ is of positive dimension. Then we can take a divisor $D \in |\mcO_V (e')|$ for some $e' \le e$ such that $\msp \in \Supp D$ and no component of $\Gamma$ is contained in $\Supp D$.
\end{Rem}

\begin{Lem} \label{lem:isoldivs}
Let $V$ be a complete intersection of codimension $c \in \{1, 2\}$ in a weighted projective space $\mbP (a_0, \dots, a_n)$ with homogeneous coordinates $x_0, \dots, x_n$ of weights $a_0, \dots, a_n$, respectively.
\begin{enumerate}
\item Let $\msp \in V$ be a point.
\begin{enumerate}
\item There exist $\msp$-isolating divisors of degree at most
\[
\max \{\, \lcm (a_i, a_j) \mid i, j \in \{0, 1, \dots, n\} \, \}.
\]
\item If there is $k$ such that $\msp \notin \cap_{j \ne k} (x_j = 0) \cap V$, then there exist $\msp$-isolating divisors of degree at most
\[
\max \{\, \lcm (a_i, a_j) \mid i, j \in \{0, 1, \dots, n\} \setminus \{k\} \, \}.
\]
\item If $c = 2$ and there are $k_1, k_2$ such that $\msp \notin \cap_{j \ne k_1, k_2} (x_j = 0)$, then there exist $\msp$-isolating divisors of degree at most
\[
\max \{\, \lcm (a_i, a_j) \mid i, j \in \{0, 1, \dots, \} \setminus \{k_1, k_2\} \,\}.
\]
\end{enumerate}
\item Let $\msp \in U_{x_i} = (x_i \ne 0) \cap V$ be a point.
\begin{enumerate}
\item There exist $\msp$-isolating divisors of degree at most
\[
\max \{\, \lcm (a_i, a_j) \mid j \in \{0, 1, \dots, n\} \,\}.
\]
\item If there is $k \ne i$ such that $\msp \notin \cap_{j \ne k} (x_j = 0) \cap V$, then there exists $\msp$-isolating divisors of degree at most
\[
\max \{\, \lcm (a_i, a_j) \mid j \in \{0, 1, \dots, n\} \setminus \{k\} \, \}.
\]
\item If $c = 2$ and there are $k_1, k_2 \ne i$ such that $\msp \notin \cap_{j \ne k_1, k_2} (x_j = 0) \cap V$, then there exists $\msp$-isolating divisors of degree at most
\[
\max \{\, \lcm (a_i, a_j) \mid j \in \{0, 1, \dots, n\} \setminus \{k_1, k_2\} \,\}.
\]
\end{enumerate}
\end{enumerate}
\end{Lem}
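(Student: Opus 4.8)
The plan is to construct every isolating system from one family of binomial divisors and then let the codimension of $V$ absorb the high-weight coordinates. Fix a representative $(p_0,\dots,p_n)$ of $\msp$ and, for indices $i\ne j$, put $d_{ij}=\gcd(a_i,a_j)$ and
\[
g_{ij}=p_i^{\,a_j/d_{ij}}x_j^{\,a_i/d_{ij}}-p_j^{\,a_i/d_{ij}}x_i^{\,a_j/d_{ij}},
\]
a quasi-homogeneous polynomial of degree $\lcm(a_i,a_j)$ that vanishes at $\msp$. Let $T_{ij}$ be the divisor it cuts on $V$, replacing $g_{ij}$ by another degree $\lcm(a_i,a_j)$ form through $\msp$ should it happen to vanish on a component of $V$; then the common support of a collection of $T_{ij}$ is the restriction to $V$ of the common zero of the corresponding $g_{ij}$.

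I would first dispose of (1)(a) and (2)(a). Choose the base index $i$ (prescribed in (2), any $i$ with $p_i\ne 0$ in (1)) and claim that $\{g_{ij}\}_{j\ne i}$ already isolates $\msp$ on all of $\mbP$: a common zero $q$ cannot have $q_i=0$, for then each $q_j$ would vanish and $q$ would be the origin; hence $q\in U_{x_i}$, and there the equations $q_j^{\,a_i/d_{ij}}=(\mathrm{const})\,q_i^{\,a_j/d_{ij}}$ leave only finitely many values for each $q_j$. Restricting to $V$ preserves both finiteness and the membership of $\msp$, and $\max_{j\ne i}\lcm(a_i,a_j)$ is bounded by the quantity in the statement.

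The substance lies in the refinements, where I set $S=\{k\}$ in (b) and $S=\{k_1,k_2\}$ in (c), so that $|S|\le c$. The hypothesis $\msp\notin\bigcap_{j\notin S}(x_j=0)\cap V$ (resp.\ $\msp\notin\bigcap_{j\notin S}(x_j=0)$) says exactly that $\msp$ lies off the coordinate subspace $\Pi_S=\bigcap_{j\notin S}(x_j=0)$, and in particular it furnishes a base index $i\notin S$ with $p_i\ne 0$. I would then take the divisors $\{T_{ij}\}_{j\notin\{i\}\cup S}$, all of degree at most $\max_{i,j\notin S}\lcm(a_i,a_j)$, which is the asserted bound. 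Viewing $\rho\colon\mbP\ratmap\mbP(a_j:j\notin S)$ as the projection forgetting the coordinates in $S$, each such $g_{ij}$ is pulled back from a binomial isolating $\rho(\msp)$ by part (a); hence the common zero $W$ of $\{g_{ij}\}$ on $\mbP$ is a finite union of fibres of $\rho$, each an $|S|$-dimensional weighted-linear space whose closure meets $(x_i=0)$ only along $\Pi_S$. Since $\operatorname{codim}_{\mbP}V=c\ge|S|$, a dimension count gives that $W\cap V$ is finite as soon as no fibre of $\rho$ meets $V$ improperly; that intersection contains $\msp$, and restricting to $V$ then yields the desired isolating divisors.

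The main obstacle is precisely this properness: ruling out that one of the $|S|$-dimensional fibres of $\rho$ is swallowed by $V$. All fibres in $W$ pass through $\Pi_S$, so when $\Pi_S\cap V=\varnothing$ — the typical situation, forced for instance when the coordinate point $P_k\notin V$ in case (b) — no fibre can lie in $V$ and $W\cap V$ is automatically finite. The delicate case is $\Pi_S\cap V\ne\varnothing$: here I would use that the hypothesis keeps $\msp$ off $\Pi_S$, together with the complete-intersection structure of $V$, to argue that the defining equations of $V$ still cut the fibre through $\msp$ in a proper, hence finite, subscheme, augmenting $\{T_{ij}\}$ by finitely many further divisors of degree at most $\max_{i,j\notin S}\lcm(a_i,a_j)$ through $\msp$ to excise any stray positive-dimensional locus. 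Making this degeneration analysis precise, and checking that the degree bookkeeping drops exactly the weights $a_k$ with $k\in S$, is the step that demands the most care; everything else is the formal restriction argument of the first two paragraphs.
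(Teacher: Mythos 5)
Your construction coincides with the paper's: your $g_{ij}$ are exactly the binomials $\alpha_i^{m_j/a_i}x_j^{m_j/a_j}-\alpha_j^{m_j/a_j}x_i^{m_j/a_i}$ used there, parts (1)(a) and (2)(a) are argued identically (with (1) reduced to (2) by covering $V$ by the charts $U_{x_i}$), and for (b) and (c) the paper likewise discards the binomials involving the indices in $S$ and regards the survivors as pullbacks along the projection $\rho\colon\mbP\ratmap\mbP(a_j: j\notin S)$. The paper then closes the argument in one line: the hypothesis is asserted to make $\rho|_V$ a finite morphism in a neighbourhood of $\msp$, so the preimage of the finite set isolated in the target by part (a) is finite on $V$. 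There is no dimension count and no case division on $\Pi_S\cap V$.

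The case you flag as delicate is a genuine gap, and your proposed repair --- augmenting the system by further divisors of the same degrees --- cannot succeed. Writing $\Pi_S=\bigcap_{j\notin S}(x_j=0)$, if $\Pi_S\cap V\ne\varnothing$ the fibre of $\rho$ through $\msp$ may be wholly contained in $V$: take $V=(x_1f+x_2g+x_3h=0)\subset\mbP(1,1,1,1,5)$ with $\msp=(1\!:\!0\!:\!0\!:\!0\!:\!0)$ and $k$ the index of the weight-$5$ coordinate; the hypothesis of (2)(b) holds, yet $V$ contains the line $(x_1=x_2=x_3=0)$ joining $\msp$ to the coordinate point $P_k$, and every form of degree at most $\max\{\lcm(a_i,a_j)\mid i,j\ne k\}=1$ vanishing at $\msp$ vanishes on that whole line, so no system of forms of the asserted degrees isolates $\msp$. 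The statement is therefore only usable --- and the paper's finiteness claim for $\rho|_V$ only justified --- under the stronger condition $\Pi_S\cap V=\varnothing$, which is exactly what is verified in every application (Lemmas~\ref{lem:Pisoldiv1}, \ref{lem:Pisoldiv2} and~\ref{lem:singPisoldiv}): there $\rho|_V$ is an everywhere-defined projective morphism with finite fibres, hence finite, and one concludes without any further divisors. Finally, even in that good case your justification for (c), namely ``no fibre can lie in $V$'', is insufficient: a two-dimensional fibre closure can meet $V$ in a curve without being contained in $V$; one must add that any such curve has positive intersection with the ample divisor $\Pi_S$ on the fibre closure and would therefore meet $\Pi_S\cap V=\varnothing$, a contradiction.
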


\begin{proof}
We set $U_{x_j} = (x_j \ne 0) \cap V$.
(1) follows from (2) since $V$ is covered by the~$U_{x_j}$.

We prove (2).
We can write $\msp = (\alpha_0\!:\!\cdots\!:\!\alpha_n)$, with $\alpha_i \ne 0$.
We set $m_j = \lcm (a_i, a_j)$.
Then the polynomials in the set
\[
\Set{ \alpha_i^{m_j/a_i} x_j^{m_j/a_j} - \alpha_j^{m_j/a_j} x_i^{m_j/a_i} | j \in \{0, 1, \dots, n\} \setminus \{i\}}
\]
isolate $\msp$, and the first assertion follows by considering the divisors $(g_j = 0) \cap V$.

Suppose that $\msp \notin \cap_{j \ne k} (x_j = 0) \cap V$, where $k \ne i$.
Then the natural projection
\[
V \hookrightarrow \mbP (a_0, \dots, a_n) \ratmap \mbP (a_0, \dots, \hat{a}_k, \dots, a_n) =: \mbP'
\]
is a finite morphism in a neighborhood of $\msp \in V$.
The polynomials in the set
\[
\Set{ \alpha_i^{m_j/a_i} x_j^{m_j/a_j} - \alpha_j^{m_j/a_j} x_i^{m_j/a_i} | j \in \{0, 1, \dots, n\} \setminus \{i, k\}}
\]
isolate $\tau (\msp) \in \mbP'$, that is, the common zero locus of the polynomials in the above set is a finite set of points on $\mbP'$.
It follows that the common zero locus of the polynomials in the same set, considered as a subset of $\mbP (a_0, \dots, a_n)$, is again a finite set of points since $\tau$ is a finite morphism.
This shows that the above set isolates $\msp$ and the second assertion is proved.

Suppose that $c = 2$ and $\msp \notin \cap_{j \ne k_1, k_2} (x_j = 0) \cap V$, where $k_1, k_2 \ne i$.
Then the natural projection
\[
V \hookrightarrow \mbP (a_0, \dots, a_n) \ratmap \mbP (a_0, \dots, \hat{a}_{k_1}, \dots, \hat{a}_{k_2}, \dots, a_n) =: \mbP''
\]
is a finite morphism in a neighborhood of $\msp \in V$.
By the similar argument as above considering the set
\[
\Set{ \alpha_i^{m_j/a_i} x_j^{m_j/a_j} - \alpha_j^{m_j/a_j} x_i^{m_j/a_i} | j \in \{0, 1, \dots, n\} \setminus \{i, k_1, k_2\}},
\]
we obtain the third assertion.
\end{proof}

\begin{Lem} \label{lem:Pisoldiv1}
Let $X$ be a prime Fano $3$-fold weighted hypersurface of index $1$ which belongs to one of the families listed in Table~\ref{table:Fanohyp}.
Let $\msp \in X$ be a point contained in the smooth locus of the ambient weighted projective space.
Then the following assertions hold.
\begin{enumerate}
\item There exist $\msp$-isolating divisors of degree at most $4/(-K_X)^3$ except when the family number of $X$ is $25$ and $\msp \in (x = y = z = 0) \cap X$.
\item If $X$ belongs to a family which is given one of $\heartsuit, \diamondsuit, \clubsuit$ in the ``Case" column of Table~\ref{table:Fanohyp}, then there exist $\msp$-isolating divisors of degree at most $2/(-K_X^3)$.
\end{enumerate}
\end{Lem}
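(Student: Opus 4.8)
The plan is to deduce both statements from Lemma~\ref{lem:isoldivs}, reducing the existence of low-degree isolating divisors to an arithmetic comparison between least common multiples of the weights and the two numbers $4/(-K_X)^3$ and $2/(-K_X)^3$. Since $X = X_d \subset \mbP(1, a_1, a_2, a_3, a_4)$ has index $1$, we have $-K_X = \mcO_X(1)$ and $d = a_1 + a_2 + a_3 + a_4$, so $(-K_X)^3 = d/(a_1 a_2 a_3 a_4)$ and the two target degrees are $4 a_1 a_2 a_3 a_4/d$ and $2 a_1 a_2 a_3 a_4/d$. First I would read off these two numbers for every family in Table~\ref{table:Fanohyp}, which is immediate from the data already tabulated there.

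Next I would record the mechanism. Writing $\msp = (\alpha_0 : \cdots : \alpha_4)$, choose an index $i$ with $\alpha_i \ne 0$; then Lemma~\ref{lem:isoldivs}(2)(a) produces $\msp$-isolating divisors of degree at most $\max_j \lcm(a_i, a_j)$, and part (2)(b) discards a single large weight $a_k$ whenever $\msp \notin \cap_{j\ne k}(x_j = 0)\cap X$. Because $X$ is a hypersurface ($c=1$) only one weight may be dropped, so the whole argument rests on taking $i$ to be an index of a small nonvanishing weight and removing the single remaining large weight. The key point is that $\msp$ lies in the smooth locus of $\mbP$: the coordinate point $p_k = (0 : \cdots : 1 : \cdots : 0)$ carrying a weight $a_k > 1$ is singular on $\mbP$, hence $\msp \ne p_k$, so the stratum $\cap_{j\ne k}(x_j=0)\cap X = \{p_k\}\cap X$ is avoided and the drop is legitimate, \emph{provided} the projection away from $x_k$ is finite near $\msp$, i.e.\ provided $X$ does not contain the weighted line joining $\msp$ to $p_k$.

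With this in hand I would run through Table~\ref{table:Fanohyp} family by family. For a generic $\mbP$-smooth point at least one weight-$1$ coordinate is nonzero, and taking $i$ with $a_i = 1$ together with a single drop of the top weight already gives degree at most $\max\{\lcm(a_i, a_j) : j \ne k\}$, which one checks is $\le 4 a_1 a_2 a_3 a_4/d$. The points that require care are those on deep coordinate strata where both weight-$1$ coordinates vanish; there I would place $\msp$ on the smallest stratum compatible with $\mbP$-smoothness, use the smallest surviving weight as $i$, drop the largest, and check that the residual maximal pairwise $\lcm$ meets the bound. For part (2) I would repeat the computation only for the distinguished families marked $\heartsuit$, $\diamondsuit$, $\clubsuit$; these are precisely the families whose weight vectors are arranged so that dropping the single top weight brings $\max_{j\ne k}\lcm(a_i, a_j)$ down to $\le 2 a_1 a_2 a_3 a_4/d$ (for instance family~$6$, where after removing the weight $4$ the surviving weights $1,1,1,2$ give $\max\lcm = 2 = 2/(-K_X)^3$).

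The genuine obstacle, and the reason for the recorded exception, is the family-$25$ configuration $X_{15}\subset\mbP(1,1,3,4,7)$: here $4/(-K_X)^3 = 112/5$, while the pair of top weights $4,7$ has $\lcm(4,7) = 28 > 112/5$. On the stratum $(x=y=z=0)\cap X$ only the coordinates of weight $4$ and $7$ survive, and when $X$ contains the weighted line $(x=y=z=0)$ this line lies in $X$, so the projection needed to drop either of these weights is not finite and neither drop is available; the best achievable degree is then $\lcm(4,7)=28$, which exceeds the target. Thus the hard part of the proof is not any single family but the uniform yet delicate bookkeeping: for each family one must identify which coordinate strata can carry a point of the smooth locus of $\mbP$, confirm that the one permitted drop removes the obstructing weight, and recognise the unique configuration (family~$25$ on $(x=y=z=0)\cap X$) where a weighted line contained in $X$ blocks the drop and forces the stated exception.
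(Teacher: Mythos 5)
Your reduction to Lemma~\ref{lem:isoldivs} and your identification of the family-$25$ exception (the weighted line $(x=y=z=0)\cong\mbP(4,7)$ possibly contained in $X$, with $\lcm(4,7)=28>112/5$) match the paper. But the core of your mechanism --- justifying the removal of a weight $a_k$ by observing that $\msp$ lies in the smooth locus of $\mbP$ while the coordinate point $\msp_k$ is singular, ``provided the projection away from $x_k$ is finite near $\msp$'' --- is exactly where the substantive work lies, and you leave it unverified. Your own discussion of family $25$ shows that the bare condition $\msp\ne\msp_k$ is not enough: there the hypothesis of Lemma~\ref{lem:isoldivs}(2-b) is literally satisfied yet the drop fails because a weighted line through $\msp_k$ lies in $X$. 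The paper avoids this by proving the \emph{stronger} statement that the entire coordinate stratum $\cap_{j\ne k}(x_j=0)\cap X$ is empty, i.e.\ $\msp_k\notin X$; this is extracted from the defining equation via the divisibility $a_k\mid d$, which forces the pure power $x_k^{d/a_k}$ to appear in $f$. That arithmetic input ($a_4\mid d$ for the $\diamondsuit$ families, $a_3\mid d$ for the $\clubsuit$ families) is absent from your argument, and without it the drop is not legitimate.

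Relatedly, your uniform recipe ``drop the single top weight'' does not reproduce the paper's case division and fails outright in some families. The three symbols encode three different strategies: $\heartsuit$ needs no drop at all (the maximum of $\lcm(a_i,a_j)$ over \emph{all} pairs is already $\le 2/(-K_X)^3$), $\diamondsuit$ drops $a_4$, and $\clubsuit$ drops $a_3$, not $a_4$. For family $26$, $X_{15}\subset\mbP(1,1,3,5,6)$ with $2/(-K_X)^3=12$, removing the top weight $6$ leaves $\lcm(3,5)=15>12$, so your bound is not met; one must instead remove the weight $5$ (legitimate because $5\mid 15$ gives $t^3\in f$, hence $\msp_t\notin X$), whereas $6\nmid 15$ means $\msp_w\in X$ and the top weight cannot be dropped at all. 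A similar issue occurs for the unmarked families in part (1) (e.g.\ family $16$, where $\msp_w\in X$ and the paper drops $a_3=4$). Also note that several families have only one coordinate of weight $1$, so your reduction of the ``points requiring care'' to the stratum where ``both weight-$1$ coordinates vanish'' does not cover all cases. To repair the proposal you need, for each family, the explicit choice of which index to drop together with the monomial argument showing the corresponding coordinate stratum misses $X$.
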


\begin{proof}
Let $X = X_d \subset \mbP (a_0, a_1, \dots, a_4) =: \mbP$ with $1 = a_0 \le a_1 \le \cdots \le a_4$, where $d = a_1 + \cdots + a_4$.
Let $x, y, z, t, w$ be homogeneous coordinates of weights $a_0, a_1, a_2, a_3, a_4$, respectively.

We prove (2).
We consider $X$ belonging to a family which is given one of $\heartsuit, \diamondsuit, \clubsuit$ in the ``Case" column of Table~\ref{table:Fanohyp}.
Then the assertions follow from the following observations.
\begin{itemize}
\item Case: $X$ belongs to a family with $\heartsuit$.
We have
\[
\max \{ \, \lcm (a_i, a_j) \mid i, j \in \{0, 1, 2, 3, 4\} \,\} \le 2/(-K_X)^3,
\]
and the assertion follows from Lemma~\ref{lem:isoldivs} (1-a).
\item Case: $X$ belongs to a family with $\diamondsuit$.
We have $a_3 < a_4$, $a_4 \mid d$, and
\[
\max \{\, \lcm (a_i, a_j) \mid i, j \in \{0, 1, 2, 3\} \,\} \le 2/(-K_X)^3.
\]
It follows from $a_3 < a_4$ and $a_4 \mid d$ that the monomial $w^{d/a_4}$ appears in the defining polynomial of $X$ with nonzero coefficient, and thus $(x = y = z = t = 0) \cap X = \emptyset$.
The assertion now follows from Lemma~\ref{lem:isoldivs} (1-b).
\item Case: $X$ belongs to a family with $\clubsuit$.
We have $a_2 < a_3 < a_4$, $a_3 \mid d$ and
\[
\max \{\, \lcm (a_i, a_j) \mid i, j \in \{0, 1, 2, 4\} \,\} \le 2/(-K_X)^3.
\]
It follows from $a_2 < a_3 < a_4$ and $a_3 \mid d$ that the monomial $t^{d/a_3}$ appears in the defining polynomial of $X$ with nonzero coefficient, and thus $(x = y = z = w = 0) \cap X = \emptyset$.
The assertion now follows from Lemma~\ref{lem:isoldivs} (1-b).
\end{itemize}

We prove (1).
It is enough to consider $X$ whose family number $\msi$ belongs to
\[
\{16, 17, 25, 31, 36, 38, 46, 47\}.
\]
\begin{itemize}
\item Case: $\msi \in \{17, 31, 38\}$.
We have
\[
\max \{\, \lcm (a_i, a_j) \mid i, j \in \{0, 1, 2, 3, 4\} \,\} \le 4/(-K_X)^3,
\]
and the assertion follows from Lemma~\ref{lem:isoldivs} (1-a).
\item Case: $\msi \in \{16, 36, 46, 47\}$.
In this case $a_3 \mid d$ and $w^{d/a_3}$ appears in the defining polynomial of $X$.
Hence $(x = y = z = w = 0) \cap X = \emptyset$.
We have
\[
\max \{\, \lcm (a_i, a_j) \mid i, j \in \{0, 1, 2, 4\} \,\} \le 4/(-K_X)^3,
\]
and the assertion follows from Lemma~\ref{lem:isoldivs} (1-b).
\item Case $\msi = 25$:
By the assumption, we have $\msp \in U_x \cup U_y \cup U_z$.
For $i = 0, 1, 2$, it is straightforward to check
\[
\max \{\, \lcm (a_i, a_j) \mid j \in \{0, 1, 2, 3, 4\} \,\} \le 4/(-K_X)^3.
\]
Thus the assertion follows from Lemma~\ref{lem:isoldivs} (2-a).
\end{itemize}
The proof is now completed.
\end{proof}

\begin{Lem} \label{lem:Pisoldiv2}
Let $X$ be a prime Fano $3$-fold weighted complete intersection of codimension $2$ and index $1$ which belongs to one of the families listed in Table~\ref{table:FanoWCI}.
Let $\msp \in X$ be a point which is contained in the smooth locus of the ambient weighted projective space.
Then there exist $\msp$-isolating divisors of degree at most $2/(-K_X)^3$.
\end{Lem}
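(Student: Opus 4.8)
The plan is to follow the strategy of the proof of Lemma~\ref{lem:Pisoldiv1}(2), now invoking Lemma~\ref{lem:isoldivs} in the codimension $c=2$ case and checking the bound separately for each of the eighteen families of Table~\ref{table:FanoWCI}. Write $X = X_{d_1,d_2} \subset \mbP(a_0,\dots,a_5)$ with $a_0 \le \dots \le a_5$. Since $X$ has index $1$ we have $\sum_i a_i = d_1 + d_2 + 1$ and $(-K_X)^3 = d_1 d_2/(a_0 \cdots a_5)$, so the target degree is $2/(-K_X)^3 = 2 (a_0 \cdots a_5)/(d_1 d_2)$. For each family I would first compare the crude bound $\max_{i,j} \lcm(a_i,a_j)$ coming from Lemma~\ref{lem:isoldivs}(1-a) with this target; for the families with the largest values of $2/(-K_X)^3$ (numbers $60,71,76,78,84,85$) the crude bound already suffices and nothing more is needed.

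For the remaining families the crude bound is too large, and I would discard one or two of the top weights and appeal instead to the sharper estimates of Lemma~\ref{lem:isoldivs}(1-b) and~(1-c). The decisive input is that $\msp$ lies in the smooth locus of $\mbP$, hence avoids every singular coordinate stratum. To apply~(1-c) for a pair of indices $k_1,k_2$ one only needs $\msp \notin \mbP(a_{k_1},a_{k_2})$, and this holds automatically whenever $\gcd(a_{k_1},a_{k_2}) > 1$, since then the whole coordinate line $\mbP(a_{k_1},a_{k_2})$ lies in $\Sing \mbP$; likewise a single high-weight coordinate point $P_k$ with $a_k > 1$ is a singular point of $\mbP$, so $\msp \neq P_k$ and~(1-b) applies. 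In this way I would dispose of all families for which the offending weights can be removed either singly (numbers $8,20,31,45,47,59$) or as a pair sharing a common factor (numbers $14,37,64$), in each case checking that the reduced quantity $\max \{\lcm(a_i,a_j) : i,j \notin \{k_1,k_2\}\}$ drops to at most $2/(-K_X)^3$.

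The genuinely delicate families are numbers $24,51,75$, where the two weights that must be removed are coprime, so the line $\mbP(a_{k_1},a_{k_2})$ is no longer a singular stratum. Here I would argue instead that $X \cap \mbP(a_{k_1},a_{k_2})$ consists only of singular coordinate points of $\mbP$: with $u,v$ the two surviving coordinates, the arithmetic of the degrees $d_1,d_2$ relative to $a_{k_1},a_{k_2}$ forces each defining equation to restrict on this line to a pure power $c\,u^p$ or $c'\,v^q$, or to $0$, so that the common zero locus is either empty or a single coordinate point $P_{k_1}$ or $P_{k_2}$, which is singular because its weight exceeds $1$. Since $\msp \in X$ is a smooth point it cannot lie on this locus, whence $\msp \notin \mbP(a_{k_1},a_{k_2})$ and~(1-c) applies. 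The main obstacle is precisely to know that the relevant pure power occurs with nonzero coefficient: this is where the hypothesis that $X$ is quasi-smooth along $\Sing \mbP$ (Setting~\ref{setting:FanoWCI}) is essential, for quasi-smoothness at the singular coordinate point $P_{k_2}$ forces a pure power of $x_{k_2}$ into a defining equation and thereby rules out the degenerate possibility that the line lies entirely in $X$. Assembling these three mechanisms over the eighteen families then yields the asserted bound.
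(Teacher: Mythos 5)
Your proposal is correct and follows essentially the same route as the paper: everything reduces to Lemma~\ref{lem:isoldivs} together with a family-by-family comparison of the relevant $\lcm$ against $2/(-K_X)^3$ after discarding one or two of the largest weights, with the same six families (Nos.\ $60,71,76,78,84,85$) handled by part (1-a) and the rest by (1-b) or (1-c). The only divergence is the mechanism used to verify the hypotheses of (1-b) and (1-c): the paper shows the discarded coordinate stratum is disjoint from $X$ by forcing pure powers of the corresponding variables into $f_1,f_2$, whereas for Nos.\ $8,14,20,31,37,45,47,59,64$ you instead observe that the stratum lies in $\Sing \mbP$ (a single discarded weight exceeds $1$, resp.\ the discarded pair shares a common factor), so $\msp$ avoids it by the hypothesis of the lemma; your treatment of the coprime pairs in Nos.\ $24,51,75$ (pure powers forced by quasi-smoothness at the singular coordinate points) coincides with the paper's. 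Your variant is in fact slightly more robust: for No.\ $37$, where $X_{8,12}\subset\mbP(1,2,3,4,5,6)$ and the weights $4,6$ are removed, the paper's displayed identity $f_2(0,0,0,t,0,w)=\beta w^{d_2/a_5}$ overlooks the admissible degree-$12$ monomial $t^3$, and if $t^2\notin f_1$ the locus $(x=y=z=v=0)\cap X$ need not be empty; your observation that $\gcd(4,6)=2>1$ places the whole line inside $\Sing\mbP$ sidesteps this issue entirely.
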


\begin{proof}
Let $X = X_{d_1, d_2} \subset \mbP (a_0, \dots, a_5)$ with $a_0 \le \cdots \le a_5$ and $d_1 < d_2$.
Let $x, y, z, t, v, w$ be homogeneous coordinates of weights $a_0, a_1, a_2, a_3, a_4, a_5$, respectively, and let $f_1, f_2 \in \mbC [x, y, z, t, v, w]$ be the defining polynomials of $X$ of degree $d_1, d_2$, respectively.
We denote by $\msi$ the family number of $X$.

We consider the case $\msi = 60, 71, 76, 78, 84, 85$.
We have
\[
\max \{\, \lcm (a_i, a_j) \mid i, j \in \{0, 1, \dots, 5\} \,\} \le 2/(-K_X)^3,
\]
and thus the assertion follows from Lemma~\ref{lem:isoldivs} (1-a).
\item Case: $\msi = 8, 20, 31, 45, 47, 59$.
In this case $a_4 < a_5$, $a_5 \mid d_2$ and
\[
\max \{\, \lcm (a_i, a_j) \mid i, j \in \{0, 1, 2, 3, 4\} \,\} \le 2/(-K_X)^3.
\]
Then $w^{d_2/a_5} \in f_2$ since $a_4 < a_5$ and $a_5 \mid d_2$, and thus $(x = y = z = t = v = 0) \cap X = \emptyset$.
The assertion follows from Lemma~\ref{lem:isoldivs} (1-b).

We consider the case $\msi = 24, 37, 51, 64, 75$.
In this case $a_2 < a_3 < a_4 < a_5$, $d_1 < a_3 + a_5$, $a_3 \mid d_1$, $a_5 \mid d_2$, and
\[
\max \{\, \lcm (a_i, a_j) \mid i, j \in \{0, 1, 2, 4\} \,\} \le 2/(-K_X)^3.
\]
Then
\[
f_1 (0, 0, 0, t, 0, w) = \alpha t^{d_1/a_3}, \quad f_2 (0, 0, 0, t, 0, w) = \beta w^{d_2/a_4}
\]
for some nonzero $\alpha, \beta \in \mbC$.
It follows that $(x = y = z = v = 0) \cap X = \emptyset$, and thus the assertion follows from Lemma~\ref{lem:isoldivs} (1-c).

We consider the case $\msi = 14$.
In this case we have $(x = y  z = t = 0) \cap X = \emptyset$ and
\[
\max \{ \, \lcm (a_i, a_j) \mid i, j \in \{0, 1, 2, 3\} \,\} = 2 < 2/(-K_X)^3 = 4.
\]
Thus the assertion follows from Lemma~\ref{lem:isoldivs} (1-c), and the proof is completed.
\end{proof}

\begin{Rem}
In Lemmas~\ref{lem:Pisoldiv1} and~\ref{lem:Pisoldiv2}, we do not assume that $\msp$ is a smooth point of $X$, and the case where $\msp$ is a Gorenstein singular point of $X$ is covered.
This will be utilized in the proof of Lemma~\ref{lem:singPisoldiv} below.
\end{Rem}

\begin{Prop} \label{prop:WCIsmpt}
Let $X$ be as in Setting~\ref{setting:FanoWCI}.
Then no smooth point on $X$ is a maximal center.
\end{Prop}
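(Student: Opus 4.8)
The plan is to run the standard Pukhlikov--Corti exclusion argument for smooth points, feeding the $4n^2$-inequality into a comparison of a global intersection number with a local multiplicity, where the crucial numerical input is the existence of cheap isolating divisors provided by Lemmas~\ref{lem:Pisoldiv1} and~\ref{lem:Pisoldiv2}. I would argue by contradiction: suppose a smooth point $\msp \in X$ is a maximal center. Then there is a mobile linear system $\mcM \sim -n K_X$, with $n$ its quasi-effective threshold, such that $\msp$ is a center of non-canonical singularities of $(X, \frac{1}{n}\mcM)$. Since $X$ is smooth at $\msp$, the germ $\msp \in X$ is a smooth $3$-fold germ and Theorem~\ref{thm:4nineq} applies, giving for general members $D_1, D_2 \in \mcM$
\[
\mult_{\msp}(D_1 \cdot D_2) > 4 n^2 .
\]
I write $Z = D_1 \cdot D_2$ for the intersection $1$-cycle; since $\Cl(X) = \mbZ[-K_X]$ and $D_i \sim -n K_X$, it satisfies $(-K_X \cdot Z) = n^2 (-K_X)^3$ and $\msp \in \Supp Z$.

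Next I would produce a low-degree divisor through $\msp$ meeting $Z$ properly. A smooth point of $X$ necessarily lies in the smooth locus of the ambient $\mbP$, since the points of $\Sing \mbP$ on $X$ are quotient singularities of $X$; moreover, for the family numbered $25$ the excluded locus $(x=y=z=0)\cap X$ appearing in Lemma~\ref{lem:Pisoldiv1}(1) is contained in $\Sing \mbP$, so it contains no smooth point and the exception is vacuous for our $\msp$. Hence by Lemma~\ref{lem:Pisoldiv1}(1) in the hypersurface case, and by Lemma~\ref{lem:Pisoldiv2} in the codimension-$2$ case, there exist $\msp$-isolating divisors of degree at most $e := 4/(-K_X)^3$. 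Applying Remark~\ref{rem:isoldivs} with $\Gamma = \Supp Z$, I can select a divisor $T \in |\mcO_X(e')|$ with $e' \le e$, with $\msp \in \Supp T$, and with no component of $Z$ contained in $\Supp T$, so that $T$ and $Z$ intersect properly.

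Finally I would extract the contradiction from two incompatible estimates of $(T \cdot Z)$. On the global side, since $X$ has index $1$ we have $T \sim -e' K_X$, so
\[
(T \cdot Z) = e'(-K_X \cdot Z) = e'\, n^2 (-K_X)^3 \le 4 n^2 .
\]
On the local side, because $X$ is smooth at $\msp$ and $T, Z$ meet properly there, the standard lower bound for intersection multiplicities at a smooth point gives
\[
(T \cdot Z) \ge \mult_{\msp}(T \cdot Z) \ge \mult_{\msp} T \cdot \mult_{\msp} Z \ge \mult_{\msp} Z > 4 n^2 ,
\]
using $\mult_{\msp} T \ge 1$. These contradict each other, so $\msp$ cannot be a maximal center, and the proof is complete.

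The main obstacle is not the final numerical comparison, which is immediate once the isolating degree is controlled, but the middle step: one must guarantee uniformly, over every family in Tables~\ref{table:Fanohyp} and~\ref{table:FanoWCI}, the existence of $\msp$-isolating divisors of degree at most $4/(-K_X)^3$ for every smooth point. This is precisely the content already established in Lemmas~\ref{lem:Pisoldiv1} and~\ref{lem:Pisoldiv2}, so the real care required here is the verification that the single exceptional locus (family~$25$, the set $(x=y=z=0)\cap X$) lies entirely in $\Sing \mbP$ and therefore meets no smooth point, which removes the only obstruction to applying the isolating-divisor bound with $e = 4/(-K_X)^3$.
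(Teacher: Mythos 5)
Your overall strategy coincides with the paper's: produce an isolating divisor $T\in|-e'K_X|$ with $e'\le 4/(-K_X)^3$ through $\msp$ meeting $Z=D_1\cdot D_2$ properly, and play $(T\cdot Z)=e'n^2(-K_X)^3\le 4n^2$ against the local bound $(T\cdot Z)\ge\mult_{\msp}Z>4n^2$ from Theorem~\ref{thm:4nineq}. That part is correct and is exactly what the paper does. The gap is in your disposal of the exceptional case of Lemma~\ref{lem:Pisoldiv1}(1). You assert that for family No.~25, $X_{15}\subset\mbP(1,1,3,4,7)$, the locus $(x=y=z=0)\cap X$ is contained in $\Sing\mbP$, hence meets no smooth point of $X$, so the exception is vacuous. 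This is false. The locus $(x=y=z=0)$ is $\mbP(4,7)\cong\mbP^1$, and since $\gcd(4,7)=1$ every point of it with $t\ne 0$ and $w\ne 0$ is a \emph{smooth} point of $\mbP$; the singular locus of $\mbP(1,1,3,4,7)$ consists only of the three coordinate points $\msp_z,\msp_t,\msp_w$. The unique degree-$15$ monomial in $t,w$ is $t^2w$. If its coefficient $\alpha$ in the defining polynomial is nonzero, then indeed $(x=y=z=0)\cap X=\{\msp_t,\msp_w\}\subset\Sing\mbP$ and the exception is harmless; but quasi-smoothness along $\Sing\mbP$ only forces $(\alpha,\beta)\ne(0,0)$, where $\beta$ is the coefficient of $zt^3$ (cf.\ Table~\ref{table:descrf}). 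When $\alpha=0$ and $\beta\ne 0$, the curve $\mbP(4,7)$ lies entirely in $X$, and at its points with $t,w\ne 0$ one has $\prt f/\prt z=\beta t^3\ne 0$, so $X$ is quasi-smooth, hence smooth, there. For such smooth points Lemma~\ref{lem:Pisoldiv1} gives no isolating divisors of degree at most $4/(-K_X)^3=112/5$ (the relevant $\lcm(4,7)=28$ exceeds this bound), so your numerical comparison cannot be run.

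The paper does not claim vacuousness: it treats these points by a genuinely separate argument, repeating the proof of \cite[Lemma 2.1.5]{CP17} verbatim for smooth points of $(x=y=z=0)\cap X$ in family No.~25, and only then runs the isolating-divisor computation for all remaining smooth points. To complete your proof you must either supply that supplementary argument or otherwise exclude maximal centers on this curve; as written, the case is left uncovered.
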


\begin{proof}
Let $\msp \in X$ be a smooth point.
If $X$ is a weighted hypersurface and its family number is $25$, and $\msp$ is contained in $(x = y = z = 0) \cap X$, then we can repeat the argument of the proof of \cite[Lemma 2.1.5]{CP17} without any change and conclude that $\msp$ is not a maximal center.
In what follows, we assume that $\msp \notin (x = y = z = 0) \cap X$ when $X$ is a weighted hypersurface and its family number is $25$.

Suppose that $X$ admits a smooth point $\msp \in X$ which is a maximal center.
Then there exists a movable linear system $\mcM \subset \left| - n K_X \right|$ such that $\msp$ is a center of non-canonical singularities of $(X, \frac{1}{n} \mcM)$.
Let $D_1, D_2 \in \mcM$ be general members of $\mcM$.
By Lemmas~\ref{lem:Pisoldiv1} and~\ref{lem:Pisoldiv2}, we can take a divisor $T \in \left| - l K_X \right|$ for some positive integer $l \le 4/(-K_X^3)$ such that $\Supp T$ passes through $\msp$ and does not contain any component of $D_1 \cap D_2$.
Thus we have
\[
4 n^2 \ge l n^2 (-K_X^3) = (T \cdot D_1 \cdot D_2) > 4 n^2,
\]
where the last inequality follows from the $4 n^2$-inequality for smooth points (see Theorem~\ref{thm:4nineq}).
This is a contradiction.
\end{proof}

\subsection{Exclusion of $cA_1$ points}
\label{sec:WCIGorsing}

The aim of this section is to exclude singularities of type $cA_1$ on $X$ as a a maximal singularity for $X$ as in Setting~\ref{setting:FanoWCI}.

\begin{Lem} \label{lem:singPisoldiv}
Let $X$ be as in Setting~\ref{setting:FanoWCI} and let $\msp \in X$ be a singular point contained in the smooth locus of the ambient weighted projective space.
Then there exist $\msp$-isolating divisors of degree at most $2/(-K_X^3)$.
\end{Lem}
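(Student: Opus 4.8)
The plan is to reduce the statement to Lemmas~\ref{lem:Pisoldiv1} and~\ref{lem:Pisoldiv2}, which already produce $\msp$-isolating divisors for arbitrary points of $X$ lying in the smooth locus of $\mbP$, and then to improve their conclusion for the few families where the uniform bound is only $4/(-K_X^3)$. If $X$ is a codimension~$2$ weighted complete intersection, then Lemma~\ref{lem:Pisoldiv2} gives $\msp$-isolating divisors of degree at most $2/(-K_X^3)$ with no further work, since $\msp$ is by hypothesis contained in the smooth locus of $\mbP$. Likewise, if $X$ is a weighted hypersurface lying in a family marked $\heartsuit$, $\diamondsuit$ or $\clubsuit$ in Table~\ref{table:Fanohyp}, then Lemma~\ref{lem:Pisoldiv1}(2) already yields the desired bound. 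Thus it remains to treat the weighted hypersurfaces whose family number lies in $\{16,17,25,31,36,38,46,47\}$, for which Lemma~\ref{lem:Pisoldiv1} only guarantees degree at most $4/(-K_X^3)$.

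For these eight families I would exploit that $\msp$, being singular, carries extra constraints that pin down its position. Write $\mbP=\mbP(a_0,\dots,a_4)$ with $a_0=1\le a_1\le\cdots\le a_4$ and coordinates $x,y,z,t,w$. If some coordinate of $\msp$ of small weight is nonzero — in particular one of the two weight-one coordinates, or for family $38$ the weight-two coordinate $y$ — then Lemma~\ref{lem:isoldivs}(2-a) applied at that coordinate $x_i$ produces $\msp$-isolating divisors of degree at most $\max_j\lcm(a_i,a_j)$, and a direct check shows that this maximum is at most $2/(-K_X^3)$ in each of the eight families (for a weight-one coordinate it equals $a_4$). Therefore one is reduced to the case $\msp\in(x=y=0)\cap X$, so that $\msp$ lies on the coordinate surface $\mbP(a_2,a_3,a_4)$.

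On this surface I would isolate $\msp$ using the two hyperplanes $(x=0)$ and $(y=0)$, of degrees $a_0=1$ and $a_1\le 2$, together with a single additional divisor. Since $X$ is quasi-smooth along $\Sing\mbP$ and $\msp$ is a Gorenstein (indeed $cA_1$) point, $\msp$ lies in the smooth locus of $\mbP$; in particular $\msp$ is none of the coordinate vertices of $\mbP(a_2,a_3,a_4)$, all of which have weight $a_i\ge 2$ and hence lie in $\Sing\mbP$, so at least two of $z(\msp),t(\msp),w(\msp)$ are nonzero. The section $C:=(x=y=0)\cap X\subset\mbP(a_2,a_3,a_4)$ is one-dimensional, and it suffices to produce a form $s$ in $z,t,w$ of degree at most $2/(-K_X^3)$ with $s(\msp)=0$ and $C\not\subset(s=0)$: then $(x=0),(y=0),(s=0)$ cut out the finite set $C\cap(s=0)\ni\msp$. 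When $\msp$ has a vanishing coordinate among $z,t,w$, the corresponding coordinate hyperplane has degree at most $a_4\le 2/(-K_X^3)$ and does the job once one checks it does not contain $C$; when $z(\msp),t(\msp),w(\msp)$ are all nonzero, I would instead choose $s$ in the linear system $|\mcO_{\mbP(a_2,a_3,a_4)}(l)|$ for the largest admissible $l\le 2/(-K_X^3)$, imposing only the single condition $s(\msp)=0$.

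The main obstacle is exactly this last construction for the families in which the two smallest of $a_2,a_3,a_4$ are coprime with large least common multiple — namely families $25$, $46$ and $47$ — where a naive \emph{ratio} form between two coordinates already exceeds $2/(-K_X^3)$. For these I would write out the monomial basis of $|\mcO_{\mbP(a_2,a_3,a_4)}(l)|$ for the largest $l\le 2/(-K_X^3)$, verify that its dimension is at least two, and check, using the explicit shape of the defining polynomial together with $\msp\in X$, that not every member through $\msp$ contains the curve $C$; Remark~\ref{rem:isoldivs} may be invoked once a preliminary isolating bound is available. Everything outside this delicate point is a finite, if tedious, verification over the eight tabulated families.
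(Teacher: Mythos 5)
Your first reduction agrees with the paper: Lemmas~\ref{lem:Pisoldiv1} and~\ref{lem:Pisoldiv2} dispose of everything except the weighted hypersurfaces with family number in $\{16,17,25,31,36,38,46,47\}$, and within those families a nonzero coordinate of small weight lets Lemma~\ref{lem:isoldivs}~(2-a) finish (your check that $a_4\le 2/(-K_X)^3$ for the weight-one charts matches the paper's Table~\ref{table:descrf}). Where you diverge is the residual case $\msp\in(x=y=0)\cap X$, and this is exactly where your argument has a genuine gap. You announce that you will ``exploit that $\msp$, being singular, carries extra constraints,'' but you never do: the only consequence you draw is that $\msp$ lies in the smooth locus of $\mbP$, which is already a hypothesis of the lemma. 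The paper's key observation is different and stronger: a singular point of $X$ lying in the smooth locus of $\mbP$ is a point where $X$ fails to be quasi-smooth, so the three partial derivatives of $f(0,0,z,t,w)$ must vanish at $\msp$; the explicit shapes of $f(0,0,z,t,w)$ recorded in Table~\ref{table:descrf} then show that this system has no solution away from the coordinate vertices $\msp_z,\msp_t,\msp_w$ (which are quotient points, hence excluded). In other words, the residual case is \emph{empty}, and no isolating divisors need to be constructed there at all. (For family~$38$ the same argument is run on the stratum $(x=y=z=0)\cap X$, after first using the weight-$3$ coordinate $z$ as well, since $\lcm(3,8)=24\le 80/3$.)

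Your substitute for this step --- directly isolating $\msp$ on the curve $C=(x=y=0)\cap X$ by the two hyperplanes of weight $\le 2$ plus one more low-degree form $s$ --- is not completed. You yourself flag families $25$, $46$, $47$ as obstructions, and the plan you sketch there (find an $l\le 2/(-K_X)^3$ with $h^0(\mcO_{\mbP(a_2,a_3,a_4)}(l))\ge 2$ and check that not every member through $\msp$ contains $C$) requires the explicit defining polynomial restricted to the stratum and a component-by-component analysis of $C$ (you need $s$ to avoid \emph{every} irreducible component of $C$, not just the ones through $\msp$, and you do not address possible reducibility of $C$). None of this is carried out, so the proof is incomplete as written. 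The fix is not to push the construction harder but to use the non-quasi-smoothness of $X$ at $\msp$ to show the case does not arise.
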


\begin{proof}
By Lemmas~\ref{lem:Pisoldiv1} and~\ref{lem:Pisoldiv2}, it remains to consider the case where $X$ is a weighted hypersurface whose family number $\msi$ belongs to
\[
\{16, 17, 25, 31, 36, 38, 46, 47\}.
\]

We consider the case $\msi = 38$.
In this case $X = X_{18} \subset \mbP (1, 2, 3, 5, 8)$.
We claim that the set $\Delta := (x = y = z = 0) \cap X$ is contained in  $\Qsm (X)$.
Let $f = f (x, y, z, t, w)$ be the defining polynomial of $X$.
We have $f = \alpha t^2 w + \beta y w^2 + \gamma z t^3 + G$, where $\alpha, \beta, \gamma \in \mbC$ and $G = G (x, y, z, t, w) \in (x, y, z)^2$.
We have $(\alpha, \gamma) \ne (0, 0)$ and $\beta \ne 0$.
If $\alpha \ne 0$, then $\Delta$ consists of $2$ points which are the $\frac{1}{5} (1, 2, 3)$ point and the $\frac{1}{8} (1, 3, 5)$ point, and hence $X$ is quasi-smooth along $\Delta$.
Suppose that $\alpha = 0$.
In this case $\Gamma = (x = y = z = 0)$ is a curve and it is straightforward to check that $X$ is quasi-smooth along $\Gamma$ since $\beta \ne 0$ and $\gamma \ne 0$.
The claim is proved.
It follows that $\msp \in U_x \cup U_y \cup U_z$ and there exist $\msp$-isolating divisors of degree at most $3 a_4$ by Lemma~\ref{lem:isoldivs} (2-a), and we have $3 a_4 = 24 \le 2/(-K_X)^3 = 80/3$.

Finally, we consider the case $\msi \in \{16, 17, 25, 31, 36, 46, 47\}$.
In this case $X = X_d \subset \mbP (1, 1, a_2, a_3, a_4)$ with $d = 1 + a_2 + a_3 + a_4$ and $a_2 \ge 2$.
We claim that $\msp \in U_x \cup U_y$.
Assume to the contrary that $\msp \in (x = y = 0) \cap X$.
Recall that $\msp$ is a singular point of $X$, hence it is either a hypersurface singular point or a cyclic quotient terminal singular point (see Remark~\ref{rem:Setting}).
In the latter case $\msp$ is contained in the singular locus of the ambient weighted projective space.
Hence $\msp \in X$ is a hypersurface singularity and in particular $X$ is not quasi-smooth at $\msp$.
Thus we have
\begin{equation} \label{eq:prtfxy0}
\frac{\prt f (0,0,z,t,w)}{\prt z} (\msp) = \frac{\prt f (0,0,z,t,w)}{\prt t} (\msp) = \frac{\prt f (0,0,z,t,w)}{\prt w} (\msp) = 0.
\end{equation}
An explicit description of $f (0, 0, z, t, w)$ is given in the second column of Table~\ref{table:descrf}, where the coefficients $\alpha, \beta, \dots$ satisfy the conditions given in the third column.
It is then straightforward to see that either the equations in \eqref{eq:prtfxy0} have only trivial solution or they imply that $\msp \in \{\msp_z, \msp_t, \msp_w\}$.
We have a contradiction in the former case, and the latter case is impossible since any point in $\{\msp_z, \msp_t, \msp_w\} \cap X$ is a quotient singular point of $X$.
Thus the claim is proved, and we have $\msp \in U_x \cup U_y$.
By Lemma~\ref{lem:isoldivs} (2-a), there exist $\msp$-isolating divisors of degree most $a_4$.
It is easy to check $a_4 \le 2/(-K_X)^3$ (see the fourth column of Table~\ref{table:descrf}) and the proof is completed.
\end{proof}

\begin{table}[h]
\renewcommand{\arraystretch}{1.1}
\begin{center}
\caption{Descriptions of $f (0, 0, z, t, w)$}
\label{table:descrf}
\begin{tabular}{cccc}
\hline
No. & $f (0, 0, z, t, w)$ & Conditions & $2/(-K_X)^3$  \\
\hline
16 & $\alpha w^2 z + \beta t^3 + \gamma t^2 z^2 + \delta t z^4 + \varepsilon z^6$ & $\alpha \ne 0, \beta \ne 0$ & $20/3$ \\
17 & $\alpha z^4 + \beta t^3 + \gamma t^2 w + \delta w^3$  & $\alpha \ne 0$  & $8$ \\
25 & $\alpha w t^2 + \beta t^3 z + \gamma z^5$ & $(\alpha, \beta) \ne (0,0), \gamma \ne 0$ & $56/5$ \\
31 & $\alpha w^2 z + \beta w t^2 + \gamma z^4$ & $\alpha \ne 0, \gamma \ne 0$ & $15$ \\
36 & $\alpha w^2 z + \beta t^3 + \gamma t z^3$ & $\alpha \ne 0, \beta \ne 0, \gamma \ne 0$ & $56/3$ \\
46 & $\alpha t^3 + \beta z^7$ & $\alpha \ne 0, \beta \ne 0$ & $20$ \\
47 & $\alpha w^2 z + \beta t^3$ & $\alpha \ne 0, \beta \ne 0$ & $80/3$
\end{tabular}
\end{center}
\end{table}

\begin{Prop} \label{prop:WCIcApt}
Let $X$ be as in Setting~\ref{setting:FanoWCI}, and let $\msp \in X$ be a singular point of type $cA_1$.
Then $\msp$ is not a maximal center.
\end{Prop}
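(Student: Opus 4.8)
The plan is to run the same contradiction argument used for smooth points in Proposition~\ref{prop:WCIsmpt}, but feeding in the $2n^2$-inequality (Theorem~\ref{thm:ineqcA1}) in place of the $4n^2$-inequality and the sharper isolating bound of Lemma~\ref{lem:singPisoldiv} in place of Lemmas~\ref{lem:Pisoldiv1} and~\ref{lem:Pisoldiv2}. First I would record that a $cA_1$ point, being a Gorenstein non-quotient singularity, cannot lie on $\Sing \mbP$: by Setting~\ref{setting:FanoWCI} every point of $X \cap \Sing \mbP$ is a quasi-smooth point of $X$, hence a terminal quotient singularity, so $\msp$ lies in the smooth locus of $\mbP$ and Lemma~\ref{lem:singPisoldiv} applies to it.

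Assume for contradiction that $\msp$ is a maximal center. Then there is a mobile linear system $\mcM \subset |-n K_X|$, with $n > 0$, such that $\msp$ is a center of non-canonical singularities of the pair $(X, \frac{1}{n} \mcM)$. Since $\msp$ is Gorenstein, $-n K_X$ is Cartier in a neighbourhood of $\msp$, so on the germ of $\msp$ the system $\mcM$ consists of Cartier divisors and Theorem~\ref{thm:ineqcA1} yields $\mult_{\msp} Z > 2 n^2$, where $Z = D_1 \cdot D_2$ is the intersection of two general members. On the other side, Lemma~\ref{lem:singPisoldiv} provides $\msp$-isolating divisors of degree at most $2/(-K_X^3)$; applying Remark~\ref{rem:isoldivs} with $\Gamma = \Supp Z$ produces a single divisor $T \in |-l K_X|$ with $l \le 2/(-K_X^3)$, with $\msp \in \Supp T$, and with $T$ containing no component of $Z$. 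Then $T \cdot Z$ is an effective $0$-cycle, so $(T \cdot Z) \ge (T \cdot Z)_{\msp} \ge \mult_{\msp} Z$, while $(T \cdot Z) = l\, n^2 (-K_X^3) \le 2 n^2$ by the degree bound. Together these give $2 n^2 \ge \mult_{\msp} Z > 2 n^2$, a contradiction, so $\msp$ is not a maximal center.

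The one step needing care is the local estimate $(T \cdot Z)_{\msp} \ge \mult_{\msp} Z$ at the singular point, since there $T$ is only $\mbQ$-Cartier and the naive bound $(T \cdot Z)_{\msp} \ge \mult_{\msp} T \cdot \mult_{\msp} Z$ refers to ambient multiplicities. I would verify it in the local analytic model $\msp \in (f = 0) \subset \mbA^4$ of the $cA_1$ germ: choosing $T = (g = 0) \cap X$ with $g \in \mathfrak{m}_{\msp}$ and restricting $g$ to a parametrization of each analytic branch of $Z$ shows $\ord_{\msp}(g|_Z) \ge \mult_{\msp} Z$, which, summed over branches, is exactly the required inequality. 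Everything else is the bookkeeping of intersection numbers already carried out in Proposition~\ref{prop:WCIsmpt}, now with the constant $4$ replaced by $2$.
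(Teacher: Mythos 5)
Your proposal is correct and follows essentially the same route as the paper: assume $\msp$ is a maximal center, take the isolating divisor $T \in |-lK_X|$ with $l \le 2/(-K_X)^3$ from Lemma~\ref{lem:singPisoldiv} avoiding the components of $D_1 \cdot D_2$, and contradict the $2n^2$-inequality of Theorem~\ref{thm:ineqcA1} via $2n^2 \ge (T \cdot D_1 \cdot D_2) > 2n^2$. The two points you flag for extra care --- that a $cA_1$ point must lie off $\Sing \mbP$ by the quasi-smoothness hypothesis of Setting~\ref{setting:FanoWCI}, and the local estimate $(T \cdot Z)_{\msp} \ge \mult_{\msp} Z$ computed in the ambient $\mbA^4$ --- are left implicit in the paper but are correctly handled in your write-up.
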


\begin{proof}
Suppose that $\msp \in X$ is a maximal center.
Then there exists a movable linear system $\mcM \sim - n K_X$ such that $\msp$ is the center of a non-canonical singularities of the pair $(X, \frac{1}{n} \mcM)$.
Let $D_1, D_2 \in \mcM$ be general members.
Then by Lemma~\ref{lem:singPisoldiv}, we can take a divisor $T \in \left|- l K_X \right|$ for some positive integer $l \le 2/(-K_X)^3$ such that $\Supp T$ passes through $\msp$ and does not contain any component of $D_1 \cap D_2$.
Thus, we have
\[
2 n^2 \ge l n^2 (-K_X^3) = (T \cdot D_1 \cdot D_2) > 2 n^2,
\]
where the last inequality follows from Theorem~\ref{thm:ineqcA1}.
This is a contradiction.
\end{proof}

\subsection{Links centered at quotient points and proof of Theorem~\ref{thm:WCI}}

\begin{Prop} \label{prop:WCIqpt}
Let $X$ be as in Setting~\ref{setting:FanoWCI} and let $\msp \in X$ be a terminal quotient singular point.
If the point $\msp$ is a maximal center, then there exists an elementary self-link $X \ratmap X$ of type II initiated by the Kawamata blow-up at $\msp$.
\end{Prop}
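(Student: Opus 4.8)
The plan is to build the self-link by running the two-ray game on the Kawamata blow-up of $\msp$ and recognizing a birational involution at its far end. First I would take the Kawamata blow-up $\varphi \colon Y \to X$ at the terminal quotient singular point $\msp$; by \cite{Kawamata} this is the unique divisorial contraction extracting a divisor of minimal discrepancy over $\msp$, so it is the contraction named in the statement. Because $X$ has Picard rank $1$, the threefold $Y$ has Picard rank $2$ and $\bNE (Y)$ has exactly two extremal rays, one of which is contracted by $\varphi$. I would then play out the two-ray game on the remaining ray, obtaining a map $\tau \colon Y \ratmap Y'$ that is an isomorphism in codimension $1$ (a composite of flops and flips), followed by a second extremal divisorial contraction $\varphi' \colon Y' \to X'$, and finally show that $X' \cong X$, so that $\sigma := \varphi' \circ \tau \circ \varphi^{-1}$ is the required elementary self-link of type~II.

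The crucial observation is that neither the point $\msp$ nor the data driving this construction is altered by the $cA_1$ points. By the quasi-smoothness hypothesis along $\Sing \mbP$ in Setting~\ref{setting:FanoWCI}, the point $\msp$ sits at a coordinate point of $\mbP$ and is a cyclic quotient singularity of precisely the same type as in the quasi-smooth members of the family, with the same local equations. Hence the Kawamata blow-up, the second extremal ray, and the numerical shape of the two-ray game are fixed by the weights of $\mbP$ and the degrees alone. Moreover the birational involution producing the self-link is, in each family, read off from the structure of the defining equations---typically their being quadratic in a distinguished variable, giving a double-cover model with its deck transformation---a feature of the family that persists for the members carrying $cA_1$ points. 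Such self-links initiated by Kawamata blow-ups at quotient points were constructed in \cite{CPR} for the weighted hypersurfaces of Table~\ref{table:Fanohyp} and in \cite{OkadaI} for the codimension~$2$ complete intersections of Table~\ref{table:FanoWCI}, and I would reproduce those constructions with the same local and family-level input.

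The main obstacle is to certify that the \emph{global} completion of the two-ray game is genuinely a self-link of type~II---rather than a link of type~I to a fibration, or a link to a different Fano---once $cA_1$ points are present. This comes down to checking that the $cA_1$ singularities of $X$ avoid the loci that the link actually touches, namely the flopping and flipping curves of $\tau$ and the exceptional locus of $\varphi'$, and that the involution remains biregular on the anticanonical model. I expect these loci to be cut out by monomials of controlled degree dictated by the local data at $\msp$, so that the extra singularities of a member of the family can be arranged disjoint from them; where the configuration needs care, I would invoke the explicit, family-by-family descriptions of the links in \cite{CPR} and \cite{OkadaI} to confirm that $\tau$ is an isomorphism in codimension~$1$, that $\varphi'$ is divisorial, and that the identification $X' \cong X$ holds, so that the maximal center $\msp$ is indeed untwisted by a self-link.
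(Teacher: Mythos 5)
Your proposal is correct and takes essentially the same route as the paper: the paper's proof simply defers to the known family-by-family constructions of self-links at quotient points (in \cite{CP17} for the hypersurfaces and \cite{OkadaI}, \cite{AZ} for the codimension-two complete intersections), observing exactly as you do that those arguments rely only on quasi-smoothness of $X$ along $X \cap \Sing (\mbP)$, which is guaranteed by Setting~\ref{setting:FanoWCI}. Your description of the two-ray game and the check that the $cA_1$ points do not interfere is a faithful expansion of what those references carry out.
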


\begin{proof}
Suppose that $X$ is a weighted hypersurface.
Then the same arguments as in \cite[Chapter 5]{CP17} prove the assertion.
To be more precise, although $X$ is assumed to be quasi-smooth in \cite{CP17}, the arguments in \cite[Chapter 5]{CP17} in fact relies only on the quasi-smoothness of $X$ along $X \cap \Sing (\mbP)$, where $\mbP$ is the ambient weighted projective $4$-space, besides the basic assumption that $X$ is a prime Fano $3$-fold.

Similarly, for weighted complete intersections of codimension 2, the same arguments as in \cite[Section 7]{OkadaI} and \cite[Section 4]{AZ} prove the assertion.
\end{proof}

\begin{Rem}
It should be mentioned that, for some weighted hypersurface $X$ of index $1$ which is not listed Table~\ref{table:Fanohyp}, the arguments in \cite[Chapter 5]{CP17} do rely on the entire quasi-smoothness of $X$.
For example, the singular point of type $\frac{1}{3} (1, 1, 2)$ on a quasi-smooth member $X_{17} \subset \mbP (1, 2, 3, 5, 7) =: \mbP$ of family No.~$33$ is excluded as a maximal center in \cite[Chapter 5]{CP17} and the proof of this fact does rely on the quasi-smoothness of $X$ (not only on the quasi-smoothness along $X \cap \Sing \mbP$).
\end{Rem}

\begin{proof}[Proof of Theorem~\ref{thm:WCI}]
This is a consequence of Lemma~\ref{lem:cribirrig} and Propositions~\ref{prop:WCIcurve}, \ref{prop:WCIsmpt}, \ref{prop:WCIcApt} and~\ref{prop:WCIqpt}.
\end{proof}

\section{Del Pezzo fibrations of degree $1$}

The aim of this section is to prove the following by the method of maximal singularities developed in \cite{PukdP} and \cite{OkadadP} combined with Corti's inequality for smooth points and $cA_1$ points.

\begin{Thm} \label{thm:dP}
Let $\pi \colon X \to \mbP^1$ be a del Pezzo fibration of degree $1$ with only $cA_1$ points.
If $X/\mbP^1$ satisfies the $K^2$-condition~\eqref{eq:K^2}, then it is birationally superrigid.
\end{Thm}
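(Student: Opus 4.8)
The plan is to run the Noether--Fano method for Mori fiber spaces in the form used for del Pezzo fibrations in \cite{PukdP} and \cite{OkadadP}, feeding in the $2n^2$-inequality (Theorem~\ref{thm:ineqcA1}) and its Corti refinement (Theorem~\ref{thm:Cortiineq}) for $cA_1$ points. First I would assume that $X/\mbP^1$ is not birationally superrigid. By the Noether--Fano criterion there is then a mobile linear system $\mcM$ on $X$, with $\mcM \sim_{\mbQ} -nK_X + lF$ for the class $F$ of a fibre and $n > 0$ its quasi-effective threshold, together with a prime divisor $E$ over $X$ such that the pair $(X, \frac{1}{n}\mcM)$ is not canonical at $E$; let $B$ be the center of $E$ on $X$. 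The Noether--Fano analysis combined with the $K^2$-condition \eqref{eq:K^2} reduces us to the case $l \ge 0$, and I would set $\gamma = l/n$, so that $K_X + \frac{1}{n}\mcM - \gamma F \sim_{\mbQ} 0$.

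Next I would locate $B$. Horizontal centers and curve centers are to be excluded exactly as in \cite{PukdP, OkadadP}: a horizontal (multi)section cannot be a maximal center because the $K^2$-condition \eqref{eq:K^2} bounds its intersection with $-K_X$, while a curve contained in a fibre is ruled out by restricting $\mcM$ to the del Pezzo surface fibre and comparing multiplicities with $-K$. This leaves the decisive case in which $B$ is a single point $\msp$ lying on a fibre $F_0 = \pi^{-1}(\pi(\msp))$, where $\msp$ is either a smooth point of $X$ (so $F_0$ is a smooth degree-$1$ del Pezzo surface) or one of the $cA_1$ points (so $\msp$ induces a Du Val point on $F_0$).

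At this point I would apply the Corti inequality with the single distinguished surface $F_0$ and coefficient $\gamma$: Corti's inequality for smooth points \cite{Cor00} when $\msp$ is smooth, and Theorem~\ref{thm:Cortiineq} when $\msp$ is a $cA_1$ point. Writing $Z = D_1 \cdot D_2 = Z_h + Z_v$ for general $D_1, D_2 \in \mcM$, with $Z_v$ supported on $F_0$ and $Z_h$ meeting $F_0$ properly, the $cA_1$ case yields a rational number $0 < t \le 1$ with
\[
\mult_{\msp} Z_h + t\,\mult_{\msp} Z_v > 2n^2 + 4n^2 \gamma t
\]
(and the same bound with $2n^2$ replaced by $4n^2$ in the smooth case). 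The left-hand side is then bounded from above by global intersection theory. Since $(-K_X)^2 \cdot F = 1$ and $F^2 = 0$, one gets $\mult_{\msp} Z_h \le Z \cdot F = n^2$; and, crucially, the $K^2$-condition \eqref{eq:K^2}, read as $\alpha \le 0$ in the decomposition $(-K_X)^2 = \alpha h + \beta \sigma$ of $N_1(X) \cong \mbR^2$ into its vertical and horizontal extremal rays, forces $(-K_X) \cdot Z_v = n^2 \alpha + 2nl \le 2nl$, whence $\mult_{\msp} Z_v \le (-2K_{F_0}\cdot Z_v) = 2(-K_X)\cdot Z_v \le 4nl$ via the base-point-free system $|-2K_{F_0}|$ and adjunction $-K_X|_{F_0} = -K_{F_0}$. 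Substituting $\gamma = l/n$ and using $t \le 1$, the lower bound $\mult_{\msp} Z_v > n^2 + 4nl$ contradicts the upper bound $\mult_{\msp} Z_v \le 4nl$, since together they force $n^2 < 0$.

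The main obstacle I expect is the $cA_1$ case of this last step: one must run Theorem~\ref{thm:Cortiineq} and the global estimate on the \emph{singular} fibre $F_0$. Concretely, the adjunction formula and the multiplicity bound $\mult_{\msp} Z_v \le (-2K_{F_0}\cdot Z_v)$ must be justified when $F_0$ is a degree-$1$ del Pezzo surface carrying a Du Val point at $\msp$, so that $|-2K_{F_0}|$ stays base-point-free there, and one must check that the vertical cycle $Z_v$ produced by the $cA_1$ Corti inequality is genuinely supported on $F_0$ and enters the $K^2$-estimate with the correct coefficient. Verifying that these singular-fibre refinements leave the numerical balance intact is where the real work lies; the smooth case is then a formally identical but easier computation, and the proof concludes by contradiction.
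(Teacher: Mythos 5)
Your overall strategy is the one the paper follows (Noether--Fano plus the Corti-type inequality of Theorem~\ref{thm:Cortiineq} on the fibre through the centre, played off against the $K^2$-condition), and your per-point numerical computation --- $\mult_{\msp}Z_h \le (F\cdot Z) = n^2$, the lower bound $\mult_{\msp}Z_v > n^2 + 4n^2\gamma$ from the Corti inequality, and the upper bound on $(-K_X\cdot Z_v)$ via a member of $|{-2}K_{F_0}|$ avoiding the components of $Z_v$ --- is essentially identical to the paper's. But there is a genuine gap at the reduction step. You apply Theorem~\ref{thm:Cortiineq} to the pair $(X, \tfrac{1}{n}\mcM - \gamma F_0)$ with $\gamma = l/n$ at a \emph{single} point $\msp$, yet all that Noether--Fano gives you is that $(X, \tfrac{1}{n}\mcM)$ is non-canonical at $\msp$. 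Subtracting $\gamma F_0$ from the boundary makes the pair \emph{less} singular along $E$ (the condition becomes $\tfrac{1}{n}\ord_E\mcM > a_E(K_X) + \gamma\,\ord_E F_0$), so non-canonicity of $(X, \tfrac{1}{n}\mcM - \tfrac{l}{n}F_0)$ at $\msp$ does not follow and is in general false for a single fibre: the required ``excess'' may be spread over several fibres. This is precisely the content of the supermaximal-singularity statement the paper invokes (\cite[Proposition 2.7]{OkadadP}, going back to \cite{PukdP}): under the $K^2$-condition and the absence of curve centres, there exist points $\msp_1,\dots,\msp_k$ in \emph{distinct} fibres $F_1,\dots,F_k$ and positive rationals $\lambda_1,\dots,\lambda_k$ with $\sum\lambda_i > m/n$ such that each $\msp_i$ is a non-canonical centre of $(X, \tfrac{1}{n}\mcM - \sum\lambda_j F_j)$.

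The fix is mechanical but necessary: replace your single point by this collection, run your computation at each $\msp_i$ (decomposing $Z^v = \sum Z_i^v$ by fibres) to get $(-K_X\cdot Z_i^v) > \tfrac{1}{2}n^2 + 2\lambda_i n^2$, and sum over $i$ using $\sum\lambda_i > m/n$ to contradict $(-K_X\cdot Z^v) \le 2mn$, which is what the paper does. Two smaller points: your decomposition ``$(-K_X)^2 = \alpha h + \beta\sigma$'' should be a decomposition of the class $[Z^h]$ in $\bNE(X)$ as $\alpha(-K_X)^2 + \beta\ell$ with $\alpha = n^2$ and $\beta \ge 0$ forced by \eqref{eq:K^2}; and the multiplicity bound on the singular fibre is cleaner if you lift $C_i \in |\mcO_{F_i}(2)|$ to a Cartier divisor $H_i \in |{-2}K_X + r_iF|$ on $X$ (legitimate since $X$ is Gorenstein) rather than arguing by adjunction on $F_0$ itself.
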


The rest of this section is devoted to the proof of Theorem~\ref{thm:dP}.
In what follows, let $\pi \colon X \to \mbP^1$ be a del Pezzo fibration of degree $1$ with only $cA_1$ singularities and assume that $X$ satisfies the $K^2$-condition~\eqref{eq:K^2}.
We denote by $F \in \Pic (X)$ the fiber class.

We recall that by \cite[Page 117]{PukdP} the variety $X$ can be embedded in a toric $\mbP (1, 1, 2, 3)$-bundle $P \to \mbP^1$ in such a way that the morphism $\pi$ coincides with the restriction of $P \to \mbP^1$.
A fiber of $\pi$ is a Gorenstein del Pezzo surface (which can be non-normal), and it is an irreducible and reduced hypersurface of degree $6$ in $\mbP (1, 1, 2, 3)$.

\begin{Prop}[{\cite[Section 3]{PukdP}}] \label{prop:dPcurve}
Under the above setting, no curve on $X$ is a center of non-canonical singularities of the pair $(X, \frac{1}{n} \mcM)$ for any movable linear system $\mcM \subset \left| - n K_X + m F \right|$ on $X$.
\end{Prop}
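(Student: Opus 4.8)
The plan is to follow the method of maximal singularities for del Pezzo fibrations from \cite{PukdP}, reducing the problem to intersection theory on the degree $1$ del Pezzo fibres, and to check that the presence of $cA_1$ singularities does not affect the argument. Suppose, for contradiction, that some irreducible curve $\Gamma \subset X$ is a center of non-canonical singularities of $(X, \frac{1}{n}\mcM)$. Blowing up $X$ along $\Gamma$, the exceptional divisor over a general point of $\Gamma$ has discrepancy $1$, and that general point lies in the smooth locus of $X$ because $cA_1$ points are isolated; hence non-canonicity forces $\mult_{\Gamma}\mcM > n$. I would then split into two cases according to whether $\pi(\Gamma)$ is a point or all of $\mbP^1$.

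First I would treat the vertical case, where $\Gamma$ is contained in a fibre $F_t = \pi^{-1}(t)$. Each fibre is a Gorenstein del Pezzo surface of degree $1$, so $-K_{F_t}$ is an ample Cartier divisor with $(-K_{F_t})^2 = 1$, and since $F|_{F_t} = 0$ adjunction gives $\mcM|_{F_t} \sim -nK_{F_t}$. Writing a general member as $\nu\,\Gamma + R$ with $\nu \ge \mult_{\Gamma}\mcM > n$ and $R$ effective not containing $\Gamma$, and intersecting with $-K_{F_t}$, I would obtain
\[
n = (-nK_{F_t}) \cdot (-K_{F_t}) = \nu\,\bigl(\Gamma \cdot (-K_{F_t})\bigr) + R\cdot(-K_{F_t}) \ge \nu > n,
\]
using $\Gamma\cdot(-K_{F_t}) \ge 1$ and $R\cdot(-K_{F_t}) \ge 0$; this is a contradiction.

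Next I would treat the horizontal case, where $\pi|_{\Gamma}$ is dominant. Passing to the generic fibre $X_{\eta}$, a smooth del Pezzo surface of degree $1$ over $\mbC(\mbP^1)$, the curve $\Gamma$ restricts to a closed point $\msp \in X_{\eta}$, the system restricts to $\mcM_{\eta} \sim -nK_{X_{\eta}}$, and the defining valuation restricts so that $\msp$ is a center of non-canonical singularities of $(X_{\eta}, \frac{1}{n}\mcM_{\eta})$. Resolving the base locus of $\mcM_{\eta}$ and letting $\mu_i$ be the multiplicities of $\mcM_{\eta}$ at the (possibly infinitely near) base points, two general members $M_1, M_2 \in \mcM_{\eta}$ satisfy $\sum_i \mu_i^2 \le M_1 \cdot M_2 = (-nK_{X_{\eta}})^2 = n^2$. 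Non-canonicity at $\msp$ forces some $\mu_i > n$, whence $n^2 < \mu_i^2 \le \sum_i \mu_i^2 \le n^2$, again a contradiction. Combining the two cases shows that no curve is a center of non-canonical singularities.

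The main obstacle is not the numerics, which are identical to the nonsingular case of \cite{PukdP}, but verifying that the $cA_1$ singularities are harmless. One must confirm that $\mult_{\Gamma}\mcM > n$ persists when $\Gamma$ meets $\Sing X$ (handled by working over a general, hence smooth, point of $\Gamma$), that every fibre remains a Gorenstein del Pezzo surface of degree $1$ with $(-K_{F_t})^2 = 1$ and $-K_{F_t}$ ample even after acquiring the singularities coming from the $cA_1$ points of $X$, and that the restriction of discrepancies to the generic fibre preserves non-canonicity in the horizontal case. Once these points are in place the argument is insensitive to the $cA_1$ singularities; note that the $K^2$-condition is not needed for this proposition and will instead be used to exclude $0$-dimensional centers.
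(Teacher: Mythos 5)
Your argument is correct and is essentially the paper's proof: the paper simply cites \cite[Section 3]{PukdP} and remarks that Pukhlikov's smooth-case argument goes through for Gorenstein fibrations, and your vertical/horizontal case split with the multiplicity bound $\mult_{\Gamma}\mcM > n$ and the degree-$1$ intersection computations is precisely that argument, with the $cA_1$ adaptations correctly identified as harmless.
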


\begin{proof}
In \cite[Section 3]{PukdP} this statement is proven under the assumption that $X$ is smooth, but the same proof works in Gorenstein case.
\end{proof}

We say that an irreducible curve $C \subset X$ is {\it horizontal} if $\pi (C) = \mbP^1$ and that it is {\it vertical} if $\pi (C)$ is a point.
A $1$-cycle $C$ is {\it horizontal} (resp.\ {\it vertical}) if every irreducible curve in $\Supp C$ is horizontal (resp.\ vertical).

\begin{proof}[Proof of Theorem~\ref{thm:dP}]
Suppose that $X/\mbP^1$ is not birationally superrigid.
Then there exists a Mori fiber space $Y/T$ and a birational map $\sigma \colon X \ratmap Y$ which is not square.
Let $\mcM \subset \left| - n K_X + m F \right|$ be the proper transform on $X$ of a very ample complete linear system on $Y$.
The $K^2$-condition~\eqref{eq:K^2} implies that $-K_X$ is not in the movable cone of $X$, and thus $m \ge 0$.
By Proposition~\ref{prop:dPcurve} and \cite[Proposition 2.7]{OkadadP}, there are points $\msp_1, \dots, \msp_k \in X$ in distinct $\pi$-fibers and positive rational numbers $\lambda_1, \dots, \lambda_k$ such that $\msp_1, \dots, \msp_k$ are centers of non-canonical singularities of the pair $(X, \frac{1}{n} \mcM - \sum \lambda_i F_i)$ and $\sum \lambda_i > m/n$, where $F_i$ is the $\pi$-fiber containing $\msp_i$.

Let $D_1, D_2$  be general members  of $\mcM$ and let $Z := D_1 \cdot D_2$.
We decompose $Z$ into the vertical and the horizontal components:
$Z = Z^v + Z^h$, and write
\[
Z^v = \sum Z_i^v,
\]
where the support of $Z_i^v$ is in a $\pi$-fiber and $Z^h$ is horizontal.
By the Corti's inequality Theorem~\ref{thm:Cortiineq}, there are numbers $t_i$, $i = 1, \dots, k$, with~$0 < t_i \le 1$ such that
\[
\mult_{\msp_i} Z^h + t_i \mult_{\msp_i} Z^v_i > 2 n^2 + 4 \lambda_i t_i n^2.
\]
We have
\[
\mult_{\msp_i} Z^h \le (F_i \cdot Z^h) = (F \cdot Z) = (F \cdot D_1 \cdot D_2) = n^2,
\]
and thus
\[
\mult_{\msp_i} Z_i^v > \frac{1}{t_i} n^2 + 4 \lambda_i n^2 \ge n^2 + 4 \lambda_i n^2.
\]
Note that $F_i$ is isomorphic to an irreducible and reduced weighted hypersurface of degree $6$ in $\mbP (1, 1, 2, 3)$.
By \cite[Lemma 2.10]{OkadadP}, we can take a curve $C_i \in |\mcO_{F_i} (2)|$ which passes through $\msp_i$ and which does not contain any component of $\Supp (Z_i^v)$.
Then we can take a divisor $H_i \in \left|- 2 K_X + r_i F \right|$, where~$r_i$ is a sufficiently large integer, such that $H_i|_{F_i} = C_i$.
We have
\[
(-K_X \cdot Z^v_i) = \frac{1}{2} (H_i \cdot Z^v_i) \ge \frac{1}{2} \mult_{\msp_i} Z^v_i > \frac{1}{2} n^2 + 2 \lambda_i n^2,
\]
and, by the inequality $\sum \lambda_i > m/n$, we have
\begin{equation} \label{eq:dP-1}
(-K_X \cdot Z^v) = \sum_{i=1}^k (-K_X \cdot Z^v_i) > \frac{k}{2} n^2 + 2 n^2 \sum_{i=1}^k \lambda_i > \frac{1}{2} n^2 + 2 m n.
\end{equation}

We define $\ell := -K_X \cdot F \in \bNE (X)$ so that $\mbR_{\ge 0} \cdot \ell \subset \bNE (X)$ is the extremal ray corresponding to $\pi$.
Denote by $[Z^h] \in \bNE (X)$ the class of $Z^h$.
We can write $[Z^h] = \alpha (-K_X)^2 + \beta \ell$ for some numbers $\alpha, \beta$.
We have $\alpha = (F \cdot Z^h) = n^2$, and the $K^2$-condition~\eqref{eq:K^2} implies that $\beta \ge 0$.
It follows that
\[
(-K_X \cdot Z^h) = (-K_X \cdot \alpha (-K_X)^2 + \beta \ell) \ge n^2 (-K_X)^3.
\]
Combining this with
\[
(-K_X \cdot Z) = (-K_X) \cdot (- n K_X + m F)^2 = n^2 (-K_X)^3 + 2 m n,
\]
we obtain the inequality
\begin{equation} \label{eq:dP-2}
(-K_X \cdot Z^v) = (-K_X \cdot Z) - (-K_X \cdot Z^h) \le 2 m n.
\end{equation}
Two inequalities \eqref{eq:dP-1} and \eqref{eq:dP-2} are impossible, and this shows that $X/\mbP^1$ is birationally superrigid.
\end{proof}


\end{document}